\definecolor{myc}{RGB}{36,107,2}
\def\R{\mathbb{R}}
\def\N{\mathbb{N}}
\def\F{\mathcal{F}}
\def\H{\mathcal{H}}
\def\L{\mathcal{L}}
\def\I{\mathcal{I}}
\def\d{\,\mathrm{d}}
\def\wt{\stackrel{*}{\rightharpoonup}}
\def\Tan{\mathrm{Tan}}
\def\spt{\mathrm{spt}\,}
\def\e{\varepsilon}
\def\mm{\mathfrak{m}}
\def\ZZ{\mathbb{B}}
\def\LL{\mathbb{L}}
\def\euno{\mathrm{e}_1}
\def\edue{\mathrm{e}_2}
\def\en{\mathrm{e}_n}
\def\mat{\mathbb{A}}
\newcommand{\res}{\mathop{\hbox{\vrule height 7pt width .5pt depth 0pt \vrule height .5pt width 6pt depth 0pt}}\nolimits}
\newtheorem{theorem}{Theorem}[section]
\newtheorem{corollary}[theorem]{Corollary}
\newtheorem{proposition}[theorem]{Proposition}
\newtheorem{lemma}[theorem]{Lemma}
\newtheorem{remark}[theorem]{Remark}
\theoremstyle{definition}
\numberwithin{equation}{section}
\numberwithin{figure}{section}
\definecolor{mygreen}{RGB}{21,118,40} 
\author{M. Caroccia}
\address{DiMaI, Universit\`a di Firenze, and Scuola Normale Superiore di Pisa}
\email{caroccia.marco@gmail.com}
\author{M. Focardi}
\address{DiMaI, Universit\`a di Firenze}
\email{matteo.focardi@unifi.it}
\author{N. Van Goethem}
\address{Centro de Matem\'atica e Aplica\c c\~{o}es Fundamentais, Universidade de Lisboa}
\email{vangoeth@fc.ul.pt}
\title[On the integral representation of variational functionals on $BD$]{On the integral representation of variational functionals on $BD$}
\keywords{$BD$, integral representation, relaxation, lower semicontinuity}
\begin{document}
\begin{abstract}
 Following the global method for relaxation we prove an integral representation result for a large class of variational functionals naturally defined 
 on the space of functions with Bounded Deformation. Mild additional continuity assumptions are required on
 the functionals. 
 \end{abstract}

\maketitle
 
\tableofcontents

\section{Introduction}
In linearized elasticity one route is to consider the displacement (or the velocity) field $u$ as basic model variable. In this case, the deformation (or strain) tensor of an elastic body is given by the symmetric gradient of $u$, i.e., $e(u):=\frac{1}{2}(\nabla u+\nabla^tu)$. Therefore, the study of well-posedness of the PDE system of linear elasticity was at the origin of the study of the differential operator $e(u)$  and in particular its coerciveness properties, first analysed by Korn in 1906 \cite{Korn} and followed by plenty of refinements  to this date (see for instance \cite{Horgan} for a survey). In linearized elasticity, the variational approach consists in minimizing the stored elastic energy (which is quadratic in the strain) minus the work of the external forces.
However, as soon as elasto-plasticity is considered, two main problems are faced: first, the observed stress-strain relation in plasticity is not linear anymore, resulting in a less-than quadratic, sometimes linear relation between the stored elastic energy and the strain. Here we refer to the pioneer work by Suquet on well posedness in perfect plasticity \cite{Suquet3} itself based on preliminary work on the distributional operator of bounded deformation published in \cite{Suquet1,Suquet2,TS1,TS2,Kohn}. Specifically, in the above quoted works the authors study the properties of the differential operator $Eu:=\frac{1}{2}(Du+D^tu)$ where $D$ stands for the distributional derivative that generalizes the gradient $\nabla$ to account for discontinuous fields $u$ . In this way the space $BD(\Omega)$ of function with Bounded Deformation on the open subset $\Omega$ of $\R^n$ has been introduced as the space of $L^1(\Omega;\R^n)$ vector fields $u$ whose symmetrized distributional derivative $Eu$ is a Radon measure (see \cite{Temam1,ambrosio1997fine}, see also section \ref{prelBD} to which we refer for the notation used in this introduction on $BD$ maps). 
Moreover, $e(u)$ is proven to be the density of the absolutely continuous part of $Eu$. 

The second issue arising in plastic problems is that concentration phenomena observed in plasticity require some weak notion of deformation that allow for slip or boundary concentration of strain for instance. Indeed, these effects are well handled in $BD(\Omega)$ by the so-called singular part of the deformation measure field.  It should also be said, that these aforementioned two issues are related, since a linear growth of the stored elastic energy prevents coerciveness in Sobolev spaces. Thus, bounds in the non-reflexive space $L^1$ require to consider limit of sequences in the space of Radon measures, and hence, again, justifies the choice of the space $BD(\Omega)$ when dealing with elasto-plastic models. For these models, the associated general bulk stored elastic energy reads as the integral
\begin{equation}\label{e:int1}
F_0(u):=\int_\Omega f_0\big(x,u(x),e(u)(x)\big)\d x,
\end{equation}
where $f_0$ has linear growth in the third variable and satisfies suitable assumptions (see Section~\ref{ss:relax}),
and $u\in BD(\Omega)$ is such that $Eu$ is absolutely continuous with respect to $\L^n\res\Omega$, namely $u\in LD(\Omega)$. 
To account also for singular effects a more general energy expression reads as
\begin{equation}\label{e:int2}
F_1(u):=\int_\Omega f_1\big(x,u(x),e(u)(x)\big)\d x
+\int_{J_u} g_1\left(x,u^-(x),u^+(x),\nu_u(x)\right)d\mathcal{H}^{n-1}
\end{equation}
where $f_1$ has linear growth in the third variable, $f_1$ and $g_1$ satisfy suitable assumptions (see Section~\ref{ss:relaxbulksurface}),
and $u\in SBD(\Omega)$, the subspace of $BD(\Omega)$ where the singular part $E^su$ of the measure $Eu$ is 
concentrated on the $(n-1)$-rectifiable set of approximate discontinuity points $J_u$
(see Section~\ref{prelBD} for the precise notation).

It is a classical problem in the Calculus of Variations to determine the lower semicontinuous envelope of the energies in \eqref{e:int1} and \eqref{e:int2} in order to find the limits of minimizing sequences lying in the larger space $BD(\Omega)$. 
More precisely, let $F$ be the functional either in \eqref{e:int1} or in \eqref{e:int2} if $u$ belongs to $LD(\Omega)$ or $SBD(\Omega)$  respectively, and $+\infty$ otherwise on $L^1(\Omega;\R^n)$. 
Then, the $L^1(\Omega;\R^n)$ lower semicontinuous envelope of the functional $F$, that is the greatest functional less or equal than $F$ which is $L^1$ lower semicontinuous, is given by 
\[
 \F(u):=\inf\Big\{\liminf_{j\to+\infty}F(u_j):\,u_j\to u \text{ in $L^1(\Omega;\R^n)$}\Big\},
\]
provided some coercivity assumptions on the integrands are imposed (cf. \cite{fonseca2007modern}).

A suitable localized version of the functional $\F$ to the family of open subsets of $\Omega$ turns out to be a variational functional according to Dal Maso and Modica \cite{DalMasoModica80} naturally defined on the space $BD(\Omega)$ (see Section~\ref{ss:variational functional} below and the discussion in what follows). 
Therefore, more generally, we consider variational functionals in this sense and prove for them in Theorem~\ref{thm:mainthm} an integral representation result following closely the celebrated 
global method for relaxation developed in Bouchitt{\'e}, Fonseca and Mascarenhas \cite{bouchitte1998global} to deal with the 
analogous problem for functionals defined either on Sobolev spaces or on the space $BV$ of functions with Bounded Variation (see \cite{ContiFocardiIurlano17} for an extensive survey of the subject and an exhaustive bibliography).

The integral representation result in Theorem~\ref{thm:mainthm} will be applied to the above mentioned relaxation problems for the 
functionals of the type \eqref{e:int1} and \eqref{e:int2}. 
More precisely, in Theorems~\ref{t:relaxationLD} and \ref{t:relaxationSBD} we show that the resulting $L^1$ lower semicontinuous envelope 
$\F$ has indeed an integral representation of the type
\begin{align}\label{e:int3}
\F(u) =\int_\Omega&  f\big(x,u(x),e(u)(x)\big)\d x\nonumber\\
&+\int_{J_u}g\big(x,u^-(x),u^+(x),\nu_u(x)\big)\d \H^{n-1}(x)
+\int_\Omega  f^\infty\left(x,u(x),\frac{\d E^cu}{\d|E^cu|}(x)\right)\d|E^cu|
\end{align}
for all $u\in BD(\Omega)$. Here, $f^\infty$ denotes the (weak) recession function of the integrand $f$.
Moreover, a characterization of the energy densities $f$ and $g$ is given in terms of asymptotic Dirichlet problems involving $F$ itself with boundary values related to the infinitesimal behaviours of the function $u$ around the base point $x$.

Apart from the usual lower semicontinuity (and therefore locality), growth conditions and measure theoretical properties to be satisfied by the functional $\F$ (see assumptions (H1)-(H3) 
in Section~\ref{s:mainthm}), we impose two conditions expressing continuity of the energy functional with respect to specific family of rigid motions. 
More precisely, continuity with respect to translations both in the dependent and independent variable is stated in (H4). Such a condition is used for instance in \cite{bouchitte1998global} 
in the $BV$ setting to express the energy density of the Cantor part in terms of the recession function $f^\infty$ of the bulk energy density.
Additionally, in the current $BD$ setting we need to require further assumption (H5), that expresses continuity of the energy with respect to infinitesimal rigid motions. In turn, this condition implies that the bulk energy density depends only on the symmetric part of the relevant matrix. Condition (H5) is crucial for our arguments
both from a technical side and conceptually as we discuss in details in Section \ref{s:discussionH5}. 

In this respect, we emphasize that all integral representation, relaxation, lower semicontinuity results 
available in literature for energies defined on $BD(\Omega)$ (see e.g. \cite{barroso2000relaxation, ebobisse2001note,rindler2011lower,de2017characterization,arroyorabasadephilippisrindler,kosibarindler,DalMasoOrlandoToader}) 
are based on a stronger version of (H5) that imposes invariance of the energy with respect to infinitesimal rigid motions (cf. Remark \ref{r:H5bis}). From a mechanical perspective such a condition reflects a restriction on the material behaviour. Therefore, it is preferable to avoid it, also because of its controversy in the continuum mechanics community (see \cite{Svendsen1999,Murdoch2005}). 
Note that the quoted invariance property with respect to superposed infinitesimal rigid motions would imply 
the integrands in \eqref{e:int3} to be independent of $u$. In our result, though, this explicit dependence is kept, as was the case in the $BV(\Omega)$ setting \cite{bouchitte1998global}. Finally, let us point out that assumption (H5) is actually not needed to give a partial integral representation result on the subspace $SBD(\Omega)$, or more generally on $BD(\Omega)$ but only for the volume and surface terms of the energies, as already noticed in \cite{ebobisse2001note}.

Further possible applications of our main theorem are in the field of homogenization problems 
(cf. Section~\ref{ss:homogenization}), or more generally to problems in which the determination 
of variational limits in terms of $\Gamma$-convergence of energies defined on 
$BD$ are involved (see e.g. \cite{maso2012introduction}, \cite{focardi2014asymptotic,ContiFocardiIurlano18,CVG}). 

We mention that integral representation results for energies defined on distinguished subspaces of $BD$ 
(in particular satisfying a different set of growth conditions different from (H2)) have been recently 
obtained either in the superlinear case in the $2$ dimensional framework in \cite{ContiFocardiIurlano17}, 
or in the space of Caccioppoli affine functions in \cite{FrSol19}.

Let us now summarize the contents of the paper. In Section~\ref{s:mainthm} we state Theorem~\ref{thm:mainthm}
the main result of the paper, all the preliminaries needed to prove it are provided in Section~\ref{sec:blowupcantor}. 
Section~\ref{s:blowupCantor2} focuses on the analysis of the Cantor part of the energy and more precisely on its integral representation. 
In turn, those results are used in Section~\ref{s:proofmainresult} to establish Theorem~\ref{thm:mainthm}. 
Applications of Theorem~\ref{thm:mainthm} to several issues related to energies with linear growth defined on $BD$ are 
studied in Section~\ref{s:applications}.
More precisely, the relaxation of variational integrals is the topic of Section~\ref{ss:relax},
the lower semicontinuity either of bulk or of bulk and surface energies is investigated in Section~\ref{ss:relaxbulksurface}, 
eventually Section~\ref{ss:homogenization} deals with the periodic homogenization of bulk type energies.
In the final Section~\ref{s:discussionH5} an example of a quasiconvex function $f:\mathbb{M}^{n\times n}\to[0,+\infty)$ being bounded
by the norm of the symmetric part of the relevant matrix, but on the other hand depending non-trivially also on the skew-symmetric part, 
is provided following a celebrated example by M\"uller \cite[Theorem~1]{Muller1992}. The issue of relaxation on $BD$ for the associated 
bulk energy functional is discussed, highlighting the role of assumption (H5) to deduce Theorem~\ref{thm:mainthm} and related open problems.

\section{Main result}\label{s:mainthm}

\subsection{Basic notation}\label{ss:basic}
The unitary vectors of the standard coordinate basis of $\R^n$ will be denoted by $\euno,\ldots,\en$.
$\mathbb{M}^{n\times n}$ stands for the set of all $n\times n$ matrices and 
$\mathbb{M}_{sym}^{n\times n}$, $ \mathbb{M}_{skew}^{n\times n}$ for the subsets of all symmetric 
and skew-symmetric matrices, respectively. 

For a given a set $E$ we adopt the notation $E(x_0,r):=x_0+rE$ for the rescaled copy of size $r>0$ translated in $x_0$. 
In particular, $Q^{\nu}(x_0,r)$ stands for any cube centered at $x_0$, with edge length $r$ and with one face orthogonal 
to $\nu$. 
We also adopt the convention that, whenever 
$\nu$ is omitted then $\nu=\mathrm{e}_i$ for some $i\in\{1,\ldots,n\}$, and the corresponding cube $Q(x_0,r)$ is oriented according to the coordinate directions.

In what follows, we shall often consider a function $\Psi:(0,+\infty)\to [0,+\infty)$ such that 
$\displaystyle{\lim_{t\to 0^+}}\Psi(t)=0$, referring to it as a modulus of continuity. 

Finally, throughout the paper $\Omega$ will denote a non empty, bounded, open subset of $\R^n$ with Lipschitz boundary,
and $\mathcal{O}(\Omega)$, $\mathcal{O}_{\infty}(\Omega)$ denote the families of all the open 
subsets of $\Omega$ and of all the open subsets of $\Omega$ with Lipschitz boundary, respectively.

\subsection{Framework and main result}\label{ss:variational functional}

We consider a class of local energies typically arising in variational problems: 
$\mathcal{F}:L^1(\Omega;\R^n)\times \mathcal{O}(\Omega)\to[0,+\infty]$, 
and $BD(\Omega)$ is the set of maps with {\em{Bounded Deformation}}
(for the precise definition, the notation used in what follows, and several properties of $BD$ 
functions see Section~\ref{sec:blowupcantor}). 
We assume that the following properties are in force on $\mathcal{F}$:
\begin{itemize}
\item[(H1)] $\F(\cdot, A) $ is strongly $L^1(A;\R^n)$ lower semicontinuous for all $A\in\mathcal{O}(\Omega)$\footnote{In the rest of the paper, we shall write $L^1$ lower semicontinuous 
in place of strongly $L^1$ lower semicontinuous for the sake of simplicity.};
\item[(H2)] There exists a constant $C>0$ such that for every 
$(u,A)\in BD(\Omega)\times\mathcal{O}(\Omega)$,
        \begin{equation}\label{eqn:growth condition}
           \frac1C|Eu|(A)\leq \F(u,A) \leq C (\L^n(A)+|E u|(A));
        \end{equation}
\item[(H3)] $\mathcal{F}(u,\cdot)$ is the restriction to $\mathcal{O}(\Omega)$ of a Radon measure for every $u\in BD(\Omega)$;
\item[(H4)] There exists a modulus of continuity $\Psi$ such that 
        \begin{equation}\label{eqn:continuity of energy hyp}
            |\mathcal{F}(\mathrm{v}+u(\cdot - x_0),x_0+A) - \mathcal{F}(u,A)|\leq 
            \Psi(|x_0|+|\mathrm{v}|)(\L^n(A)+|E u|(A) )
        \end{equation}
       for all $(u,A,\mathrm{v},x_0)\in BD(\Omega)\times\mathcal{O}(\Omega)\times\R^n\times\Omega$, with $x_0+A\subset \Omega$;
\item[(H5)] There exists a modulus of continuity $\Psi$  such that 
 \begin{equation}\label{eqn:continuity of energy hyp 2}
 |\mathcal{F}(u+\LL(\cdot-x_0),A)-\mathcal{F}(u,A)|\leq \Psi(|\LL|\mathrm{diam}(A))(\L^n(A)+|Eu|(A))
 \end{equation}
 for every $(u,A,\LL) \in BD(\Omega)\times \mathcal{O}(\Omega)\times\mathbb{M}_{skew}^{n\times n}$, and for all $x_0\in A$.
\end{itemize}

\begin{remark}
It is well-known that assumption (H1) implies locality of $\F(\cdot,A)$ for all $A\in \mathcal{O}(\Omega)$.
Namely, if $u=v$ $\L^n$ a.e. on $A$ then $\F(u,A)=\F(v,A)$. 
\end{remark}

\begin{remark}
Hypothesis (H5) implies that the energy depends only on the symmetric gradient (see formula \eqref{eqn:invariance} and Section~\ref{s:discussionH5} 
for more details).
\end{remark}

Following the global method for relaxation introduced by Bouchitt{\'e}, Fonseca and Mascarenhas in \cite{bouchitte1998global}
we consider the \textit{local} Dirichlet problem
    \begin{equation}\label{e:mm}
    \mm(u,A):=\inf\big\{\mathcal{F}(v,A):\, v\in BD(\Omega),\ v|_{\partial A}=u|_{\partial A}\big\},
    \end{equation}
    where $(u,A)\in BD(\Omega)\times\mathcal{O}_{\infty}(\Omega)$ is given, 
    and prove the ensuing result.

\begin{theorem}\label{thm:mainthm}
Let $\mathcal{F}:L^1(\Omega;\R^n)\times \mathcal{O}(\Omega)\to[0,+\infty]$ be satisfying (H1)-(H5). 
Then, for all $(u,A)\in BD(\Omega)\times \mathcal{O}(\Omega)$
\begin{align*}
        \F(u,A)=&\int_{A} f\big(x,u(x),e(u)(x)\big) \d x+ \int_{J_u\cap A} g\big(x,u^-(x),u^+(x),\nu_u(x)\big) \d\H^{n-1}(x)\\
        &+ \int_{A} f^\infty\Big(x,u(x),\frac{\d E^cu}{\d |E^cu|}(x)\Big)\d|E^cu|(x),
    \end{align*}
where for all $(x_0,\mat,\mathrm{v},\mathrm{v}^-,\mathrm{v}^+,\nu)\in \Omega\times\mathbb{M}_{sym}^{n\times n}\times(\R^n)^4$
    \begin{equation}\label{e:f}
    f(x_0,\mathrm{v},\mat):=\limsup_{\e\rightarrow 0} \frac{\mm(\mathrm{v}+\mat(\cdot-x_0), Q(x_0,\e))}{\e^n},
    \end{equation}
\begin{equation}\label{e:g}
g(x_0,\mathrm{v}^-,\mathrm{v}^+,\nu):=\limsup_{\e\rightarrow 0} \frac{\mm( u_{\mathrm{v}^-,\mathrm{v}^+,\nu}(\cdot-x_0), Q^{\nu}(x_0,\e))}{\e^{n-1}},
\end{equation}
\begin{equation}\label{e:cantor}
f^\infty(x_0,\mathrm{v},\mat):= \limsup_{t\rightarrow+\infty}\frac{ f(x_0,\mathrm{v},t \mat)-f(x_0,\mathrm{v},0)}{t},
\end{equation}
and 
    \begin{equation}\label{e:uvpiuvmeno}
    u_{\mathrm{v}^-,\mathrm{v}^+,\nu}(y):=\left\{
        \begin{array}{cc}
           \mathrm{v}^+  & \text{if $y\cdot \nu\geq 0$} \\
           \mathrm{v}^- &  \text{otherwise}.
        \end{array}
        \right.
    \end{equation}
\end{theorem}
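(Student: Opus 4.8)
The plan is to follow the blueprint of the global method for relaxation of Bouchitté–Fonseca–Mascarenhas, adapted to the $BD$ setting. The starting point is the observation, classical in this circle of ideas, that under hypotheses (H1)–(H3) the functional $\mathcal F(u,\cdot)$ is a Radon measure mutually absolutely continuous with $\L^n\res\Omega + |Eu|$, and hence by the Radon–Nikod\'ym and Besicovitch differentiation theorems it suffices to compute the three Radon–Nikod\'ym derivatives of $\mathcal F(u,\cdot)$ with respect to $\L^n\res\Omega$, to $\H^{n-1}\res J_u$, and to $|E^cu|$, at $|Eu|$-almost every point. The decomposition $Eu = e(u)\L^n + (u^+-u^-)\odot\nu_u\,\H^{n-1}\res J_u + E^cu$ then reduces the theorem to three local blow-up statements. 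The key general principle I would invoke at each point $x_0$ is that the Radon–Nikod\'ym derivative of $\mathcal F(u,\cdot)$ coincides with that of the localized minimum function $\mm(u,\cdot)$: the inequality $\mm(u,A)\le \mathcal F(u,A)$ is immediate from the definition, and the reverse (asymptotically, on small cubes) is obtained by a cut-off/gluing construction between a near-optimal competitor for $\mm$ and $u$ itself near $\partial A$, using the measure property (H3) to control the error on the transition layer and the growth bound (H2) to make it negligible.

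First I would treat the \emph{absolutely continuous part}. At an $\L^n$-a.e.\ point $x_0$ the blow-ups of $u$ converge in $L^1$ to the affine map $y\mapsto u(x_0) + \nabla u(x_0)(y-x_0)$ — but here one must be careful: for a general $BD$ function only the \emph{symmetrized} approximate gradient $e(u)(x_0)$ is available, not $\nabla u(x_0)$. This is precisely where (H5) enters: it lets one replace the (unavailable) full gradient by \emph{any} matrix $\mat$ with $\mat_{sym}=e(u)(x_0)$ at the cost of a vanishing error, so that $f$ in \eqref{e:f} depends only on $e(u)(x_0)$, and the rescaled minimum $\e^{-n}\mm(u(x_0)+\mat(\cdot-x_0),Q(x_0,\e))$ converges to $f(x_0,u(x_0),e(u)(x_0))$. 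Continuity in the base point and in the value $u(x_0)$ is furnished by (H4). Second, for the \emph{jump part}, at $\H^{n-1}$-a.e.\ $x_0\in J_u$ the blow-ups converge to the pure-jump function $u_{u^-(x_0),u^+(x_0),\nu_u(x_0)}(\cdot-x_0)$, and the rescaling $\e^{1-n}\mm(\cdot,Q^{\nu}(x_0,\e))$ produces $g$ as in \eqref{e:g}; the only structural subtlety is that the upper and lower blow-up limits might differ, so I would show (via an equicontinuity/subadditivity argument on the dependence of $\mm$ on the cube, again exploiting (H3) and (H4)) that the $\limsup$ in \eqref{e:g} is the right object and that the blow-up limit exists $\H^{n-1}$-a.e.

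The genuinely hard part is the \emph{Cantor part}. Here there is no canonical blow-up profile: at $|E^cu|$-a.e.\ $x_0$ one knows only (by Alberti's rank-one theorem for $BD$, available from the cited structure theory) that $\dfrac{\d E^cu}{\d|E^cu|}(x_0) = a\odot\nu$ is a symmetrized rank-one matrix, and that the blow-ups of $u$ concentrate in a one-dimensional direction, but they need not converge to a fixed function. The strategy, exactly as in \cite{bouchitte1998global}, is to show that the Radon–Nikod\'ym derivative of $\mathcal F(u,\cdot)$ with respect to $|E^cu|$ at $x_0$ equals $f^\infty\big(x_0,u(x_0),\tfrac{\d E^cu}{\d|E^cu|}(x_0)\big)$, the recession function being forced by the scaling: on a small cube the measure $|Eu|$ is much larger than $\L^n$, so the bulk integrand is probed at matrices of norm $\sim t\to\infty$, and the difference quotient \eqref{e:cantor} emerges. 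Translation invariance (H4) is used to freeze the $x_0$- and $u$-dependence along the blow-up, while (H5) is again essential to pass from the full derivative structure to its symmetric part $a\odot\nu$. I expect the main obstacle to be the proof that this Cantor-part derivative is \emph{exactly} $f^\infty$ and not merely bounded by it: one inequality comes from comparing with affine competitors (using (H5) to adjust the skew part), and the matching lower bound requires a delicate averaging argument — slicing the Cantor measure and using the one-dimensional restriction properties of $BD$ together with subadditivity of $\mm$ — to show that near-optimal competitors cannot do asymptotically better than the recession function predicts. This is where I would concentrate the technical effort, and it is the content that Sections~\ref{sec:blowupcantor}–\ref{s:blowupCantor2} of the paper are designed to supply.
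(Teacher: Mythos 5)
Your outline reproduces the architecture of the paper's proof: reduce to three Radon--Nikod\'ym derivatives, identify them via the cell formula $\mm$ (Lemma~\ref{lem:derivative of m}), handle the volume and jump parts by blow-up, and use (H4)--(H5) to freeze the point/value dependence and the skew-symmetric ambiguity. Two corrections on the easy parts: $BD$ maps \emph{are} approximately differentiable $\L^n$-a.e.\ (Ambrosio--Coscia--Dal Maso), so the bulk blow-up limit $u(x_0)+\nabla u(x_0)(\cdot-x_0)$ is available and the volume and surface representations need only (H1)--(H3) (Lemma~\ref{lem:goodpointsdensity}); (H5) enters the bulk term only through \eqref{eqn:invariance}, i.e.\ to show $f$ depends on $\mat$ only through $\frac{\mat+\mat^t}{2}$, not to compensate for a missing gradient.

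The genuine gap is the Cantor part, which you correctly flag as the hard step but do not actually prove. Your description of the difficulty is also inaccurate in a way that matters: at a Cantor point the polar is $\eta\odot\xi$, but the blow-up limits are \emph{not} one-directional; by the rigidity result of De Philippis--Rindler (Proposition~\ref{prop:rindlerchar}) they have the two-directional form $\alpha_1(y\cdot\eta)\xi+\alpha_2(y\cdot\xi)\eta+\LL y+\mathrm{v}$ with two unknown $BV$ profiles plus an uncontrolled infinitesimal rigid motion. The paper's proof hinges on ingredients your plan does not contain: (i) a \emph{double} blow-up (Proposition~\ref{prop:vgblupRinDe}), using that tangent measures to tangent measures are tangent measures, to select a limit in which at least one of the two profiles is affine; (ii) the choice of thin parallelepipeds $P^{\xi,\eta}_\rho$ whose short edge ($\rho$) points in the remaining non-affine direction, so that Lemma~\ref{lem:continuity if m} lets one replace the boundary datum by an affine map $w_{i,j}$ with an error $O(\rho_j)$ (Lemma~\ref{lem:shapeofF}); (iii) Lemma~\ref{lem:killtheantisym}, which kills the skew-symmetric matrix $\mathbb{M}_{K(x_0,\e)}[u]$ produced by the rigid-motion normalization, combined with \eqref{eqn:H5onm}; and (iv) the lower bound of Lemma~\ref{lem:lwr bound}, which transfers the $BV$ estimate of Bouchitt\'e--Fonseca--Mascarenhas to symmetrized rank-one matrices via (H5). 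Your proposed substitute for the matching lower bound --- ``slicing the Cantor measure and using one-dimensional restriction properties of $BD$ together with subadditivity of $\mm$'' --- is not developed and is unlikely to work as stated, precisely because the two-directional structure of $BD$ blow-ups (and the absence of a full Alberti-type/one-dimensional section theory for $E^cu$) is what forces the double blow-up and the oriented thin domains in the first place. As it stands, the proposal is a correct road map with the decisive step missing.
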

Note that $f^\infty$ is classically termed the \textit{weak} recession function, in contrast to the \textit{strong} recession function for which the limit is assumed to
exists (see the comments in Section~\ref{ss:relax}). 
Its finiteness is guaranteed by the linear growth of $f$ (see \eqref{e:flingr}).

We point out that the analogous result to Theorem~\ref{thm:mainthm} for functionals defined on the space $BV$ of functions with bounded variation 
has been proven under the sole assumptions (H1)-(H4) (cf. \cite[Theorems~3.7, 3.12]{bouchitte1998global}). 
A detailed discussion on the need of assumption (H5) in the $BD$ setting is the topic of Section~\ref{s:discussionH5}.
Several comparisons with the $BV$ case are discussed in Remarks~\ref{r:truncation} and \ref{r:truncation2}.

\section{Preliminaries}\label{sec:blowupcantor}

\subsection{Some results of geometric measure theory}\label{ss:GMT}

In the forthcoming blow-up procedure, it will be mandatory to obtain  limits satisfying additional structural properties. 
To this aim we introduce some useful concepts of geometric measure theory. Let $U\subseteq\R^n$ be either open or closed. Here and in what follows $\mathcal{M}_{loc}(U;\R^k)$ stands for the sets of all $\R^k$-valued Radon measures on $U$, and  $\mathcal{M}(U;\R^k)$ for the subset of all $\R^k$-valued finite Radon measures on $U$. 

Let $\mu_i$, $\mu\in\mathcal{M}_{loc}(U;\R^k)$, following \cite{AFP00} we say that $\mu_i$ locally weakly* converge to $\mu$, 
and we write $\mu_i\wt \mu$ in $\mathcal{M}_{loc}(U;\R^k)$, if
    \[
    \lim_{i\to+\infty}\int_{U} \varphi \d \mu_i =\int_{U} \varphi \d \mu \qquad\text{for all $\varphi\in \mathcal C_c^0(U)$}.
    \]
If $\mu_i$, $\mu\in\mathcal{M}(U;\R^k)$, we say that $\mu_i$ weakly* converges to $\mu$ if the condition above
holds for all $\varphi\in \mathcal C_0^0(U)$. The following property holds true.
\begin{lemma}[Proposition~1.62~(b) \cite{AFP00}]\label{lemma: weakconv}
Let $\{\mu_i\}_{i\in\N}$ be locally weakly* converging to $\mu$ in $\mathcal{M}_{loc}(U;\R^k)$, 
and $\{|\mu_i|\}_{i\in\N}$ be locally weakly* converging to $\lambda$ in $\mathcal{M}_{loc}(U)$. 
Then, $\lambda\geq|\mu|$, and $\displaystyle\mu(E)=\lim_{i\to\infty}\mu_i(E)$ for every relatively 
compact Borel subset $E$ of $U$ with $|\lambda|(\partial E)=0$.
\end{lemma}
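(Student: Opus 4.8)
The plan is to split the assertion into the two independent claims (i) $\lambda\geq|\mu|$ as measures on $U$ and (ii) $\mu_i(E)\to\mu(E)$ for every relatively compact Borel set $E\Subset U$ with $\lambda(\partial E)=0$. A preliminary observation is that, being the weak* limit of the nonnegative measures $|\mu_i|$, the measure $\lambda$ is itself nonnegative, so that $|\lambda|=\lambda$ and the hypothesis on $E$ reads $\lambda(\partial E)=0$. Throughout I will only need, besides the defining property of weak* convergence, the elementary fact that for nonnegative Radon measures $\nu_i\wt\nu$ one has $\limsup_i\nu_i(K)\leq\nu(K)$ on every compact set $K$ (obtained by comparing $\ca_K$ with a Urysohn function and using outer regularity of $\nu$). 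When $U$ is closed rather than open the same arguments apply verbatim, all topological notions being understood in the relative topology.

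For claim (i) I would fix an open set $V$ with $\overline{V}\Subset U$ and estimate, for any $\varphi\in\mathcal{C}_c^0(V;\R^k)$ with $|\varphi|\leq 1$, using that $\mu_i\wt\mu$ locally, that $|\varphi|\in\mathcal{C}_c^0(V)$ takes values in $[0,1]$, and that $|\mu_i|\wt\lambda$,
\[
\Big|\int_V\varphi\cdot\d\mu\Big|=\lim_i\Big|\int_V\varphi\cdot\d\mu_i\Big|\leq\limsup_i\int_V|\varphi|\,\d|\mu_i|=\int_V|\varphi|\,\d\lambda\leq\lambda(V).
\]
Taking the supremum over such $\varphi$ and invoking the characterization $|\mu|(V)=\sup\{\int_V\varphi\cdot\d\mu:\varphi\in\mathcal{C}_c^0(V;\R^k),\ |\varphi|\leq1\}$ of the total variation on open sets yields $|\mu|(V)\leq\lambda(V)$; a routine exhaustion together with the outer regularity of the Radon measure $\lambda$ then upgrades this to $|\mu|(B)\leq\lambda(B)$ for every Borel set $B\subseteq U$.

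For claim (ii), fix $\e>0$. Since $\overline{E}$ is compact and $\lambda(\partial E)=0$, outer regularity of $\lambda$ provides an open set $A\supset\partial E$ with $\overline{A}\Subset U$ and $\lambda(\overline{A})<\e$. I would then choose, by Urysohn's lemma, $\varphi\in\mathcal{C}_c^0(U)$ with $0\leq\varphi\leq1$, $\varphi\equiv1$ on $\overline{E}\setminus A$ and $\spt\varphi\subset\mathring{E}\cup A$; this is admissible because $\overline{E}\setminus A\subset\mathring{E}$ (as $\partial E\subset A$), and $\mathring{E}\cup A$ is open with closure compactly contained in $U$. A short verification, again using $\partial E\subset A$, gives the pointwise bound $|\ca_E-\varphi|\leq\ca_A$: on $\overline{E}\setminus A$ one has $\ca_E=1=\varphi$, outside $\mathring{E}\cup A$ both quantities vanish since $E\setminus\mathring{E}\subset\partial E\subset A$, and the remaining set where the two may differ is contained in $A$. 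Consequently, for every $i$,
\[
\Big|\mu_i(E)-\int_U\varphi\,\d\mu_i\Big|\leq\int_U|\ca_E-\varphi|\,\d|\mu_i|\leq|\mu_i|(\overline{A}),
\]
and, by claim (i), $\big|\mu(E)-\int_U\varphi\,\d\mu\big|\leq|\mu|(\overline{A})\leq\lambda(\overline{A})<\e$. Since $\varphi\in\mathcal{C}_c^0(U)$ and $\mu_i\wt\mu$ locally we have $\int_U\varphi\,\d\mu_i\to\int_U\varphi\,\d\mu$, whereas $\limsup_i|\mu_i|(\overline{A})\leq\lambda(\overline{A})<\e$; combining these bounds gives $\limsup_i|\mu_i(E)-\mu(E)|\leq 2\e$, and letting $\e\to0$ concludes the proof.

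The only genuine difficulty is that $\mu$ and the $\mu_i$ are vector-valued, hence signed, so no monotonicity is available when one replaces the indicator $\ca_E$ by a continuous $\varphi$; instead one must control $\|\ca_E-\varphi\|_{L^1(|\mu_i|)}$ \emph{uniformly in $i$}, and it is precisely the hypothesis $\lambda(\partial E)=0$, through the compact-set inequality $\limsup_i|\mu_i|(\overline{A})\leq\lambda(\overline{A})$, that delivers this uniform control. The remaining bookkeeping with relative compactness and with $\mathcal{M}_{loc}$ in place of $\mathcal{M}$ is routine.
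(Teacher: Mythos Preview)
Your proof is correct. The paper does not prove this lemma but simply quotes it from \cite{AFP00}, so there is no approach to compare against; your argument is in fact essentially the standard one given there.
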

    
We use standard notations for the push-forward of measures, and in particular, given 
$\mu\in \mathcal{M}_{loc}(\R^n;\R^k)$, $x\in\R^n$ and $\e>0$, we will often consider the push forward with the map 
$F^{x,\e}(y):=\frac{y-x}{\e}$ defined as
	\begin{equation}\label{e:pushforward}
	F_{\#}^{x,\e}\mu(A):=\mu(x+\e A).
	\end{equation}
Preiss' tangent space $\Tan(\mu,x)$ at a given point $x\in\R^n$, is defined as the subset of non zero measures 
 $\nu\in\mathcal{M}_{loc}(\R^n;\R^k)$ 
such that $\nu$ is the local weak* limit 
of $\sfrac1{c_i}F_\#^{x,\e_i}\mu$, for some sequence $\{\e_i\}_{i\in \N}\downarrow 0$ 
as $i\uparrow+\infty$ and for some positive sequence $\{\e_i\}_{i\in\N}$ 
(see \cite{mattila1999geometry}, \cite{AFP00}, \cite{rindler2018calculus}). To ensure that the total variation is preserved 
along the blow-up limit procedure we recall the ensuing result. 
\begin{lemma}[Tangent measure with unit mass, Lemma~10.6 \cite{rindler2018calculus}]\label{lem:unit mass conve} 
Let $\mu\in \mathcal{M}_{loc}(\R^n;\R^k)$. 
Then, for $|\mu|$-a.e. $x\in \R^n$ and for every 
bounded, open, convex set $K$ the following assertions hold
\begin{itemize}
	\item[(a)] There exists a tangent measure $\gamma\in \Tan(\mu,x)$ such that $|\gamma|(K)=1$, 
	$|\gamma|(\partial K)=0$;
	\item[(b)] There exists 
	$\{\e_i\}_{i\in \N}\downarrow 0$ as $i\uparrow+\infty$ such that 
	$\frac{F^{x,\varepsilon_i}_{\#}\mu}{F^{x,\e_i}_{\#}|\mu|(K)}\wt \gamma$ in $\mathcal{M}(\overline{K};\R^k)$.
\end{itemize}	 
\end{lemma}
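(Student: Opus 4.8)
The plan is to produce $\gamma$ by renormalizing a tangent measure supplied by the standard existence theory; the only two points that go beyond routine bookkeeping are a geometric choice of a dilation parameter (this is exactly where the convexity of $K$ enters) and the input that total variation is not lost along the blow-up at a $|\mu|$-typical point. Throughout I use, as is needed for the statement and as is the case in all the applications (where, after rescaling, $K$ is a cube centred at the origin), that $0\in K$; without it even assertion (a) can fail, e.g.\ for $\mu=\H^1\res\ell$ with $\ell$ a line and $K$ a ball disjoint from $\ell$.

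First I would fix a ``good'' base point $x$. By Besicovitch differentiation and the density estimates underlying the theory of tangent measures (see e.g.\ \cite{mattila1999geometry,rindler2018calculus}), for $|\mu|$-a.e.\ $x$ one has $|\mu|(B(x,r))>0$ for all $r>0$, $\limsup_{r\to0^+}|\mu|(B(x,\lambda r))/|\mu|(B(x,r))<+\infty$ for every $\lambda\ge1$, and the polar $\tfrac{\d\mu}{\d|\mu|}$ is $|\mu|$-approximately continuous at $x$. Fixing such an $x$ and an arbitrary sequence $\e_i\downarrow0$, I put $c_i:=|\mu|(B(x,\e_i))$; the density estimate makes $\bigl\{\tfrac1{c_i}F^{x,\e_i}_{\#}\mu\bigr\}_i$ locally uniformly bounded in mass, so along a (not relabelled) subsequence $\tfrac1{c_i}F^{x,\e_i}_{\#}\mu\wt\sigma$ in $\mathcal{M}_{loc}(\R^n;\R^k)$, while approximate continuity of the polar forces $\tfrac1{c_i}F^{x,\e_i}_{\#}|\mu|\wt|\sigma|$ as well. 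Since the mass of $\tfrac1{c_i}F^{x,\e_i}_{\#}|\mu|$ on $B(0,1)$ is identically $1$, this gives $|\sigma|\ne0$; and the density estimate likewise yields $|\sigma|(\overline{B(0,\rho)})>0$ for every $\rho>0$, i.e.\ $0\in\spt|\sigma|$. In particular $\sigma\in\Tan(\mu,x)$.

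Next comes the geometric step. Since $K$ is open with $0\in K$, fix $\delta>0$ with $B(0,\delta)\subseteq K$; and $\overline K$ is compact because $K$ is bounded. Using $0\in\mathrm{int}\,K$ and convexity, one checks that $\overline{sK}\subseteq s'K$ whenever $0<s<s'$, so the boundaries $\{\partial(sK)\}_{s>0}$ are pairwise disjoint; as $|\sigma|$ is finite on compact sets, $|\sigma|(\partial(sK))=0$ for all but countably many $s>0$. I would then pick such an $s_0>0$; then $0<|\sigma|(s_0K)\le|\sigma|(\overline{s_0K})<+\infty$ (the lower bound since $s_0K\supseteq B(0,s_0\delta)$ and $0\in\spt|\sigma|$, the upper one since $\overline{s_0K}$ is compact and $\sigma$ is Radon). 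Setting $\e_i':=s_0\e_i\downarrow0$, I would define
\[
\gamma:=\frac{1}{|\sigma|(s_0K)}\,F^{0,s_0}_{\#}\sigma .
\]
Directly from this definition, $|\gamma|(K)=1$ and $|\gamma|(\partial K)=0$.

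It remains to identify $\gamma$ with the normalized blow-ups. From $F^{x,s_0\e_i}=F^{0,s_0}\circ F^{x,\e_i}$ and $(F^{0,s_0})^{-1}(K)=s_0K$ one gets
\[
\frac{F^{x,\e_i'}_{\#}\mu}{F^{x,\e_i'}_{\#}|\mu|(K)}=\frac{F^{0,s_0}_{\#}\bigl(\tfrac1{c_i}F^{x,\e_i}_{\#}\mu\bigr)}{\tfrac1{c_i}F^{x,\e_i}_{\#}|\mu|(s_0K)} .
\]
Push-forward by the homeomorphism $F^{0,s_0}$ preserves local weak-$*$ convergence, so the numerator converges weakly-$*$ to $F^{0,s_0}_{\#}\sigma$, whereas by $\tfrac1{c_i}F^{x,\e_i}_{\#}|\mu|\wt|\sigma|$, by $|\sigma|(\partial(s_0K))=0$ and by Lemma~\ref{lemma: weakconv} the (scalar) denominator converges to $|\sigma|(s_0K)\in(0,+\infty)$; hence the quotient converges weakly-$*$ in $\mathcal{M}_{loc}(\R^n;\R^k)$ to $\gamma$. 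The same computation with total variations gives $\bigl|\tfrac{F^{x,\e_i'}_{\#}\mu}{F^{x,\e_i'}_{\#}|\mu|(K)}\bigr|\wt|\gamma|$, and since $|\gamma|(\partial K)=0$, standard arguments (again via Lemma~\ref{lemma: weakconv}) upgrade this to weak-$*$ convergence of the restrictions in $\mathcal{M}(\overline K;\R^k)$. This is assertion (b), and (a) follows at once, since $\gamma$ is a nonzero local weak-$*$ limit of rescalings of $\mu$ at $x$, hence $\gamma\in\Tan(\mu,x)$, with $|\gamma|(K)=1$ and $|\gamma|(\partial K)=0$. The main obstacle is precisely the input $\tfrac1{c_i}F^{x,\e_i}_{\#}|\mu|\wt|\sigma|$ (no loss of total variation in the blow-up at a typical point), together with the convexity of $K$, which is what supplies a one-parameter family of dilations with $|\sigma|$-negligible boundaries; granted these, everything else is bookkeeping with push-forwards and Lemma~\ref{lemma: weakconv}.
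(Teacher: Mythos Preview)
The paper does not give its own proof of this lemma: it is simply quoted from \cite{rindler2018calculus} as a known result and used as a black box. So there is no in-paper argument to compare against.

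Your proof is correct and is essentially the standard one. The ingredients you isolate are exactly the right ones: (i) at $|\mu|$-a.e.\ point the doubling-type density estimate gives a nontrivial tangent measure $\sigma$ with $0\in\spt|\sigma|$; (ii) at a Lebesgue point of the polar $\tfrac{\d\mu}{\d|\mu|}$ the total variations also converge, i.e.\ $\tfrac1{c_i}F^{x,\e_i}_{\#}|\mu|\wt|\sigma|$; (iii) convexity of $K$ with $0\in K$ gives the nesting $\overline{sK}\subset s'K$ for $s<s'$, hence pairwise disjoint boundaries, so one can pick a dilate $s_0K$ with $|\sigma|$-null boundary and renormalize. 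Your observation that the hypothesis $0\in K$ is implicitly needed (and is satisfied in every use the paper makes of the lemma) is a genuine point worth flagging. The only place where ``standard arguments'' hides a small amount of work is the final upgrade from local weak-$*$ convergence on $\R^n$ to weak-$*$ convergence in $\mathcal{M}(\overline K;\R^k)$: one should sandwich $\mathbf 1_{\overline K}$ between continuous cutoffs supported in dilates $s_mK$ with $s_m\downarrow1$ chosen so that $|\gamma|(\partial(s_mK))=0$, and then use Lemma~\ref{lemma: weakconv} again; this is routine, but it is where the condition $|\gamma|(\partial K)=0$ is actually used.
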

Finally, with the help of the next result, we will be able to select a blow-up with a partial affine structure. 
\begin{theorem}[Tangent measures to tangent measures are tangent measures, Theorem~14.16 \cite{mattila1999geometry}]\label{thm:tangent measure are tangent}
Let $\mu\in \mathcal{M}_{loc}(\R^n;\R^k)$ be a Radon measure. Then for $|\mu|$-a.e. $x\in \R^n$ any $\nu\in\Tan(\mu,x)$ satisfies the following properties
\begin{itemize}
\item[(a)] For any convex set $K$, $\frac{F^{y,\rho}_{\#} \nu}{F^{y,\rho}_{\#}|\nu|(K)} \in  \Tan(\mu,x)$  for all $y\in\spt \nu$ and $\rho>0$;
\item[(b)] $\Tan(\nu,y)\subseteq  \Tan(\mu,x)$ for all $y\in\spt\nu$;
\end{itemize}
\end{theorem}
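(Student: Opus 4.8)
\emph{Proof strategy.} The plan is to deduce assertion (b) from (a), to reduce (a) to a single statement about blow-ups whose centres are permitted to move at the same rate as the radius, and to isolate the latter as the genuine differentiation-theoretic core. First I would record the elementary structural facts, all valid at $|\mu|$-a.e.\ $x$ and established by routine compactness and Besicovitch-type arguments, that will be used freely: $\Tan(\mu,x)\neq\emptyset$; it is stable under multiplication by positive constants and under the dilations $\lambda\mapsto F^{0,\rho}_\#\lambda$, the latter because $F^{0,\rho}_\#F^{x,\e}_\#\mu=F^{x,\rho\e}_\#\mu$; it is closed under weak$^*$ convergence of sequences to a nonzero limit, by a standard diagonal argument against a countable dense family of test functions; $0\in\spt\gamma$ for every $\gamma\in\Tan(\mu,x)$; and $x$ is a Lebesgue point of the polar $\d\mu/\d|\mu|$ relative to $|\mu|$, so that, together with Lemma~\ref{lemma: weakconv}, normalised subsequences of $F^{x,r}_\#\mu$ admit nonzero weak$^*$ cluster points, all of which lie in $\Tan(\mu,x)$. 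Granting these, (b) is immediate from (a): if $\sigma\in\Tan(\nu,y)$, write $\sigma$ as a weak$^*$ limit of $d_jF^{y,\rho_j}_\#\nu$ with $\rho_j\downarrow0$, $d_j>0$; by (a) each $F^{y,\rho_j}_\#\nu$ lies in $\Tan(\mu,x)$ — the normalisation appearing in (a) is immaterial, $\Tan(\mu,x)$ being a cone — hence so does the positive multiple $d_jF^{y,\rho_j}_\#\nu$, and passing to the limit the closure property gives $\sigma\in\Tan(\mu,x)$. Thus it remains to prove (a): for $|\mu|$-a.e.\ $x$, every $\nu\in\Tan(\mu,x)$, every $y\in\spt\nu$ and every $\rho>0$, $F^{y,\rho}_\#\nu\in\Tan(\mu,x)$.

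For (a) I would start from a representation $c_iF^{x,\e_i}_\#\mu\wt\nu$ ($\e_i\downarrow0$, $c_i>0$) and use the composition identity $F^{y,\rho}\circ F^{x,\e}=F^{x+\e y,\,\e\rho}$, which gives $F^{y,\rho}_\#\big(F^{x,\e_i}_\#\mu\big)=F^{x+\e_i y,\,\e_i\rho}_\#\mu$. Since $\varphi\circ F^{y,\rho}\in\mathcal C_c^0(\R^n)$ for every $\varphi\in\mathcal C_c^0(\R^n)$, push-forward by the homeomorphism $F^{y,\rho}$ is weak$^*$ continuous on $\mathcal M_{loc}(\R^n;\R^k)$, and therefore
\[
c_i\,F^{x+\e_i y,\,\e_i\rho}_\#\mu\;=\;F^{y,\rho}_\#\big(c_iF^{x,\e_i}_\#\mu\big)\;\wt\;F^{y,\rho}_\#\nu\;=:\;\sigma .
\]
As $F^{y,\rho}$ is a homeomorphism, $\spt\sigma=F^{y,\rho}(\spt\nu)\ni F^{y,\rho}(y)=0$, so $\sigma\neq0$. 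Setting $p_i:=x+\e_i y$ and $r_i:=\e_i\rho$, this realises $\sigma$ as a blow-up limit of $\mu$ along the \emph{moving} centres $p_i$, with $p_i\to x$, $r_i\downarrow0$, and drift rate $\abs{p_i-x}/r_i=\abs{y}/\rho$ bounded (indeed constant). Hence (a), for all $\rho>0$ simultaneously, follows from the statement below, which I regard as the heart of the theorem and the step I expect to be the main obstacle.

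\emph{Bounded-drift lemma.} For $|\mu|$-a.e.\ $x\in\R^n$: whenever $p_i\to x$, $r_i\downarrow0$, $\sup_i\abs{p_i-x}/r_i<\infty$, $c_i>0$ and $c_iF^{p_i,r_i}_\#\mu\wt\sigma$ with $\sigma\neq0$, then $\sigma\in\Tan(\mu,x)$.

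\noindent This cannot be established by formal manipulation alone: a fixed translate $(\tau_z)_\#\gamma$ of a tangent measure $\gamma$ need not be a tangent measure in general, and the point of the lemma is precisely that it is one when $z\in\spt\gamma$ — equivalently, that at $|\mu|$-a.e.\ $x$ the cone $\Tan(\mu,x)$ is stable under re-centring at points of the supports of its elements. The mechanism I would exploit is that, because $z\in\spt\gamma$, the displaced centre $p_i$ carries a definite proportion of the mass of $\mu$ at scale $r_i$; combined with a Besicovitch covering / density-point argument, this forces the rescaled restrictions of $\mu$ near $x$ to be, at small scales, insensitive to moving the centre by a bounded multiple of the scale, so that their weak$^*$ cluster set — which contains $\sigma$ — is still contained in $\Tan(\mu,x)$. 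Making this rigorous is where the substance of the proof lies; the auxiliary points (weak$^*$ continuity of push-forward under proper maps and under convergent translations, control of the normalising constants via Lemma~\ref{lemma: weakconv} and the Lebesgue-point property, the diagonal extractions) are routine and would be handled quickly.
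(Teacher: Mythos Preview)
The paper does not prove this statement: it is quoted verbatim from Mattila's book as Theorem~14.16, with only the additional remark that the extension from scalar measures ($k=1$) to $\R^k$-valued measures is immediate via the structure of tangent spaces recorded in \cite[Theorem~2.44]{AFP00} or \cite[Lemma~10.4]{rindler2018calculus}. There is thus no proof in the paper against which to compare your proposal.

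That said, your outline is correct and is essentially Mattila's own argument. The reduction of (b) to (a) via the cone property and weak$^*$-closedness of $\Tan(\mu,x)$ is standard; the composition identity $F^{y,\rho}\circ F^{x,\e}=F^{x+\e y,\,\e\rho}$ is exactly what recasts (a) as your bounded-drift lemma; and you correctly isolate that lemma as the only place where genuine measure-theoretic work is needed, while honestly flagging that you have not carried it out. For the record, Mattila's proof of the drift lemma proceeds by contradiction: one fixes a countable dense set of candidate shift vectors and a countable separating family of test functions, and for each shift $y$ defines the set of ``bad'' base points $x$ at which the re-centred blow-up fails to lie in $\Tan(\mu,x)$. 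A Besicovitch density-point argument then shows each bad set is $|\mu|$-null --- at a density point of the bad set the rescalings of $\mu$ about $x$ and about $x+\e_i y$ are forced to be weak$^*$-close (the hypothesis $y\in\spt\nu$ being what guarantees comparable mass at the displaced centre), and this contradicts badness. The passage from the countable family of shifts to arbitrary $y\in\spt\nu$ is a final appeal to the weak$^*$-closedness of $\Tan(\mu,x)$. Your ``mechanism'' paragraph anticipates exactly this; what is missing is only the execution.
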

Note that the original result in \cite[Theorem~14.16 ]{mattila1999geometry} is proven for $k=1$. However, the good properties of tangent space of measures (i.e. \cite[Theorem~2.44]{AFP00} or
\cite[Lemma~10.4]{rindler2018calculus}) allow to immediately extend its validity for generic $k$.
\subsection{Preliminaries on $BD$}\label{prelBD} 
We recall next some basic properties of the space $BD$ needed for our purposes. We refer to \cite{Temam1} for classical theorems, 
while for the fine properties we refer to \cite{ambrosio1997fine} (see also \cite{ebo99b}). 

The space of functions with Bounded Deformation on $\Omega$, $BD(\Omega)$, is the set of all maps $u\in L^1(\Omega;\R^n)$ whose 
symmetrized distributional derivative $Eu$ is a matrix-valued Radon measure. It is a Banach space equipped with the norm 
$\|u\|_{BD(\Omega)}:=\|u\|_{L^1(\Omega,\R^n)}+|Eu|(\Omega)$, where $|\mu|$ stands for the total variation of the Radon measure $\mu$ (see \cite{AFP00}). 
A sequence $\{u_j\}_{j\in\N}$ is said to strictly converge to $u$ in $BD(\Omega)$ if
$$
u_j\to u \mbox{ in } L^1(\Omega;\R^n) \mbox{ and } |Eu_j|(\Omega)\to |Eu|(\Omega),
$$
as $j\to\infty$. 

As shown by Ambrosio, Coscia and Dal Maso in \cite{ambrosio1997fine}, $BD(\Omega)$ maps are approximately differentiable 
$\L^n$-a.e. in $\Omega$, the jump set is $\H^{n-1}$-rectifiable, and $Eu$ can be decomposed as
    \begin{equation}\label{e:Eu decomposition}
    E u = e(u)\L^n\res\Omega + (u^+(x)-u^-(x))\odot \nu_u \H^{n-1}\res J_u + E^cu,
    \end{equation}
where $e(u)=\frac{\nabla u+\nabla u^t}{2}$, 
$\nabla u$ is the approximate gradient of $u$, 
$u^+-u^-$ denotes the jump of $u$ over the jump set $J_u$, 
with $u^\pm$ the traces left by $u$ on $J_u$, 
$\nu_u$ is a unitary Borel vector field normal to $J_u$ (here, $a\odot b:=\frac{1}{2}(a\otimes b+b\otimes a)$, $a$, $b\in\R^n$, 
denotes the symmetrized tensor product), $E^cu$ is the Cantor part of $E u$ defined as $E^cu:=E^su\res(\Omega\setminus J_u)$ 
and $E^su:=E u - e(u)\L^n\res\Omega$ (cf. \cite[Eq. (1.2), Definition~4.1]{{ambrosio1997fine}}). 
Let $S_u$ be the complement of the set of points of approximate continuity of $u$, \cite[Theorem 6.1]{ambrosio1997fine} implies that 
$|Eu|(S_u\setminus J_u)=0$, so that $E^cu=Eu\res C_u$, where 
\begin{equation}\label{e:Cu}
C_u:=\Big\{x\in\Omega\setminus S_u:\,\textstyle{\lim_{r\downarrow 0}\frac{|Eu|(B_r(x))}{r^n}=+\infty,\,
\lim_{r\downarrow 0}\frac{|Eu|(B_r(x))}{r^{n-1}}=0}\Big\}.
\end{equation}
The limits in the definition of $C_u$ can be taken with respect to any family $K(x,r)$, with $K$ a bounded, open, convex set containing 
the origin. We shall often use the previous characterization of $E^cu$ throughout the paper.

The space of special functions of bounded deformation is then defined as
$$
SBD(\Omega)=\{u\in BD(\Omega): E^cu=0\}.
$$
The space  $\mathcal C^\infty(\Omega;\R^n)\cap W^{1,1}(\Omega;\R^n)$ is dense in $BD(\Omega)$ for the strict topology on $BD(\Omega)$.
Moreover, for $\Omega$ an open bounded set with Lipschitz boundary, there exists a surjective, bounded, linear trace operator
$\gamma: BD(\Omega)\longrightarrow L^1(\partial\Omega,\R^n)$ satisfying the following integration by parts formula: for every 
$u\in BD(\Omega)$ and $\varphi\in\mathcal C^1(\bar{\Omega})$,
$$
\int_\Omega u \odot \nabla \varphi \d x+\int_\Omega \varphi \d Eu=\int_{\partial\Omega}\varphi\, \gamma(u)\odot \nu \d\mathcal H^{n-1},
$$
with $\nu$ the unit external normal to $\partial\Omega$. The trace operator is continuous if $BD(\Omega)$ is endowed with the strict topology. For notational simplicity, in what follows $\gamma(u)$
will be denoted simply by $u$ itself.

With the same assumptions on $\Omega$, one also has the following embedding result: $BD(\Omega)\hookrightarrow L^q(\Omega,\R^n)$ is compact
for every $1\leq q<\frac{n}{n-1}$.
In view of compactness, the following holds for $\Omega$ a bounded extension domain (cf \cite{Kohn}, \cite{Temam1}): if  $\{u_j\}_{j\in\N}$ is bounded in $BD(\Omega)$ 
there exists a subsequence that converges to some $u\in BD(\Omega)$ with respect to the $L^1(\Omega;\R^n)$ topology. 
\smallskip

 We recall next a Poincar\'e inequality for $BD$ maps which has been proven in \cite[Theorem~3.1]{ambrosio1997fine}, 
 (see also \cite[Theorem 1.7.11]{ebo99b} and also \cite[Lemma~1.4.1]{ebo99b}).
 To this aim consider the space of infinitesimal rigid motions 
\begin{equation}\label{e:R}
\mathcal{R}:=\{\LL x+\mathrm{v}\,:\, \mathrm{v}\in \R^n,\, \LL\in\mathbb{M}_{skew}^{n\times n}\}\,.
\end{equation}
\begin{theorem}\label{thm:poincare}
 Let $A$ be a bounded, open, connected set with Lipschitz boundary, then 
 \begin{equation}\label{e:characterizationR}
\mathcal{R}=\{\vartheta\in\mathcal{D}'(A;\R^n):\, E\vartheta=0\}.
 \end{equation}
 Moreover, let $\mathfrak{R}:BD(A) \rightarrow \mathcal{R}$ 
 be a linear continuous map which leaves the elements of $\mathcal{R}$ fixed. 
 Then there exists a constant $c=c(A,\mathfrak{R})$ such that for all $u\in BD(A)$
    \[
    \|u-\mathfrak{R}[u]\|_{L^{\frac{n}{n-1}}(A;\R^n)}\leq c|E u|(A). 
    \]
\end{theorem}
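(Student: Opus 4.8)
The plan is to establish the two assertions in turn. For the characterization \eqref{e:characterizationR}, the inclusion $\mathcal{R}\subseteq\{\vartheta\in\mathcal{D}'(A;\R^n):E\vartheta=0\}$ is immediate: if $\vartheta=\LL x+\mathrm{v}$ with $\LL\in\mathbb{M}_{skew}^{n\times n}$ then $\nabla\vartheta=\LL$ classically and $E\vartheta=\frac12(\LL+\LL^t)=0$. For the converse, let $\vartheta\in\mathcal{D}'(A;\R^n)$ satisfy $E\vartheta=0$. The first step is to upgrade regularity: from the distributional identities $\partial_i\vartheta_j=-\partial_j\vartheta_i$ and $\sum_i\partial_i\vartheta_i=\mathrm{tr}\,e(\vartheta)=0$ one gets $\Delta\vartheta_j=\sum_i\partial_i^2\vartheta_j=-\partial_j\big(\sum_i\partial_i\vartheta_i\big)=0$, so each component of $\vartheta$ is harmonic, hence $\vartheta\in C^\infty(A;\R^n)$ by Weyl's lemma. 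The second step is the classical Killing argument: differentiating $\partial_i\vartheta_j+\partial_j\vartheta_i=0$ and taking the alternating sum of the three cyclic permutations of the indices $(i,j,k)$ yields $\partial_k\partial_i\vartheta_j=0$ for all $i,j,k$; since $A$ is connected this forces $\nabla\vartheta\equiv\LL$ for a constant matrix $\LL$, and $e(\vartheta)=0$ gives $\LL+\LL^t=0$, i.e. $\vartheta\in\mathcal{R}$.

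For the inequality I would argue by contradiction and compactness, first for a distinguished projection and then transfer to a general $\mathfrak{R}$. Let $P:L^1(A;\R^n)\to\mathcal{R}$ be the $L^2(A;\R^n)$-orthogonal projection onto the finite-dimensional space $\mathcal{R}$; this is well defined (elements of $\mathcal{R}$ are affine, hence in $L^\infty(A;\R^n)$, so the scalar products of an $L^1$ function against an $L^2$-orthonormal basis of $\mathcal{R}$ make sense), it is $L^1$-continuous, and $P|_{\mathcal{R}}=\mathrm{id}$. Suppose no constant $c$ satisfies $\|u-Pu\|_{L^{\frac{n}{n-1}}(A)}\le c|Eu|(A)$ for all $u\in BD(A)$. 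Then there are $u_j\in BD(A)$ with $t_j:=\|u_j-Pu_j\|_{L^{\frac{n}{n-1}}(A)}>j\,|Eu_j|(A)$; necessarily $t_j>0$, and setting $v_j:=(u_j-Pu_j)/t_j$ we have $Pv_j=0$, $Ev_j=Eu_j/t_j$ with $|Ev_j|(A)<1/j$, and $\|v_j\|_{L^{\frac{n}{n-1}}(A)}=1$. By H\"older's inequality $\{v_j\}$ is then bounded in $BD(A)$, so by the compact embedding $BD(A)\hookrightarrow L^1(A;\R^n)$ we may assume $v_j\to v$ in $L^1(A;\R^n)$; lower semicontinuity of the total variation gives $|Ev|(A)=0$, hence $v\in\mathcal{R}$ by \eqref{e:characterizationR}, while continuity of $P$ gives $Pv=\lim_j Pv_j=0$, so $v=Pv=0$. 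But then $\|v_j\|_{L^1(A)}\to0$ and $|Ev_j|(A)\to0$, i.e. $\|v_j\|_{BD(A)}\to0$, and the classical continuous embedding $BD(A)\hookrightarrow L^{\frac{n}{n-1}}(A)$ (see \cite{Temam1}) forces $\|v_j\|_{L^{\frac{n}{n-1}}(A)}\to0$, contradicting $\|v_j\|_{L^{\frac{n}{n-1}}(A)}=1$.

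It remains to pass from $P$ to an arbitrary $\mathfrak{R}$ as in the statement. Since $\mathfrak{R}$ leaves $\mathcal{R}$ fixed and $Pu\in\mathcal{R}$, one has $Pu-\mathfrak{R}[u]=\mathfrak{R}[Pu]-\mathfrak{R}[u]=-\mathfrak{R}[u-Pu]$; as $\mathfrak{R}$ is bounded from $BD(A)$ into the finite-dimensional space $\mathcal{R}$, on which all norms are equivalent, we get $\|Pu-\mathfrak{R}[u]\|_{L^{\frac{n}{n-1}}(A)}\le C\|u-Pu\|_{BD(A)}=C\big(\|u-Pu\|_{L^1(A)}+|Eu|(A)\big)\le C'|Eu|(A)$, using the inequality already proved for $P$ together with H\"older's inequality on the bounded set $A$. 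The triangle inequality then yields $\|u-\mathfrak{R}[u]\|_{L^{\frac{n}{n-1}}(A)}\le\|u-Pu\|_{L^{\frac{n}{n-1}}(A)}+\|Pu-\mathfrak{R}[u]\|_{L^{\frac{n}{n-1}}(A)}\le c''|Eu|(A)$, which is the claim. The only delicate point in this scheme is the endpoint exponent: the compactness step only produces strong convergence in $L^1$, so to close the contradiction at the exponent $\frac{n}{n-1}$ one genuinely needs the sharp continuous Sobolev embedding of $BD(A)$ into $L^{\frac{n}{n-1}}(A)$; the remaining ingredients are soft.
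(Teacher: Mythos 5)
Your proof is correct. Note that the paper does not prove this statement at all: it is quoted from the literature (Ambrosio--Coscia--Dal Maso \cite[Theorem~3.1]{ambrosio1997fine}, see also \cite{ebo99b}, \cite{Temam1}), so there is no in-paper argument to compare against; what you have written is essentially a faithful reconstruction of the standard proof from those references. Both halves check out: the kernel characterization via harmonicity (Weyl's lemma) plus the Killing-type identity for second derivatives is the classical route, and the contradiction--compactness argument for the distinguished projection $P$, followed by the transfer to a general $\mathfrak{R}$ using linearity, $\mathfrak{R}[Pu]=Pu$ and equivalence of norms on the finite-dimensional space $\mathcal{R}$, is sound (the normalization $t_j>0$, $Pv_j=0$, lower semicontinuity of $u\mapsto|Eu|(A)$ under $L^1$ convergence, and the identification $v\in\mathcal{R}$, $Pv=0$, hence $v=0$, are all handled correctly). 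You are also right to flag the one genuine external ingredient: the endpoint continuous embedding $BD(A)\hookrightarrow L^{\frac{n}{n-1}}(A;\R^n)$, which is strictly more than the compact embedding into $L^q$ for $q<\frac{n}{n-1}$ that the paper quotes, but is classical (Temam, Strang--Temam, Kohn) and independent of the inequality being proved, so there is no circularity.
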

In particular, for a bounded, open convex set $K$, denoting by $\nu_{\partial K}$ the unit normal vector to $\partial K$,
let $\mathfrak{R}_K: BD(K) \rightarrow \mathcal{R}$ be the map defined as
	\begin{equation}\label{e:RK}
\mathfrak{R}_{K}[u] (y):= \mathbb{M}_{K}[u]y+b_{K}[u],
	\end{equation}
with
	\begin{equation}\label{e:bK}
b_{K}[u]:=\fint_{K} u \d x\,,
	\end{equation}
	and
\begin{equation}\label{e:MK}
\mathbb{M}_{K}[u]:=\frac{1}{2\L^n(K)}\int_{\partial K}\big(
u\otimes \nu_{\partial K} - \nu_{\partial K} \otimes u\big)\d \H^{n-1}\,.
\end{equation}
For $BV$ maps we express the quantity $\mathbb{M}_{K}$ in \eqref{e:MK} in terms of the skew-symmetric part of 
the total variation measure.
\begin{lemma}\label{l:auxiliary}
Let $A$ be a bounded open set with Lipschitz boundary. Then, for all $u\in BV(A;\R^n)$
\begin{align*}
\int_{\partial A} \big(u \otimes \nu_{\partial A} - \nu_{\partial A} \otimes u\big)\d\H^{n-1}
=Du(A)-(Du)^t(A)\,.
\end{align*}
In particular, for all bounded, open, convex sets $K$, and 
for all $u\in BV(K;\R^n)$ we have
\[
\mathbb{M}_{K}[u]=\frac1{2\mathcal L^n(K)}\big(Du(K)-(Du)^t(K)\big)\,.
\]
\end{lemma}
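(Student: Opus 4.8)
The plan is to prove Lemma~\ref{l:auxiliary} by reducing the matrix identity to its scalar components and applying the Gauss--Green (integration by parts) formula for $BV$ functions coordinate-wise. First I would fix indices $i,j\in\{1,\dots,n\}$ and examine the $(i,j)$-entry of the matrix-valued boundary integral on the left-hand side, which equals $\int_{\partial A}\big(u_i (\nu_{\partial A})_j - (\nu_{\partial A})_i u_j\big)\,\d\H^{n-1}$, where $u_k$ denotes the $k$-th component of the trace of $u$ on $\partial A$ (recall the trace is well defined since $A$ has Lipschitz boundary). The goal is to show this equals $D_j u_i(A) - D_i u_j(A)$, which is exactly the $(i,j)$-entry of $Du(A)-(Du)^t(A)$.

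The key tool is the classical divergence theorem for scalar $BV$ functions on a Lipschitz domain: for $w\in BV(A)$ and a direction $\mathrm{e}_k$, one has $\int_A \d D_k w = \int_{\partial A} w\, (\nu_{\partial A})_k \,\d\H^{n-1}$, i.e. $D_k w(A) = \int_{\partial A} \gamma(w)(\nu_{\partial A})_k\,\d\H^{n-1}$. (This is the $BV$ version of Gauss--Green and follows by testing the definition of $Du$ against a function $\varphi\equiv 1$ near $\bar A$, or equivalently is a special case of the $BD$ integration-by-parts formula already recalled in the preliminaries applied componentwise, or of the standard $BV$ trace theory in \cite{AFP00}.) Applying this with $w=u_i$ and $k=j$ gives $\int_{\partial A} u_i (\nu_{\partial A})_j\,\d\H^{n-1}=D_j u_i(A)$, and with $w=u_j$, $k=i$ gives $\int_{\partial A} u_j(\nu_{\partial A})_i\,\d\H^{n-1}=D_i u_j(A)$. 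Subtracting yields the $(i,j)$-entry identity, and since $i,j$ were arbitrary this proves the first displayed equation of the lemma.

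For the second assertion, I would simply specialize $A=K$ with $K$ bounded, open and convex — such a set automatically has Lipschitz boundary, so the first part applies — and then divide by $2\mathcal L^n(K)$. Comparing with the definition \eqref{e:MK} of $\mathbb{M}_K[u]$, namely $\mathbb{M}_K[u]=\frac{1}{2\L^n(K)}\int_{\partial K}(u\otimes\nu_{\partial K}-\nu_{\partial K}\otimes u)\,\d\H^{n-1}$, and observing that the matrix $u\otimes\nu_{\partial K}$ has $(i,j)$-entry $u_i(\nu_{\partial K})_j$ while $\nu_{\partial K}\otimes u$ has $(i,j)$-entry $(\nu_{\partial K})_i u_j$, the boundary integral is precisely the left-hand side of the first identity. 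Hence $\mathbb{M}_K[u]=\frac{1}{2\L^n(K)}\big(Du(K)-(Du)^t(K)\big)$, as claimed.

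I do not anticipate a serious obstacle here: the statement is essentially a bookkeeping reformulation of the Gauss--Green formula, and the only mild point requiring care is the justification of the scalar $BV$ divergence theorem with traces on a merely Lipschitz domain — but this is standard (e.g. \cite{AFP00}) and, as noted, is also contained in the $BD$ integration-by-parts identity recalled above by choosing $\varphi\equiv 1$. One should only be slightly attentive to sign and transpose conventions (which component carries which normal index) to land exactly on $Du(A)-(Du)^t(A)$ rather than its negative; tracking the $(i,j)$-entries explicitly, as above, removes any ambiguity.
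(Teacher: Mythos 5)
Your proposal is correct and follows essentially the same route as the paper: the paper also reduces the identity to the scalar $BV$ Gauss--Green formula (applied with a constant test field $y$ rather than coordinate vectors $\mathrm{e}_k$, which is the same computation) and then reads off the formula for $\mathbb{M}_K[u]$ directly from its definition \eqref{e:MK}.
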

\begin{proof}
To prove the first equality we use the divergence theorem for scalar $BV$ functions \cite[Corollary~3.89]{AFP00}
\[
\int_A v\,\mathrm{div}\,\Phi\, dx+\int_A\Phi\cdot dDv=
\int_{\partial A}v\, \Phi\cdot\nu_{\partial A} \d\H^{n-1}\,,
\]
for all $v\in BV(A)$, $\Phi\in \mathcal C^1(\bar{A};\R^n)$, to get for any fixed $y\in\R^n$
\[
\int_{\partial A} \big(u \otimes \nu_{\partial A}\big)\,y \d\H^{n-1}=
\int_{\partial A} (\nu_{\partial A}\cdot y)u \d\H^{n-1}=Du(A)y\,,
\]
and moreover
\[
\int_{\partial A} \big(\nu_{\partial A} \otimes u\big)\,y \d\H^{n-1}=
\int_{\partial A} (u\cdot y) \nu_{\partial A}\d\H^{n-1}=(Du)^t(A)y\,.
\]
The very definition of $\mathbb{M}_{K}[u]$ in \eqref{e:MK} and the previous computation
provide the conclusion.
\end{proof}
\begin{proposition}\label{p:Rinvariantset} 
$\mathcal{R}$ is an invariant set for $\mathfrak{R}_K$.
\end{proposition}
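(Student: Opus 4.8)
The goal is to show that $\mathfrak{R}_K[\vartheta]=\vartheta$ for every $\vartheta\in\mathcal{R}$, i.e.\ that the projection-type operator $\mathfrak{R}_K$ defined in \eqref{e:RK}--\eqref{e:MK} indeed fixes the elements of $\mathcal{R}$ (so that Theorem~\ref{thm:poincare} applies to it). Write a generic element of $\mathcal{R}$ as $\vartheta(y)=\LL y+\mathrm{v}$ with $\LL\in\mathbb{M}_{skew}^{n\times n}$ and $\mathrm{v}\in\R^n$. Since $\mathfrak{R}_K$ is linear in $u$, I would treat the affine term $\mathrm{v}$ and the linear term $\LL y$ separately.

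For the constant term: $b_K[\mathrm{v}]=\fint_K \mathrm{v}\,\d x=\mathrm{v}$, while $\mathbb{M}_K[\mathrm{v}]=\frac1{2\L^n(K)}\int_{\partial K}(\mathrm{v}\otimes\nu_{\partial K}-\nu_{\partial K}\otimes \mathrm{v})\,\d\H^{n-1}$ vanishes because $\int_{\partial K}\nu_{\partial K}\,\d\H^{n-1}=0$ by the divergence theorem (constant vector field). Hence $\mathfrak{R}_K[\mathrm{v}](y)=\mathrm{v}$, as required. For the linear term $\vartheta_0(y):=\LL y$: since $\vartheta_0$ is smooth, $D\vartheta_0=\LL\,\L^n\res K$, so $\vartheta_0\in BV(K;\R^n)$ and Lemma~\ref{l:auxiliary} gives $\mathbb{M}_K[\vartheta_0]=\frac1{2\L^n(K)}\big(D\vartheta_0(K)-(D\vartheta_0)^t(K)\big)=\frac1{2\L^n(K)}\big(\LL\,\L^n(K)-\LL^t\,\L^n(K)\big)=\frac12(\LL-\LL^t)=\LL$, using $\LL^t=-\LL$. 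It remains to compute $b_K[\vartheta_0]=\fint_K \LL y\,\d y=\LL\,\fint_K y\,\d y=\LL\, c_K$, where $c_K:=\fint_K y\,\d y$ is the barycentre of $K$. Therefore $\mathfrak{R}_K[\vartheta_0](y)=\LL y-\LL c_K+\LL c_K$; wait — more carefully, $\mathfrak{R}_K[\vartheta_0](y)=\mathbb{M}_K[\vartheta_0]\,y+b_K[\vartheta_0]=\LL y+\LL c_K$. This does not obviously equal $\LL y$ unless one notices that the definition \eqref{e:RK} of $\mathfrak{R}_K$ should be read with the affine map based at the barycentre, or — the clean way — one simply observes that by linearity it suffices to check $\mathfrak{R}_K$ fixes the two pieces and that the apparent discrepancy $\LL c_K$ is itself a constant vector $\mathrm{v}'\in\R^n$, so that $\mathfrak{R}_K[\vartheta_0]=\LL(\cdot)+\mathrm{v}'\in\mathcal{R}$; combined with the constant case this shows $\mathfrak{R}_K[\mathcal{R}]\subseteq\mathcal{R}$, which is exactly the assertion "$\mathcal{R}$ is an invariant set for $\mathfrak{R}_K$." (Indeed the proposition only claims invariance of the set $\mathcal{R}$, not that $\mathfrak{R}_K$ acts as the identity on it.)

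So the structure I would write is: (i) recall $\mathfrak{R}_K$ is linear and $\mathcal{R}$ is a linear space, so it is enough to check that the images of a basis of $\mathcal{R}$ — the constants $\mathrm{e}_i$ and the maps $y\mapsto\LL y$ for $\LL$ ranging over a basis of $\mathbb{M}_{skew}^{n\times n}$ — lie in $\mathcal{R}$; (ii) handle constants via $\int_{\partial K}\nu_{\partial K}\,\d\H^{n-1}=0$; (iii) handle the linear pieces via Lemma~\ref{l:auxiliary}, obtaining $\mathbb{M}_K[\LL(\cdot)]=\LL$ and $b_K[\LL(\cdot)]\in\R^n$, hence $\mathfrak{R}_K[\LL(\cdot)]\in\mathcal{R}$; (iv) conclude by linearity. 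The only mildly delicate point is the barycentre term $\LL c_K$: one must not claim $\mathfrak{R}_K$ is the identity on $\mathcal{R}$ but only that it maps $\mathcal{R}$ into itself — which is all that Theorem~\ref{thm:poincare} (and hence the rest of the paper) needs, since one is free to further normalise. I expect no genuine obstacle here; the proof is a short computation once Lemma~\ref{l:auxiliary} and the vanishing of the integral of the outer normal are invoked.
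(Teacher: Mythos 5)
Your computations coincide with the paper's (Lemma~\ref{l:auxiliary} for the skew part, plain averaging for the constant part), but the final interpretive step is where the proposal goes wrong. In the paper this proposition is used solely to verify the hypothesis of Theorem~\ref{thm:poincare} for the map $\mathfrak{R}_K$, and that hypothesis is that the map \emph{leaves the elements of $\mathcal{R}$ fixed}, not merely that it maps $\mathcal{R}$ into itself; accordingly the paper's own proof shows $b_K[u]=\mathrm{v}$ and $\mathbb{M}_K[u]=\LL$, i.e.\ $\mathfrak{R}_K[u]=u$ for $u\in\mathcal{R}$. Your claim that set-invariance ``is all that Theorem~\ref{thm:poincare} (and hence the rest of the paper) needs'' is false: the zero map preserves $\mathcal{R}$ yet the Poincar\'e inequality fails for it on nonzero rigid motions, and, more to the point, if the barycentre $c_K:=\fint_K y\,\d y$ were nonzero, then for $u(y)=\LL y$ one would have $u-\mathfrak{R}_K[u]\equiv-\LL c_K\neq\underline{0}$ while $|Eu|(K)=0$, so the key estimate \eqref{eqn:ebo} itself would be violated. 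The pointwise-fixing (projection) property is therefore indispensable and cannot be recovered by an unspecified ``further normalisation''.

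That said, you did put your finger on a genuine imprecision: for a general bounded open convex $K$ the one-line claim $b_K[\LL\,\cdot+\mathrm{v}]=\mathrm{v}$ requires $c_K=\underline{0}$. The resolution is that every convex body to which $\mathfrak{R}_K$ and \eqref{eqn:ebo} are actually applied in the paper (the cubes $Q(\underline{0},\rho)$, their linear images $\ZZ^tQ(\underline{0},\rho)$, and the boxes $P^{\xi,\eta}_\rho$) is symmetric with respect to the origin, hence has barycentre $\underline{0}$; likewise, in Lemma~\ref{lem:killtheantisym} the auxiliary operator is recentred at $x_0$ precisely so that it still fixes $\mathcal{R}$ pointwise. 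So the correct completion of your argument is to record this centering fact (or add it as a hypothesis) and conclude $\mathfrak{R}_K[u]=u$ for all $u\in\mathcal{R}$, rather than to weaken the conclusion to mere set-invariance.
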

\begin{proof}
If $u$ is affine, i.e. $u=\LL x+\mathrm{v}$, $\LL\in\mathbb{M}_{skew}^{n\times n}$ and $\mathrm{v}\in\R^n$, 
we have $b_{K}[u] =\mathrm{v}$ by a simple computation. Furthermore, by taking into account 
that $Du=\LL$, Lemma~\ref{l:auxiliary} implies
	\begin{align*}
	\mathbb{M}_{K}[u]
=\frac{1}{2}(\LL-\LL^t)={\LL}. 
	\end{align*}
\end{proof}
As the trace theorem implies the continuity of $\mathfrak{R}_K:BD(K)\to\mathcal R$, thanks to the previous result, 
Theorem~\ref{thm:poincare} yields the existence of a constant $c>0$ 
depending only on $K$ such that for every $u\in BD(K)$ 
	\begin{equation}\label{eqn:ebo}
	\|u-\mathfrak{R}_{K}[u]\|_{L^{\frac n{n-1}}(K;\R^n)}\leq c\,|E u|(K).
	\end{equation}

\begin{remark}\label{r:scaling}
 Let $u\in BD(\e K)$, where $\e>0$ and $K$ a bounded, open, convex set $K$ containing the origin. 
 Then, for a constant $c$ depending only on $K$ we have
 \begin{equation}\label{e:scalingRK}
\|u-\mathfrak{R}_{\e K}[u]\|_{L^1(\e K;\R^n)}\leq c\,\e |E u|(\e K)\,.
 \end{equation}
This follows from H\"older inequality, \eqref{eqn:ebo} and a scaling argument by considering 
$u_\e(y):=u(\e y)$, $y\in K$, and noting that $\mathfrak{R}_{\e K}[u](\e y)=\mathfrak{R}_{K}[u_\e](y)$, 
$\|u-\mathfrak{R}_{\e K}[u]\|_{L^1(\e K;\R^n)}=\e^n\|u_\e-\mathfrak{R}_K[u_\e]\|_{L^1(K;\R^n)}$, and 
$|E u|(\e K)=\e^{n-1}|Eu_\e|(K)$.
\end{remark}

\subsection{On the Cantor part of the symmetrized distributional derivative}
Recently, the fine properties of $BD$ functions have been complemented with the analog of Alberti's rank-one theorem in the $BV$ setting.
More precisely, we recall the fundamental contribution by De Philippis and Rindler (cf. \cite{de2016structure}).
\begin{theorem}\label{thm:structureof polar}
 Let $u\in BD(\Omega)$. Then, for $|E^cu|$-a.e. $x\in \Omega$ 
    \begin{equation}\label{e:polar}
    \frac{\d E u}{\d |E u|}(x)=\frac{\eta(x)\odot \xi(x)}{|\eta(x)\odot \xi(x)|}
    \end{equation}
for some $\xi,\,\eta:\Omega\to\mathbb{S}^{n-1}$ Borel vector fields.
 \end{theorem}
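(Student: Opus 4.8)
The plan is to deduce the statement from the general structure theorem for $\mathcal{A}$-free measures of De Philippis and Rindler \cite{de2016structure}, applied to the measure $\mu:=Eu$ and to the second order Saint--Venant compatibility operator. The first point is to identify the differential constraint satisfied by $Eu$. Let $\mathcal{A}$ be the homogeneous, constant coefficient, second order operator acting on symmetric--matrix--valued fields $M$ by
\[
(\mathcal{A}M)_{ijkl}:=\partial_{ik}M_{jl}+\partial_{jl}M_{ik}-\partial_{il}M_{jk}-\partial_{jk}M_{il},
\]
i.e. the linearized incompatibility operator. Commuting third order partial derivatives shows that $\mathcal{A}\circ E\equiv 0$ as operators on vector fields, $E$ denoting the symmetrized gradient; since distributional derivatives obey the same algebraic identities, $\mathcal{A}(Eu)=0$ in $\mathcal{D}'(\Omega)$ for every $u\in L^1_{\mathrm{loc}}(\Omega;\R^n)$ (this could also be obtained by strict approximation of $u$ by smooth maps). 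Thus $\mu=Eu\in\mathcal{M}(\Omega;\mathbb{M}^{n\times n}_{sym})$ is $\mathcal{A}$-free.

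The second point is to compute the wave cone $\Lambda_{\mathcal{A}}:=\bigcup_{\xi\in\mathbb{S}^{n-1}}\ker\mathbb{A}(\xi)\subseteq\mathbb{M}^{n\times n}_{sym}$, where $(\mathbb{A}(\xi)M)_{ijkl}=\xi_i\xi_kM_{jl}+\xi_j\xi_lM_{ik}-\xi_i\xi_lM_{jk}-\xi_j\xi_kM_{il}$ is the principal symbol. Substituting $M=a\odot\xi$ and collecting terms according to the components of $a$, every coefficient cancels, so $a\odot\xi\in\ker\mathbb{A}(\xi)$ for all $a\in\R^n$. Conversely, by the $O(n)$-covariance of $\mathcal{A}$ it suffices to treat $\xi=\en$, and a direct inspection of the scalar equations $\mathbb{A}(\en)M=0$ shows that they amount exactly to $M_{jk}=0$ for all $j,k<n$; such an $M$ equals $\en\odot c$ for a suitable $c\in\R^n$. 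Hence $\Lambda_{\mathcal{A}}=\{a\odot b:\,a,b\in\R^n\}$, the cone of symmetrized rank-one tensors.

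Finally I would invoke the De Philippis--Rindler theorem: for an $\mathcal{A}$-free measure $\mu$ one has $\frac{\d\mu}{\d|\mu|}(x)\in\Lambda_{\mathcal{A}}$ for $|\mu|$-a.e. $x$ belonging to the set where $\mu$ is singular with respect to $\L^n$. Applying this to $\mu=Eu$ and using $|E^cu|\le|E^su|$ with $E^su$ singular with respect to $\L^n$, we obtain that $\frac{\d Eu}{\d|Eu|}(x)$ is a symmetrized rank-one tensor of unit norm for $|E^cu|$-a.e. $x$; writing it as $\eta\odot\xi/|\eta\odot\xi|$ with $\eta,\xi\in\mathbb{S}^{n-1}$ and applying a Borel selection theorem to the closed--valued multifunction $x\mapsto\{(\eta,\xi)\in(\mathbb{S}^{n-1})^2:\eta\odot\xi=|\eta\odot\xi|\,\frac{\d Eu}{\d|Eu|}(x)\}$ yields the Borel fields $\xi,\eta$ of \eqref{e:polar}. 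The only genuinely hard step is the last one: the $\mathcal{A}$-free structure theorem is the deep ingredient, its proof resting on the tangent measure tools recalled in Section~\ref{ss:GMT} together with a dimensional estimate on the singular set and the rigidity of $\mathcal{A}$-free fields under blow-up; by contrast the identification of $\mathcal{A}$, the wave cone computation and the measurable selection are all elementary.
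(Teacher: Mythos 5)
Your proposal is correct and follows essentially the same route as the paper, which states this theorem without proof and simply cites De Philippis--Rindler \cite{de2016structure}: there the $BD$ case is deduced exactly as you do, by noting that $Eu$ is annihilated by the second-order Saint--Venant (curl\,curl) operator, computing the wave cone of that operator to be the symmetrized rank-one matrices $a\odot b$, and applying the $\mathcal{A}$-free structure theorem to the singular (hence Cantor) part. Your wave-cone computation and the concluding normalization plus Borel selection are sound, so there is no gap to report.
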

Next, we state a rigidity result for $BD$ maps with constant polar vector that follows from that established 
in \cite[Theorem~2.10 (i)-(ii)]{DPR19} by taking into account that the measure on the right hand side below is 
in addition positive (see also \cite[Theorem~3.2]{DPR19}). 
 \begin{proposition}\label{prop:rindlerchar}
 If $w\in BD_{loc}(\R^n)$ is such that for some $\eta,\,\xi\in\R^n$
	\begin{equation}\label{e:cantordec}
	E w= \frac{\eta\odot \xi}{|\eta\odot \xi|}|E w|, 
	\end{equation}
then 
\begin{itemize}
 \item[(i)] if $\eta\neq\pm\xi$ 
 	\[
	w(y)=\alpha_1(y\cdot \xi)\eta+\alpha_2(y\cdot \eta)\xi+\LL y+\mathrm{v},
	\]
for some $\alpha_1,\alpha_2\in BV_{loc}(\R)$, 
$\LL\in \mathbb{M}_{skew}^{n\times n}$, $\mathrm{v}\in \R^n$;

\item[(ii)] if $\eta=\pm\xi$
 	\[
	w(y)=\alpha(y\cdot \xi)\xi
	+\LL y+\mathrm{v},
	\]
for some $\alpha\in BV_{loc}(\R)$, 
$\LL\in \mathbb{M}_{skew}^{n\times n}$, $\mathrm{v}\in \R^n$.
\end{itemize}
\end{proposition}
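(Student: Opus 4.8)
The plan is to deduce the statement from the rigidity theorem \cite[Theorem~2.10~(i)--(ii)]{DPR19} of De Philippis and Rindler, exploiting that in \eqref{e:cantordec} the scalar measure multiplying the fixed matrix is \emph{positive}. I would first dispose of the degenerate cases: if $\eta=0$ or $\xi=0$ then $\eta\odot\xi=0$, so \eqref{e:cantordec} can only be read as $Ew=0$, and by the characterization of infinitesimal rigid motions recalled in Theorem~\ref{thm:poincare} (applied on balls) this forces $w(y)=\LL y+\mathrm v$ with $\LL\in\mathbb M_{skew}^{n\times n}$, $\mathrm v\in\R^n$, which has the form claimed in (i) and in (ii) with constant profiles. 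Hence we may assume $\eta,\xi\neq0$, and since $\tfrac{\eta\odot\xi}{|\eta\odot\xi|}$ is unaffected by $\eta\mapsto s\eta$, $\xi\mapsto t\xi$ with $s,t>0$ (the $BV_{loc}$ profiles in (i)--(ii) absorbing such rescalings), we may normalize $|\eta|=|\xi|=1$. After this normalization $\eta\odot\xi$ has rank two when $\eta\neq\pm\xi$ and rank one when $\eta=\pm\xi$; since a representation of rank-one type $w=\alpha(y\cdot\xi)\xi+\vartheta$ is a fortiori of rank-two type (take $\alpha_2\equiv0$), it is enough to establish the asserted structure in each of these two cases.

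It now suffices to rewrite \eqref{e:cantordec} as $Ew=(\eta\odot\xi)\,\mu$ with $\mu:=|\eta\odot\xi|^{-1}|Ew|\in\mathcal M_{loc}(\R^n)$ positive, and to apply \cite[Theorem~2.10~(i)--(ii)]{DPR19}: in the rank-two case this gives $w(y)=\alpha_1(y\cdot\xi)\eta+\alpha_2(y\cdot\eta)\xi+\vartheta(y)$ and in the rank-one case $w(y)=\alpha(y\cdot\xi)\xi+\vartheta(y)$, with $\vartheta\in\mathcal R$. The positivity of $\mu$ is precisely what promotes the one-dimensional profiles $\alpha_1,\alpha_2$ (resp. $\alpha$) to genuine $BV_{loc}(\R)$ functions — in the rank-one case it even forces $\alpha$ to be monotone — whereas the signed version \cite[Theorem~3.2]{DPR19} only yields a weaker conclusion. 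What remains is bookkeeping: checking that the splitting of $\R^n$ into $\mathrm{span}\{\eta,\xi\}$ and its orthogonal complement, and the sign and normalization conventions, agree with those of \cite{DPR19}, and that the local hypothesis $w\in BD_{loc}(\R^n)$ is admissible, which it is since the statement and its proof in \cite{DPR19} are of local nature.

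Should a self-contained proof be preferred, I would follow the scheme of the $BD$ rank-one structure theorem \cite{de2016structure}: pick an orthonormal frame $f_1,\dots,f_n$ adapted to $V:=\mathrm{span}\{\eta,\xi\}$; then $Ew=(\eta\odot\xi)\mu$ forces $E_{ij}w=0$ whenever $f_i\perp V$ or $f_j\perp V$, i.e. $\partial_iw_j+\partial_jw_i=0$ for all such pairs, a system of partial-rigidity constraints. From these one deduces that, modulo an element of $\mathcal R$, the components of $w$ along $V^{\perp}$ are affine and $w$ depends only on the coordinates in $V$, which reduces the problem to dimension $\dim V\le2$, where $\eta\odot\xi$ is a fixed symmetric matrix and the classification follows by diagonalizing it and integrating the resulting one-dimensional identities in the sense of measures. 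I expect the real obstacle of this route — and the reason it is cleaner to quote \cite{DPR19} — to be the rigorous treatment of the partial rigidity and of the one-dimensional integration at the level of measures: the Cantor part of $Ew$ (cf. \eqref{e:Eu decomposition}) carries no pointwise information, so one must argue through slicing and careful approximation, which is exactly the technical core of \cite{de2016structure,DPR19}.
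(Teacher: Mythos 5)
Your proposal is correct and follows essentially the same route as the paper: the paper likewise obtains the statement directly from the rigidity result in \cite[Theorem~2.10~(i)--(ii)]{DPR19}, using precisely the observation that the scalar measure in \eqref{e:cantordec} is positive (with \cite[Theorem~3.2]{DPR19} as a companion reference), and offers no further argument. Your extra bookkeeping on degenerate vectors, normalization, and the sketched self-contained alternative goes beyond what the paper records but does not change the approach.
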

%
%

The next Lemma will be particularly useful when dealing with the anti-symmetric part of the gradient 
in the Cantor part of the measure $E u$ (see \eqref{e:bK} and \eqref{e:MK} for the definitions of 
$b_{K}$ and $\mathbb{M}_{K}$, respectively).
\begin{lemma}\label{lem:killtheantisym}
Let $K$ be a bounded, open, convex set containing the origin. 
For any $u\in BD(\Omega)$ and for $|E^c u|$-a.e. $x_0\in \Omega$ 
    \[
\lim_{\e\rightarrow 0} b_{K(x_0,\e)}[u]=u(x_0),\qquad \ \lim_{\e\rightarrow 0}\e |\mathbb{M}_{K(x_0,\e)}[u]|=0.
    \]
\end{lemma}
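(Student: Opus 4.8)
The plan is to exploit the well-known density properties of Lebesgue-type points for the Radon–Nikodým decomposition of $|Eu|$ together with the characterization $E^cu = Eu \res C_u$ recalled in \eqref{e:Cu}. For the first limit, $\lim_{\e\to 0} b_{K(x_0,\e)}[u] = u(x_0)$: since $u \in BD(\Omega) \subset L^1(\Omega;\R^n)$, at every Lebesgue point $x_0$ of $u$ (which is $\L^n$-a.e. $x_0$, hence $|E^cu|$-a.e. $x_0$ as $|E^cu| \perp \L^n$ — indeed $|E^cu|$ is concentrated on the $\L^n$-negligible set $C_u$, on which however $u$ has an approximate limit equal to $u(x_0)$ by \eqref{e:Cu}, since $C_u \subseteq \Omega \setminus S_u$) one has $\fint_{K(x_0,\e)}|u(y) - u(x_0)|\,\d y \to 0$ by the standard theory of Lebesgue points relative to the family of rescaled convex sets $K(x_0,\e)$; this gives $b_{K(x_0,\e)}[u] = \fint_{K(x_0,\e)} u\,\d x \to u(x_0)$.

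For the second, more delicate, limit I would argue as follows. Fix a bounded open convex set $K$ containing the origin, and let $x_0$ be a point where the two limit conditions in \eqref{e:Cu} hold, i.e. $\lim_{\e\to 0}\frac{|Eu|(K(x_0,\e))}{\e^n} = +\infty$ and $\lim_{\e\to 0}\frac{|Eu|(K(x_0,\e))}{\e^{n-1}} = 0$; by the characterization recalled after \eqref{e:Cu} these hold for $|E^cu|$-a.e. $x_0$. By Lemma~\ref{l:auxiliary} applied on $K(x_0,\e)$ (after noting $u \in BD \Rightarrow u \in BV$ only locally — but the lemma's boundary-integral identity only needs the trace, or alternatively I would replace $u$ by a strictly-convergent smooth approximation and pass to the limit using strict continuity of the trace), we have
\[
\e\,|\mathbb{M}_{K(x_0,\e)}[u]| = \frac{\e}{2\L^n(K(x_0,\e))}\bigl|Eu(K(x_0,\e)) - (Eu)^t(K(x_0,\e))\bigr| \le \frac{\e}{\L^n(K)\,\e^n}\,|Eu|(K(x_0,\e)),
\]
using $\L^n(K(x_0,\e)) = \e^n \L^n(K)$ and the elementary bound $|M - M^t| \le 2|M|$. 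Hence $\e\,|\mathbb{M}_{K(x_0,\e)}[u]| \le C_K\,\frac{|Eu|(K(x_0,\e))}{\e^{n-1}} \to 0$ precisely by the second condition defining $C_u$ in \eqref{e:Cu}. Note it is exactly the $\e^{n-1}$-scaling (not $\e^n$) that is available here — the $\e^n$-rate would diverge — which is why the Cantor-part normalization is the right one.

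The main obstacle is the justification of the boundary-integral identity of Lemma~\ref{l:auxiliary} for a general $u \in BD(\Omega)$, since that lemma is stated for $BV$ maps whereas $BD$ functions need not be $BV$. I would resolve this by rewriting $\mathbb{M}_{K}[u]$ directly via the trace and integration-by-parts formula for $BD$ recalled in Section~\ref{prelBD}: testing $\int_\Omega u\odot\nabla\varphi\,\d x + \int_\Omega \varphi\,\d Eu = \int_{\partial\Omega}\varphi\,u\odot\nu\,\d\H^{n-1}$ with affine $\varphi$ on $K(x_0,\e)$ (localizing to $K(x_0,\e) \Subset \Omega$, legitimate for small $\e$) yields an identity expressing $\int_{\partial K(x_0,\e)} u\otimes\nu - \nu\otimes u$ in terms of $Eu(K(x_0,\e))$ alone — the symmetrized-gradient and antisymmetrized-gradient contributions from the volume term combine so that only the skew part of $Eu$ survives, exactly as in the proof of Lemma~\ref{l:auxiliary}. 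Once this $BD$-version of the identity is in hand, the estimate above closes the argument. A minor technical point to check is that the exceptional sets (non-Lebesgue points of $u$, points failing \eqref{e:Cu}, points of $\partial\Omega$ where localization fails) are all $|E^cu|$-null, which follows from $|E^cu| \ll |Eu|$ and $|Eu|(S_u\setminus J_u) = 0$.
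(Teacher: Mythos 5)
Your argument for the first limit is fine and is essentially the paper's: $|E^cu|$-a.e.\ point is a point of approximate continuity (since $E^cu$ is concentrated on $C_u\subseteq\Omega\setminus S_u$), so the averages $b_{K(x_0,\e)}[u]$ converge to $u(x_0)$.

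The second limit, however, contains a genuine gap at its key step. You write
$\e\,|\mathbb{M}_{K(x_0,\e)}[u]|=\frac{\e}{2\L^n(K(x_0,\e))}\,|Eu(K(x_0,\e))-(Eu)^t(K(x_0,\e))|$
and then propose to derive a ``$BD$-version'' of Lemma~\ref{l:auxiliary} expressing the boundary integral $\int_{\partial K(x_0,\e)}(u\otimes\nu-\nu\otimes u)\,\d\H^{n-1}$ in terms of $Eu(K(x_0,\e))$ alone. This cannot work: $Eu$ is a symmetric matrix-valued measure, so its ``skew part'' vanishes identically, and more fundamentally no identity or bound of the form $\e\,|\mathbb{M}_{K(x_0,\e)}[u]|\le C\,|Eu|(K(x_0,\e))/\e^{n-1}$ can hold for $BD$ maps. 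The counterexample is an infinitesimal rigid motion $u(y)=\LL y$ with $\LL\in\mathbb{M}^{n\times n}_{skew}$: then $Eu=0$ while $\mathbb{M}_{K(x_0,\e)}[u]=\LL\neq 0$ (Proposition~\ref{p:Rinvariantset}), so the right-hand side is zero and the left-hand side is not. Lemma~\ref{l:auxiliary} genuinely needs the full gradient measure $Du$, which for $u\in BD\setminus BV$ is not a measure, and the integration-by-parts formula for $BD$ only tests symmetrized products $u\odot\nu$, so it cannot recover the antisymmetric boundary integral. This is precisely why the paper does not argue this way: it invokes the fine result of Ambrosio--Coscia--Dal Maso (\cite[Theorem~6.5, Corollary~6.7]{ambrosio1997fine}) giving $\fint_{B(x_0,\e)}|u-d_{B(x_0,\e)}[u]|\,\d y\to 0$ at $|E^cu|$-a.e.\ $x_0$, where $d_{B(x_0,\e)}[u]$ is the spherical boundary average, and then applies the Korn--Poincar\'e inequality of Theorem~\ref{thm:poincare} to the projection $\mathfrak{R}^*_\e[v]:=d_{B(x_0,\e)}[v]+\mathbb{M}_{K(x_0,\e)}[v](\cdot-x_0)$, which leaves $\mathcal{R}$ fixed; the triangle inequality then bounds $\fint_{B(x_0,\e)}|\mathbb{M}_{K(x_0,\e)}[u](y-x_0)|\,\d y$ by the vanishing mean oscillation term plus $C\,|Eu|(B(x_0,\e))/\e^{n-1}$, and the equivalence of norms on $\mathbb{M}^{n\times n}$ concludes. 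In short: the control of the skew-symmetric part at Cantor points cannot come from $Eu$ on a single set via an algebraic identity; it requires the Korn--Poincar\'e inequality combined with the mean-continuity property of $u$ at $|E^cu|$-a.e.\ point, and your proposal is missing exactly this ingredient.
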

\begin{proof}
Let $u\in BD(\Omega)$ be fixed. As noticed in the preliminaries $|E^c u|$-a.e. $x_0\in\Omega$ is a point of approximate continuity for $u$, 
thus $b_{K(x_0,\e)}[u]\rightarrow u(x_0)$ as $\e\downarrow 0$.

For the second part of the statement, we use the computation in \cite[Theorem~6.5, Corollary~6.7]{ambrosio1997fine} implying that for 
$|E^c u|$-a.e. $x_0\in\Omega$ 
    \begin{equation}\label{acmmatrix}
    \lim_{\e\rightarrow 0} \fint_{B(x_0,\e)}|u-d_{B(x_0,\e)}[u]|\d y=0,
    \end{equation}
where
    \[
    d_{B(x_0,\e)}[u]:=\fint_{\partial B(x_0,\e)}u(x)\d\H^{n-1}(x).
    \]
Let $x_0\in C_u$ be a point for which \eqref{acmmatrix} holds, and recall then that $\e^{1-n}|Eu|(B(x_0,\e))\to 0$ as $\e\downarrow 0$.
Define for any $v\in BD(\Omega)$ and for $\e$ sufficiently small 
    \[
    \mathfrak{R}_{\e}^*[v](y):= d_{B(x_0,\e)}[v]+\mathbb{M}_{K(x_0,\e)}[v](y-x_0).
    \]
Arguing as in the proof of Proposition~\ref{p:Rinvariantset} it is immediate to see that $\mathcal{R}$ is an invariant set for 
$\mathfrak{R}_{\e}^*$. In particular, thanks to Theorem~\ref{thm:poincare} we infer for all $v\in BD(\Omega)$ that 
    \[
 \|v-\mathfrak{R}_{\e}^*[v]\|_{L^1(B(x_0,\e);\R^n)}
 \leq c\, \e |Ev|(B(x_0,\e))
    \]
where the constant $c$ is independent from $\e$ (this is obtained with a scaling argument similar to that in
Remark~\ref{r:scaling}). In particular, it follows that
    \begin{align*}
        \int_{B(x_0,\e)} |u(y)-d_{B(x_0,\e)}[u]-\mathbb{M}_{K(x_0,\e)}[u] (y-x_0)|\d y\leq c\, \e |E u|(B(x_0,\e)).
    \end{align*}
Therefore, by the triangular inequality we have
    \begin{align*}
\fint_{B(x_0,\e)}|\mathbb{M}_{K(x_0,\e)}[u] (y-x_0)|\d y\leq \fint_{B(x_0,\e)}|u(y)-d_{B(x_0,\e)}[u]|\d y+ C\frac{|Eu|(B(x_0,\e))}{\e^{n-1}},
    \end{align*}
and thus by the choice of $x_0$ it follows
    \[
    \e\fint_{B(\underline{0},1)}|\mathbb{M}_{K(x_0,\e)}[u]z|\d z=\fint_{B(x_0,\e)}|\mathbb{M}_{K(x_0,\e)}[u](y-x_0)|\d y\rightarrow 0.
    \]
Notice that, the quantity $\mathbb{M}\mapsto\fint_{B(\underline{0},1)}|\mathbb{M}z|\d z$ defines a norm on $\mathbb{M}^{n\times n}$, 
and thus for some constant $C$ depending only on the dimension, we have
    \[
    \e |\mathbb{M}_{K(x_0,\e)}[u]|\leq C\e\fint_{B(\underline{0},1)}|\mathbb{M}_{K(x_0,\e)}[u]z|\d z\rightarrow 0.\qedhere
    \]
\end{proof}

\subsection{Change-of-base formulas}
It is well-known that the chain rule formula does not hold in general for $BD$ maps. 
We provide a simple variation of it that will be useful throughout the paper.
\begin{lemma}\label{lem:changeofvariable}
Let $\ZZ\in\mathbb{M}^{n\times n}$ be invertible, let $w\in BD(\Omega)$ and set
	\[
	\tilde{w}(y):=\ZZ w(\ZZ^ty).
	\]
Then, $\tilde{w}\in BD(\ZZ^{-t}\Omega)$ and
\begin{align*}
E\tilde{w}&= |\det\ZZ|^{-1}
\ZZ (\ZZ^{-t}_{\#} Ew) \ZZ^t.
\end{align*}
Moreover, if $K$ is an open convex set and $v\in BD(K)$ we have
\begin{equation}\label{eqn:rigidbehavior}
	\mathfrak{R}_{\ZZ^{-t}K}[\tilde{v}](y)=\ZZ\mathfrak{R}_K[v] (\ZZ^t y) \ \ \text{for all $y\in \R^n$}.
	\end{equation}
\end{lemma}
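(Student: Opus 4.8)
The plan is to verify the two claims by a direct distributional computation. First I would establish that $\tilde w \in L^1(\ZZ^{-t}\Omega;\R^n)$: this is immediate from the change-of-variables formula for the Lebesgue integral, since the linear map $y \mapsto \ZZ^t y$ has constant Jacobian $|\det \ZZ|$, and $\|\tilde w\|_{L^1(\ZZ^{-t}\Omega)} = |\det\ZZ|^{-1}\,|\ZZ|\,\|w\|_{L^1(\Omega)}$ up to operator-norm factors. The substantive point is the formula for $E\tilde w$. The cleanest route is to test against a smooth compactly supported matrix field (or to argue first for $w \in \mathcal C^\infty(\Omega;\R^n)\cap W^{1,1}$ and then pass to the limit using strict density of smooth maps in $BD$, as recalled in Section~\ref{prelBD}, together with continuity of the trace and of all the linear operations involved). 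For smooth $w$, one computes pointwise: if $\tilde w(y) = \ZZ w(\ZZ^t y)$ then $\nabla\tilde w(y) = \ZZ\,\nabla w(\ZZ^t y)\,\ZZ^t$ by the chain rule, hence the symmetric part is $e(\tilde w)(y) = \ZZ\, e(w)(\ZZ^t y)\, \ZZ^t$; integrating this against a test function and changing variables $x = \ZZ^t y$ (so $\d y = |\det\ZZ|^{-1}\d x$) produces exactly $|\det\ZZ|^{-1}\ZZ(\ZZ^{-t}_{\#}Ew)\ZZ^t$, recalling the definition \eqref{e:pushforward} of the push-forward $\ZZ^{-t}_{\#}$, i.e. push-forward under $x\mapsto \ZZ^{-t}x$. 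The passage to general $w\in BD(\Omega)$ then follows since both sides are continuous with respect to strict convergence: the left-hand side by definition of $E$ and weak* continuity, the right-hand side because push-forward, left/right matrix multiplication, and scaling are all weak*-continuous on measures.

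Next I would prove \eqref{eqn:rigidbehavior}. The natural strategy is to \emph{not} recompute everything from scratch but to use that $\mathfrak R_K[v]$ is characterized by its two pieces $b_K[v]$ and $\mathbb M_K[v]$ from \eqref{e:bK} and \eqref{e:MK}, and to track how each transforms. For the translation part: by the change of variables $x = \ZZ^t y$ one gets $b_{\ZZ^{-t}K}[\tilde v] = \fint_{\ZZ^{-t}K}\ZZ v(\ZZ^t y)\,\d y = \ZZ \fint_{K} v(x)\,\d x = \ZZ\, b_K[v]$. For the skew part I would invoke Lemma~\ref{l:auxiliary}, which rewrites $\mathbb M_K[v] = \frac{1}{2\L^n(K)}(Dv(K) - (Dv)^t(K))$ for $v \in BV$, so that on the dense class $BV\cap BD$ we can use the already-established transformation rule for the \emph{full} distributional derivative: applying the same computation as above (without symmetrizing) gives $D\tilde v = |\det\ZZ|^{-1}\ZZ(\ZZ^{-t}_{\#}Dv)\ZZ^t$, hence $D\tilde v(\ZZ^{-t}K) = |\det\ZZ|^{-1}\ZZ\, Dv(K)\,\ZZ^t$ (using $\ZZ^{-t}_{\#}Dv$ evaluated on $\ZZ^{-t}K$ equals $Dv$ on $K$) and $\L^n(\ZZ^{-t}K) = |\det\ZZ|^{-1}\L^n(K)$, so the determinant factors cancel and $\mathbb M_{\ZZ^{-t}K}[\tilde v] = \frac{1}{2\L^n(K)}\ZZ(Dv(K) - (Dv)^t(K))\ZZ^t = \ZZ\,\mathbb M_K[v]\,\ZZ^t$. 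Assembling: $\mathfrak R_{\ZZ^{-t}K}[\tilde v](y) = \mathbb M_{\ZZ^{-t}K}[\tilde v]\,y + b_{\ZZ^{-t}K}[\tilde v] = \ZZ\,\mathbb M_K[v]\,\ZZ^t y + \ZZ\, b_K[v] = \ZZ(\mathbb M_K[v]\,\ZZ^t y + b_K[v]) = \ZZ\,\mathfrak R_K[v](\ZZ^t y)$, which is precisely \eqref{eqn:rigidbehavior}. Finally one upgrades from $BV\cap BD$ to all of $BD(K)$: both sides of \eqref{eqn:rigidbehavior} are continuous in $v$ for strict convergence (the left because $\mathfrak R_{\ZZ^{-t}K}$ is continuous as a composition of the trace operator with linear maps, and $\tilde v$ depends strictly-continuously on $v$; the right because $\mathfrak R_K$ is likewise continuous), so density closes the argument. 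Alternatively one can avoid density altogether here by observing that $\mathbb M_K$ in \eqref{e:MK} only involves the trace of $v$ on $\partial K$, and the trace of $\tilde v$ on $\partial(\ZZ^{-t}K)$ is $\ZZ$ times the trace of $v$ composed with $\ZZ^t$, so one can substitute directly in the boundary integral \eqref{e:MK}, changing variables $\H^{n-1}$-a.e.\ on the boundary and tracking how the surface measure and the normal $\nu_{\partial K}$ transform under the linear map $\ZZ^{-t}$.

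The main obstacle, and the only place demanding care, is the treatment of the skew-symmetric part $\mathbb M_K$ under the linear change of variables, because the boundary-integral definition \eqref{e:MK} transforms awkwardly: under $y \mapsto \ZZ^t y$ the surface measure $\H^{n-1}$ on $\partial(\ZZ^{-t}K)$ does \emph{not} simply scale, and the unit normal gets mapped by the (nonconformal) cofactor/inverse-transpose rule rather than by $\ZZ$ itself. This is exactly why routing through Lemma~\ref{l:auxiliary} — which converts the boundary integral into the volume quantity $Dv(K)-(Dv)^t(K)$ — is the right move: it reduces the skew part to the already-proven transformation law for the full derivative $D$, where the change of variables is transparent. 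A minor secondary point is the density/approximation argument needed to deduce the $BD$ statement from the $BV\cap BD$ computation (Lemma~\ref{l:auxiliary} is stated for $BV$), but this is routine given strict density of smooth maps and continuity of the trace operator for the strict topology, both recalled in Section~\ref{prelBD}.
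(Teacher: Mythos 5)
Your proposal is correct and follows essentially the same route as the paper: the transformation formula for $E\tilde w$ is obtained exactly as in the paper's proof (chain rule for smooth maps, change of variables, then strict-density approximation with weak* continuity of all operations involved). For \eqref{eqn:rigidbehavior} the paper only states that it "follows from a direct computation", and your derivation — computing $b_K$ by change of variables and handling $\mathbb M_K$ via Lemma~\ref{l:auxiliary} and the transformation law for the full derivative $D$, with the determinant factors cancelling, plus a routine density (or direct boundary-trace) upgrade to all of $BD$ — is a correct and natural way to carry out that computation.
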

\begin{proof}
Let $\varphi \in \mathcal C^{\infty}(\Omega;\R^n)\cap BD(\Omega)$ and define 
$\tilde{\varphi}(y):=\ZZ\varphi(\ZZ^t y)$. 
Clearly, $\tilde{\varphi}\in \mathcal C^{\infty}(\ZZ^{-t}\Omega;\R^n)$, with $\nabla \tilde{\varphi} (y)= 
\ZZ \nabla \varphi (\ZZ^t y) \ZZ^t$ and $e(\tilde{\varphi})(y)= \ZZ e(\varphi) (\ZZ^t y) \ZZ^t$.
Hence, the symmetrized distributional derivative of $\tilde{\varphi}$ is given by
	\[
	E \tilde{\varphi} =|\det\ZZ|^{-1} \ZZ\big(\ZZ^{-t}_{\#}(e(\varphi)\L^n\res\Omega)\big)\ZZ^t\,.
	\]
Finally, if $w\in BD(\Omega)$ we conclude by approximation of $w$ by smooth maps in the $BD$ strict topology.

The last assertion follows from a direct computation.
\end{proof}

\begin{remark}\label{rmk:cov}
We shall often use Lemma~\ref{lem:changeofvariable} to reduce ourselves to the case in which the two vectors $\xi$, $\eta$
in the polar decomposition of \eqref{e:polar} are actually given by $\euno$ and $\edue$. To this aim the following remarks
are useful. Let $w\in BD(K)$ be given by 
	\[
	w(y):=\psi_1(y\cdot \eta) \xi+\psi_2(y\cdot \xi)\eta
	\]
for some $\psi_1, \psi_2\in BV(\R)$ and for $\xi,\eta\in \R^n$ non-parallel unit vectors, i.e.~$\eta\neq\pm\xi$. 
Consider any invertible matrix $\ZZ\in\mathbb{M}^{n\times n}$ such that $\ZZ\eta=\euno,$ $\ZZ\xi=\edue$, 
and the associated function 
	\[
	\tilde{w}(y):=\ZZ (\psi_1(\ZZ^ty\cdot \eta)\xi+\psi_2(\ZZ^ty\cdot \xi)\eta)= \psi_1(y\cdot \euno)\edue+\psi_2(y\cdot \edue)\euno.
	\]	
Then, $\tilde{w}\in BD(\ZZ^{-t}K)$ with $E\tilde{w}=|\det\ZZ|^{-1} \ZZ (\ZZ^{-t}_{\#} Ew) \ZZ^t$. Furthermore, since
\[
\ZZ^{-t}_{\#} Ew=\frac{\eta\odot \xi}{|\eta\odot \xi|} \ZZ^{-t}_{\#}|Ew|
\]
we then have
\[
E\tilde{w}= |\det\ZZ|^{-1}
\frac{\euno\odot \edue}{|\eta\odot \xi|}  \ZZ^{-t}_{\#} |E w|,
\]
in turn implying both
\begin{align}\label{covmass1}
E\tilde{w}= \frac{\euno\odot \edue}{|\euno\odot \edue|}|E\tilde{w}|
\end{align}
and 
\begin{align}\label{covmass2}
|E\tilde{w}|=|\det\ZZ|^{-1}\frac{|\euno\odot \edue|}{|\eta\odot \xi|}  \ZZ_{\#}^{-t}|E w|.
\end{align}
In particular, we conclude that 
	\[
	|E \tilde{w}|(\ZZ^{-t}K)=|\det\ZZ|^{-1}\frac{|\euno\odot \edue|}{|\eta\odot \xi|}  |E w|(K).
	\]
\end{remark}

\subsection{On the cell problem defining $\mm$.} The next two results clarify the link between 
$\mm$ and $\mathcal{F}$. They have been originally proved in \cite{bouchitte1998global} in the $BV$ setting 
and then straightforwardly adapted to the $BD$ setting in \cite{ebobisse2001note}.
\begin{lemma}[Lemma~3.5, Remark~3.6 \cite{bouchitte1998global}, Lemma~3.2 \cite{ebobisse2001note}]\label{lem:derivative of m}
Let $u\in BD(\Omega)$, and set $\mu:=\L^n+|E^s u|$. Then, for any bounded, open, convex set $K$ containing 
the origin we have
    \[
    \lim_{r\rightarrow 0} \frac{\F(u, K(x_0,r))}{\mu(K(x_0,r))}=\lim_{r\rightarrow 0} \frac{\mm(u,K(x_0,r))}{\mu(K(x_0,r))} \qquad 
    \text{for $\mu$-a.e. $x_0\in \Omega$.}
    \]
\end{lemma}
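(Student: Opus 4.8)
The plan is to follow closely the classical argument of Bouchitté--Fonseca--Mascarenhas, adapting it to the $BD$ setting exactly as indicated by the reference to \cite{ebobisse2001note}. Fix $u\in BD(\Omega)$ and set $\mu:=\L^n+|E^su|$. Since trivially $\mm(u,A)\le\F(u,A)$ for every $A\in\mathcal O_\infty(\Omega)$ (as $u$ itself is a competitor in \eqref{e:mm}), one inequality between the two limits is immediate wherever they exist; the content of the lemma is the reverse inequality together with the existence of the limits. First I would record that, by (H3), $A\mapsto\F(u,A)$ is (the restriction of) a Radon measure, so by the Besicovitch differentiation theorem the limit $\frac{\d\F(u,\cdot)}{\d\mu}(x_0)=\lim_{r\to0}\frac{\F(u,K(x_0,r))}{\mu(K(x_0,r))}$ exists and is finite for $\mu$-a.e.\ $x_0$ (the growth bound (H2) gives $\F(u,\cdot)\ll\mu$ up to the Lebesgue part, so the density is finite $\mu$-a.e.); the same differentiation argument applies once we control $\mm(u,\cdot)$ from above by $\F(u,\cdot)$. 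Thus it suffices to prove
\[
\limsup_{r\to0}\frac{\F(u,K(x_0,r))}{\mu(K(x_0,r))}\le\liminf_{r\to0}\frac{\mm(u,K(x_0,r))}{\mu(K(x_0,r))}
\]
for $\mu$-a.e.\ $x_0\in\Omega$.

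The core step is a gluing/cut-off construction. Fix a good point $x_0$ and a radius $r$; choose a near-optimal competitor $v=v_{r}\in BD(\Omega)$ for $\mm(u,K(x_0,r))$, so that $v|_{\partial K(x_0,r)}=u|_{\partial K(x_0,r)}$ and $\F(v,K(x_0,r))\le\mm(u,K(x_0,r))+\eta\,\mu(K(x_0,r))$ for a small $\eta$. One then interpolates between $v$ on an inner cube $K(x_0,(1-\delta)r)$ and $u$ on the annulus, using the trace agreement on $\partial K(x_0,r)$ to control the jump created on the intermediate surface; the standard device is to average over many nested layers (a De Giorgi-type slicing argument on $\sim 1/\delta$ annuli) so that the energy contribution of the ``bad'' transition layer is bounded by $\frac{C}{\#\text{layers}}$ times $\mu$ of the annulus, hence negligible. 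This produces a function $w_r$ which equals $u$ near $\partial K(x_0,r)$, for which $\F(u,K(x_0,r))\le\F(w_r,K(x_0,r))$ is \emph{not} what we want — rather, one uses $\F(\cdot,K(x_0,r))\le\mm+\text{error}$ together with the subadditivity/measure property (H3) applied to the decomposition into inner cube plus annulus, plus the fact that $u$ is an admissible competitor on the inner region after gluing. The upshot is
\[
\F(u,K(x_0,r))\le\F(v_r,K(x_0,(1-\delta)r))+\F(u,K(x_0,r)\setminus\overline{K(x_0,(1-\delta)r)})+\text{(layer error)},
\]
and the first term is $\le\mm(u,K(x_0,r))+\eta\mu(K(x_0,r))$ by choice of $v_r$, the second is $\F(u,\cdot)$ of a thin annulus, which after dividing by $\mu(K(x_0,r))$ and letting $r\to0$ then $\delta\to0$ vanishes because $x_0$ is a point of approximate continuity for the density $\frac{\d\F(u,\cdot)}{\d\mu}$ — here one uses that $\mu$-a.e.\ point has $\frac{\mu(K(x_0,r)\setminus K(x_0,(1-\delta)r))}{\mu(K(x_0,r))}\to 0$ as $\delta\to 0$ uniformly along a subsequence, which follows from the differentiation theorem applied to the measure $\F(u,\cdot)$ relative to $\mu$.

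The main obstacle, and the only place where the $BD$ setting genuinely differs from the $BV$ one, is the gluing step: cutting and pasting $BD$ functions across a hypersurface creates a jump term $(w^+-w^-)\odot\nu\,\H^{n-1}$ on that surface, and one must bound its total variation by $\|v_r-u\|_{L^1}$ on the corresponding layer divided by the layer width — i.e.\ one needs a trace estimate of the form ``$\int_{\Sigma}|v^+-u^-|\,\d\H^{n-1}\le\frac{C}{\text{width}}\int_{\text{layer}}|v-u|\,\d x+C|E(v-u)|(\text{layer})$''. This is exactly the kind of one-dimensional slicing estimate available for $BD$ (it follows from the structure of $BD$ functions on lines, or more elementarily from a covering/averaging over the family of parallel slices of the annulus), and combined with the Poincaré inequality \eqref{eqn:ebo} and the fact that $\|v_r-u\|_{L^1(K(x_0,r))}$ is controlled — because both $v_r$ and $u$ have symmetrized derivative bounded by $C\,\mm/\mu(K(x_0,r))\cdot\mu(K(x_0,r))$ on $K(x_0,r)$ via (H2), and they share the same boundary trace — one gets that the layer error is $O(1/\#\text{layers})\cdot\mu(K(x_0,r))$ as required. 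I would invoke (H2) once more to absorb the $\L^n$ term in the annulus estimate. No use of (H4) or (H5) is needed for this particular lemma, matching the remark in the text that it is a direct adaptation of the $BV$ result.
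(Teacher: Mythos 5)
Your construction breaks down exactly at the displayed inequality
\[
\F(u,K(x_0,r))\le\F(v_r,K(x_0,(1-\delta)r))+\F\big(u,K(x_0,r)\setminus\overline{K(x_0,(1-\delta)r)}\big)+\text{(layer error)},
\]
which you assert but never actually derive, and which is the whole content of the lemma. The only hypothesis that relates the values of $\F$ at \emph{different} functions is the lower semicontinuity (H1), and (H1) needs a fixed open set and a sequence converging to $u$ in $L^1$ on that set; your glued function $w_r$ is close to $v_r$, not to $u$, on the inner cube, so at a fixed scale $r$ there is no mechanism producing an upper bound for $\F(u,K(x_0,r))$ in terms of $\F(v_r,\cdot)$. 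The ingredients you invoke do not supply one: (H3) compares $\F$ at the \emph{same} function on different sets; near-minimality of $v_r$ bounds $\F(v_r,\cdot)$ by $\mm(u,\cdot)$; and the fact that the glued function is admissible in the cell problem only gives $\mm(u,K(x_0,r))\le\F(w_r,K(x_0,r))$, i.e.\ an upper bound for $\mm$, which is the trivial direction. (Your phrase ``one uses $\F(\cdot,K(x_0,r))\le\mm+\text{error}$'' is backwards: always $\mm\le\F$.) Indeed the pointwise-in-$r$ inequality is false in general; the statement only holds in the limit $r\to0$ for $\mu$-a.e.\ $x_0$, and a single-point, single-scale gluing cannot see this. (A secondary issue: the annulus term requires $\limsup_r\mu(K(x_0,r)\setminus K(x_0,(1-\delta)r))/\mu(K(x_0,r))$ to be small for $\mu$-a.e.\ $x_0$, which also needs a derivation-type argument rather than ``approximate continuity of the density''.)

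The proof this lemma refers to (Lemma~3.5 of Bouchitt\'e--Fonseca--Mascarenhas, adapted to $BD$ by Ebobisse) is genuinely global: one introduces, for $\delta>0$, the auxiliary set function
\[
\mathbf{m}^\delta(u;A):=\inf\Big\{\sum_i\mm(u;A_i)\Big\},
\]
the infimum running over countable disjoint families $A_i\subset A$ with Lipschitz boundary, $\mathrm{diam}\,A_i\le\delta$ and $\mu\big(A\setminus\bigcup_iA_i\big)=0$. One first shows $\lim_{\delta\to0}\mathbf{m}^\delta(u;A)=\F(u,A)$: the inequality $\le$ is immediate from $\mm\le\F$ and (H3); for $\ge$ one glues near-minimizers $v_i$ on \emph{all} the $A_i$ simultaneously (here no De~Giorgi slicing is needed, since each $v_i$ has the same trace as $u$ on $\partial A_i$, so no jump is created), checks via the $BD$ Poincar\'e inequality \eqref{eqn:ebo} that the glued maps converge to $u$ in $L^1(A)$ as $\delta\to0$, and only then applies (H1) on the \emph{fixed} set $A$, using (H2)--(H3) to discard the residual $\mu$-null set. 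Finally, a Besicovitch--Vitali covering/derivation argument (replacing $u$ by near-minimizers on a fine disjoint cover of a hypothetical positive-$\mu$-measure set where $\liminf_r\mm(u,K(x_0,r))/\mu(K(x_0,r))$ falls below $\d\F(u,\cdot)/\d\mu(x_0)$, and reaching a contradiction with the equality above) transfers this to the derivatives. In short, the replacement must be performed at many points of a set of positive $\mu$-measure at once so that lower semicontinuity can act on a fixed open set; this is the idea missing from your proposal. Your remarks on $BD$ trace/jump estimates and that (H4)--(H5) are not needed are correct but peripheral.
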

\begin{lemma}\label{lem:continuity if m}
There exists a constant $C>0$ such that for any $u_1,u_2\in BD(\Omega)$, $A\in \mathcal{O}_{\infty}(\Omega)$ 
    \[
    |\mm(u_1;A)-\mm(u_2;A)|\leq C \int_{\partial A} |u_1-u_2|\d \H^{n-1}.
    \]
\end{lemma}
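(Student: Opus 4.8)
The plan is to exploit the (H2) upper bound together with a competitor construction that glues the optimal profile for one boundary datum to a transition layer near $\partial A$ realizing the switch to the other boundary datum. Fix $u_1,u_2\in BD(\Omega)$ and $A\in\mathcal{O}_\infty(\Omega)$; by symmetry it suffices to prove $\mm(u_1;A)-\mm(u_2;A)\le C\int_{\partial A}|u_1-u_2|\,\d\H^{n-1}$. Given $\delta>0$, pick $v\in BD(\Omega)$ with $v|_{\partial A}=u_2|_{\partial A}$ and $\F(v,A)\le \mm(u_2;A)+\delta$. Since $\partial A$ is Lipschitz, for small $t>0$ there is a tubular neighborhood $A_t:=\{x\in A:\operatorname{dist}(x,\partial A)<t\}$ admitting a bi-Lipschitz chart, and one can build a map $w_t\in BD(\Omega)$ that agrees with $v$ on $A\setminus A_t$, has trace $u_1|_{\partial A}$ on $\partial A$, and interpolates across $A_t$; the natural choice is, in the tubular coordinate $(y',s)$ with $s\in(0,t)$ the distance to $\partial A$, to set $w_t = v + \varphi(s/t)\,(u_1-u_2)$ near the boundary, where $\varphi$ is a fixed Lipschitz cutoff with $\varphi(0)=1$, $\varphi(1)=0$. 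Then $w_t$ is an admissible competitor for $\mm(u_1;A)$.

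Next I would estimate $\F(w_t,A)$ using (H2) and (H3). By (H3), $\F(w_t,A)=\F(w_t,A\setminus\overline{A_t})+\F(w_t,A_t)$ (up to the $\F(w_t,\partial A_t)$ term, which one avoids by choosing $t$ outside an $\mathcal L^1$-null set of bad radii, or absorbs by monotonicity). On $A\setminus\overline{A_t}$ we have $w_t=v$, so by locality $\F(w_t,A\setminus\overline{A_t})\le\F(v,A)\le\mm(u_2;A)+\delta$. On $A_t$, (H2) gives $\F(w_t,A_t)\le C(\L^n(A_t)+|Ew_t|(A_t))$, and $|Ew_t|(A_t)\le|Ev|(A_t)+|E(\varphi(s/t)(u_1-u_2))|(A_t)$. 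The term $\L^n(A_t)+|Ev|(A_t)\to0$ as $t\to0$ since these are finite measures vanishing on $\partial A$ for a.e. $t$. The interpolation term is the crux: its total variation on $A_t$ splits into a part where the derivative falls on $\varphi'$, contributing of order $\frac1t\int_{A_t}|u_1-u_2|\,\d x\approx\frac1t\cdot t\int_{\partial A}|u_1-u_2|\,\d\H^{n-1}$ by the coarea/layer-cake structure of the tubular neighborhood and boundedness of the trace operator, and a part where the derivative falls on $u_1-u_2$, contributing $\varphi(s/t)\,\d E(u_1-u_2)$ on $A_t$, which again tends to $0$. Passing to the limit $t\to0$ along good radii yields $\mm(u_1;A)\le\mm(u_2;A)+\delta+C\int_{\partial A}|u_1-u_2|\,\d\H^{n-1}$, and letting $\delta\to0$ concludes.

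The main obstacle is making the layer estimate $\frac1t\int_{A_t}|u_1-u_2|\,\d x\lesssim\int_{\partial A}|u_1-u_2|\,\d\H^{n-1}$ rigorous and the interpolation $BD$-valued: one must choose a representative of $u_1-u_2$ near $\partial A$ whose integral over thin shells is controlled by its boundary trace, which is exactly what the continuity of the trace operator for the strict topology and a density/approximation argument (approximating $u_1-u_2$ by smooth maps strictly convergent in $BD$, for which the shell estimate is classical via the fundamental theorem of calculus in the normal direction) provide; one then passes the final inequality to the limit. This is precisely the adaptation of the argument of Bouchitté–Fonseca–Mascarenhas \cite[Lemma~3.5, Remark~3.6]{bouchitte1998global} and Ebobisse–Rindler-type constructions \cite{ebobisse2001note} to the present $BD$ framework, using that all the measures involved are finite and that $(H2)$ controls $\F$ linearly by $\L^n+|E\cdot|$.
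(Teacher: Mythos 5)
The paper does not actually prove this lemma: it is quoted from the $BV$ case in \cite{bouchitte1998global} and its $BD$ adaptation in \cite{ebobisse2001note}. Your argument is a legitimate variant of that classical proof, with one structural difference: the quoted proofs take as competitor the sharp-interface gluing $w_t:=v$ on $\{x\in A:\,\mathrm{dist}(x,\partial A)>t\}$ and $w_t:=u_1$ on the collar, paying a jump term $\int_{\partial A_t}|v-u_1|\,\d\H^{n-1}$ on the interior interface, which is then controlled by choosing good radii $t$ via Fatou and the coarea formula for the distance function; you instead interpolate with a Lipschitz cutoff, $w_t=v+\varphi(\mathrm{dist}(\cdot,\partial A)/t)(u_1-u_2)$, and pay the term $\frac{1}{t}\int_{A_t}|u_1-u_2|\,\d x$ coming from $\nabla\varphi_t\odot(u_1-u_2)$. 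Both routes use (H2), (H3) and locality in the same way, both produce a constant depending only on the growth constant (and $\mathrm{Lip}\,\varphi$), and both reduce to the same key fact about boundary layers.

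That key fact is where your sketch is soft. Neither ``boundedness of the trace operator'' nor the plain density argument you outline suffices: boundedness gives $\int_{\partial A}|g^{tr}|\leq C\|g\|_{BD(A)}$, which is the wrong direction, and if you approximate $g:=u_1-u_2$ strictly by smooth $g_k$, the error $\frac1t\int_{A_t}|g-g_k|\,\d x$ is not controlled uniformly in $t$ for fixed $k$, so the double limit $t\to0$, $k\to\infty$ cannot be interchanged as written. What is really needed is the $L^1$ attainment of the $BD$ trace on interior shells of a Lipschitz domain, i.e. $\frac1t\int_{A_t}|g-g^{tr}\circ\pi|\,\d x\to0$ (Temam; Babadjian, \emph{Traces of functions of bounded deformation}), combined with the coarea formula for the distance function, which yields $\limsup_{t\to0}\frac1t\int_{A_t}|g|\,\d x\leq C\int_{\partial A}|g^{tr}|\,\d\H^{n-1}$ with a constant independent of $A$ — the same ingredient that, in the quoted proofs, is packaged as the averaged choice of good layers. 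Note also that the constant in the statement must be uniform in $A\in\mathcal{O}_\infty(\Omega)$, so any step whose constant degenerates with the Lipschitz character of $\partial A$ must be avoided; the distance-function/coarea route achieves this, while a chart-by-chart tubular-coordinate estimate as you describe it would not automatically do so. With that step replaced by the correct trace-attainment statement, your proof is complete and equivalent in spirit to the one in the literature.
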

Finally, we refine Lemma~\ref{lem:continuity if m} as a consequence of assumptions (H4) and (H5).
Following \cite[Remark 3.10]{bouchitte1998global}, for all $K$ bounded, open, convex set containing the origin, 
for every 
$(\mat,\mathrm{v}_0,\mathrm{v},x_0,x)\in\mathbb{M}^{n\times n}\times 
(\R^n)^2\times (\Omega)^2$ 
and for every $\e>0$ small enough, hypothesis (H4) implies 
\begin{align}\label{e:stimammcont}
|\mm(\mathrm{v}+\mathrm{v}_0+\mat(\cdot-x-x_0),K(x+x_0,\e))-&
\mm(\mathrm{v}_0+\mat(\cdot-x_0),K(x_0,\e))|&\nonumber\\
\leq C_K\Psi(|x|+|\mathrm{v}|)\big(1+\textstyle{\frac{|\mat+\mat^t|}2}\big)\e^n,
\end{align}
and, in turn, hypothesis (H5) implies
 \begin{align}
      &|\mm(\mathrm{v}_0+\mat(\cdot -x_0),K(x_0,\e))- \mm(\mathrm{v}_0+\textstyle{\frac{\mat+\mat^t}2}(\cdot -x_0),K(x_0,\e))|\nonumber\\
      &\qquad \qquad \leq C_K\Psi\big(\e\,\mathrm{diam}(K)\textstyle{\frac{|\mat-\mat^t|}2}\big)\big(1+\textstyle{\frac{|\mat+\mat^t|}2}\big)\e^n\label{eqn:H5onm}
    \end{align}
for some constant $C_K>0$ depending on $K$ only.

\section{Analysis of the blow-ups of the Cantor part}\label{s:blowupCantor2}

In this section we show how to select a suitable blow-up limit at Cantor type points. To this aim we fix some notation: 
let $u\in BD(\Omega)$, we may assume $u$ to be extended to a map in $BD(\R^n)$ being $\Omega$ Lipschitz 
(cf. \cite[Corollary~1.6.4]{ebo99b}). By a slight abuse of notation we denote by $u$ the extended function.

With fixed a bounded, open, convex set $K$ containing the origin, 
for every $\e>0$ and $x\in\Omega$ set $K(x,\e):=x+\e K$. For $\e$ sufficiently small, consider the associated rescaled 
functions $u_{K,x,\e}:K\to\R^n$ given by
\begin{equation}\label{e:rescaled}
u_{K,x,\e}(y):=
\frac{u(x+\e y)- \mathfrak{R}_{K}[u(x+\e \cdot)](y)}{\e \frac{|E u|(K(x,\e))}{\L^n(K(x,\e)))}},
\end{equation}
where $ \mathfrak{R}_{K}$ is defined in \eqref{e:RK}. Clearly, we may assume that $|E u|(K(x,\e))>0$ 
for all $\e\in(0,\mathrm{dist}(x,\partial\Omega))$, otherwise $u$ would be an infinitesimal rigid 
motion around $x$ (cf. Eq. \eqref{e:characterizationR} in Theorem~\ref{thm:poincare}).
We analyze first basic compactness properties of the rescaled family $\{u_{K,x,\e}\}_{\e>0}$. 
\begin{proposition}\label{p:recalings compactness}
For $|Eu|$-a.e. $x\in\Omega$ there exist a sequence $\e_i\downarrow 0$, a map $w\in BD(K)$, and a measure
$\gamma\in\Tan(Eu,x)$ such that 
\begin{itemize}
\item[(i)] $\{u_{K,x,\e_i}\}_{i\in\N}$ converges to $w$ strictly in $BD(K)$, $\mathfrak{R}_{K}[w]=\underline{0}$
\item[(ii)] $\{\frac1{\L^n(K)}Eu_{K,x,\e_i}\}_{i\in\N}$ converges to $\gamma$ weakly* in $\mathcal{M}(\bar{K};\R^{n\times n})$, 
$|\gamma|(K)=|\gamma(K)|=1$, $|\gamma|(\partial K)=0$, $|\gamma|\in\Tan(|Eu|,x)$, 
$\gamma=\frac{dEu}{d|Eu|}(x)|\gamma|$ $|\gamma|$-a.e. in $\R^n$, and $Ew=\L^n(K)\gamma\res K$. 
\end{itemize}
\end{proposition}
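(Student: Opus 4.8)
The plan is to reduce the statement to the tangent‑measure results of Section~\ref{ss:GMT}, after a preliminary computation on the symmetrized derivatives of the rescalings and a compactness step based on the Poincar\'e inequality \eqref{eqn:ebo}. First I would record the algebra of the rescalings. Writing $v_\e(y):=u(x+\e y)$, one has $Ev_\e=\e^{1-n}F^{x,\e}_{\#}Eu$; since $\mathfrak R_K[v_\e]\in\mathcal R$ has vanishing symmetrized derivative and $\L^n(K(x,\e))=\e^n\L^n(K)$, a direct computation from \eqref{e:rescaled} gives
\[
\frac1{\L^n(K)}\,Eu_{K,x,\e}=\frac{F^{x,\e}_{\#}Eu}{|Eu|(K(x,\e))}=\frac{F^{x,\e}_{\#}Eu}{F^{x,\e}_{\#}|Eu|(K)},
\qquad\text{so}\qquad |Eu_{K,x,\e}|(K)=\L^n(K)
\]
for every admissible $\e$. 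Moreover $\mathfrak R_K[u_{K,x,\e}]=\underline0$: indeed $\mathfrak R_K$ is linear and, by Proposition~\ref{p:Rinvariantset}, maps $BD(K)$ into $\mathcal R$ and fixes $\mathcal R$ pointwise, whence $\mathfrak R_K\bigl[v_\e-\mathfrak R_K[v_\e]\bigr]=\underline0$. Feeding $\mathfrak R_K[u_{K,x,\e}]=\underline0$ and $|Eu_{K,x,\e}|(K)=\L^n(K)$ into \eqref{eqn:ebo} yields the bound $\|u_{K,x,\e}\|_{L^{n/(n-1)}(K;\R^n)}\le c\,\L^n(K)$, uniform in $\e$, so $\{u_{K,x,\e}\}_\e$ is bounded in $BD(K)$.

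Next I would fix the base point $x$ in the set of full $|Eu|$-measure where, simultaneously, Lemma~\ref{lem:unit mass conve} applies to $\mu=Eu$ and the convex set $K$, the polar satisfies $\bigl|\tfrac{\d Eu}{\d|Eu|}(x)\bigr|=1$, and $x$ is a Lebesgue point of $\tfrac{\d Eu}{\d|Eu|}$ with respect to $|Eu|$ along the differentiation basis $\{K(x,r)\}_r$. Lemma~\ref{lem:unit mass conve} then provides $\e_i\downarrow0$ and $\gamma\in\Tan(Eu,x)$ with $|\gamma|(K)=1$, $|\gamma|(\partial K)=0$ and $\tfrac1{\L^n(K)}Eu_{K,x,\e_i}\wt\gamma$ in $\mathcal M(\overline K;\R^{n\times n})$ (after identifying the rescaled pushforward with $Eu_{K,x,\e_i}$ on the open set $K$, the two possibly differing on $\partial K$ by a mass which is killed in the limit by $|\gamma|(\partial K)=0$). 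Along $\{\e_i\}$, the uniform $BD(K)$ bound from the first step together with the compact embedding $BD(K)\hookrightarrow L^1(K;\R^n)$ let one pass to a further, non‑relabelled subsequence with $u_{K,x,\e_i}\to w$ in $L^1(K;\R^n)$ for some $w\in BD(K)$ and $Eu_{K,x,\e_i}\wt Ew$ in $\mathcal M_{loc}(K;\R^{n\times n})$.

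It then remains to identify the limits. Comparing $Eu_{K,x,\e_i}\wt Ew$ with $\tfrac1{\L^n(K)}Eu_{K,x,\e_i}\wt\gamma$ on the open set $K$ yields $Ew=\L^n(K)\,\gamma\res K$; since $|\gamma|(K)=1$ this gives $|Ew|(K)=\L^n(K)=\lim_i|Eu_{K,x,\e_i}|(K)$, so $u_{K,x,\e_i}\to w$ \emph{strictly} in $BD(K)$; by strict continuity of the trace operator $\mathfrak R_K$ is strictly continuous on $BD(K)$, hence $\mathfrak R_K[w]=\lim_i\mathfrak R_K[u_{K,x,\e_i}]=\underline0$, which proves (i). For (ii), the assertions $\gamma\in\Tan(Eu,x)$, $|\gamma|(K)=1$, $|\gamma|(\partial K)=0$ are precisely those of Lemma~\ref{lem:unit mass conve}. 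Finally, the Lebesgue‑point property forces the total variation on $K$ of $\tfrac1{\L^n(K)}Eu_{K,x,\e_i}-\tfrac{\d Eu}{\d|Eu|}(x)\,\tfrac{F^{x,\e_i}_{\#}|Eu|}{|Eu|(K(x,\e_i))}$ to tend to $0$; therefore every weak* subsequential limit of the positive measures $\tfrac{F^{x,\e_i}_{\#}|Eu|}{|Eu|(K(x,\e_i))}$ must coincide with $|\gamma|$, which simultaneously yields $\gamma=\tfrac{\d Eu}{\d|Eu|}(x)|\gamma|$, $|\gamma(K)|=\bigl|\tfrac{\d Eu}{\d|Eu|}(x)\bigr|=1$, and $|\gamma|\in\Tan(|Eu|,x)$.

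I expect the delicate point to be this last identification: upgrading weak* convergence on $\overline K$ to the polar rigidity $\gamma=\tfrac{\d Eu}{\d|Eu|}(x)|\gamma|$ and to $|\gamma|\in\Tan(|Eu|,x)$ requires Besicovitch differentiation for the polar $\tfrac{\d Eu}{\d|Eu|}$ and some care in excluding concentration of mass on $\partial K$. By contrast, matching the two subsequences (from Lemma~\ref{lem:unit mass conve} and from $BD$ compactness) and the strict continuity of $\mathfrak R_K$ are routine.
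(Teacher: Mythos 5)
Your proposal follows the paper's route almost step by step: the identity $\frac1{\L^n(K)}Eu_{K,x,\e}=\frac{F^{x,\e}_{\#}Eu}{F^{x,\e}_{\#}|Eu|(K)}$ with $|Eu_{K,x,\e}|(K)=\L^n(K)$, the selection of $\e_i$ and $\gamma$ via Lemma~\ref{lem:unit mass conve}, the uniform $BD(K)$ bound (you apply \eqref{eqn:ebo} directly to $u_{K,x,\e}$ using $\mathfrak R_K[u_{K,x,\e}]=\underline0$, whereas the paper uses the scaling Remark~\ref{r:scaling}; both are fine), compactness in $L^1$, the identification $Ew=\L^n(K)\gamma\res K$, strict convergence from conservation of total variation, and $\mathfrak R_K[w]=\underline0$ from strict continuity of the trace. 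Up to this point the argument is correct and essentially identical to the paper's proof.

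The weak spot is the final identification, which you yourself flag. The paper does not prove $|\gamma|\in\Tan(|Eu|,x)$ and $\gamma=\frac{\d Eu}{\d|Eu|}(x)|\gamma|$ $|\gamma|$-a.e.\ in $\R^n$ by hand: it invokes the standard fact (\cite[Theorem~2.44]{AFP00}, \cite[Lemma~10.4]{rindler2018calculus}) that at $|Eu|$-a.e.\ $x$ every tangent measure of the vector measure $Eu$ is the polar at $x$ times a tangent measure of $|Eu|$. Your Lebesgue-point argument controls the total variation of $\frac1{\L^n(K)}Eu_{K,x,\e_i}-\frac{\d Eu}{\d|Eu|}(x)\,\frac{F^{x,\e_i}_{\#}|Eu|}{|Eu|(K(x,\e_i))}$ only on the bounded set under consideration, so at best it identifies $\gamma$ and $|\gamma|$ \emph{on $K$}; it does not yield the global statements that $|\gamma|$, as an element of $\mathcal M_{loc}(\R^n)$, belongs to $\Tan(|Eu|,x)$ (this requires local weak* convergence of $\frac{F^{x,\e_i}_{\#}|Eu|}{|Eu|(K(x,\e_i))}$ on every compact set of $\R^n$, where the Lebesgue-point bound needs a uniform control of ratios such as $|Eu|(K(x,R\e_i))/|Eu|(K(x,\e_i))$, which is not automatic), nor the polar identity $|\gamma|$-a.e.\ in $\R^n$. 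Also the assertion that ``every weak* subsequential limit of $\frac{F^{x,\e_i}_{\#}|Eu|}{|Eu|(K(x,\e_i))}$ coincides with $|\gamma|$'' presupposes a uniform mass bound on $\bar K$ and ignores possible mass of the limit on $\partial K$. These global properties are not decorative: they are used later (items (I)--(III) in the proof of Proposition~\ref{prop:vgblupRinDe}). The gap is easily repaired by citing the same results the paper cites, but as written your last step does not prove the full statement.
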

\begin{proof}
We prove the conclusion for all Lebesgue points of the polar vector $\Omega\ni x\mapsto\frac{dEu}{d|Eu|}(x)$.
A simple computation shows that 
	\begin{equation}\label{e:Euscal}
	E u_{K,x,\e}=\L^n(K) \frac{F^{x,\e}_{\#}E u}{F^{x,\e}_{\#}|E u|(K)}\,,
	\end{equation}
(recall the definition of $F^{x,\e}_{\#}$ in \eqref{e:pushforward}) so that $|Eu_{K,x,\e}|(K)=\L^n(K)$.
Lemma~\ref{lem:unit mass conve} yields for a sequence $\e_i\downarrow 0$ that 
$\{\frac1{\L^n(K)}Eu_{K,x,\e_i}\}_{i\in\N}$ converges weakly* in $\mathcal{M}(\bar{K};M^{n\times n})$ 
to some $\gamma\in\Tan(Eu,x)$, with $|\gamma|(K)=1$,
$|\gamma|(\partial K)=0$, $|\gamma|\in\Tan(|Eu|,x)$ and such that $\gamma=\frac{dEu}{d|Eu|}(x)|\gamma|$
$|\gamma|$-a.e. in $\R^n$ (for the last two claims see \cite[Theorem~2.44]{AFP00}, \cite[Lemma~10.4]{rindler2018calculus}), 
then $|\gamma(K)|=1$. 

Moreover, by taking into account Remark~\ref{r:scaling} we get
\[
\|u_{K,x,\e}\|_{L^1(K;\R^n)}=\frac{\L^n(K)}{\e |E u|(K(x,\e))}\|u(x+\cdot)-\mathfrak{R}_{\e K}[u(x+\cdot)](\cdot)\|_{L^1(\e K;\R^n)}\,,
\]
so that for some constant depending only on $K$
\[
 \|u_{K,x,\e}\|_{L^1(K;\R^n)}\leq c\,\L^n(K)\,.
\]
The compact embedding $BD(K)\hookrightarrow L^1(K;\R^n)$ yields that we can extract a subsequence (not relabeled) with $u_{K,x,\e_i}$
converging in $L^1(K;\R^n)$ to some limit map $w$ belonging to $BD(K)$ satisfying $ \mathfrak{R}_{K}[w]=\underline{0}$. 
Therefore, $Eu_{K,x,\e_i}$ converge to $Ew$ weakly* in $\mathcal{M}(K;\R^{n\times n})$, and thus 
$Ew=\L^n(K)\gamma\res K$. The strict convergence in $BD(K)$ of $\{u_{K,x,\e_i}\}_{i\in\N}$ to $w$ 
then follows at once.
\end{proof}
In what follows, any map $w$ given by Proposition~\ref{p:recalings compactness} will be termed blow-up limit. 
If $x$ is a point of approximate differentiability or a jump point, usually introduced with a different definition 
of the rescaled maps, the blow-up limit is well-known to be unique. 
In turn, this implies that the Radon-Nikod\'ym derivative of the functional $\F$ with respect to $|Eu|$ in such points can be 
straightforwardly characterized in terms of asymptotic Dirichlet problems with boundary values given by the blow-up limit itself 
(cf. Lemma~\ref{lem:derivative of m}). 
In contrast, if $x\in C_u$ is a point satisfying \eqref{e:polar}, usually referred to as a Cantor type point, 
the blow-up limit is in general not unique. In order to overcome this difficulty, a double blow-up procedure is performed. 
By means of this argument, we can reduce ourselves to the case of a two dimensional $BV$ map which is affine in one direction.

The strategy of the proof is a slight variation of \cite[Lemma 2.14]{de2017characterization}, which is originally worked out 
in the context of generalized Young measures. We basically follow the lines of such proof by incorporating also the need
of selecting a sequence preserving the mass along the blow-up process. 
We shall improve upon the structure of blow-ups in Proposition~\ref{prop:vgblup} in section~\ref{s:finer}.

\subsection{A double blow-up procedure}

We introduce some notation necessary for the blow-up procedure. Given a couple of vectors $\xi$, $\eta\in\mathbb{S}^{n-1}$ 
(possibly $\xi=\pm\eta$), consider an orthonormal basis $\zeta_i$ of $\mathrm{span}\{\xi,\eta\}^{\perp}$ 
 (thus for $n=2$ either $\mathrm{span}\{\xi,\eta\}^{\perp}=\{\underline{0}\}$ if $\xi\neq\pm \eta$ or $i=1$ if $\xi=\pm\eta$, 
 and for $n\geq 3$ either $1\leq i\leq n-2$ if $\xi\neq\pm \eta$ or $1\leq i\leq n-1$ if $\xi=\pm\eta$), 
 then for all $\rho>0$ define the bounded, open, convex set containing the origin
 \begin{equation}\label{eqn:convexdefinition}
 P^{\xi,\eta}_{\rho}:= \big\{y\in \R^n:\,|y\cdot \eta|\leq \sfrac\rho2,\, |y\cdot \xi|\leq \sfrac12,\,
 |y\cdot \zeta_i|\leq \sfrac12\,\, \text{for $1\le i\le n-2$}\big\}, 
 	\end{equation}
 if $\xi\neq \pm\eta$, and otherwise if $\xi=\pm\eta$
 \begin{equation}\label{eqn:convexdefinition1}
 P^{\xi,\eta}_{\rho}:= \big\{y\in \R^n:\,|y\cdot \eta|\leq \sfrac\rho2,\,
 |y\cdot \zeta_i|\leq \sfrac12\,\, \text{for $1\le i\le n-1$}\big\}. 
 \end{equation}
 We underline that the role of $\eta$ and $\xi$ is not symmetric in the definition of  $P_{\rho}^{\xi,\eta}$
(in this respect see the comments right before Case~1 in the ensuing proof). Moreover, we do not highlight the dependence of 
$P_{\rho}^{\xi,\eta}$ on $\zeta_i$ not to further overburden the notation. In any case the specific choice of $\zeta_i$ is not relevant 
for the arguments that follow.

With this notation at hand, we can state the key result to prove the integral representation of the Cantor part (recall the definitions
of $F^{x,\e}_{\#}Eu$ given in \eqref{e:pushforward} and that of $u_{K,x,\e}$ given in \eqref{e:rescaled}).

\begin{proposition}[blow-up at $|E^cu|$-a.e. point]\label{prop:vgblupRinDe}
Let $u\in BD(\Omega)$. Then for $|E^c u|$-a.e. $x\in \Omega$ and for every $\rho>0$ there exist an infinitesimal sequence $\{\e_i\}_{i\in \N}$,
vectors $\xi,\,\eta\in\mathbb{S}^{n-1}$, bounded, open, convex set containing the origin $P^{x}_{\rho}:=P^{\xi,\eta}_{\rho}$ (cf. \eqref{eqn:convexdefinition} and \eqref{eqn:convexdefinition1}), and a map $v_{\rho} \in BV(P^{x}_{\rho};\R^n)$ such that 
$u_{P^{x}_{\rho},x,\e_i}$ converge to $v_{\rho}$ strict in $BD(P^{x}_\rho)$, where
\begin{equation}\label{eqncase1}
 v_{\rho}(y)= \bar{\psi}_{\rho}( y\cdot \eta) \xi + (y\cdot \xi) \bar{\beta}_{\rho}\,\eta+\LL_{\rho}y +\mathrm{v}_{\rho},
\end{equation}
for some $\bar{\psi}_{\rho}\in BV_{loc}(\R)$, $\bar{\beta}_{\rho} \in \R$, 
$\mathrm{v}_{\rho}\in \R^n$, and $\LL_{\rho}\in \mathbb{M}_{skew}^{n\times n}$.
Moreover, $v_{\rho}$ satisfies
\begin{align}\label{eqn:relationasymptotics}
	& \mathfrak{R}_{P^{x}_{\rho}}[v_{\rho}]=\underline{0},\qquad
	|E v_{\rho}|(P^{x}_{\rho})=\L^n(P^{x}_{\rho}).
\end{align}
and
	\[
	\frac{\d E v_{\rho}}{\d |E v_{\rho} |}(y)=\frac{\eta\odot \xi}{|\eta\odot \xi|}\qquad 
	\text{$|E v_{\rho}|$-a.e. on $P^{x}_{\rho}$}.
	\]
\end{proposition}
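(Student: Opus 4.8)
The plan is to perform a double blow-up at a point $x\in C_u$ where the polar decomposition \eqref{e:polar} holds with $\frac{\d Eu}{\d|Eu|}(x)=\frac{\eta\odot\xi}{|\eta\odot\xi|}$ and where the conclusions of Proposition~\ref{p:recalings compactness} and Lemma~\ref{lem:killtheantisym} are available (this is a set of full $|E^cu|$-measure).

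\textbf{First blow-up.} I would apply Proposition~\ref{p:recalings compactness} with the convex set $K=P^{\xi,\eta}_\rho$ to obtain a sequence $\e_i\downarrow 0$, a blow-up limit $w\in BD(P^x_\rho)$ with $\mathfrak R_{P^x_\rho}[w]=\underline 0$, $|Ew|(P^x_\rho)=\L^n(P^x_\rho)$, and a tangent measure $\gamma\in\Tan(Eu,x)$ with $Ew=\L^n(P^x_\rho)\,\gamma\res P^x_\rho$ and polar $\frac{\d Ew}{\d|Ew|}=\frac{\eta\odot\xi}{|\eta\odot\xi|}$ $|Ew|$-a.e. However $w$ need not have the affine-in-one-direction structure \eqref{eqncase1}: although the polar of $Ew$ is the rank-one symmetric matrix $\frac{\eta\odot\xi}{|\eta\odot\xi|}$ on $P^x_\rho$, the structure theorems require the polar to be constant on all of $\R^n$. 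So the idea is to extend: by Lemma~\ref{lem:unit mass conve}(a) (Lemma~\ref{lem:unit mass conve}) applied to the measure $|\gamma|$ and Theorem~\ref{thm:tangent measure are tangent}, pass to a second tangent measure at a suitably chosen point $y_0\in\spt|\gamma|$ so as to obtain $\gamma'\in\Tan(Eu,x)$, hence also a tangent measure of $|Eu|$ at $x$, whose polar is still $\frac{\eta\odot\xi}{|\eta\odot\xi|}$, but which is now globally defined with constant polar on $\R^n$; more precisely one realizes $\gamma'$ as $Ew'$ for a $w'\in BD_{loc}(\R^n)$ via the scaling identity \eqref{e:Euscal}, using that $\frac1{\L^n(K)}Eu_{K,x,\e_i}\wt\gamma$. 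Here I follow the scheme of \cite[Lemma~2.14]{de2017characterization} (stated right before this proposition), adapted to preserve the total variation mass via Lemma~\ref{lem:unit mass conve}.

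\textbf{Applying the rigidity result.} Once I have $w'\in BD_{loc}(\R^n)$ with $Ew'=\frac{\eta\odot\xi}{|\eta\odot\xi|}|Ew'|$ on all of $\R^n$, Proposition~\ref{prop:rindlerchar} (both cases $\eta\ne\pm\xi$ and $\eta=\pm\xi$) gives the representation $w'(y)=\alpha_1(y\cdot\xi)\eta+\alpha_2(y\cdot\eta)\xi+\LL y+\mathrm v$ with $\alpha_1,\alpha_2\in BV_{loc}(\R)$ (and $\alpha_1=0$ or $\alpha_1=\alpha_2$ when $\eta=\pm\xi$). Now the point of the asymmetric definition of $P^{\xi,\eta}_\rho$ enters: since $\rho$ is small and the cross-section in the $\xi$-direction has unit length while the $\eta$-direction has length $\rho$, after the double blow-up the only part of the variation that survives at positive mass in $P^x_\rho$ is the $\alpha_2(y\cdot\eta)\xi$ term; the contribution of $\alpha_1(y\cdot\xi)\eta$ (which varies in the long direction $\xi$) degenerates, so one may replace $\alpha_1(y\cdot\xi)$ by its affine part $\bar\beta_\rho(y\cdot\xi)$ on $P^x_\rho$, and renaming $\bar\psi_\rho:=\alpha_2$ yields exactly \eqref{eqncase1}. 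One then normalizes by subtracting $\mathfrak R_{P^x_\rho}[v_\rho]$ and rescaling so that the two conditions in \eqref{eqn:relationasymptotics} hold — here Lemma~\ref{lem:killtheantisym} guarantees the skew part $\e|\mathbb M_{K(x,\e)}[u]|$ vanishes so the normalization is compatible with the strict convergence, and a diagonal argument over the two blow-up sequences produces the single sequence $\{\e_i\}$ along which $u_{P^x_\rho,x,\e_i}\to v_\rho$ strictly in $BD(P^x_\rho)$. That the polar of $Ev_\rho$ is $\frac{\eta\odot\xi}{|\eta\odot\xi|}$ is then immediate from the explicit form of $v_\rho$, since $E(\bar\psi_\rho(y\cdot\eta)\xi + \bar\beta_\rho(y\cdot\xi)\eta + \LL y+\mathrm v) = \xi\odot\eta\,(D\bar\psi_\rho\res\{\cdot\,\eta\}) + \text{(Lebesgue-absolutely continuous, rank-one symmetric) terms}$, all proportional to $\eta\odot\xi$.

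\textbf{Main obstacle.} The delicate point is the passage from "$w$ has rank-one symmetric polar only on the bounded set $P^x_\rho$" to "there is a $w'$ with constant polar on all of $\R^n$ to which Proposition~\ref{prop:rindlerchar} applies", i.e.\ correctly setting up the second blow-up so that (i) the polar direction is preserved, (ii) the total variation mass is not lost (handled by Lemma~\ref{lem:unit mass conve} and the normalization in \eqref{e:rescaled}), and (iii) the resulting limit is again of the form $u_{P^x_\rho,x,\e_i}$ for a genuine infinitesimal sequence $\e_i$, via Theorem~\ref{thm:tangent measure are tangent}. A secondary technical nuisance is justifying that the long-direction term $\alpha_1(y\cdot\xi)\eta$ can be linearized on $P^x_\rho$ in the limit $\rho\to 0$; this is where the specific geometry of $P^{\xi,\eta}_\rho$ in \eqref{eqn:convexdefinition} — unit length in the $\xi$ and $\zeta_i$ directions, length $\rho$ in the $\eta$ direction — is used, together with a one-dimensional $BV$ argument showing that a $BV$ function on an interval of length $1$, when blown up, contributes negligibly to the rescaled energy measure compared with the surviving $\eta$-direction term; alternatively one can absorb the issue into the second blow-up by choosing the base point $y_0$ and scale so that $\alpha_1$ becomes affine in the limit.
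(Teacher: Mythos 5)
Your overall skeleton (first blow-up via Proposition~\ref{p:recalings compactness}, rigidity via Proposition~\ref{prop:rindlerchar}, tangent-of-tangent via Theorem~\ref{thm:tangent measure are tangent}, mass control via Lemma~\ref{lem:unit mass conve}) is the same as the paper's, but the decisive step — why one of the two one-dimensional profiles can be taken \emph{affine} on $P^x_\rho$ — is where your argument fails. Your stated mechanism is that the box is thin (width $\rho$) in the $\eta$-direction, so the term $\alpha_1(y\cdot\xi)\eta$, varying in the long direction, ``degenerates'' and may be replaced by its affine part. This is not true for fixed $\rho$, and $\rho$ is fixed in the statement (there is no limit $\rho\to 0$ here): on $P^{\xi,\eta}_\rho$ one has $|E(\alpha_1(y\cdot\xi)\eta)|(P^{\xi,\eta}_\rho)\approx |\eta\odot\xi|\,\rho\,|D\alpha_1|\big((-\sfrac12,\sfrac12)\big)$ while $|E(\alpha_2(y\cdot\eta)\xi)|(P^{\xi,\eta}_\rho)\approx |\eta\odot\xi|\,|D\alpha_2|\big((-\sfrac\rho2,\sfrac\rho2)\big)$, and these are generically of the same order (e.g. if $\alpha_2\in W^{1,1}$ the second is also $O(\rho)$); a $BV$ profile in the long direction restricted to a thin box is in general nowhere affine. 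The aspect ratio of $P^{\xi,\eta}_\rho$ is not what linearizes anything — its role is only that the final non-affine direction must coincide with the short edge, which is needed in the later boundary estimates (Lemma~\ref{lem:shapeofF}).

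The missing idea, which is the heart of the paper's proof, is that the linearization is produced by the \emph{choice of the second blow-up base point and a case analysis on the singular parts} $D^s\alpha_1$, $D^s\alpha_2$: if (say) $D^s\alpha_1\neq 0$ one blows up $\tilde w$ at a point $y$ where $\alpha_2'$ has a Lebesgue point, $|D^s\alpha_2|$ has zero one-dimensional density, and $|E\tilde w|$ concentrates superlinearly (driven by $D^s\alpha_1$); then the $\alpha_2$-component of the rescaled maps tends to zero in $BD$, so the second limit is $\psi_\rho(z\cdot\mathrm{e}_1)\mathrm{e}_2$ plus a rigid motion. If both singular parts vanish a separate case is required, using the non-degeneracy $|\alpha_1'(y\cdot\mathrm{e}_1)+\alpha_2'(y\cdot\mathrm{e}_2)|\neq 0$ to keep unit mass, and the limit is fully linear. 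Crucially, the orientation of $P^{\xi,\eta}_\rho$ (which of the two polar vectors spans the short edge) is chosen \emph{a posteriori}, according to which $\alpha_k$ retains a singular part; with your choice of $P^{\xi,\eta}_\rho$ fixed at the outset you cannot guarantee the surviving non-affine direction is the short one. Your fallback remark (``choose the base point and scale so that $\alpha_1$ becomes affine in the limit'') is exactly this argument, but it is left entirely unexecuted. A secondary gap: you cannot simply ``normalize and rescale'' the limit to enforce \eqref{eqn:relationasymptotics}, because the statement requires convergence of the specific maps $u_{P^x_\rho,x,\e_i}$, whose normalization is built in; the identities $\mathfrak{R}_{P^x_\rho}[v_\rho]=\underline 0$ and $|Ev_\rho|(P^x_\rho)=\L^n(P^x_\rho)$ must be proved by transferring the second tangent measure back to $\Tan(E^cu,x)$ (condition of type (III) plus Theorem~\ref{thm:tangent measure are tangent}) and using Lemma~\ref{lemma: weakconv} together with $|\gamma|(\partial P^x_\rho)=0$; Lemma~\ref{lem:killtheantisym} plays no role at this stage.
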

\begin{proof}
We divide the proof in two steps, each corresponding to a blow-up procedure.
\smallskip

\noindent\textbf{First blow-up.} 
We perform a first blow-up in a point $x$ satisfying all the conditions listed in (I)-(III) that follows. 
More precisely, consider the subset of points $x\in C_u$ (so that $x$ is a point of approximate continuity 
of $u$ (see \eqref{e:Cu})) having the following additional properties:
\begin{itemize}
\item[(I)] $\frac{\d E u}{\d |E u|}(x)= \frac{\d E^c u}{\d |E^c u|}(x)= \frac{\eta(x)\odot \xi(x)}{|\eta(x)\odot \xi(x)|}$, 
for some vectors $\xi(x),\eta(x)\in\mathbb{S}^{n-1}$, and $\Tan(|Eu|,x)=\Tan(|E^cu|,x)$;
\item[(II)] Proposition~\ref{p:recalings compactness} holds true with $K:=\ZZ^t Q(\underline{0},\rho)$ if $\eta(x)\neq\pm\xi(x)$,
where $\mathbb{B}$ is any invertible matrix  such that $\mathbb{B}\eta(x)=\euno$ and $\mathbb{B}\xi(x)=\edue$,  
and $K:=P_{\rho}^{\xi(x),\eta(x)}$ if $\eta(x)=\pm\xi(x)$: we extract a subsequence (not relabeled) such that 
$u_{K,x,\e_i}$ converge to some map $w$ strict in $BD(K)$, with $ \mathfrak{R}_{K}[w]=\underline{0}$, 
$\{\frac1{\L^n(K)}Eu_{K,x,\e_i}\}_{i\in\N}$ converges weakly* in $\mathcal{M}(\bar{K};\R^{n\times n})$ to some 
$\gamma\in\Tan(Eu,x)$ such that $|\gamma|(K)=|\gamma(K)|=1$, $|\gamma|(\partial K)=0$, 
$|\gamma|\in\Tan(|Eu|,x)$, $\gamma=\frac{dEu}{d|Eu|}(x)|\gamma|$ 
$|\gamma|$-a.e. in $\R^n$, and $Ew=\L^n(K)\gamma\res K$;

\item[(III)] $\Tan(|Ew|, z)\subseteq \Tan(|E^c u|,x)$ for all $z\in \spt|Ew|$.
\end{itemize}
Notice that the set of points where either (I) or (II) or (III) fails is $|E^c u|$-negligible thanks to 
Theorem~\ref{thm:structureof polar}, to the locality of Preiss' tangent space to a measure, to 
Proposition~\ref{p:recalings compactness} itself and to Theorem~\ref{thm:tangent measure are tangent}. 
Proposition~\ref{prop:rindlerchar} describes the structure of the blow-up limit $w$ in (II) in details: 
 \begin{itemize}
 \item if $\eta(x)\neq\pm\xi(x)$: we can find two maps $\alpha_1,\alpha_2\in BV_{loc}(\R)$, 
 $\LL_\rho\in \mathbb{M}_{skew}^{n\times n}$ and $\mathrm{v}_\rho\in \R^n$, such that 
 \[
 w(y)=\alpha_1(y\cdot \eta(x)) \xi(x)+ \alpha_2(y\cdot \xi(x)) \eta(x)+\LL_{\rho}y+\mathrm{v}_{\rho}\,,
 \]

\item if $\eta(x)=\pm\xi(x)$: we can find a map $\alpha\in BV_{loc}(\R)$, 
$\LL_\rho\in \mathbb{M}_{skew}^{n\times n}$ and $\mathrm{v}_\rho\in \R^n$, such that
 \[
 w(y)=\alpha(y\cdot \xi(x)) \xi(x)+\LL_{\rho}y+\mathrm{v}_{\rho}\,.
 \]
\end{itemize}
Thus, if $\eta(x)=\pm\xi(x)$ we conclude by setting $v_\rho:=w$ and with  $P_\rho^x:=P_{\rho}^{\xi(x),\eta(x)}$. 
Otherwise, if $\eta(x)\neq\pm\xi(x)$,  we are forced to take a second blow-up to prove that (at least) one between the 
$\alpha_k$'s can be taken affine.  
\smallskip 

\noindent\textbf{Second blow-up if $\eta(x)\neq\pm\xi(x)$.}
First, we change variables by means of the invertible matrix $\mathbb{B}$ introduced in item (II) above.
Following Remark~\ref{rmk:cov}, with $\tilde{\mathbb{L}}_{\rho}:=\mathbb{B}\mathbb{L}_{\rho}\mathbb{B}^t$ and 
$\tilde{\mathrm{v}}_{\rho}:=\mathbb{B}\mathrm{v}_{\rho}$,  we consider the associated map 
\[\tilde{w}(y)=\alpha_1(y\cdot \euno) \edue+ \alpha_2(y\cdot \edue)\euno
+\tilde{\mathbb{L}}_{\rho} y+\tilde{\mathrm{v}}_{\rho}\,.
\]
We blow-up $\tilde{w}$ around a suitable point $y\in\ZZ^{-t}K=
Q(\underline{0},\rho)$, distinguishing two cases depending on the distributional 
derivatives of the $\alpha_k$'s. We note that the vectors $\eta$ and $\xi$ in the statement correspond exactly 
to the two vectors provided by the polar decomposition of $E^cu$ at $x$, 
$\eta(x)$ and $\xi(x)$, respectively, if $D^s \alpha_1\neq 0$. In this case we 
also set $P_\rho^x:=P_\rho^{\xi(x),\eta(x)}$, the latter set being introduced in \eqref{eqn:convexdefinition}.
Instead, if $D^s \alpha_1=0$ and $D^s \alpha_2\neq 0$,
then $\eta$ corresponds to $\xi(x)$ and $\xi$ to $\eta(x)$, and 
$P_\rho^x:=P_\rho^{\eta(x),\xi(x)}$.
Finally, if both $D^s \alpha_1=D^s \alpha_2=0$, we choose
$\eta=\eta(x)$ and $\xi=\xi(x)$ and set $P_\rho^x:=P_\rho^{\xi(x),\eta(x)}$
(actually, the opposite choice would be fine as well).

\smallskip

\textbf{Case $1$:} \textit{either $D^s \alpha_1\neq 0$ or $D^s \alpha_2\neq 0$}. Without loss of generality 
we may assume that $D^s \alpha_1 \neq 0$. Set $\tilde{P}_{\rho}^{x}:= \ZZ^{-t} P_{\rho}^{x}$ 
(the matrix $\ZZ$ has been introduced above), and select a point $y\in Q(\underline{0},\rho)$ such that 
\begin{itemize}
\item[(a)] $\frac{\d E \tilde{w}}{\d |E \tilde{w}|}(y)
=\frac{\euno\odot \edue}{|\euno\odot \edue|}$, and $\delta^{-n}|E\tilde{w}|(\tilde{P}^x_\rho(y,\delta))\to+\infty$ 
as $\delta\downarrow 0$;
\item[(b)] Proposition~\ref{p:recalings compactness} holds true: $\tilde{w}_{\tilde{P}_{\rho}^{x},y,\delta_i}$, for 
some sequence $\delta_i\downarrow 0$, converge to a map $\tilde{g}_\rho$ strict in $BD(\tilde{P}_{\rho}^{x})$ with 
$\mathfrak{R}_{\tilde{P}^x_\rho}[\tilde{g}_\rho]=\underline{0}$, 
and $\{\frac1{\L^n(\tilde{P}_{\rho}^{x})}E\tilde{w}_{\tilde{P}_{\rho}^{x},y,\delta_i}\}_{i\in\N}$ converges weakly* in 
$\mathcal{M}(\bar{\tilde{P}}_{\rho}^{x};\R^{n\times n})$ to some $\tilde{\gamma}\in\Tan(E\tilde{w},y)$ such that 
$|\tilde\gamma|(\tilde{P}_{\rho}^{x})=|\tilde\gamma(\tilde{P}_{\rho}^{x})|=1$, 
$|\tilde{\gamma}|(\partial \tilde{P}_{\rho}^{x})=0$, $|\tilde{\gamma}|\in\Tan(|E\tilde{w}|,y)$, and 
$\tilde\gamma=\frac{\d E \tilde{w}}{\d |E \tilde{w}|}(y)|\tilde\gamma|$ $|\tilde\gamma|$-a.e. on $\R^n$,
$E\tilde{g}_\rho=\L^n(\tilde{P}_{\rho}^{x})\tilde{\gamma}\res \tilde{P}_{\rho}^{x}$; 

\item[(c)] $\displaystyle \lim_{r\rightarrow 0}\fint_{y\cdot \edue-r}^{y\cdot \edue+r}
| \alpha_2'(s)- \alpha_2'(y\cdot \edue)| \d s =
\displaystyle \lim_{r\rightarrow 0} \frac{1}{2r}|D^s  \alpha_2|(y\cdot \edue-r,y\cdot \edue+r)=0$.
\end{itemize}	 
Since for $\L^1$-a.e. $t\in \R$ it holds
\[
  \lim_{r\rightarrow 0} \fint_{ t-r}^{t+r} | \alpha_2'(s)- \alpha_2'(t)| \d s =0,\quad
  \lim_{r\rightarrow 0} \frac{1}{2r}|D^s \alpha_2|(t-r,t+r)=0,
\]
conditions (c) is true for $|D^s  \alpha_1| \otimes \L^{n-1}$-a.e. $y\in Q(\underline{0},\rho)$. 
First, note that $|D^s\alpha_1|\otimes \L^{n-1}$ is non trivial by assumption; then if $I\subseteq\R$ is the subset of 
points of full $\L^1$ measure for which the previous two conditions hold true, we conclude that 
\[
\big(|D^s  \alpha_1| \otimes \L^{n-1}\big)\big((-\sfrac\rho2,\sfrac\rho 2)\times((-\sfrac\rho2,\sfrac\rho 2)\setminus I)
\times(-\sfrac\rho2,\sfrac\rho2)^{n-2}\big)=0.
\]
On the other hand, conditions (a)-(b) hold $|E^s\tilde{w}|$-a.e. on $Q(\underline{0},\rho)$ in view of 
Theorem~\ref{thm:structureof polar}, the decomposition of $Ew$ in \eqref{e:Eu decomposition}, and 
Proposition~\ref{p:recalings compactness} itself. 
Since the measures $|D^s \alpha_1|\otimes \L^{n-1}$ and $|D\alpha_1| \otimes |D^s \alpha_2|\otimes \L^{n-2}$ are mutually singular,  
the measure $|D^s \alpha_1|\otimes \L^{n-1}$ is absolutely continuous with respect to $|E^s \tilde{w}|$, 
thus we conclude that (a)-(c) hold for $|D^s\alpha_1|\otimes \L^{n-1}$-a.e. $y\in Q(\underline{0},\rho)$.

Then, fix a point $y\in Q(\underline{0},\rho)$ for which (a)-(c) are satisfied,
we show that the second blow-up limit $\tilde{g}_\rho$ satisfies
\begin{equation}\label{e:grho structure 1}
\tilde{g}_\rho(z)=\psi_\rho(z\cdot \euno)\,\edue+\tilde{\mathbb{L}}_{\rho} z+\tilde{\mathrm{v}}_{\rho}\,,
\end{equation}
for some $\psi_\rho\in BV_{loc}(\R)$, $\tilde{\mathrm{v}}_\rho\in\R^n$, and $\tilde{\LL}_\rho\in\mathbb{M}^{n\times n}_{skew}$.
We make a slight abuse of notation since the latter two quantities maybe different from those in the definition of $\tilde{w}$,
but as their role is inessential we keep the same symbols.
To this aim, we split $E \tilde{w}_{\tilde{P}^{x}_{\rho},y,\delta_i}$ as follows
\[
 E \tilde{w}_{\tilde{P}^{x}_{\rho},y,\delta_i}=E \tilde{w}^{(1)}_i+E \tilde{w}^{(2)}_i\,,
\]
where we have set
\begin{equation*}
\tilde{w}^{(1)}_i(z):=\frac{\alpha_1((y+\delta_i z)\cdot\mathrm{e}_1)\mathrm{e}_2-
\mathfrak{R}_{\tilde{P}^{x}_\rho}[\alpha_1((y+\delta_i \cdot)\cdot\mathrm{e}_1)\mathrm{e}_2](z)}
{\delta_i\frac{|E \tilde{w}|(\tilde{P}^{x}_\rho(y,\delta_i))}{\L^n(\tilde{P}^{x}_\rho(y,\delta_i))}}\,,
\end{equation*}
and
\begin{equation*}
\tilde{w}^{(2)}_i(z):=\frac{\alpha_2((y+\delta_i z)\cdot\mathrm{e}_2)\mathrm{e}_1-
\mathfrak{R}_{\tilde{P}^{x}_\rho}[\alpha_2((y+\delta_i \cdot)\cdot\mathrm{e}_2)\mathrm{e}_1](z)}
{\delta_i\frac{|E \tilde{w}|(\tilde{P}^{x}_\rho(y,\delta_i))}{\L^n(\tilde{P}^{x}_\rho(y,\delta_i))}}\,.
\end{equation*}
First, we claim that as $i\uparrow+\infty$
\begin{equation}\label{e:claim w}
|E\tilde{w}^{(2)}_i|(\tilde{P}^{x}_\rho)
\to 0\,.
\end{equation}
Indeed, noting that
\begin{align*}
E \tilde{w}^{(2)}_i=\L^n(\tilde{P}^{x}_{\rho}) \frac{\euno\odot \edue}{|E \tilde{w}|(\tilde{P}^{x}_{\rho}(y,\delta_i))} 
\Big(\delta_i^n \alpha_2'((y+\delta_i \cdot )\cdot \edue)\L^n\res\tilde{P}^{x}_\rho
+F^{y,\delta_i}_{\#}\big((\L^1\otimes D^s \alpha_2 \otimes \L^{n-2})\res\tilde{P}^{x}_\rho\big)\Big)\,.
\end{align*}
On setting $t_i:=\sfrac 1{|E\tilde{w}|(\tilde{P}^{x}_{\rho}(y,\delta_i))}$, and noting that $t_i \delta_i^n$ is infinitesimal 
in view of the choice of $y$ above, if $\varphi\in \mathcal C^0_c(\tilde{P}^{x}_{\rho};\R^n)$ and 
$ \alpha:= \alpha_2'(y\cdot \edue)$, we conclude that
	\begin{align*}
	t_i \delta_i^n&\left|\int_{\tilde{P}^{x}_{\rho}}
	\varphi(z):(\euno\odot \edue) \alpha'_2((y+\delta_i z)\cdot \edue)\d z\right|
	\leq t_i  \int_{\tilde{P}^{x}_{\rho}(y,\delta_i)} \left|\varphi\left(\frac{x-y}{\delta_i}\right)\right| 
	| \alpha'_2(x\cdot \edue)| \d x\\
	&  \leq t_i\|\varphi\|_{\infty}  \int_{\tilde{P}^{x}_{\rho}(y,\delta_i)} | \alpha'_2(x\cdot \edue)- \alpha| \d x
	+ t_i\delta_i^n|\alpha|\|\varphi\|_{\infty}  \mathcal{L}^n(\tilde{P}^{x}_{\rho}) \\&
	= \rho\|\varphi\|_{\infty}t_i \delta_i^{n} 
	\fint_{(y\cdot \edue)-\sfrac{\delta_i}2}^{(y\cdot \edue)+\sfrac{\delta_i}2}  | \alpha'_2(s)- \alpha| \d s
	+ o(1)\,, 
	\end{align*}
which vanishes as $i\uparrow+\infty$ (cf. item (b) above). Arguing similarly, we get that
\begin{align*}
t_i&\Big|\int_{\tilde{P}^{x}_{\rho}} \varphi(z)\,\d F_{\#}^{y,\delta_i}
\big((\L^1\otimes D^s \alpha_2 \otimes \L^{n-2})\res\tilde{P}^{x}_\rho\big)(z)\Big|\\
&\leq \rho\|\varphi\|_{\infty}\L^n(\tilde{P}^{x}_{\rho})t_i\delta_i^{n-1}
|D^s \alpha_2|(y\cdot \edue-\sfrac{\delta_i}2,y\cdot \edue+\sfrac{\delta_i}2)
\end{align*}
is infinitesimal for $i\uparrow+\infty$, as well (cf. item (c) above).

In conclusion, \eqref{e:claim w} implies that $\tilde{w}_i^{(2)}
$ converges to $0$ strongly in $BD(\tilde{P}^x_\rho)$ in view of Remark~\ref{r:scaling}, 
and then the limit of $\tilde{w}_{\tilde{P}^{x}_{\rho},y,\delta_i}$ is equal to that of 
$\tilde{w}^{(1)}_i$. 
Moreover, the latter coincides with the $L^1(\tilde{P}^{x}_{\rho};\R^n)$ limit of 
\begin{equation*}
\frac{\alpha_1((y+\delta_i z)\cdot\mathrm{e}_1)\mathrm{e}_2-
\mathfrak{R}_{\tilde{P}^{x}_\rho}[\alpha_1((y+\delta_i \cdot)\cdot\mathrm{e}_1)\mathrm{e}_2](z)}
{\delta_i\frac{|E(\alpha_1(\cdot\cdot\mathrm{e}_1)\edue)|(\tilde{P}^{x}_\rho(y,\delta_i))}{\L^n(\tilde{P}^{x}_\rho(y,\delta_i))}}\,.
\end{equation*}
as $|E\tilde{w}_{\tilde{P}^{x}_{\rho},y,\delta_i}|(\tilde{P}^{x}_{\rho})
=\mathcal L^n(\tilde{P}^{x}_{\rho})$ and $|E\tilde{w}^{(2)}_i|(\tilde{P}^{x}_\rho)$ 
is infinitesimal. Thus, by the blow-up theory for $BV$ functions (cf. \cite[Proposition~3.77, Theorem~3.95]{AFP00}), 
Lemma~\ref{l:auxiliary} and Remark~\ref{r:scaling} we conclude \eqref{e:grho structure 1} with
\[
\tilde{g}_\rho(z)=\psi_\rho(z\cdot \euno)\,\edue-\frac1{\sqrt{2}}\big(\edue\otimes\euno-\euno\otimes\edue\big)z\,.
\]
\smallskip

\textbf{Case $2$:} \textit{$D^s\alpha_1=D^s\alpha_2=0$}. 
In this case $\tilde{w}\in W^{1,1}(\tilde{P}^{x}_{\rho},\mathbb{R}^n)$, where $\tilde{P}_{\rho}^{x}:= \ZZ^{-t} P_{\rho}^{x}$
(the matrix $\ZZ$ has been introduced in item (II) above).
Arguing as in Case~1, we select a point $y\in Q(\underline{0},\rho)$ such that 
	\begin{itemize}
\item[(a')] $\frac{\d E \tilde{w}}{\d |E \tilde{w}|}(y)=\frac{\euno\odot \edue}{|\euno\odot \edue|}$;
\item[(b')] Proposition~\ref{p:recalings compactness} holds true: $\tilde{w}_{\tilde{P}_{\rho}^{x},y,\delta_i}$, for 
some sequence $\delta_i\downarrow 0$, converge to a map $\tilde{g}_\rho$ strict in $BD(\tilde{P}_{\rho}^{x})$ with 
$\mathfrak{R}_{\tilde{P}^{x}_{\rho}}[\tilde{g}_\rho]=\underline{0}$, and
$\{\frac1{\L^n(\tilde{P}_{\rho}^{x})}E\tilde{w}_{\tilde{P}_{\rho}^{x},y,\delta_i}\}_{i\in\N}$ converges weakly* in 
$\mathcal{M}(\bar{\tilde{P}}_{\rho}^{x};\R^{n\times n})$ to some $\tilde{\gamma}\in\Tan(E\tilde{w},y)$ such that 
$|\tilde\gamma|(\tilde{P}_{\rho}^{x})=|\tilde\gamma(\tilde{P}_{\rho}^{x})|=1$, 
$|\tilde{\gamma}|(\partial \tilde{P}_{\rho}^{x})=0$, $|\tilde{\gamma}|\in\Tan(|E\tilde{w}|,y)$, 
$\tilde\gamma=\frac{\d E \tilde{w}}{\d |E \tilde{w}|}(y)|\tilde\gamma|$ $|\tilde\gamma|$-a.e. on $\R^n$,
and $E\tilde{g}_\rho=\L^n(\tilde{P}_{\rho}^{x})\tilde{\gamma}\res \tilde{P}_{\rho}^{x}$;
\item[(c')] $\displaystyle \lim_{r\rightarrow 0} \fint_{(y\cdot \mathrm{e}_k)-r}^{(y\cdot \mathrm{e}_k)+r} 
	| \alpha_k'(s)- \alpha_k'(y\cdot \mathrm{e}_k)| \d s =0 $ for $k\in\{1,2\}$;
\item[(d')] $| \alpha_1'(y\cdot \euno)+ \alpha_2'(y\cdot \edue)|\neq 0$.
\end{itemize}	 
Note that conditions (a')-(c') hold for $\L^n$-a.e. $y\in Q(\underline{0},\rho)$, and (d') for a set of positive Lebesgue
measure in $Q(\underline{0},\rho)$ as $|E\tilde{w}|(Q(\underline{0},\rho))=\frac{|\euno\odot\edue|}{|\eta\odot\xi|}\L^n(Q(\underline{0},\rho))\neq 0$
thanks to \eqref{covmass2} and (II). 
As a consequence of all these conditions, 
$\delta^{-n}|E\tilde{w}|(\tilde{P}^x_\rho(y,\delta))\to|e(\tilde{w})(y)|\mathcal{L}^n(\tilde{P}_{\rho}^{x})
\neq 0$ as $\delta\downarrow 0$.
Thus, by blowing-up the function $\tilde{w}$ at one such point along the sequence of radii 
$\delta_i$ given by (b'), we may infer that, up to extracting a subsequence not relabeled,
$\tilde{w}_{\tilde{P}^{x}_{\rho},y,\delta_i}$ converge strictly in $BD(\tilde{P}^{x}_{\rho})$ to 
\[
\tilde{g}_\rho(z)=\frac{\alpha_1'(y\cdot\euno)+\alpha_2'(y\cdot\edue)}{2|e(\tilde{w})(y)|}
\big((z\cdot\euno)\,\edue+(z\cdot\edue)\,\euno\big)\,.
\]
To get the latter formula, we use that in this setting $\alpha_1$ and $\alpha_2$ are $W^{1,1}_{loc}(\R)$, and that condition (c') 
and Lemma~\ref{l:auxiliary} hold true.
\smallskip

\noindent\textbf{Conclusion in case $\eta(x)\neq\xi(x)$.} 
Both in case~$1$ and $2$ we have selected a point 
$y\in Q(\underline{0},\rho)$, a function $\tilde{g}_\rho\in BD(\tilde{P}_{\rho}^x)$ 
and a measure $\tilde{\gamma}\in\text{Tan}(E\tilde{w},y)$
satisfying: $\tilde{g}_\rho$ is affine in (at least) one direction between $\euno$ and $\edue$ (cf. \eqref{e:grho structure 1}) 
and 
$|\tilde\gamma|(\tilde{P}_{\rho}^{x})=|\tilde\gamma(\tilde{P}_{\rho}^{x})|=1$, 
$|\tilde{\gamma}|(\partial \tilde{P}_{\rho}^{x})=0$, $|\tilde{\gamma}|\in\Tan(|E\tilde{w}|,y)$, 
$\tilde\gamma=\frac{\euno\odot \edue}{|\euno\odot \edue|}|\tilde\gamma|$ $|\tilde\gamma|$-a.e. on $\R^n$,
and $E\tilde{g}_\rho=\L^n(\tilde{P}_{\rho}^{x})\tilde{\gamma}\res \tilde{P}_{\rho}^{x}$.

Lemma~\ref{lem:changeofvariable} implies that
$\text{Tan}(Ew,\ZZ^ty)=\ZZ^{-1}\big(\ZZ_{\#}^t\text{Tan}(E\tilde{w},y)\big)\ZZ^{-t}$. Therefore, by setting 
$g_{\rho}:=\ZZ^{-1} \tilde{g}_{\rho}(\ZZ^{-t}\cdot)$ and
$\gamma:=\ZZ^{-1}\big(\ZZ^{t}_{\#}\tilde\gamma\big)\ZZ^{-t}$ we deduce that 
$\gamma\in \text{Tan}(Ew,\ZZ^t y)\subseteq \Tan(E^c u,x)$ and that
$|\gamma|\in \text{Tan}(|Ew|,\ZZ^t y)\subseteq  \Tan(|E^c u|,x)$ in view of condition (III). Moreover, 
$Eg_\rho=\L^n(P_{\rho}^{x})\gamma\res P_{\rho}^{x}$.

Let $\{r_i\}_{i\in \N}$ be a sequence of radii and $\{c_i\}_{i\in \N}$ 
of positive constants  such that $c_i F^{x,r_i}_{\#} E u$ converge locally weakly* to $\gamma$ in 
$\mathcal{M}_{loc}(\R^n;\R^{n\times n})$, and $c_i F^{x,r_i}_{\#}|Eu|$ converge locally weakly* 
to $|\gamma|$ on $\mathcal{M}(\R^n)$. Then, $|\gamma|(\partial {P}_{\rho}^{x})=0$,
by taking into account that $|\tilde\gamma|(\partial \tilde{P}_{\rho}^{x})=0$. 
Thus, Lemma~\ref{lemma: weakconv} 
yields
	\begin{align*}
	c_i F^{x,r_i}_{\#} |Eu|(P_{\rho}^{x}) \rightarrow 
	|E g_\rho|(P_{\rho}^{x})\stackrel{\eqref{covmass2}}{=}|\det\ZZ|\frac{|\eta\odot\xi|}{|\euno\odot\edue|}
	\L^n(\tilde{P}_{\rho}^{x})=\frac{|\eta\odot\xi|}{|\euno\odot\edue|}\L^n(P_{\rho}^{x})\,.
	\end{align*}
Hence, $\L^n(P_{\rho}^{x}) \frac{F^{x,r_i}_{\#}E u}{F^{x,r_i}_{\#}|E u|(P_{\rho}^{x})} \wt  \frac{|\euno\odot\edue|}{|\eta\odot\xi|} Eg_\rho$
in $\mathcal{M}(P_{\rho}^{x};\R^{n\times n})$, and
$\L^n(P_{\rho}^{x}) \frac{F^{x,r_i}_{\#} |E u|}{F^{x,r_i}_{\#} |E u|(P_{\rho}^{x})} \wt \frac{|\euno\odot\edue|}{|\eta\odot\xi|} |Eg_\rho|$
in $\mathcal{M}(P_{\rho}^{x})$.
Furthermore, note that the rescaled maps $u_{P_\rho^{x},x,r_i}$ converge strongly in 
$L^1(P_\rho^{x};\R^n)$ to a map $v_{\rho}\in BD(P_{\rho}^{x})$ and $Eu_{P_\rho^{x},x,r_i}$ converge weakly* to 
$Ev_\rho$ in $\mathcal{M}(P_\rho^{x},\R^{n\times n})$, up to a subsequence not relabeled. 
Therefore, we find $Ev_{\rho}= \frac{|\euno\odot\edue|}{|\eta\odot\xi|} E g_\rho$ as measures on $P_\rho^{x}$ 
(cf. Remark~\ref{rmk:cov}). In view of 
Eq. \eqref{e:characterizationR} in Theorem~\ref{thm:poincare}, 
there is an infinitesimal rigid motion such that 
\[
v_{\rho}(z)=\frac{|\euno\odot\edue|}{|\eta\odot\xi|} g_{\rho}(z)+\LL_\rho z+\mathrm{v}_\rho\,.
\]
In addition, $|E v_{\rho}|(P_{\rho}^{x})=\L^n(P_{\rho}^{x})$, so that in particular $u_{P_\rho^{x},x,r_i}$ converge 
to $v_{\rho}$ strictly in $BD(P_\rho^{x})$.
We complete the proof of \eqref{eqn:relationasymptotics} by deducing that 
$\mathfrak{R}_{P_{\rho}^{x}}[v_{\rho}]=\underline{0}$ from $\mathfrak{R}_{P_{\rho}^{x}}[u_{P_\rho^{x},x,r_i}]=\underline{0}$
for all $i$ and the above mentioned strict convergence in $BD(P_\rho^{x})$.
\end{proof}
\begin{remark}
Notice that the parallelogram $P^x_{\rho}$ produced by Proposition~\ref{prop:vgblupRinDe}, along which the sequences $u_{P_{\rho}^x,x,\e}$ converges strictly in $BD(P_{\rho}^x)$ to a function $v_{\rho}$ affine in one direction, has been chosen in a way that the short edge (the edge of size $\rho$) is oriented exactly in the direction of non-affinity of $v_{\rho}$. The non-affine direction is exactly the one we need to control in order to conclude the representation Theorem~\ref{thm:mainthm} and thus, the fact that the parallelograms $P^x_{\rho}$ are well oriented plays a crucial role in the argument that will follow. 
\end{remark}

\subsection{Finer analysis of the blow-up limits}\label{s:finer} 
We proceed next with the investigation 
of some properties of the blow-up limits provided by Proposition~\ref{prop:vgblupRinDe} 
that follow by exploiting their structure evidenced in Eqs. \eqref{eqncase1}-\eqref{eqn:relationasymptotics}.
Similar results are available in the $BD$ setting in case the base point is either a point of approximate differentiability 
or a jump point. The analogue of the ensuing result is also well-known for $BV$ functions (see for instance \cite[Theorem~3.95]{AFP00}).

We will state some technical lemmas that will allow us to identify in a more precise way the blow-up limits.
To this aim, for a function $\psi\in BV((a,b))$ we denote by $\psi(a)$, $\psi(b)$ the right and left traces in $a$, $b\in\R$, respectively.
We start with the case $\xi\neq\pm\eta$.
\begin{lemma}\label{lem:blwpCASEONE}
Let $\{v_{\rho}\}_{\rho>0}\subseteq BV(P ^{\xi,\eta}_{\rho};\R^n)$, $\xi,\,\eta\in\R^n\setminus\{\underline{0}\}$ 
with $\xi\neq\pm \eta$, be a sequence of functions such that
	\[
	v_{\rho}(x)=\bar{\psi}_{\rho}(x\cdot \eta) \xi+ 
	(x\cdot \xi)\bar{\beta}_{\rho} \,\eta
	+\LL_{\rho} x+\mathrm{v}_{\rho} 
	\]
for some $\bar{\psi}_{\rho}\in BV\big((-\sfrac \rho2,\sfrac \rho2)\big), \bar{\beta}_{\rho} \in \R$,
$\mathrm{v}_{\rho}\in \R^n, \LL_{\rho}\in \mathbb{M}_{skew}^{n\times n}$. 
Assume also that 
	\[
	\mathfrak{R}_{P^{\xi,\eta}_{\rho}}[v_{\rho}]=\underline{0},\qquad |E v_{\rho}|(P^{\xi,\eta}_{\rho})=\L^n(P^{\xi,\eta}_{\rho})\,,
	\]
Then, $v_{\rho}$ can be re-written as
	\begin{align*}
	v_{\rho}(x)= \psi_{\rho}(x\cdot \eta) \xi+\frac{
	x\cdot \xi}{2|\eta\odot \xi|}\eta,
	\end{align*}
for some $\psi_{\rho}\in BV\big((-\sfrac \rho2,\sfrac \rho2)\big)$ with zero average such that 
$\{\sfrac{\psi_{\rho}(\cdot)}\rho\}_{\rho>0}$ is uniformly bounded in 
$L^\infty\big((-\sfrac\rho2,\sfrac\rho2)\big)$ and
\begin{align}\label{eqn:blw2CASEONE}
 D \psi_{\rho}\big((-\sfrac \rho2,\sfrac \rho2)\big)
  =\frac\rho{2|\eta\odot \xi|}\,.
\end{align}
\end{lemma}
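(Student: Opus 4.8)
The plan is to extract the structural information about $v_\rho$ from the two normalization conditions $\mathfrak{R}_{P^{\xi,\eta}_\rho}[v_\rho]=\underline 0$ and $|Ev_\rho|(P^{\xi,\eta}_\rho)=\L^n(P^{\xi,\eta}_\rho)$ in turn. First I would compute $Ev_\rho$ explicitly from the given form of $v_\rho$. Since $\nabla v_\rho(x)=\bar\psi_\rho'(x\cdot\eta)\,\xi\otimes\eta+\bar\beta_\rho\,\eta\otimes\xi+\LL_\rho$ in the absolutely continuous part, and the only singular contribution comes from $D^s\bar\psi_\rho$, we get
\[
Ev_\rho=\big(\bar\psi_\rho'(x\cdot\eta)+\bar\beta_\rho\big)\,(\xi\odot\eta)\,\L^n\res P^{\xi,\eta}_\rho + (\xi\odot\eta)\,D^s\bar\psi_\rho\otimes\L^{n-1},
\]
the skew part $\LL_\rho$ disappearing under symmetrization. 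In particular $Ev_\rho$ is a (scalar measure) multiple of the fixed symmetric matrix $\xi\odot\eta$.

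Next I would use $\mathfrak R_{P^{\xi,\eta}_\rho}[v_\rho]=\underline 0$, which means both $b_{P^{\xi,\eta}_\rho}[v_\rho]=\underline 0$ (zero average) and $\mathbb M_{P^{\xi,\eta}_\rho}[v_\rho]=\underline 0$ (zero skew part). By Proposition~\ref{p:Rinvariantset} and Lemma~\ref{l:auxiliary}, $\mathbb M_K$ reproduces the skew part of an affine map, so $\mathbb M_{P^{\xi,\eta}_\rho}[v_\rho]=\LL_\rho$ together with a contribution from the non-affine term; the cleanest route is to observe that since $\mathbb M_K$ annihilates the symmetric-gradient part (it only sees $Du-(Du)^t$, and $\xi\odot\eta$ is symmetric so its contribution cancels), we get $\mathbb M_{P^{\xi,\eta}_\rho}[v_\rho]=\LL_\rho=\underline 0$. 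Hence the skew term drops out entirely. The zero-average condition lets me absorb $\mathrm v_\rho$ and the constant part of the remaining affine pieces: writing the coordinate $t:=x\cdot\eta$ and recalling that $\xi,\eta$ are orthogonal to the $\zeta_i$'s and (when $\xi\not\perp\eta$) mutually non-orthogonal, a short computation of the average over the parallelogram $P^{\xi,\eta}_\rho$ forces $\bar\beta_\rho$ to combine with a shift of $\bar\psi_\rho$ so that one may rewrite $v_\rho(x)=\psi_\rho(x\cdot\eta)\,\xi+c_\rho\,(x\cdot\xi)\,\eta$ with $\psi_\rho\in BV((-\sfrac\rho2,\sfrac\rho2))$ of zero average, for some constant $c_\rho$. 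Here the key point is that the linear term $(x\cdot\xi)\eta$ has zero average over the symmetric parallelogram automatically, so no constant survives there.

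Then I would pin down the constant $c_\rho$ and the scaling of $\psi_\rho$ from the mass constraint. From the formula for $Ev_\rho$ above (now with $\LL_\rho=0$),
\[
|Ev_\rho|(P^{\xi,\eta}_\rho)=|\xi\odot\eta|\int_{-\rho/2}^{\rho/2}|\psi_\rho'(t)+c_\rho|\,\d t\cdot\L^{n-1}(\text{cross-section})+|\xi\odot\eta|\,|D^s\psi_\rho|\big((-\sfrac\rho2,\sfrac\rho2)\big)\cdot\L^{n-1}(\text{cross-section}),
\]
and matching this with $\L^n(P^{\xi,\eta}_\rho)=\rho\cdot\L^{n-1}(\text{cross-section})$ (the cross-section being the unit cube in the remaining $n-1$ directions). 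The total-variation term equals $|\xi\odot\eta|\,|D(\psi_\rho+c_\rho\,\mathrm{id})|\big((-\sfrac\rho2,\sfrac\rho2)\big)$ cross-sectionally, so identifying $\psi_\rho+c_\rho(x\cdot\xi)/|\xi\odot\eta|^2$ … more simply: I would absorb the factor by setting the final $\psi_\rho$ to be the original primitive rescaled so that $|\xi\odot\eta|\cdot|D\psi_\rho|((-\sfrac\rho2,\sfrac\rho2))=\rho$, i.e.\ $D\psi_\rho((-\sfrac\rho2,\sfrac\rho2))=\rho/(2|\xi\odot\eta|)$ once one checks the derivative of $\psi_\rho$ plus the constant $c_\rho$ has constant sign (which is forced because its integral of the absolute value equals the integral without absolute value, up to a sign — this is exactly how \eqref{eqn:blw2CASEONE} with the equality rather than an inequality comes out). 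The constant multiplying $(x\cdot\xi)\eta$ is then determined to be $\frac1{2|\eta\odot\xi|}$ by comparing coefficients, giving the stated form. Finally, the uniform $L^\infty$ bound on $\psi_\rho(\cdot)/\rho$ follows from the Poincaré–Sobolev inequality \eqref{eqn:ebo} applied to $v_\rho$ on $P^{\xi,\eta}_\rho$: $\|v_\rho-\mathfrak R_{P^{\xi,\eta}_\rho}[v_\rho]\|_{L^{n/(n-1)}}=\|v_\rho\|_{L^{n/(n-1)}}\le c|Ev_\rho|(P^{\xi,\eta}_\rho)=c\L^n(P^{\xi,\eta}_\rho)$, which after the scaling $x\mapsto x/\rho$ in the $\eta$-direction gives a uniform bound on $\|\psi_\rho/\rho\|$ in an averaged sense, upgraded to $L^\infty$ using that $\psi_\rho$ is monotone (constant sign of $D\psi_\rho$) and has zero average, hence its oscillation controls its sup norm.

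The main obstacle I anticipate is the bookkeeping in the second paragraph: correctly handling the non-orthogonality of $\xi$ and $\eta$ when computing averages and the operator $\mathbb M_K$ over the skewed parallelogram $P^{\xi,\eta}_\rho$, and making sure that the term $\bar\beta_\rho(x\cdot\xi)\eta$ genuinely recombines into the clean coefficient $\frac1{2|\eta\odot\xi|}$ rather than producing extra cross-terms. The sign/monotonicity argument that turns the a priori inequality $|D\psi_\rho|\ge|D\psi_\rho(\cdots)|$ into the exact identity \eqref{eqn:blw2CASEONE} is the other delicate point; it relies crucially on the mass-saturation $|Ev_\rho|(P^{\xi,\eta}_\rho)=\L^n(P^{\xi,\eta}_\rho)$ being an equality, which is exactly what Proposition~\ref{prop:vgblupRinDe} provides.
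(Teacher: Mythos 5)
There is a genuine gap at the heart of your second paragraph. You claim that $\mathbb M_{P^{\xi,\eta}_\rho}$ ``annihilates the symmetric-gradient part'' because $Ev_\rho$ is a multiple of the symmetric matrix $\xi\odot\eta$, and hence that $\mathbb M_{P^{\xi,\eta}_\rho}[v_\rho]=\LL_\rho=\underline 0$. But by Lemma~\ref{l:auxiliary} the operator $\mathbb M_K$ sees the skew part of the \emph{full} distributional derivative $Du$, not of $Eu$: the two terms $\bar\psi_\rho(x\cdot\eta)\xi$ and $\bar\beta_\rho(x\cdot\xi)\eta$ have (distributional) gradients proportional to $\xi\otimes\eta$ and $\eta\otimes\xi$ respectively, which are not symmetric when $\xi\neq\pm\eta$, so each contributes a nontrivial skew term to $\mathbb M_K$ (concretely, in the rectified coordinates of the paper, a boundary-trace term $\frac1{2\rho}\big(\bar\psi_\rho(\sfrac\rho2)-\bar\psi_\rho(-\sfrac\rho2)\big)(\edue\otimes\euno-\euno\otimes\edue)$ and $\frac{\bar\beta_\rho}2(\euno\otimes\edue-\edue\otimes\euno)$). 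Thus $\mathfrak R[v_\rho]=\underline 0$ does \emph{not} give $\LL_\rho=\underline 0$; it gives $\LL_\rho=-\kappa_\rho(\,\cdot\,)$ with $\kappa_\rho=\frac1{2\rho}\big(\bar\psi_\rho(\sfrac\rho2)-\bar\psi_\rho(-\sfrac\rho2)-\bar\beta_\rho\rho\big)$, and absorbing this $\kappa_\rho$ into redefined $\psi_\rho,\beta_\rho$ is precisely the mechanism (the paper's Step~1--2) that forces the equal split $D\psi_\rho\big((-\sfrac\rho2,\sfrac\rho2)\big)=\beta_\rho\rho$ and hence the exact coefficient $\frac1{2|\eta\odot\xi|}$ in both the linear term and \eqref{eqn:blw2CASEONE}.

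Because you discard this interaction, your later step cannot close: after assuming $\LL_\rho=0$ you are left with $v_\rho=\psi_\rho(x\cdot\eta)\xi+c_\rho(x\cdot\xi)\eta$ and the single mass constraint $|D(\psi_\rho+c_\rho\,t)|\big((-\sfrac\rho2,\sfrac\rho2)\big)=\sfrac\rho{|\eta\odot\xi|}$, which is one equation for the two unknowns $c_\rho$ and $D\psi_\rho\big((-\sfrac\rho2,\sfrac\rho2)\big)$; the phrase ``determined by comparing coefficients'' has nothing to compare against, and the conclusion would in fact be false without the $\kappa_\rho$ relation (any splitting of the total shear between $\psi_\rho$ and $c_\rho$ would be compatible with the mass constraint alone). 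A secondary point: the constant sign of $D(\psi_\rho+\beta_\rho t)$, which you invoke both for the exact identity \eqref{eqn:blw2CASEONE} and for the $L^\infty$ bound, does not follow from mass saturation alone (that only normalizes the total variation); one needs the positivity of the polar, i.e.\ $Ev_\rho=\frac{\eta\odot\xi}{|\eta\odot\xi|}|Ev_\rho|$ on subslabs, as supplied by Proposition~\ref{prop:vgblupRinDe} and used in the paper's Step~2. The change of variables via an invertible $\mathbb B$ with $\mathbb B\eta=\euno$, $\mathbb B\xi=\edue$ (Lemma~\ref{lem:changeofvariable}, Remark~\ref{rmk:cov}) would also spare you the bookkeeping on the skewed parallelogram that you flag as a concern.
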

	\begin{proof}
Set $R_{\rho}:=\big(-\sfrac\rho2,\sfrac\rho2\big)\times \big(-\sfrac12,\sfrac12\big)^{n-1}$,
and let $\mathbb{B}\in \mathbb{M}^{n\times n}$ be any invertible matrix such that $\mathbb{B}\eta=\euno, \mathbb{B}\xi=\edue$ and mapping $R_{\rho}$ onto $P_{\rho}^{\xi,\eta}$. 
Namely, $P_{\rho}^{\xi,\eta}=\mathbb{B}^t R_{\rho}$. By invoking Lemma~\ref{lem:changeofvariable} and Remark~\ref{rmk:cov} we can infer that
    \[
    \tilde{v}_{\rho}(y):=\mathbb{B} v_{\rho}(\mathbb{B}^t y)
    \]
satisfies $\tilde{v}_{\rho}\in BD(R_{\rho})$ and
    \begin{equation*}
    \tilde{v}_{\rho}(y)=\bar{\psi}_{\rho}(y\cdot \euno) \edue+
    (y\cdot \edue)\bar{\beta}_{\rho}\,\euno +
	\tilde{\mathbb{L}}_{\rho} y+\tilde{\mathrm{v}}_{\rho},
	\end{equation*}
with $\tilde{\mathbb{L}}_{\rho}=\mathbb{B}\mathbb{L}_{\rho}\mathbb{B}^t$, $\tilde{\mathrm{v}}_{\rho}=\mathbb{B}\mathrm{v}_{\rho}$. 
Moreover $\mathfrak{R}_{R_{\rho}}[\tilde{v}_{\rho}]=\underline{0}$, and
	\begin{equation}\label{totalmass}
|E \tilde{v}_{\rho}|(R_{\rho})=\frac{|\euno\odot \edue|}{|\eta\odot \xi|} 
\rho,\qquad  E \tilde{v}_\rho = \frac{\euno\odot \edue}{|\euno\odot \edue|} |E \tilde{v}_\rho|.
\end{equation}	
\smallskip

\noindent\textbf{Step $1$:} \textit{Identification and
properties of $ \psi_{\rho}$ and $ \beta_{\rho}$.}
Condition $\mathfrak{R}_{R_{\rho}}[\tilde{v}_{\rho}]=\underline{0}$ is equivalent to
$\mathbb{M}_{R_{\rho}}[\tilde{v}_{\rho}]=b_{R_{\rho}}[\tilde{v}_{\rho}]=\underline{0}$
(see \eqref{e:bK} for the definition of $b_{R_{\rho}}$ and \eqref{e:MK} for that of $\mathbb{M}_{R_{\rho}}$).
In turn, from these equalities we get that
	\begin{align*}\label{eqn:shape1}
	\tilde{\mathrm{v}}_{\rho}+\Big(\fint_{R_{\rho}} \bar{\psi}_{\rho} (y\cdot \euno)\d y\Big)\edue
	=\tilde{\mathrm{v}}_{\rho}+\Big(\fint_{-\sfrac \rho2}^{\sfrac \rho2} \bar{\psi}_{\rho}(t)  \d t\Big)\edue=\underline{0}\,.
  \end{align*}
Thanks to Lemma~\ref{l:auxiliary} we get
\begin{align*}
\mathbb{M}_{R_{\rho}} \left[\Big( \bar{\psi}_{\rho}(y\cdot \euno)\,-\fint_{-\sfrac \rho2}^{\sfrac \rho2} \bar{\psi}_{\rho}(t)  \d t\Big)\edue\right]
=\frac{1}{2\rho} \big(\bar{\psi}_{\rho}( \sfrac \rho2) -\bar{\psi}_{\rho}(-\sfrac \rho2)\big)(\edue\otimes \euno-\euno\otimes \edue)\,,
\end{align*}
and 
\begin{align*}
\mathbb{M}_{R_{\rho}} \left[ (y\cdot \edue)\bar{\beta}_{\rho} \euno\right]=& \frac{\bar{\beta}_{\rho}}{2}  (\euno\otimes \edue-\edue\otimes \euno)\,.
\end{align*}
Therefore, recalling that $\mathbb{M}_{R_{\rho}}\big[\tilde{\LL}_{\rho}y\big]=\tilde{\LL}_{\rho}y$, we conclude that for every $y\in R_{\rho}$
\begin{align*}
\tilde{\LL}_{\rho}y+\mathbb{M}_{R_{\rho}}[\tilde{v}_{\rho}]y=
\tilde{\LL}_{\rho}y+\kappa_{\rho}(\edue\otimes \euno-\euno\otimes \edue)y=\underline{0}\,,
\end{align*}
where we have set 	
\begin{equation}\label{e:krho}
\kappa_{\rho}:= \frac{1}{2\rho}\big(\bar{\psi}_{\rho}(\sfrac \rho2)-\bar{\psi}_{\rho}(-\sfrac \rho2)-\bar{\beta}_{\rho}\rho \big).
\end{equation}
In particular, by defining
	\[
	\psi_{\rho}(t):=\bar{\psi}_{\rho}(t)-\kappa_{\rho} t - \fint_{-\sfrac\rho2}^{\sfrac\rho2} \bar{\psi}_{\rho}(t) \d t, 
	\qquad  \beta_{\rho}:=\bar{\beta}_{\rho}+\kappa_{\rho},
	\]
	$\psi_{\rho}\in BV((-\sfrac\rho2,\sfrac\rho2))$ has zero average, and 
	\begin{equation}\label{shapeinside}
	\tilde{v}_{\rho}(y)= \psi_{\rho}(x\cdot \euno) \edue + (x\cdot \edue) \beta_{\rho}\euno.
	\end{equation}
Moreover, from the very definitions of $\tilde{v}_\rho$, $\psi_\rho$ and $\beta_\rho$ we see that
	\[
	E \tilde{v}_{\rho}=(\euno\odot \edue)\big(D \bar{\psi}_{\rho} \otimes \L^{n-1} +\bar{\beta}_{\rho}\L^n\big)
	=(\euno\odot \edue)\big(D \psi_{\rho} \otimes \L^{n-1} +\beta_{\rho}\L^n\big).
	\]
Let $I\subseteq (-\sfrac \rho2,\sfrac \rho2)$ be $\L^1$-measurable, then
\begin{align}\label{number 1}
	\frac{\euno\odot \edue}{|\euno\odot \edue|}&|E \tilde{v}_{\rho}| (I\times  (-\sfrac12,\sfrac12)^{n-1})
	=E \tilde{v}_{\rho} (I\times  (-\sfrac12,\sfrac12)^{n-1})\nonumber\\&
	=(\euno\odot \edue)\big( D \bar{\psi}_{\rho} (I)+   \bar{\beta}_{\rho}\L^1(I) \big)  
	=(\euno\odot \edue)\big( D \psi_{\rho} (I)+   \beta_{\rho}\L^1(I) \big).  
\end{align}
In particular, if $I=(-\sfrac\rho2,\sfrac\rho 2)$ by exploiting \eqref{totalmass} we conclude 
	\begin{equation}\label{eqn:totmass}
	D\bar{\psi}_{\rho}\big((-\sfrac \rho2,\sfrac \rho2)\big)+\bar{\beta}_{\rho}\rho
	=D\psi_{\rho}\big((-\sfrac \rho2,\sfrac \rho2)\big)+\beta_{\rho}\rho
	=\frac{\rho}{|\eta\odot\xi|}. 
\end{equation}
\smallskip

\noindent\textbf{Step $2$:} \textit{The value of $\beta_{\rho}$}.
Let $\I:=\{I\subseteq(-\sfrac \rho2,\sfrac \rho2)\quad\L^1\, \text{-measurable}:\L^1(I)>0\}$, then from \eqref{number 1} we deduce that 
	\[
	\inf_{\I}\frac{D\bar{\psi}_{\rho}(I)}{\L^1(I)}+\bar{\beta}_{\rho} \geq0, \qquad 
\inf_{\I}\frac{D \psi_{\rho}(I)}{\L^1(I)}+\beta_{\rho} \geq0\,\,.
	\]
Hence, $t\mapsto\bar{\psi}_{\rho}(t)+\bar{\beta}_{\rho}t$ and $t\mapsto \psi_{\rho}(t)+\beta_{\rho}t$ 
are monotone non-decreasing functions. 

Thus, thanks to the definition of $\kappa_\rho$ we infer that $D \psi_{\rho} (-\sfrac \rho2,\sfrac \rho2)=\beta_\rho\rho$ as
\begin{align*}
D \psi_{\rho}&\big((-\sfrac \rho2,\sfrac \rho2)\big)=D \bar{\psi}_{\rho}\big((-\sfrac \rho2,\sfrac \rho2)\big)-\kappa_{\rho}\rho\\&
\stackrel{\eqref{e:krho}}{=}\frac{1}{2}\big(D \bar{\psi}_{\rho}\big((-\sfrac \rho2,\sfrac \rho2)\big)+\bar{\beta}_{\rho}\rho\big)
\stackrel{\eqref{eqn:totmass}}{=}\frac{1}{2}\big(D \psi_{\rho}\big((-\sfrac \rho2,\sfrac \rho2)\big)+\beta_{\rho}\rho\big).
\end{align*}
In conclusion, we get 
	\begin{equation}\label{massinside}
	D \psi_{\rho} \big((-\sfrac \rho2,\sfrac \rho2)\big)
	=
	\beta_{\rho}\rho=\frac{\rho}{2|\eta\odot\xi|}.
	\end{equation}
\smallskip

\noindent\textbf{Step $3$:} \textit{Inverse change of variables and conclusion}. 
By combining \eqref{shapeinside} and \eqref{massinside} we are thus led to
    \[
    \tilde{v}_{\rho}(y)=\psi_{\rho}(x\cdot \euno)\edue + \frac{(x\cdot \edue) }{2|\eta\odot\xi|} \euno\,.
    \]
Due to the very definition, $v_{\rho}(y):=\mathbb{B}^{-1} \tilde{v}_{\rho} (\mathbb{B}^{-t} y)$, thus we get that
    \[
    v_{\rho}(y)=\psi_{\rho}(x\cdot \eta)\xi + \frac{x\cdot \xi}{2|\eta\odot \xi|} \eta\,.
    \]
Finally, by taking into account that  $t\mapsto h_\rho(t):=\psi_{\rho}(t)+\beta_{\rho}t$ 
is monotone non-decreasing with zero average we get (by \cite[Remark~3.50]{AFP00} and a simple scaling argument)
\[
\|h_\rho\|_{L^\infty((-\sfrac \rho2,\sfrac \rho2))}\leq |Dh_\rho|\big((-\sfrac \rho2,\sfrac \rho2)\big)=\frac{\rho}{|\eta\odot \xi|}.
\]
The statement for $\psi_\rho$ then follows at once.
	\end{proof}
Similarly, we can characterize the case $\xi=\pm \eta$.
\begin{lemma}\label{lem:blwpCASETHREE}
Let $\{v_{\rho}\}_{\rho>0}\subseteq BV(P^{\xi,\eta}_{\rho};\R^n)$, $\xi,\,\eta\in\R^n\setminus\{\underline{0}\}$ 
with $\xi=\pm \eta$, be a family of functions such that
	\[
	v_{\rho}(x)=\bar{\psi}_{\rho}(x\cdot \eta) \eta+\LL_{\rho} x+\mathrm{v}_{\rho} 
	\]
for some $\mathrm{v}_{\rho}\in \R^n$, $\LL_{\rho}\in \mathbb{M}_{skew}^{n\times n}$, 
$\bar{\psi}_{\rho}\in BV\big((-\sfrac \rho2,\sfrac \rho2)\big)$. 
Assume also that, 
	\[
	\mathfrak{R}_{P^{\xi,\eta}_{\rho}}[v_{\rho}]=\underline{0},\qquad|E v_{\rho}|(P^{\xi,\eta}_{\rho})=
	\L^n(P^{\xi,\eta}_{\rho})\,.
	\]
Then, $v_{\rho}$ can be re-written as
	\begin{align*}
	v_{\rho}(x)= & \psi_{\rho}(x\cdot \eta) \eta
	\end{align*}
for a non-decreasing function $\psi_{\rho}\in BV\big((-\sfrac \rho2,\sfrac \rho2)\big)$ with zero average. 
Moreover, $\{\sfrac{\psi_{\rho}(\cdot)}\rho\}_{\rho>0}$ is uniformly bounded in 
$L^\infty\big((-\sfrac \rho2,\sfrac \rho2)\big)$, and
\begin{align}\label{eqn:blw2CASETHREE}
 |D \psi_{ \rho}|\big((-\sfrac \rho2,\sfrac \rho2)\big)
 =\frac{\rho}{|\eta\odot \eta|}\,.
\end{align}
\end{lemma}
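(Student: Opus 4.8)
## Proof Proposal for Lemma~\ref{lem:blwpCASETHREE}

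The plan is to mirror closely the argument used in the proof of Lemma~\ref{lem:blwpCASEONE}, which is in fact simpler here since $\xi = \pm\eta$ forces the blow-up limit to depend on a single scalar profile in a single direction. First I would reduce to coordinates: pick an invertible matrix $\mathbb{B}$ such that $\mathbb{B}\eta = \euno$ (and accordingly $\mathbb{B}\xi = \pm\euno$) mapping the slab-type convex set $P^{\xi,\eta}_\rho$ of \eqref{eqn:convexdefinition1} onto the rescaled cube $R_\rho := (-\sfrac\rho2,\sfrac\rho2)\times(-\sfrac12,\sfrac12)^{n-1}$. Setting $\tilde v_\rho(y) := \mathbb{B} v_\rho(\mathbb{B}^t y)$ and invoking Lemma~\ref{lem:changeofvariable} and Remark~\ref{rmk:cov}, I get $\tilde v_\rho \in BD(R_\rho)$ of the form $\tilde v_\rho(y) = \bar\psi_\rho(y\cdot\euno)\euno + \tilde{\mathbb{L}}_\rho y + \tilde{\mathrm{v}}_\rho$ with $\mathfrak{R}_{R_\rho}[\tilde v_\rho] = \underline{0}$, and with total variation $|E\tilde v_\rho|(R_\rho)$ a fixed multiple of $\rho$ determined by the change-of-variables formula \eqref{covmass2}.

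Next I would extract the affine correction. Imposing $b_{R_\rho}[\tilde v_\rho] = \underline{0}$ fixes $\tilde{\mathrm{v}}_\rho$ in terms of the average of $\bar\psi_\rho$; imposing $\mathbb{M}_{R_\rho}[\tilde v_\rho] = \underline{0}$ and using Lemma~\ref{l:auxiliary} as in Step~1 of the previous proof shows that the only contribution to $\mathbb{M}_{R_\rho}$ coming from the profile part $\bar\psi_\rho(y\cdot\euno)\euno$ vanishes — since $\euno\otimes\euno - \euno\otimes\euno = 0$, unlike the $\euno,\edue$ situation — and hence the skew part $\tilde{\mathbb{L}}_\rho$ must itself be zero. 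This is the key structural simplification relative to Lemma~\ref{lem:blwpCASEONE}: here there is no $\kappa_\rho$ shear to absorb, so after subtracting the (scalar) average we immediately obtain $\tilde v_\rho(y) = \psi_\rho(y\cdot\euno)\euno$ with $\psi_\rho := \bar\psi_\rho - \fint \bar\psi_\rho$, a function in $BV((-\sfrac\rho2,\sfrac\rho2))$ with zero average.

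Then I would compute the total variation. We have $E\tilde v_\rho = (\euno\odot\euno)\, D\psi_\rho \otimes \L^{n-1}$, so $|E\tilde v_\rho|(R_\rho) = |\euno\odot\euno|\,|D\psi_\rho|((-\sfrac\rho2,\sfrac\rho2))$, while the constraint $|E v_\rho|(P^{\xi,\eta}_\rho) = \L^n(P^{\xi,\eta}_\rho)$ combined with \eqref{covmass2} gives $|E\tilde v_\rho|(R_\rho) = \frac{|\euno\odot\euno|}{|\eta\odot\eta|}\rho$ (noting $|\eta\odot\eta| = 1$ since $|\eta|=1$, but keeping it symbolic to match the statement). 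Comparing, $|D\psi_\rho|((-\sfrac\rho2,\sfrac\rho2)) = \frac{\rho}{|\eta\odot\eta|}$, which is \eqref{eqn:blw2CASETHREE}. Moreover, since the polar of $E v_\rho$ is the single rank-one direction $\frac{\eta\odot\eta}{|\eta\odot\eta|}$ which is a \emph{positive} semidefinite matrix (a square $\eta\otimes\eta$ up to scaling), $D\psi_\rho$ is a nonnegative measure, hence $\psi_\rho$ is non-decreasing. Finally, transforming back via $v_\rho(x) = \mathbb{B}^{-1}\tilde v_\rho(\mathbb{B}^{-t}x) = \psi_\rho(x\cdot\eta)\eta$, and using that a monotone zero-average function on an interval has sup-norm bounded by its total variation (cf.\ \cite[Remark~3.50]{AFP00} plus scaling), I obtain $\|\psi_\rho\|_{L^\infty} \le |D\psi_\rho|((-\sfrac\rho2,\sfrac\rho2)) = \frac{\rho}{|\eta\odot\eta|}$, giving the claimed uniform bound on $\psi_\rho/\rho$. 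I do not anticipate a genuine obstacle here; the only point requiring a little care is verifying that the skew term $\tilde{\mathbb{L}}_\rho$ is forced to vanish — i.e., that in the $\xi=\pm\eta$ case there is no analogue of the shear $\kappa_\rho$ — which follows because the matrix arising from the profile in the $\mathbb{M}_{R_\rho}$ computation is identically zero, leaving $\mathfrak{R}_{R_\rho}[\tilde v_\rho] = \underline{0}$ to directly impose $\tilde{\mathbb{L}}_\rho = 0$.
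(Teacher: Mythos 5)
Your argument is correct and is essentially the route the paper intends: the paper offers no separate proof of this lemma, treating it as the simpler analogue of Lemma~\ref{lem:blwpCASEONE}, and your steps — orthogonal change of basis onto $R_\rho$, the observation that the profile $\bar\psi_\rho(y\cdot\euno)\euno$ gives a symmetric gradient so its contribution to $\mathbb{M}_{R_\rho}$ vanishes (hence $\tilde{\mathbb{L}}_{\rho}=0$ and there is no $\kappa_\rho$-shear to absorb), followed by the total-variation identification and the $L^\infty$ bound for a monotone zero-average profile — follow exactly that scheme. The only point worth noting is that the non-decreasing character of $\psi_\rho$ rests on the polar identity $E v_\rho=\frac{\eta\odot\eta}{|\eta\odot\eta|}|Ev_\rho|$ inherited from Proposition~\ref{prop:vgblupRinDe}, which is not listed among the lemma's stated hypotheses but is used implicitly in the same way the paper uses it in \eqref{totalmass} within the proof of Lemma~\ref{lem:blwpCASEONE}, so your treatment matches the paper's.
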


Let us summarize the results contained in Proposition~\ref{prop:vgblupRinDe}, Lemma~\ref{lem:blwpCASEONE} and
Lemma~\ref{lem:blwpCASETHREE} in the following statement (see \eqref{e:rescaled} for the definition of $u_{K,x,\e}$, and
Eqs. \eqref{eqn:convexdefinition}, \eqref{eqn:convexdefinition1} for those of $P_{\rho}^{\xi,\eta}$).
\begin{proposition}[Selecting a good blow-up $|E^cu|$-a.e.]\label{prop:vgblup}
Let $u\in BD(\Omega)$. Then for $|E^cu|$-a.e. $x\in\Omega$ 
there exist vectors $\xi,\,\eta\in\mathbb{S}^{n-1}$, $\{\rho_j\}_{j\in \N}$, $\rho_j\downarrow 0$ as $j\uparrow+\infty$, 
and for all $j\in\N$ a sequence $\{\e_{i,j}\}_{i\in \N}$, with $\e_{i,j}\downarrow 0$ as $i\uparrow+\infty$, 
a bounded, open, convex set containing the origin $P^x_j:=P^{\xi,\eta}_{\rho_j}$ and a function $v_j$ such that 
$u_{P^x_j,x,\e_{i,j}}$ converge to $v_j$ strictly in $BD(P^x_j)$ as $i\uparrow+\infty$, with
	\begin{itemize}
	\item[(a)] if $\xi\neq \pm\eta$: 
\[
v_{j}(y):=\psi_{j}(y\cdot \eta)\xi+\frac{y\cdot\xi}{2|\eta\odot \xi|}\eta
\]
for some map $\psi_{j}\in BV\big((-\sfrac{\rho_j}{2},\sfrac{\rho_j}{2})\big)$ with zero average such that
	\begin{align*}
D\psi_{j} \big((-\sfrac{\rho_j}{2},\sfrac{\rho_j}{2})\big)
=\frac{\rho_j}{2|\eta\odot \xi|},\qquad
\sup_{j\in \N} \left\|\sfrac{\psi_{j} (\cdot)}{\rho_j} \right\|_{L^\infty((-\sfrac{\rho_j}2,\sfrac{\rho_j}2))}<+\infty;
\end{align*}
\item[(b)] if $\xi=\pm \eta$: 
\[
v_{j}(y)=\psi_{j}(y\cdot \eta)\eta
\]
for some non-decreasing map $\psi_{j}\in BV\big((-\sfrac{\rho_j}{2},\sfrac{\rho_j}{2})\big)$ with zero average such that
\begin{align*}
|D\psi_{j}|\big((-\sfrac{\rho_j}{2},\sfrac{\rho_j}{2})\big)
=\frac{\rho_j}{|\eta\odot\eta|},\qquad
\sup_{j\in \N}\left\|\sfrac{\psi_{j} (\cdot)}{\rho_j} \right\|_{L^\infty((-\sfrac{\rho_j}2,\sfrac{\rho_j }2))}<+\infty.
\end{align*}
\end{itemize}
 \end{proposition}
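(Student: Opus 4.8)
The plan is to assemble Proposition~\ref{prop:vgblup} purely by combining the three results already proved, since no new blow-up argument is needed: Proposition~\ref{prop:vgblupRinDe} produces, for $|E^cu|$-a.e. $x$ and for every fixed $\rho>0$, a convex set $P^x_\rho=P^{\xi,\eta}_\rho$, an infinitesimal sequence along which the rescaled maps $u_{P^x_\rho,x,\e}$ converge strictly in $BD(P^x_\rho)$ to a limit $v_\rho$ of the form \eqref{eqncase1} satisfying $\mathfrak{R}_{P^x_\rho}[v_\rho]=\underline0$, $|Ev_\rho|(P^x_\rho)=\L^n(P^x_\rho)$ and the polar identity $\frac{\d Ev_\rho}{\d|Ev_\rho|}=\frac{\eta\odot\xi}{|\eta\odot\xi|}$. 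Note that the vectors $\xi,\eta\in\mathbb S^{n-1}$ produced by Proposition~\ref{prop:vgblupRinDe} depend only on $x$ (they are, up to relabelling, the polar vectors of $E^cu$ at $x$), not on $\rho$; this is the only point worth stressing, and it is already implicit in the statement of that proposition, whose conclusion fixes $\xi,\eta$ first and then ranges over $\rho$.

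Then I would simply feed $v_\rho$ into Lemma~\ref{lem:blwpCASEONE} when $\xi\neq\pm\eta$ and into Lemma~\ref{lem:blwpCASETHREE} when $\xi=\pm\eta$: the hypotheses of those lemmas are exactly the structural relations \eqref{eqncase1}--\eqref{eqn:relationasymptotics} guaranteed by Proposition~\ref{prop:vgblupRinDe}, so each lemma rewrites $v_\rho$ in the claimed reduced form, with a $BV\big((-\sfrac\rho2,\sfrac\rho2)\big)$ function $\psi_\rho$ of zero average, the stated total-variation identity (equation \eqref{eqn:blw2CASEONE} respectively \eqref{eqn:blw2CASETHREE}), and the uniform bound $\big\|\sfrac{\psi_\rho(\cdot)}{\rho}\big\|_{L^\infty}\le \sfrac1{|\eta\odot\xi|}$ (respectively $\sfrac1{|\eta\odot\eta|}$) which is \emph{independent of $\rho$}. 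The latter is the crucial output: because the bound is uniform, one may extract an arbitrary sequence $\rho_j\downarrow0$ and obtain $\sup_j\big\|\sfrac{\psi_{\rho_j}(\cdot)}{\rho_j}\big\|_{L^\infty}<+\infty$, which is how the $\rho$-dependence in Proposition~\ref{prop:vgblupRinDe} is upgraded to a sequential statement with a uniform constant.

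Finally I would do the bookkeeping on the sequences. For each fixed $j$, the conclusion of Proposition~\ref{prop:vgblupRinDe} applied with $\rho=\rho_j$ furnishes an infinitesimal sequence $\{\e_{i,j}\}_{i\in\N}$ along which $u_{P^x_j,x,\e_{i,j}}\to v_j$ strictly in $BD(P^x_j)$, where $P^x_j:=P^{\xi,\eta}_{\rho_j}$ and $v_j$ is the reduced limit just produced; since the strict convergence and the identities $\mathfrak{R}_{P^x_j}[u_{P^x_j,x,\e_{i,j}}]=\underline0$, $|Eu_{P^x_j,x,\e_{i,j}}|(P^x_j)=\L^n(P^x_j)$ pass to the limit, all the stated properties of $v_j$ are inherited. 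Choosing $\rho_j=2^{-j}$ (say) gives $\rho_j\downarrow0$, and the uniform $L^\infty$ bound from the previous paragraph yields the $\sup_{j\in\N}$ estimate in (a) and (b). I do not expect any genuine obstacle here: the entire content is organizational, the one subtlety being to make explicit that $\xi,\eta$ can be chosen once and for all (independent of $\rho$) — which one reads off from the proof of Proposition~\ref{prop:vgblupRinDe}, where $\xi,\eta$ are determined by the polar decomposition of $E^cu$ at $x$ via Theorem~\ref{thm:structureof polar} before $\rho$ ever enters — so that the diagonal over $\rho_j$ is legitimate.

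\begin{proof}
For $|E^cu|$-a.e. $x\in\Omega$, Proposition~\ref{prop:vgblupRinDe} provides vectors $\xi,\eta\in\mathbb S^{n-1}$,
depending only on $x$, such that for every $\rho>0$ there are an infinitesimal sequence $\{\e_i\}_{i\in\N}$
(depending on $\rho$) and a map $v_\rho\in BV(P^{\xi,\eta}_\rho;\R^n)$ of the form \eqref{eqncase1} with
$u_{P^{\xi,\eta}_\rho,x,\e_i}\to v_\rho$ strictly in $BD(P^{\xi,\eta}_\rho)$, satisfying \eqref{eqn:relationasymptotics}
and $\frac{\d Ev_\rho}{\d|Ev_\rho|}=\frac{\eta\odot\xi}{|\eta\odot\xi|}$ $|Ev_\rho|$-a.e. on $P^{\xi,\eta}_\rho$.

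Fix one such $x$. If $\xi\neq\pm\eta$, the hypotheses of Lemma~\ref{lem:blwpCASEONE} are met by $v_\rho$ for every $\rho>0$;
hence $v_\rho$ can be rewritten as $v_\rho(y)=\psi_\rho(y\cdot\eta)\xi+\frac{y\cdot\xi}{2|\eta\odot\xi|}\eta$ for some
$\psi_\rho\in BV\big((-\sfrac\rho2,\sfrac\rho2)\big)$ with zero average,
$D\psi_\rho\big((-\sfrac\rho2,\sfrac\rho2)\big)=\frac\rho{2|\eta\odot\xi|}$, and
$\big\|\sfrac{\psi_\rho(\cdot)}\rho\big\|_{L^\infty((-\sfrac\rho2,\sfrac\rho2))}\le\sfrac1{|\eta\odot\xi|}$.
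If instead $\xi=\pm\eta$, Lemma~\ref{lem:blwpCASETHREE} applies likewise and yields
$v_\rho(y)=\psi_\rho(y\cdot\eta)\eta$ with $\psi_\rho\in BV\big((-\sfrac\rho2,\sfrac\rho2)\big)$ non-decreasing,
of zero average, $|D\psi_\rho|\big((-\sfrac\rho2,\sfrac\rho2)\big)=\frac\rho{|\eta\odot\eta|}$, and
$\big\|\sfrac{\psi_\rho(\cdot)}\rho\big\|_{L^\infty((-\sfrac\rho2,\sfrac\rho2))}\le\sfrac1{|\eta\odot\eta|}$.
In both cases the $L^\infty$ bound on $\sfrac{\psi_\rho}\rho$ is independent of $\rho$.

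Choose $\rho_j:=2^{-j}$, so that $\rho_j\downarrow0$ as $j\uparrow+\infty$, set $P^x_j:=P^{\xi,\eta}_{\rho_j}$,
and let $v_j:=v_{\rho_j}$, $\psi_j:=\psi_{\rho_j}$ be the objects just constructed with $\rho=\rho_j$.
By Proposition~\ref{prop:vgblupRinDe} there is, for each $j$, an infinitesimal sequence $\{\e_{i,j}\}_{i\in\N}$
such that $u_{P^x_j,x,\e_{i,j}}\to v_j$ strictly in $BD(P^x_j)$ as $i\uparrow+\infty$. The previous paragraph gives
the explicit form of $v_j$ together with the identity on $D\psi_j\big((-\sfrac{\rho_j}2,\sfrac{\rho_j}2)\big)$
(respectively on $|D\psi_j|$), while the uniform-in-$\rho$ bound yields
$\sup_{j\in\N}\big\|\sfrac{\psi_j(\cdot)}{\rho_j}\big\|_{L^\infty((-\sfrac{\rho_j}2,\sfrac{\rho_j}2))}<+\infty$.
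This proves (a) when $\xi\neq\pm\eta$ and (b) when $\xi=\pm\eta$.
\end{proof}
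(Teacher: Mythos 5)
Your overall strategy is exactly the paper's: the proposition is obtained by feeding the limits of Proposition~\ref{prop:vgblupRinDe} into Lemma~\ref{lem:blwpCASEONE} (case $\xi\neq\pm\eta$) and Lemma~\ref{lem:blwpCASETHREE} (case $\xi=\pm\eta$), with no new blow-up argument. However, the one point you single out as "the only subtlety" is precisely where your argument has a gap. You claim that the vectors $\xi,\eta$ produced by Proposition~\ref{prop:vgblupRinDe} are independent of $\rho$, reading this off the statement ("fixes $\xi,\eta$ first and then ranges over $\rho$") and off the proof ("determined by the polar decomposition before $\rho$ ever enters"). Neither is accurate. In the statement the existential quantifier on $\xi,\eta$ comes \emph{after} "for every $\rho>0$", so $\rho$-dependence is allowed; and in the proof, while the \emph{unordered} pair is indeed $\{\xi(x),\eta(x)\}$ from Theorem~\ref{thm:structureof polar}, the \emph{labelling} --- which of the two is the non-affine (short, size-$\rho$) direction $\eta$ and which is the affine direction $\xi$ --- is decided in the second blow-up according to whether $D^s\alpha_1\neq 0$ or $D^s\alpha_1=0\neq D^s\alpha_2$. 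The maps $\alpha_1,\alpha_2$ come from the first blow-up limit $w$, which depends on $K=\ZZ^tQ(\underline{0},\rho)$ and on a $\rho$-dependent subsequence, and blow-ups at Cantor points are not unique; so the ordered pair $(\xi,\eta)$, and hence the orientation of $P^{\xi,\eta}_\rho$ and the direction of affinity of $v_\rho$, may genuinely vary with $\rho$.

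Consequently, fixing $\rho_j=2^{-j}$ outright and asserting that one ordered pair $(\xi,\eta)$ works for all $j$ is not justified: along your full sequence the affine direction could in principle alternate between $\xi(x)$ and $\eta(x)$, and then neither alternative (a) nor (b) of the proposition would hold with a single pair. The repair is short and is exactly what the paper does: since for each $\rho_j$ there are only two possible labellings, extract (pigeonhole) a subsequence of $\{\rho_j\}$ along which the maps $v_{\rho_j}$ are affine in one and the same direction, relabel that direction as $\xi$, and only then apply Lemma~\ref{lem:blwpCASEONE} or Lemma~\ref{lem:blwpCASETHREE}. With that extraction inserted, the rest of your bookkeeping (the zero-average $\psi_j$, the total-variation identities, and the $\rho$-uniform $L^\infty$ bound on $\psi_\rho/\rho$ giving the $\sup_j$ estimate) is correct and matches the paper's argument.
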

\begin{proof}
We prove the result for the subset of points $F$ for which Proposition~\ref{prop:vgblupRinDe} holds true.
In particular, $|E^c u|(\Omega\setminus F)=0$. One such point $x$ being fixed, note that given any infinitesimal 
sequence $\{\rho_j\}_{j\in\N}$ we can extract a subsequence (not relabeled) along which the maps $v_{\rho_j}$ 
provided by Proposition~\ref{prop:vgblupRinDe} are affine in one single direction (either $\eta$ 
or $\xi$) provided in the statement.
Without loss of generality we denote such a direction by $\xi$ to be coherent with the notation of 
Proposition~\ref{prop:vgblupRinDe} itself.

Assume first $\xi\neq \pm \eta$. Thanks to Proposition~\ref{prop:vgblupRinDe} we can find a sequence of scales 
$\{\e_{i,j}\}_{i\in \N}$ such that the rescaled maps converge strictly in $BD(P_{j}^x)$ to a map $v_{\rho_j}$  as
in the statement there. By using Lemma~\ref{lem:blwpCASEONE} we conclude.

Finally, if $\xi=\pm\eta$ we argue similarly by using Lemma~\ref{lem:blwpCASETHREE} rather than Lemma~\ref{lem:blwpCASEONE}.
\end{proof}

\section{Proof of the main result}\label{s:proofmainresult}
We first recall the results in \cite[Theorem~3.3, Remark~3.5]{ebobisse2001note}. 
The original statement concerns integral representation of the volume and jump energy 
densities of functionals satisfying (H1)-(H4) and a more stringent version of (H5) (cf. Remark~\ref{r:H5bis}) 
and for functions in the subspace $SBD(\Omega)$. In what follows we state the result for the full space 
$BD(\Omega)$. Indeed, the same proof works with no difference since it is obtained via 
the global method for relaxation, hinging on a blow-up argument and the 
characterization of the energy densities in terms of the Dirichlet cell formulas defining 
$\mm$. We notice that (H4) and (H5) are actually not needed for the integral representation of the bulk and surface 
terms of the energy.
\begin{lemma}\label{lem:goodpointsdensity}
   Let $\F$ be satisfying (H1)-(H3). Then, for every $u\in BD(\Omega)$ 
    \begin{itemize}
        \item[(a)] for $\L^n$-a.e. $x_0\in \Omega$ 
            \[
            \lim_{\e\rightarrow 0} \frac{\F(u,Q(x_0,\e))}{\e^n}= f(x_0,u(x_0),\nabla u(x_0)),
            \]
            where $f$ denotes the function defined in \eqref{e:f}; 
        \item[(b)] for $\H^{n-1}$-a.e. $x_0\in  J_u$ 
             \[
            \lim_{\e\rightarrow 0} \frac{\F(u,Q^{\nu_u(x_0)}(x_0,\e))}{\e^{n-1}}= 
            g(x_0,u^-(x_0),u^+(x_0),\nu_u(x_0))
            \]
            where $g$ denotes the function defined in \eqref{e:g}, 
    \end{itemize}
\end{lemma}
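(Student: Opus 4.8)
The plan is to run the blow-up scheme of the global method for relaxation, following closely \cite{bouchitte1998global} and its $BD$ adaptation in \cite{ebobisse2001note}; the only new bookkeeping concerns the choice of radii along which the boundary trace of $u$ is close to that of its infinitesimal profile. Fix $u\in BD(\Omega)$, set $\ell_{x_0}(y):=u(x_0)+\nabla u(x_0)(y-x_0)$ and, for $x_0\in J_u$, $w_{x_0}:=u_{u^-(x_0),u^+(x_0),\nu_u(x_0)}(\cdot-x_0)$. By (H3) and the upper bound in (H2) the set function $\F(u,\cdot)$ is a finite Radon measure, so Besicovitch differentiation gives that $\lim_{\e\to0}\F(u,Q(x_0,\e))/\e^n$ and $\lim_{\e\to0}\F(u,Q^{\nu_u(x_0)}(x_0,\e))/\e^{n-1}$ exist and equal the densities of $\F(u,\cdot)$ with respect to $\L^n$, resp.\ to $\H^{n-1}\res J_u$, for $\L^n$-a.e.\ $x_0$, resp.\ for $\H^{n-1}$-a.e.\ $x_0\in J_u$ (to handle the varying orientation of the cubes in the jump case one first splits $J_u$ into countably many pieces on which $\nu_u$ is essentially constant, as in \cite{bouchitte1998global,ebobisse2001note}).

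Next I would apply Lemma~\ref{lem:derivative of m} with $K=Q(\underline{0},1)$, resp.\ $K=Q^{\nu}(\underline{0},1)$, and $\mu=\L^n+|E^su|$, and use that $|E^su|(Q(x_0,\e))/\e^n\to0$ at $\L^n$-a.e.\ point, while $|E^su|(Q^{\nu_u(x_0)}(x_0,\e))/\e^{n-1}\to|(u^+(x_0)-u^-(x_0))\odot\nu_u(x_0)|\in(0,\infty)$ and both $\L^n$ and $|E^cu|$ have vanishing $(n-1)$-density at $\H^{n-1}$-a.e.\ $x_0\in J_u$; this reduces the assertions to
\begin{align*}
\theta(x_0)&:=\lim_{\e\to0}\frac{\F(u,Q(x_0,\e))}{\e^n}=\lim_{\e\to0}\frac{\mm(u,Q(x_0,\e))}{\e^n},\\
\vartheta(x_0)&:=\lim_{\e\to0}\frac{\F(u,Q^{\nu_u(x_0)}(x_0,\e))}{\e^{n-1}}=\lim_{\e\to0}\frac{\mm(u,Q^{\nu_u(x_0)}(x_0,\e))}{\e^{n-1}},
\end{align*}
so that it remains to prove $\theta(x_0)=f(x_0,u(x_0),\nabla u(x_0))$ $\L^n$-a.e.\ and $\vartheta(x_0)=g(x_0,u^-(x_0),u^+(x_0),\nu_u(x_0))$ $\H^{n-1}$-a.e.\ on $J_u$. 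I would further restrict to points where $u$ is approximately differentiable (resp.\ is a jump point), so that $\int_{Q(x_0,\e)}|u-\ell_{x_0}|\d y=o(\e^{n+1})$ (resp.\ $\int_{Q^{\nu_u(x_0)}(x_0,\e)}|u-w_{x_0}|\d y=o(\e^{n})$). Slicing the cubes by the level sets of the $\ell^\infty$-distance to $x_0$ and averaging over a thin shell then provides, for every $s\in(0,1)$ and all $\e$ small, a radius $\e'=\e'(\e,s)\in[(1-s)\e,\e]$ with $\e^{-n}\int_{\partial Q(x_0,\e')}|u-\ell_{x_0}|\d\H^{n-1}\le s^{-1}\omega(\e)$ (resp.\ $\e^{1-n}\int_{\partial Q^{\nu_u(x_0)}(x_0,\e')}|u-w_{x_0}|\d\H^{n-1}\le s^{-1}\omega(\e)$) for some infinitesimal $\omega$ independent of $s$; in particular, taking $s=\tfrac12$, there is a sequence $\e_j\downarrow 0$ along which these quantities vanish.

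Now for the conclusion, in the bulk case (the surface case is identical with $\e^{n-1}$, $Q^{\nu_u(x_0)}$, $w_{x_0}$ in place of $\e^n$, $Q$, $\ell_{x_0}$). Along the good radii $\e_j$, Lemma~\ref{lem:continuity if m} yields $|\mm(u,Q(x_0,\e_j))-\mm(\ell_{x_0},Q(x_0,\e_j))|\le C\int_{\partial Q(x_0,\e_j)}|u-\ell_{x_0}|\d\H^{n-1}=o(\e_j^n)$, hence $\mm(\ell_{x_0},Q(x_0,\e_j))/\e_j^n\to\theta(x_0)$ and so $f(x_0,u(x_0),\nabla u(x_0))\ge\theta(x_0)$. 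For the reverse inequality I would pick $\e_k\downarrow0$ realizing the $\limsup$ in \eqref{e:f} and $s_k\downarrow0$ slowly, together with good radii $\e_k'\in[(1-s_k)\e_k,\e_k]$, so that $\e_k'/\e_k\to1$ and $\e_k^{-n}\int_{\partial Q(x_0,\e_k')}|u-\ell_{x_0}|\d\H^{n-1}\to0$. Since $\ell_{x_0}$ is affine, extending an almost-minimizer of $\mm(\ell_{x_0},Q(x_0,\e_k'))$ by $\ell_{x_0}$ on the frame $Q(x_0,\e_k)\setminus\overline{Q(x_0,\e_k')}$ produces an admissible competitor for $\mm(\ell_{x_0},Q(x_0,\e_k))$ with no jump across $\partial Q(x_0,\e_k')$; since by (H3) and the upper bound in (H2) the $\F(\ell_{x_0},\cdot)$-mass of the frame is $O(\e_k^n-\e_k'^n)=o(\e_k^n)$, using Lemma~\ref{lem:continuity if m} once more,
\[
\mm(\ell_{x_0},Q(x_0,\e_k))\le\mm(\ell_{x_0},Q(x_0,\e_k'))+o(\e_k^n)\le\mm(u,Q(x_0,\e_k'))+C\int_{\partial Q(x_0,\e_k')}|u-\ell_{x_0}|\d\H^{n-1}+o(\e_k^n).
\]
Dividing by $\e_k^n$ and using $\mm(u,Q(x_0,\e_k'))/\e_k'^n\to\theta(x_0)$, $\e_k'/\e_k\to1$ and the choice of $\e_k'$, we get $f(x_0,u(x_0),\nabla u(x_0))\le\theta(x_0)$. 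Part (b) follows verbatim, the frame estimate now reading $|Ew_{x_0}|(Q^{\nu_u(x_0)}(x_0,\e)\setminus Q^{\nu_u(x_0)}(x_0,\e'))=|(u^+(x_0)-u^-(x_0))\odot\nu_u(x_0)|(\e^{n-1}-\e'^{n-1})=o(\e^{n-1})$.

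I expect the delicate point to be the inequality $f\le\theta$ (and its surface analogue): a direct gluing of a near-optimal competitor for $\mm(u,Q(x_0,\e))$ with $\ell_{x_0}$ across an interior interface would introduce a jump of order $\e^n$ that does \emph{not} vanish, so the comparison breaks down at a \emph{bad} radius; this is precisely why one must descend from an arbitrary radius to a nearby good one, which is possible at no interface cost only because the limiting profile $\ell_{x_0}$ (resp.\ $w_{x_0}$) is affine (resp.\ piecewise constant) and can be used to fill the thin frame. Finally, let me stress that (H4) and (H5) play no role in the argument above: the cube is centred at the actual point $x_0$ and carries the genuine blow-up datum, so neither translations nor superposed infinitesimal rigid motions need to be moved around — which is why this partial representation already holds under (H1)–(H3).
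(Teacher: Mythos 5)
Your argument is correct and is essentially the same proof the paper relies on by citation (the global method of Bouchitt\'e--Fonseca--Mascarenhas as adapted to $BD$ in \cite{ebobisse2001note}): Besicovitch differentiation combined with Lemma~\ref{lem:derivative of m}, the trace-continuity Lemma~\ref{lem:continuity if m}, and the descent to good radii with the thin frame filled by the affine, respectively piecewise-constant, profile. The points you gloss over --- that the glued competitor charges no energy on the interface (by (H2)--(H3) and outer regularity, since the interface is $\L^n$- and $|E\cdot|$-null) and the oriented-cube differentiation on $J_u$ via a countable splitting of the normal --- are exactly the standard technicalities treated in those references, so there is no gap.
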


It should be noted that Lemma~\ref{lem:goodpointsdensity} and the lower semicontinuity of the integral on $W^{1,1}$ 
implies that the Borel functions $f$ and $g$ are respectively quasiconvex
(see \cite[Definition~5.25]{AFP00}, see also formulas \eqref{e:sqc0}, \eqref{e:sqcPER} below) and $BV$ elliptic (see \cite[Definition~5.13]{AFP00}).

By taking into account (H2), we conclude that there exists a constant $C>0$ such that 
for every $(x,\mathrm{v},\mat)\in\Omega\times\R^n\times\mathbb{M}^{n\times n}$
\begin{equation}\label{e:flingr}
\frac1C\Big|\textstyle{\frac{\mat+\mat^t}{2}}\Big|\leq f(x,\mathrm{v},\mat)
 \leq C\Big(1+\Big|\textstyle{\frac{\mat+\mat^t}{2}}\Big|\Big),
\end{equation}
and that for every $(x,\mathrm{v}^-,\mathrm{v}^+)\in\Omega\times(\R^n)^2$
\[
 \frac1C|(\mathrm{v}^+-\mathrm{v}^-)\odot\nu|\leq g(x,\mathrm{v}^-,\mathrm{v}^+,\nu)\leq 
 C|(\mathrm{v}^+-\mathrm{v}^-)\odot\nu|.
\]
Several other properties of $f$ and $g$ can be inferred according to the invariance 
properties satisfied by the functional $\F$ (cf. \cite[Remark 3.8]{bouchitte1998global}). 
For instance, assumption (H5) implies that $f$ depends only on the symmetric part of 
the relevant matrix.
Indeed, from \eqref{eqn:H5onm} we immediately deduce, for all $(x_0,\mathrm{v}_0,\mat)\in\Omega\times\R^n\times \mathbb{M}^{n\times n}
$, 
(thanks also to item (a) in Lemma \ref{lem:goodpointsdensity}) that
    \begin{align}\label{eqn:invariance}
   f(x_0,\mathrm{v}_0,\mat)&=\lim_{\e\rightarrow 0}\frac{\mm(\mathrm{v}_0+\mat(\cdot -x_0),Q(x_0,\e))}{\e^n}\nonumber\\
    &=\lim_{\e\rightarrow 0}\frac{\mm(\mathrm{v}_0+\textstyle{\frac{\mat+\mat^t}2}(\cdot -x_0),Q(x_0,\e))}{\e^n}
    =f\big(x_0,\mathrm{v}_0,\textstyle{\frac{\mat+\mat^t}{2}}\big).
    \end{align}
Therefore, in this case we deduce that $f$ is symmetric quasiconvex. Namely, for every bounded open set $D\subset\R^n$, for $\L^n$ a.e. $x\in\Omega$,
for all $(\mathrm{v},\mat)\in\R^n\times\mathbb{M}^{n\times n}_{sym}$ and for all $\varphi\in \mathcal C^1_c(D;\R^n)$
	\begin{equation}\label{e:sqc0}
	f(x,\mathrm{v},\mat)\leq \fint_{D} f(x,\mathrm{v},\mat+e(\varphi)(y))\d y,
	\end{equation}
	or, equivalently, for all $\varphi\in \mathcal C^1(Q^\nu(\underline{0},1);\R^n)$ that are $Q^\nu(\underline{0},1)$-periodic, 
it holds
	\begin{equation}\label{e:sqcPER}
	f(x,\mathrm{v},\mat)\leq \fint_{Q^\nu(\underline{0},1)} f(x,\mathrm{v},\mat+e(\varphi)(y))\d y.
	\end{equation}

\begin{remark}\label{r:H5bis}
If, in addition, we strengthen (H5) to
 \begin{equation}\label{verystrongh5}
 \mathcal{F}(u+\LL(\cdot-x_0)+\mathrm{v},A)=\mathcal{F}(u,A)
 \end{equation}
for every $(u,A,\mathrm{v},\LL,x_0) \in BD(\Omega)\times \mathcal{O}(\Omega)\times\R^n\times\mathbb{M}_{skew}^{n\times n}\times \Omega$, 
 then the cell formulas imply $f\big(x,\mathrm{v},\frac{\mat+\mat^t}2\big)=f\big(x,\frac{\mat+\mat^t}2\big)$ and 
 $g(x,\mathrm{v}^-,\mathrm{v}^+,\nu)=g(x,\mathrm{v}^+-\mathrm{v}^-,\nu)$. 
\end{remark}

\subsection{Preliminary lemmas}

We now exploit the result in Section~\ref{s:blowupCantor2}, in particular 
Proposition~\ref{prop:vgblup}, 
to deduce the asymptotic behavior of the energy around $|E^cu|$-a.e. point along
the same line developed in \cite[Lemma 3.9]{bouchitte1998global}. 
We keep the notation introduced in Proposition~\ref{prop:vgblup} and to simplify it 
we set $\psi_j:=\psi_{\rho_j}$ and $P_j^{x_0}:=P_{\rho_j}^{x_0}$ (for the definition of
$\mathfrak{R}_K$ see \eqref{e:RK}).
\begin{lemma}\label{lem:shapeofF}
Let $\F$ satisfy (H1)-(H4). Then, for every $u\in BD(\Omega)$ and 
for $|E^c u|$-a.e $x_0\in \Omega$ there exist a sequence $\{\rho_j\}_{j\in \N}$ infinitesimal as $j\uparrow+\infty$, 
and for all $j\in \N$ an infinitesimal sequence $\{\e_{i,j}\}_{i\in \N}$ as $i\uparrow+\infty$, such that 
    \begin{align}\label{eqn:keyequality}
        \frac{\d \F(u,\cdot)}{\d |E u|}(x_0)
        =\lim_{j \rightarrow +\infty}\limsup_{i\rightarrow +\infty}
        \frac{\mm\left(w_{i,j}, P_j^{x_0}(x_0,\e_{i,j})\right) }{|E u|(P_j^{x_0}(x_0,\e_{i,j}))}
    \end{align}
    where
    \begin{equation}\label{e:wij}
    w_{i,j}(y):=\mathfrak{R}_{P_j^{x_0}(x_0,\e_{i,j})}[u] (y-x_0)
    +\textstyle{\frac{|E u|(P_j^{x_0}(x_0,\e_{i,j}))}{\L^n(P_j^{x_0}(x_0,\e_{i,j})))}\frac{ \eta\odot \xi}{|\eta\odot \xi|}}(y-x_0).
\end{equation}
\end{lemma}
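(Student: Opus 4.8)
The plan is to follow the blow-up strategy of \cite[Lemma~3.9]{bouchitte1998global}, combining the abstract Radon--Nikod\'ym differentiation of the measure $\F(u,\cdot)$ with respect to $|Eu|$ with the structural results on Cantor blow-ups obtained in Proposition~\ref{prop:vgblup} and the continuity property of $\mm$ encoded in \eqref{e:stimammcont}. First I would fix a point $x_0\in\Omega$ in the (full $|E^cu|$-measure) set where simultaneously: (i) Proposition~\ref{prop:vgblup} applies, producing $\xi,\eta\in\mathbb S^{n-1}$, radii $\rho_j\downarrow 0$, scales $\e_{i,j}\downarrow 0$, parallelograms $P_j^{x_0}:=P^{\xi,\eta}_{\rho_j}$ and blow-up limits $v_j$ with the stated affine-in-one-direction structure; (ii) Lemma~\ref{lem:derivative of m} holds, so that with $\mu:=\L^n+|E^su|$ we have $\lim_{r\to 0}\frac{\F(u,K(x_0,r))}{\mu(K(x_0,r))}=\lim_{r\to 0}\frac{\mm(u,K(x_0,r))}{\mu(K(x_0,r))}$ for every bounded open convex $K$ containing the origin; (iii) $x_0\in C_u$ is a point of approximate continuity of $u$ and Lemma~\ref{lem:killtheantisym} holds, so $b_{K(x_0,\e)}[u]\to u(x_0)$ and $\e|\mathbb M_{K(x_0,\e)}[u]|\to 0$; (iv) the Besicovitch derivative $\frac{\d\F(u,\cdot)}{\d|Eu|}(x_0)$ exists and equals $\frac{\d\F(u,\cdot)}{\d|E^cu|}(x_0)$, and $\frac{\d|E^cu|}{\d\mu}(x_0)$ exists; recall also $\e^{-n}|Eu|(P_j^{x_0}(x_0,\e))\to+\infty$ and $\e^{1-n}|Eu|(P_j^{x_0}(x_0,\e))\to 0$ as $\e\to 0$, which forces $|Eu|$ and $\mu$ to have the same Radon--Nikod\'ym behaviour at $x_0$ and lets one pass freely between normalizing by $|Eu|(P_j^{x_0}(x_0,\e))$ and by $\mu(P_j^{x_0}(x_0,\e))$.

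The core computation is to rewrite $\mm(u,P_j^{x_0}(x_0,\e))$ in terms of the rescaled datum appearing in $w_{i,j}$. Using Lemma~\ref{lem:continuity if m} (Lipschitz dependence of $\mm$ on the boundary datum in $L^1(\partial A;\mathcal H^{n-1})$) together with a trace/scaling estimate and the Poincar\'e inequality \eqref{eqn:ebo}–\eqref{e:scalingRK} applied on the parallelogram $P_j^{x_0}(x_0,\e)$, I would compare $\mm(u,P_j^{x_0}(x_0,\e))$ with $\mm(\tilde u_{i,j},P_j^{x_0}(x_0,\e))$ where $\tilde u_{i,j}$ replaces $u$ near $\partial P_j^{x_0}(x_0,\e)$ by the affine map $\mathfrak R_{P_j^{x_0}(x_0,\e)}[u](\cdot-x_0)$ plus the linear term $\frac{|Eu|(P_j^{x_0}(x_0,\e))}{\L^n(P_j^{x_0}(x_0,\e))}\frac{\eta\odot\xi}{|\eta\odot\xi|}(\cdot-x_0)$; the error, after dividing by $|Eu|(P_j^{x_0}(x_0,\e))$, is controlled by the convergence $u_{P_j^{x_0},x_0,\e}\to v_j$ (which holds along the $\e_{i,j}$) in $L^1$ of the rescaled cell, up to a boundary-trace term handled by choosing, via Fubini on dilates of $P_j^{x_0}$, a good family of radii along which the $L^1$ boundary norms also converge — this is exactly the mechanism used in the $BV$ case. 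One then recognizes that $\tilde u_{i,j}$ on $P_j^{x_0}(x_0,\e_{i,j})$ is precisely $w_{i,j}$ restricted to that cube, so $\mm(u,P_j^{x_0}(x_0,\e_{i,j}))$ and $\mm(w_{i,j},P_j^{x_0}(x_0,\e_{i,j}))$ differ by a quantity that is $o(|Eu|(P_j^{x_0}(x_0,\e_{i,j})))$ after taking $\limsup_i$ and then $\lim_j$.

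Next, I would pass from $\F$ to $\mm$ on the left-hand side: by (H3), $\F(u,\cdot)$ is a Radon measure, and by (ii) above combined with the ratio $\mu(P_j^{x_0}(x_0,\e))/|Eu|(P_j^{x_0}(x_0,\e))\to 1/\frac{\d|E^cu|}{\d\mu}(x_0)$ (finite and positive $\mu$-a.e. on $C_u$), the Besicovitch derivative $\frac{\d\F(u,\cdot)}{\d|Eu|}(x_0)$ can be computed as $\lim_{\e\to 0}\frac{\F(u,P_j^{x_0}(x_0,\e))}{|Eu|(P_j^{x_0}(x_0,\e))}$ for each fixed $j$ (using that $P_j^{x_0}$ is a bounded open convex set containing the origin and that the derivative with respect to a measure can be computed along any such convex family, cf.\ the remark after \eqref{e:Cu}), and this in turn equals $\lim_{\e\to 0}\frac{\mm(u,P_j^{x_0}(x_0,\e))}{|Eu|(P_j^{x_0}(x_0,\e))}$. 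Combining with the previous paragraph and extracting a diagonal subsequence $\e_{i,j}$ (refining the one from Proposition~\ref{prop:vgblup}) so that $\limsup_i$ behaves well, I obtain \eqref{eqn:keyequality}. The main obstacle, as in \cite{bouchitte1998global}, is the control of the boundary-datum error in Lemma~\ref{lem:continuity if m}: one must show that, for a suitable choice of radii inside each dyadic-type annulus, the rescaled traces of $u$ on $\partial P_j^{x_0}(x_0,\e_{i,j})$ converge to those of $v_j$ in $L^1$ and that the linearization $\mathfrak R_{P_j^{x_0}(x_0,\e)}[u]$ is, after rescaling, asymptotically $b_K[\cdot]$-plus-$\frac{\eta\odot\xi}{|\eta\odot\xi|}$-times-linear, which is where Lemma~\ref{lem:killtheantisym} (killing the skew part $\e\,\mathbb M_{K(x_0,\e)}[u]\to 0$) is essential; the affine-in-one-direction structure of $v_j$ from Proposition~\ref{prop:vgblup} is what makes these boundary terms tractable, since along the affine direction the rescaled map is genuinely smooth and its trace is controlled by the $BV$ function $\psi_j$ in the single non-affine direction.
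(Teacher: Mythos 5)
Your overall strategy coincides with the paper's: differentiate via Lemma~\ref{lem:derivative of m}, pass to $\mm$ on the oriented parallelepipeds $P_j^{x_0}(x_0,\e_{i,j})$ produced by Proposition~\ref{prop:vgblup}, and compare boundary data through Lemma~\ref{lem:continuity if m}. There is, however, a genuine gap at the central step: you compare the trace of $u$ directly with that of $w_{i,j}$, claiming the error is controlled by the convergence $u_{P_j^{x_0},x_0,\e_{i,j}}\to v_j$. That convergence only controls the difference between $u$ and $v_j$ on $\partial P_j^{x_0}$; what remains is $\frac1{\rho_j}\int_{\partial P_j^{x_0}}\big|v_j(y)-\frac{\eta\odot\xi}{|\eta\odot\xi|}\,y\big|\d\H^{n-1}(y)$, the factor $\frac1{\rho_j}$ coming from dividing by $|Eu|(P_j^{x_0}(x_0,\e_{i,j}))$ together with $\L^n(P_j^{x_0})=\rho_j$. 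On the two faces $\{y\cdot\eta=\pm\sfrac{\rho_j}2\}$, which have unit $(n-1)$-area, $v_j$ differs from the linear datum by the constant $c_j:=\psi_j(-\sfrac{\rho_j}2)+\frac{\rho_j}{4|\eta\odot\xi|}$, which is of order $\rho_j$ but in general not $o(\rho_j)$; after division by $\rho_j$ these faces contribute a quantity of order one that does not vanish as $j\to\infty$. Your Fubini choice of good radii does not cure this: it is not a trace-convergence issue but a genuine mismatch between the profile $\psi_j$ in $v_j$ and a purely linear boundary datum.

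The paper resolves this by an intermediate competitor: it compares $u$ with $v_{i,j}:=w_{i,j}+\e_{i,j}\frac{|E u|(P_j^{x_0}(x_0,\e_{i,j}))}{\L^n(P_j^{x_0}(x_0,\e_{i,j}))}\,c_j\,\xi$, where $c_j$ is tuned so that, thanks to the exact normalization $\psi_j(\sfrac{\rho_j}2)-\psi_j(-\sfrac{\rho_j}2)=\frac{\rho_j}{2|\eta\odot\xi|}$ coming from Proposition~\ref{prop:vgblup}, the contributions of the two faces orthogonal to $\eta$ cancel exactly, while the lateral faces give $O(\rho_j)$ by the uniform $L^\infty$ bound on $\sfrac{\psi_j}{\rho_j}$; this is precisely why the short side of $P_j^{x_0}$ must point in the non-affine direction. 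Only afterwards is the constant removed: since $v_{i,j}-w_{i,j}$ is a constant vector multiplied by $\e_{i,j}\frac{|E u|(P_j^{x_0}(x_0,\e_{i,j}))}{\L^n(P_j^{x_0}(x_0,\e_{i,j}))}\to 0$ at points of $C_u$, hypothesis (H4) through \eqref{e:stimammcont} shows that $\mm(v_{i,j},\cdot)$ and $\mm(w_{i,j},\cdot)$ agree asymptotically. This is exactly where (H4) enters the proof; your sketch cites \eqref{e:stimammcont} in the preamble but never deploys it, and without the constant correction the estimate you assert fails. A minor additional point: Lemma~\ref{lem:killtheantisym} is not needed here, since the rigid motion $\mathfrak{R}_{P_j^{x_0}(x_0,\e_{i,j})}[u]$ is kept inside $w_{i,j}$; it is used only later, in the proof of Theorem~\ref{thm:mainthm}, when $w_{i,j}$ is replaced by $u(x_0)+t_{i,j}\frac{\eta\odot\xi}{|\eta\odot\xi|}(\cdot-x_0)$.
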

\begin{proof} We consider the subset of points of $C_u$ for which Proposition~\ref{prop:vgblupRinDe} (and hence Proposition~\ref{prop:vgblup}) is valid. 
For one such point $x_0\in C_u$ consider the corresponding 
vectors $\xi$ and $\eta\in\mathbb{S}^{n-1}$. 
Note that by Lemma~\ref{lem:derivative of m} for any $j\in\N$
    \begin{equation}\label{eqn:convtoF}
    \frac{\d \F(u,\cdot)}{\d |E u|}(x_0)=\lim_{i\rightarrow +\infty} \frac{\mm(u, P_j^{x_0}(x_0,\e_{i,j}))}{|E u|(P_j^{x_0}(x_0,\e_{i,j}))}.
   \end{equation}
\smallskip

\noindent\textbf{Case~1:} \textit{$\eta\neq \pm \xi$.} 
By Proposition~\ref{prop:vgblup} we have that for every $j\in\N$
	\[
	u_{i,j}:=u_{P_j^{x_0},x_0,\e_{i,j}}\rightarrow v_j:=\psi_j(y\cdot \eta) \xi +\frac{y\cdot \xi}{2|\eta \odot \xi|}\eta\qquad\text{ strict in $BD(P_j^{x_0})$}.
	\]
Define, for some constant $c_j$ to be specified in what follows, the functions
    \begin{align}\label{e:vij}
        v_{i,j}(y):=w_{i,j}(y)+\e_{i,j}\textstyle{\frac{|E u|(P_j^{x_0}(x_0,\e_{i,j}))}{\L^n(P_j^{x_0}(x_0,\e_{i,j}))}} c_j \xi.
    \end{align}
As by Remark~\ref{r:scaling}
\[
 \mathfrak{R}_{P^{x_0}_{j}}[u(x_0+\e_{i,j}\cdot )]\left(\frac{y-x_0}{\e_{i,j}}\right)
 =\mathfrak{R}_{P^{x_0}_{j}(x_0,\e_{i,j})}[u](y-x_0),
 \]
we have by Lemma~\ref{lem:continuity if m} 
    \begin{align*}
        &\frac{|\mm(u,P_j^{x_0}(x_0,\e_{i,j}))-\mm(v_{i,j},P_j^{x_0}(x_0,\e_{i,j}))|}{|E u |(P_j^{x_0}(x_0,\e_{i,j}))}\\
        &\leq C\int_{\partial P_j^{x_0}(x_0,\e_{i,j})} \frac{|u(y)-v_{i,j}(y)|}{|E u |(P_j^{x_0}(x_0,\e_{i,j}))}\d \H^{n-1}(y)\\
         &= \frac{C}{\rho_j}\int_{\partial P_j^{x_0}} \left|u_{i,j}(y) - \frac{ \eta \odot \xi\ }{|\eta \odot \xi|}\,y -c_{j}\xi\right|\d \H^{n-1}(y).
    \end{align*}
By taking the superior limit in $i$ we thus get
\begin{align*}
        \limsup_{i\rightarrow+\infty}& \frac{|\mm(u,P_j^{x_0}(x_0,\e_{i,j}))-\mm(v_{i,j},P_j^{x_0}(x_0,\e_{i,j}))|}{|E u |(P_j^{x_0}(x_0,\e_{i,j}))}\\
        & \leq \frac{C }{\rho_j}\int_{\partial P_j^{x_0}} \left| \psi_j(y\cdot \eta)- \frac{(y\cdot \eta)}{2|\eta\odot \xi|}-c_{j}  \right|\d \H^{n-1}(y)\\
        &=\frac{C }{\rho_j}\left|\psi_j(\sfrac{\rho_j}{2})- \frac{\rho_j}{4|\eta\odot \xi|}-c_j  \right|
        +\frac{C }{\rho_j}\left|\psi_j(-\sfrac{\rho_j}{2})+\frac{\rho_j}{4|\eta\odot \xi|}-c_j  \right|
        \\
        &+\frac{C }{\rho_j}\int_{-\sfrac{\rho_j}{2}}^{\sfrac{\rho_j}{2}}\left| \psi_j(t)- \frac{t}{2|\eta\odot \xi|} -c_{j} \right|\d t,
    \end{align*}
recalling that
$P_j^{x_0}=\big\{y\in \R^n:\,|y\cdot \eta|\leq \sfrac{\rho_j}2,\, |y\cdot \xi|\leq \sfrac12,\,|y\cdot \zeta_i|\leq \sfrac12\,\, i=1,\ldots,n-2\big\}$,
and that $\xi$ and $\eta$ depend on $x_0$.

By choosing $c_j:=\psi_j(-\sfrac{\rho_j}{2})+\frac{\rho_j}{4|\eta \odot \xi|}$, since $\psi_j(\sfrac{\rho_j}{2})-\psi_j(\sfrac{-\rho_j}{2})
=\frac{\rho_j}{2|\eta \odot \xi|}$, the first two summands in the last inequality are then null. Therefore, we have
\begin{align*}
\limsup_{i\rightarrow+\infty}&\frac{|\mm(u,P_j^{x_0}(x_0,\e_{i,j}))-\mm(v_{i,j},P_j^{x_0}(x_0,\e_{i,j}))|}{|E u |(P_j^{x_0}(x_0,\e_{i,j}))}
\\&
\leq\frac{C }{\rho_j}\int_{-\sfrac{\rho_j}{2}}^{\sfrac{\rho_j}{2}}\left| \psi_j(t)- \frac{t}{2|\eta\odot \xi|} -c_{j} \right|\d t
\leq C\Big(\|\psi_j\|_{L^{\infty}(-\frac{\rho_j}2,\frac{\rho_j}2)}+\rho_j+c_j\Big)\leq C\rho_j\,,
\end{align*}
where we have used that $\sfrac{\psi_j(\cdot)}{\rho_j}$ is equi-bounded in $L^{\infty}\big((-\sfrac{\rho_j}2,\sfrac{\rho_j}2)\big)$, 
and that $\psi_j(\pm\sfrac{\rho_j}2)$ are boundary trace values to infer $c_j\leq C\rho_j$, for some universal constant $C>0$. 
In conclusion, we have proved that 
    \begin{align*}
        \lim_{j\rightarrow+\infty}\limsup_{i\rightarrow+\infty}
        \frac{|\mm(u,P_j^{x_0}(x_0,\e_{i,j}))-\mm(v_{i,j},P_j^{x_0}(x_0,\e_{i,j}))|}{|E u |(P_j^{x_0}(x_0,\e_{i,j}))}=0,
    \end{align*}
so that \eqref{eqn:convtoF} yields    
    \begin{equation}\label{e:quasi}
     \frac{\d \F(u,\cdot)}{\d |E u|}(x_0)=\lim_{j\rightarrow+\infty}\limsup_{i\rightarrow+\infty}
     \frac{\mm(v_{i,j},P_j^{x_0}(x_0,\e_{i,j}))}{|E u |(P_j^{x_0}(x_0,\e_{i,j}))}.
    \end{equation}
Finally, recalling the definition of $w_{i,j}$ in \eqref{e:wij} and of $v_{i,j}$ in \eqref{e:vij}, by estimate \eqref{e:stimammcont} we have
    \begin{align*}
  \Big| \mm\Big(v_{i,j}&
  ,P_j^{x_0}(x_0,\e_{i,j})\Big)- \mm(w_{i,j},P_j^{x_0}(x_0,\e_{i,j}))\Big|\\
   & \leq C_{P_j^{x_0}}\Psi\big(\e_{i,j}\textstyle{\frac{|E u|(P_j^{x_0}(x_0,\e_{i,j}))}{\L^n(P_j^{x_0}(x_0,\e_{i,j}))}}c_j\big)
   \Big(\L^n(P_j^{x_0}(x_0,\e_{i,j}))+|Eu|(P_j^{x_0}(x_0,\e_{i,j}))\Big).
 \end{align*}
and \eqref{eqn:keyequality} then follows at once from \eqref{e:quasi} by letting $i\uparrow+\infty$, 
in view of the choice $x_0\in C_u$. 
\smallskip

\noindent\textbf{Case~2:} \textit{$\xi=\pm\eta$}. Suppose, without loss of generality that $\xi=\eta$. We argue as in Case~1.
For the sequences $\{\rho_j\}_{j\in \N}$, $\{\e_{i,j}\}_{i\in \N}$ provided by Proposition~\ref{prop:vgblup} we have that 
\[
u_{P_j^{x_0},x_0,\e_{i,j}}\rightarrow v_j:=\psi_j(y\cdot \eta) \eta\qquad\text{ strict in $BD(P_j^{x_0})$}.
\]
By setting
    \begin{align*}
        v_{i,j}(y):= w_{i,j}(y)+\e_{i,j}\textstyle{\frac{|E u|(P_j^{x_0}(x_0,\e_{i,j}))}{\L^n(P_j^{x_0}(x_0,\e_{i,j}))}} c_j \eta ,
    \end{align*}
for 
\[
c_j:=\psi_j(-\rho_j/2)+\frac{\rho_j}{2|\eta\odot \eta|}\,,
\]
we conclude that 
\[
\lim_{j\rightarrow +\infty}\limsup_{i\rightarrow +\infty}
\frac{\mm(v_{i,j},P_j^{x_0}(x_0,\e_{i,j}))}{|Eu|(P_j^{x_0}(x_0,\e_{i,j}))}=
\lim_{i\rightarrow +\infty}\frac{\mm(u,P_j^{x_0}(x_0,\e_{i,j}))}{|Eu|(P_j^{x_0}(x_0,\e_{i,j}) )}.
\] 
We again combine this equality with \eqref{eqn:convtoF} to conclude.
\end{proof}
We now use assumption (H5) to prove a lower bound for the cell formula $\mm$ computed
on affine functions as done in \cite[Lemma 3.11]{bouchitte1998global}.
\begin{lemma}\label{lem:lwr bound}
Let $\F$ satisfy (H1)-(H5). For all $\mathrm{v}\in\R^n$, $\xi'\in\R^n\setminus\{\underline{0}\}$, $\eta\in\mathbb{S}^{n-1}$, $x_0\in\Omega$ 
and for every sequence $(t_i,\e_i)$  such that $t_i\rightarrow +\infty$ and $\e_i t_i \rightarrow 0$, 
and for every $\rho>0$, it holds 
        \begin{align*}
        f(x_0,\mathrm{v},\eta \odot \xi')-f(x_0,\mathrm{v},0)\leq \liminf_{i\rightarrow +\infty} 
        \frac{\mm(\mathrm{v}+t_i\,\eta \odot \xi' (\cdot-x_0), P_{\rho}^{\xi,\eta}(x_0,\e_i))}{t_i\L^n(P_{\rho}^{\xi,\eta}(x_0,\e_i))}
        \end{align*}
where $\xi:=\sfrac{\xi'}{|\xi'|}$, $P_{\rho}^{\xi,\eta}$ is defined either in \eqref{eqn:convexdefinition} or \eqref{eqn:convexdefinition1} according to whether $\xi\neq\pm\eta$ or not, and $f$ is the 
volume energy density defined in item (a) of Lemma~\ref{lem:goodpointsdensity}.
\end{lemma}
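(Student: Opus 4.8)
The statement says: for $f$ the volume density, $\mathrm{v}\in\R^n$, $\xi'\neq\underline{0}$, $\eta\in\mathbb{S}^{n-1}$, $x_0\in\Omega$, and sequences $t_i\to+\infty$, $\e_i t_i\to0$,
$$f(x_0,\mathrm{v},\eta\odot\xi')-f(x_0,\mathrm{v},0)\le\liminf_i\frac{\mm(\mathrm{v}+t_i\,\eta\odot\xi'(\cdot-x_0),P_\rho^{\xi,\eta}(x_0,\e_i))}{t_i\L^n(P_\rho^{\xi,\eta}(x_0,\e_i))}.$$
The inequality goes from "large matrix on a thin/small parallelogram" back to "unit-size matrix on a unit cube", so the natural move is: fix $i$, let $v_i$ be (almost) optimal for $\mm(\mathrm{v}+t_i\,\eta\odot\xi'(\cdot-x_0),P_\rho^{\xi,\eta}(x_0,\e_i))$, and build from it a competitor for $\mm(\mathrm{v}+\eta\odot\xi'(\cdot-x_0),Q(x_0,\e))$ (or directly for the cube in the definition \eqref{e:f} of $f$) by rescaling space. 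I'd first change variables $y\mapsto x_0+\e_i y$ to pass to the fixed set $P_\rho^{\xi,\eta}$ with the matrix $t_i\,\eta\odot\xi'$, so after dividing by $\L^n(P_\rho^{\xi,\eta}(x_0,\e_i))=\e_i^n\L^n(P_\rho^{\xi,\eta})$ the right-hand quotient becomes $\mm(\mathrm{v}+t_i\,\eta\odot\xi'(\cdot),P_\rho^{\xi,\eta})/(t_i\e_i^n)$. Then a second scaling in the direction $\eta$ only, to absorb the large factor $t_i$ on the matrix into a squeezing of the $\eta$-edge: set $\mathbb{B}_{t_i}$ the diagonal-type dilation fixing $\mathrm{span}\{\xi,\eta\}^\perp$, scaling $\xi$ by $1$ and $\eta$ by $1/t_i$ (using Lemma~\ref{lem:changeofvariable} / Remark~\ref{rmk:cov}), which turns $t_i\,\eta\odot\xi'$ into $\eta\odot\xi'$ up to the Jacobian factor; here is where the parallelogram $P_\rho^{\xi,\eta}$ with the \emph{short} $\eta$-edge is exactly what makes the rescaled set sit inside a unit cube $Q^\eta$.

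With this reduction, the quotient on the right is comparable (up to a controlled error and the Jacobian $t_i^{-1}$) to $\mm$ of the fixed affine datum $\mathrm{v}+\eta\odot\xi'(\cdot)$ on a small cube $Q^\eta(x_0,\delta_i)$ with $\delta_i\to0$, and by Lemma~\ref{lem:goodpointsdensity}(a) (together with \eqref{eqn:invariance}, since by (H5) $f$ depends only on the symmetric part and $\eta\odot\xi'$ is already symmetric) this converges to $f(x_0,\mathrm{v},\eta\odot\xi')$. The subtracted term $f(x_0,\mathrm{v},0)$ should come from the error in replacing the (large) affine boundary datum after rescaling: the dilation $\mathbb{B}_{t_i}$ does not exactly intertwine the boundary values, and the discrepancy between the rescaled competitor and the plain affine map $\mathrm{v}+\eta\odot\xi'(\cdot)$ on the boundary has total variation of order $t_i^{-1}$ times the relevant scale; estimating $|\mm(\cdot)-\mm(\cdot)|$ via Lemma~\ref{lem:continuity if m} and the growth bound (H2)/\eqref{e:flingr} produces a term measuring the energy of a map with zero symmetric-part "affine content", i.e.\ $f(x_0,\mathrm{v},0)$, which is why it appears as a subtraction. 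Concretely I expect to write the competitor as $w_i=$ (rescale of $v_i$) which by construction has $Ew_i$ carrying essentially the symmetric part $\eta\odot\xi'$ plus a skew-symmetric affine piece that (H5), through \eqref{eqn:H5onm}, lets me discard at the level of $\mm$, with error $\Psi(\text{diam}\cdot|\text{skew part}|)\cdot(\L^n+|E|)$, infinitesimal because the skew part is $O(1)$ while the diameter of the supporting set is $O(\e_i)\to0$ — this is the decisive use of (H5) and mirrors \cite[Lemma~3.11]{bouchitte1998global}.

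So the key steps, in order: (1) rescale $y\mapsto x_0+\e_i y$ to land on the fixed parallelogram $P_\rho^{\xi,\eta}$ with matrix $t_i\,\eta\odot\xi'$; (2) apply the anisotropic dilation $\mathbb{B}_{t_i}$ (contracting the $\eta$-direction by $t_i^{-1}$) via Lemma~\ref{lem:changeofvariable}, noting $\mathbb{B}_{t_i}P_\rho^{\xi,\eta}\subseteq$ a cube oriented by $\eta$ of side comparable to $\max\{\rho/t_i,1\}$, hence $\subseteq Q^\eta(\underline 0,1)$ for $i$ large, and track how the affine datum and the Jacobian transform; (3) use Lemma~\ref{lem:continuity if m} to compare the transformed $\mm$ with $\mm$ of the exact affine map $\mathrm{v}+\eta\odot\xi'(\cdot-x_0)$ on a shrinking cube, controlling the boundary discrepancy (of size $O(t_i^{-1})$) by (H2); (4) use (H5) in the form \eqref{eqn:H5onm} to drop the residual skew-symmetric affine part at negligible cost, the residual energy being exactly $f(x_0,\mathrm{v},0)$-type; (5) take $\liminf_i$ and invoke Lemma~\ref{lem:goodpointsdensity}(a) plus \eqref{eqn:invariance} to identify the limit of $\mm(\mathrm{v}+\eta\odot\xi'(\cdot-x_0),Q^\eta(x_0,\delta_i))/\delta_i^n$ with $f(x_0,\mathrm{v},\eta\odot\xi')$. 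The main obstacle is bookkeeping in step (2)–(3): one must carefully verify that the anisotropic rescaling keeps the support inside a genuine cube centred correctly (this is precisely why $P_\rho^{\xi,\eta}$ was built with its short edge along $\eta$), that the Jacobian produces exactly the $t_i^{-1}$ normalising the affine slope, and that all error terms — the Lemma~\ref{lem:continuity if m} boundary term and the (H5) modulus term — are $o(1)$ given only $t_i\to\infty$, $\e_i t_i\to0$; choosing the auxiliary cube side as, say, $\delta_i:=\e_i\max\{1,\rho/t_i\}$ makes $\delta_i\to0$ and keeps everything consistent.
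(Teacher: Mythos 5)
Your plan is not the paper's argument and, more importantly, its central step does not work. The anisotropic dilation in your step (2) acts on competitors by $v\mapsto\ZZ v(\ZZ^t\cdot)$, and Lemma~\ref{lem:changeofvariable} only describes how the \emph{measure} $Ev$ transforms; nothing in (H1)--(H5) controls how the abstract functional $\F$ -- and hence $\mm$, which is defined through $\F$ -- behaves under such a change of variables. The only admissible operations are translations (H4), superposition of infinitesimal rigid motions (H5), gluing via (H3), the trace estimate of Lemma~\ref{lem:continuity if m}, and the two-sided bound (H2); comparing energies before and after conjugation by your dilation using (H2) alone costs multiplicative constants which moreover degenerate like $t_i$ (the conjugation distorts matrix norms by a factor of order $t_i$), so the sharp inequality, and in particular the exact subtraction of $f(x_0,\mathrm{v},0)$, cannot be recovered this way. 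Two further points: Lemma~\ref{lem:continuity if m} compares $\mm$ for two boundary data on the \emph{same} set, so it cannot carry you from the rescaled parallelepiped to a shrinking cube as in your step (3); and your explanation of the term $-f(x_0,\mathrm{v},0)$ as a boundary-discrepancy/skew-residue error is not the correct mechanism. In the genuine argument that term appears because one builds a staircase competitor for the slope-one affine datum which uses translated copies of the almost minimizers of the right-hand side only on a volume fraction of order $\sfrac1{t_i}$ of a small cube, and is essentially constant on the complementary fraction $1-\sfrac1{t_i}$, whose energy is measured by cell problems with constant data, i.e.\ by $f(x_0,\mathrm{v},0)$.

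For comparison, the paper's proof is three lines and uses (H5) in a different way: write $\eta\otimes\xi'=\eta\odot\xi'+\LL$ with $\LL=\frac12(\eta\otimes\xi'-\xi'\otimes\eta)\in\mathbb{M}^{n\times n}_{skew}$; then \eqref{eqn:H5onm} gives
\[
\big|\mm\big(\mathrm{v}+t_i\,\eta\otimes\xi'(\cdot-x_0),P_{\rho}^{\xi,\eta}(x_0,\e_i)\big)-\mm\big(\mathrm{v}+t_i\,\eta\odot\xi'(\cdot-x_0),P_{\rho}^{\xi,\eta}(x_0,\e_i)\big)\big|
\leq C_{P_{\rho}^{\xi,\eta}}\,\Psi(\e_i t_i|\LL|)\,(1+t_i|\xi'|)\,\e_i^n,
\]
which, divided by $t_i\L^n(P_{\rho}^{\xi,\eta}(x_0,\e_i))$, vanishes precisely because $\e_i t_i\to0$. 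This reduces the claim to the rank-one datum $\mathrm{v}+t_i(\xi'\cdot(y-x_0))\eta$, for which the lower bound $f(x_0,\mathrm{v},\eta\otimes\xi')-f(x_0,\mathrm{v},0)\leq\liminf_i(\dots)$ is exactly \cite[Lemma~3.11]{bouchitte1998global}, whose staircase proof uses only (H1)--(H4) together with the fact that the datum is constant in the directions orthogonal to $\xi$ (this is what makes stacked copies match across interfaces, and it is false for $\eta\odot\xi'$ -- the very reason (H5) is needed). One concludes with $f(x_0,\mathrm{v},\eta\otimes\xi')=f(x_0,\mathrm{v},\eta\odot\xi')$ from \eqref{eqn:invariance}. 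If you wish to avoid quoting the $BV$ lemma, you must reproduce that staircase construction; your outline does not contain it, and no rescaling of space can substitute for it.
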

\begin{proof}
We start off noting that $\eta \otimes \xi'=
\eta\odot \xi'+\LL$, where $\LL:= \frac{1}{2}\left(\eta \otimes \xi'-\xi \otimes \eta'\right)\in \mathbb{M}^{n\times n}_{skew}$.
Then, in view of (H5) formula \eqref{eqn:H5onm} implies
    \begin{align}
    &\left|\mm\left(\mathrm{v}+t_i\, \eta \otimes \xi'(\cdot -x_0), P_{\rho}^{\xi,\eta}(x_0,\e_i)\right) 
    -\mm\left(\mathrm{v}+t_i\,\eta \odot \xi' (\cdot-x_0), P_{\rho}^{\xi,\eta}(x_0,\e_i)\right)\right|\nonumber\\
    &\qquad \qquad \leq C_{P_{\rho}^{\xi,\eta}} \Psi(\e_i t_i |\LL|)(1+t_i|\xi'|)\e_i^n.
    \end{align}
Hence, 
    \begin{align*}
    \liminf_{i\rightarrow +\infty} \frac{\mm(\mathrm{v}+ t_i \eta \odot \xi' (\cdot-x_0), P_{\rho}^{\xi,\eta}(x_0,\e_i))}{t_i\L^n(P_{\rho}^{\xi,\eta}(x_0,\e_i))}
  &=\liminf_{i\rightarrow +\infty} \frac{\mm(\mathrm{v}+t_i \eta \otimes \xi' (\cdot-x_0), P_{\rho}^{\xi,\eta}(x_0,\e_i))}{t_i\L^n(P_{\rho}^{\xi,\eta}(x_0,\e_i))}\\
  &\geq f(x_0,\mathrm{v},\eta \otimes \xi')-f(x_0,\mathrm{v},0).
    \end{align*}
For the last inequality we have used \cite[Lemma~3.11]{bouchitte1998global}. Moreover, since by \eqref{eqn:invariance} 
    \begin{align*}
        f(x_0,\mathrm{v},\eta \otimes \xi')  = f(x_0,\mathrm{v},\eta \odot \xi'),
    \end{align*}
the conclusion follows at once.
\end{proof}
Before proving Theorem~\ref{thm:mainthm}, we note that the continuity estimate on $\mm$ contained in \eqref{e:stimammcont}, deduced as a consequence of (H4), implies both
\begin{equation}\label{e:stimafcont}
|f(x_0+x,\mathrm{v}+\mathrm{v}_0,\mat)-f(x_0,\mathrm{v}_0,\mat)|\leq \Psi(|x|+|\mathrm{v}|)(1+|\mat|)
\end{equation}
and
\[
|g(x_0+x,\mathrm{v}_0+\mathrm{v},\mathrm{v}_1+\mathrm{v},\nu)-
g(x_0,\mathrm{v}_0,\mathrm{v}_1,\nu)|\leq \Psi(|x|+|\mathrm{v}|)|\mathrm{v}_0-\mathrm{v}_1|
\]
for all $(x_0,x,\mathrm{v}_0,\mathrm{v}_1,\mathrm{v},\nu,\mat)\in(\Omega)^2\times(\R^n)^3\times
\mathbb{S}^{n-1}\times\mathbb{M}^{n\times n}_{sym}$.

These properties are instrumental already in the $BV$ setting to express the Radon-Nikod\'ym derivative of $\F$ 
at $u$ with respect to $|E^cu|$ in terms of an energy density computed on relevant quantities related to the base 
function $u$ itself.
In particular, by taking such properties into account, one can prove that the recession function $f^\infty$ of 
the bulk energy density $f$ is actually the energy density of the Cantor part.
 
 \subsection{Proof of the integral representation result} 
 
\begin{proof}[Proof of Theorem~\ref{thm:mainthm}]
The representation of the volume and surface energy densities is dealt with in Lemma~\ref{lem:goodpointsdensity}.

We then turn to the representation of the energy density of the Cantor part. 
For $|E^cu|$-a.e. point $x_0\in C_u$ we may apply Lemma~\ref{lem:shapeofF} 
    (in what follows we keep the notation introduced there) and find infinitesimal sequences 
    $\{\rho_j\}_{j\in \N}$, $\{\e_{i,j}\}_{i\in \N}$ such that
    \begin{align}\label{eqn:myfinale0}
    \frac{\d \F(u,\cdot)}{\d |E^cu|}(x_0)=\frac{\d \F(u,\cdot)}{\d |E u|}(x_0)
    =\lim_{j\rightarrow +\infty} \limsup_{i\rightarrow+\infty} 
    \frac{\mm\left(w_{i,j},P_j^{x_0}(x_0,\e_{i,j})\right)}{|E u|(P_j^{x_0}(x_0,\e_{i,j}))}.
    \end{align}
On setting
\[
\mathrm{v}_{i,j}:=\fint_{P_j^{x_0}(x_0,\e_{i,j})}u(x)\d x, \qquad \LL_{i,j} :=\mathbb{M}_{P_j^{x_0}(x_0,\e_{i,j})}[u] 
\]
and $t_{i,j}:=\textstyle{\frac{|E u|(P_j^{x_0}(x_0,\e_{i,j}))}{\L^n(P_j^{x_0}(x_0,\e_{i,j}))}}$, we have
$w_{i,j}=\mathrm{v}_{i,j}+\LL_{i,j}(\cdot -x_0)+t_{i,j}\textstyle{\frac{\eta \odot \xi}{|\eta \odot \xi|}} (\cdot-x_0)$.
Note that $t_{i,j}\rightarrow +\infty$ as $i\rightarrow +\infty$ for all $j\in\N$ as $x_0\in C_u$. Moreover, recall that 
$x_0$ is a point of approximate continuity of $u$. 

Next we note that 
\begin{align*}
\big|\mm&\big(w_{i,j},P_j^{x_0}(x_0,\e_{i,j})\big)-
\mm\Big(u(x_0)+t_{i,j} \textstyle{\frac{\eta \odot \xi}{|\eta \odot \xi|}} (\cdot-x_0),P_j^{x_0}(x_0,\e_{i,j})\Big)\Big|\nonumber\\
&\leq\big|\mm\big(w_{i,j},P_j^{x_0}(x_0,\e_{i,j})\big)-
\mm\big(\mathrm{v}_{i,j}+t_{i,j} \textstyle{\frac{\eta \odot \xi}{|\eta \odot \xi|}} (\cdot-x_0),P_j^{x_0}(x_0,\e_{i,j})\big)\big|\nonumber\\
&+\Big|\mm\Big(\mathrm{v}_{i,j}+t_{i,j} \textstyle{\frac{\eta \odot \xi}{|\eta \odot \xi|}} 
(\cdot-x_0),P_j^{x_0}(x_0,\e_{i,j})\Big)-
\mm\Big(u(x_0)+t_{i,j} \textstyle{\frac{\eta \odot \xi}{|\eta \odot \xi|}} (\cdot-x_0), P_j^{x_0}(x_0,\e_{i,j})\Big)\Big|\nonumber\\
&\stackrel{\eqref{eqn:H5onm},\,\eqref{e:stimammcont}}{\leq} 
C_{P_j^{x_0}}\big(\Psi(\e_{i,j}\mathrm{diam}(P_j^{x_0}) |\LL_{i,j}|)+
\Psi(|\mathrm{v}_{i,j}-u(x_0)|)\big)(1+t_{i,j})\e_{i,j}^n.
\end{align*}
By taking into account $\mathrm{v}_{i,j} \to u(x_0)$ and $\e_{i,j}|\LL_{i,j}|\to 0$ as $i\uparrow +\infty$
thanks to Lemma~\ref{lem:killtheantisym}, the latter estimate combined with \eqref{eqn:myfinale0} leads to
    \begin{align}\label{eqn:myfinale}
    \frac{\d \F(u,\cdot)}{\d |E^cu|}(x_0)&=\frac{\d \F(u,\cdot)}{\d |E u|}(x_0)\nonumber\\&
    =\lim_{j\rightarrow +\infty} \limsup_{i\rightarrow+\infty}\frac{\mm\left(
     u(x_0)+t_{i,j} \frac{\eta \odot \xi}{|\eta \odot \xi|} (\cdot-x_0),P_j^{x_0}(x_0,\e_{i,j})\right)}{\L^n(P_j^{x_0}(x_0,\e_{i,j})) t_{i,j}}.
    \end{align}
With fixed $j\in\N$ and $\lambda>0$, by applying Lemma~\ref{lem:lwr bound} with 
$\xi'=\lambda\frac\xi{|\eta\odot\xi|}$ and $t_i:=\sfrac{t_{i,j}}{\lambda}$, Eq. \eqref{eqn:myfinale} implies 
    \begin{equation*}
        \frac{\d \F(u,\cdot)}{\d |E^cu|}(x_0)\geq \frac{f\left(x_0,u(x_0),\lambda \frac{\eta \odot \xi}{|\eta \odot \xi|}\right)-f(x_0,u(x_0),0)}{\lambda}. 
    \end{equation*}
Hence, by taking the superior limit as $\lambda\uparrow+\infty$ we infer
    \begin{equation}\label{eqn:afinal1}
        \frac{\d \F(u,\cdot)}{\d |E^cu|}(x_0)\geq 
        f^\infty\big(x_0,u(x_0),\textstyle{\frac{\eta \odot \xi}{|\eta \odot \xi|}}\big).
    \end{equation}

On the other hand, using as a competitor in the cell problem defining 
\[
\mm\Big(u(x_0)+ t_{i,j}\textstyle{\frac{\eta \odot \xi}{|\eta \odot \xi|}} (\cdot -x_0), P_j^{x_0}(x_0,\e_{i,j})\Big)
\] 
the affine map $u(x_0)+t_{i,j}\frac{\eta \odot \xi}{|\eta \odot \xi|} (\cdot-x_0)$ itself, we can apply item (a) in Lemma~\ref{lem:goodpointsdensity} to deduce
    \begin{align*}
        &\frac{\mm\left(u(x_0)+t_{i,j} \frac{\eta \odot \xi}{|\eta \odot \xi|} (\cdot-x_0), P_j^{x_0}(x_0,\e_{i,j})\right)}{t_{i,j}\L^n(P_j^{x_0}(x_0,\e_{i,j}))}
        \leq  \frac{\mathcal{F}\Big(u(x_0)+t_{i,j}\frac{\eta \odot \xi}{|\eta \odot \xi|} (\cdot-x_0), P_j^{x_0}(x_0,\e_{i,j})\Big)}{t_{i,j}\L^n(P_j^{x_0}(x_0,\e_{i,j}))}\\
        &= \fint_{P_j^{x_0}(x_0,\e_{i,j})}\frac1{t_{i,j}}f\Big(x,u(x_0)+t_{i,j}
        \textstyle{\frac{\eta \odot \xi}{|\eta \odot \xi|}(x-x_0),t_{i,j}\frac{\eta \odot \xi}{|\eta \odot \xi|}}\Big)\d x\\
&\stackrel{\eqref{e:stimafcont}}{\leq}\frac1{t_{i,j}}
f\Big(x_0,u(x_0),t_{i,j}\textstyle{\frac{\eta\odot \xi}{|\eta\odot \xi|}}\Big)
+2\Psi(\e_{i,j}+\e_{i,j}t_{i,j})\\
& \leq
\frac1{t_{i,j}}\Big(f\Big(x_0,u(x_0),t_{i,j}\textstyle{\frac{\eta\odot \xi}{|\eta\odot \xi|}}\Big)
-f(x_0, u(x_0),0)\Big)+\frac C{t_{i,j}}.
    \end{align*}
Since, $t_{i,j}\rightarrow +\infty$ as $i\rightarrow +\infty$ for all $j\in\N$, we deduce that
    \[
    \lim_{j\rightarrow+\infty} \limsup_{i\rightarrow+\infty}
    \frac{\mm\Big(u(x_0)+t_{i,j} \frac{\eta\odot \xi}{|\eta\odot \xi|} (\cdot-x_0), P_j^{x_0}(x_0,\e_{i,j})\Big)}{\L^n(P_j^{x_0}(x_0,\e_{i,j}))) t_{i,j}}\leq f^\infty\Big(x_0,u(x_0),\textstyle{\frac{\eta\odot \xi}{|\eta\odot \xi|}}\Big),
    \]
that combined with \eqref{eqn:myfinale} and \eqref{eqn:afinal1} finally leads to
    \[
    \frac{\d \F(u,\cdot)}{\d |E^cu|}(x_0)=
    f^\infty\Big(x_0,u(x_0),\frac{\d E^cu}{\d |E^cu|}(x_0)\Big).\qedhere
    \]
\end{proof}
A standard monotone approximation technique provides the following extension of Theorem~\ref{thm:mainthm} 
(see Section~\ref{s:discussionH5} for a similar argument).
\begin{corollary}
Let $\F:L^1(\Omega;\R^n)\times \mathcal{O}(\Omega)\to[0,+\infty]$ be satisfying (H1), (H3), (H4), (H5) 
and in place of (H2)
\begin{itemize}
\item[(HH2)] there exists a constant $C>0$ such that for every $(u,A)\in BD(\Omega)\times\mathcal{O}(\Omega)$
        \begin{equation*}
           0\leq \F(u,A) \leq C (\L^n(A)+|E u|(A)).
        \end{equation*}
\end{itemize}
Then, the conclusions of Theorem~\ref{thm:mainthm} hold for all $u\in BD(\Omega)$.
\end{corollary}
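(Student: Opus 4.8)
The plan is to run the classical monotone perturbation argument. For $\delta>0$ I would set
\[
\F_\delta(u,A):=\F(u,A)+\delta|Eu|(A)\quad\text{if }u\in BD(\Omega),\qquad \F_\delta(u,A):=+\infty\quad\text{otherwise},
\]
for $(u,A)\in L^1(\Omega;\R^n)\times\mathcal{O}(\Omega)$, and first verify that $\F_\delta$ satisfies (H1)--(H5). Condition (H1) holds because $u\mapsto|Eu|(A)$ is $L^1(A;\R^n)$-lower semicontinuous and a sum of lower semicontinuous functionals is lower semicontinuous; (HH2) together with $\F_\delta(u,A)\ge\delta|Eu|(A)$ yields the coercive growth bound (H2) with constant $\max\{1/\delta,C+\delta\}$; (H3) is immediate since $\F_\delta(u,\cdot)$ is a sum of two Radon measures. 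For (H4) and (H5) the decisive point is that the perturbation term is invariant under the two relevant classes of transformations: $E(\mathrm{v}+u(\cdot-x_0))$ is the translate of $Eu$, so that $|E(\mathrm{v}+u(\cdot-x_0))|(x_0+A)=|Eu|(A)$, while $E(\LL(\cdot-x_0))=0$ for $\LL\in\mathbb{M}_{skew}^{n\times n}$, so that $|E(u+\LL(\cdot-x_0))|(A)=|Eu|(A)$; hence the $\delta$-contributions cancel exactly in the left-hand sides of \eqref{eqn:continuity of energy hyp} and \eqref{eqn:continuity of energy hyp 2}, and (H4), (H5) hold for $\F_\delta$ with the same modulus $\Psi$.

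Next I would apply Theorem~\ref{thm:mainthm} to $\F_\delta$, obtaining, for every $(u,A)\in BD(\Omega)\times\mathcal{O}(\Omega)$,
\[
\F(u,A)+\delta|Eu|(A)=\int_A f_\delta(x,u,e(u))\,\d x+\int_{J_u\cap A}g_\delta\,\d\H^{n-1}+\int_A f_\delta^\infty\Big(x,u,\frac{\d E^cu}{\d|E^cu|}\Big)\d|E^cu|,
\]
where $f_\delta,g_\delta,f_\delta^\infty$ are the cell-formula densities \eqref{e:f}--\eqref{e:cantor} associated with $\F_\delta$ via the cell problem $\mm_\delta$. Both $\delta\mapsto\F_\delta(u,A)$ and $\delta\mapsto\mm_\delta(u,A)$ are non-increasing, with $\F_\delta(u,A)\downarrow\F(u,A)$ and, for $A\in\mathcal{O}_{\infty}(\Omega)$, $\mm_\delta(u,A)\downarrow\mm(u,A)$ as $\delta\downarrow0$: the bound $\mm_\delta\ge\mm$ is immediate, while for any admissible competitor $v$ one has $\mm_\delta(u,A)\le\F(v,A)+\delta|Ev|(A)\to\F(v,A)$, and the converse follows by taking the infimum over $v$. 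Consequently $f_\delta,g_\delta,f_\delta^\infty$ are non-increasing in $\delta$ and, by (HH2), obey the linear growth estimates of \eqref{e:flingr} (and the analogues for $g_\delta$ and $f_\delta^\infty$) with a constant independent of $\delta\in(0,1]$. Letting $\delta\downarrow0$, the left-hand side decreases to $\F(u,A)$, while by monotone convergence and the uniform growth estimates the right-hand side tends to $\int_A\bar f(x,u,e(u))\,\d x+\int_{J_u\cap A}\bar g\,\d\H^{n-1}+\int_A\bar h\,\d|E^cu|$, where $\bar f:=\lim_{\delta\downarrow0}f_\delta$, $\bar g:=\lim_{\delta\downarrow0}g_\delta$, $\bar h:=\lim_{\delta\downarrow0}f_\delta^\infty$.

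It then remains to identify $\bar f=f$, $\bar g=g$ and $\bar h=f^\infty$, the latter being the cell-formula densities attached to $\F$. One inequality is elementary: for $w_{x_0,\mathrm{v},\mat}(y):=\mathrm{v}+\mat(y-x_0)$ with $\mat\in\mathbb{M}_{sym}^{n\times n}$, the divergence theorem gives $|Ev|(Q(x_0,r))\ge|Ev(Q(x_0,r))|=|\mat|\,r^n$ for every $v$ admissible in $\mm(w_{x_0,\mathrm{v},\mat},Q(x_0,r))$, whence
\[
\mm(w_{x_0,\mathrm{v},\mat},Q(x_0,r))+\delta|\mat|\,r^n\le\mm_\delta(w_{x_0,\mathrm{v},\mat},Q(x_0,r))\le\F(w_{x_0,\mathrm{v},\mat},Q(x_0,r))+\delta|\mat|\,r^n,
\]
so dividing by $r^n$, passing to $\limsup_{r\to0}$ and then to $\delta\downarrow0$ one gets $f(x_0,\mathrm{v},\mat)\le\bar f(x_0,\mathrm{v},\mat)$. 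For the reverse inequality I would use that $\F_\delta$ is coercive, hence falls within the scope of Lemmas~\ref{lem:derivative of m} and \ref{lem:goodpointsdensity}: tested on a globally affine function, these identify $f_\delta(x_0,\cdot,\cdot)$ with a genuine Radon-Nikod\'ym derivative of the measure $\F_\delta(\cdot,\cdot)$ and, in particular, show that the $\limsup$ in \eqref{e:f} is attained as a limit at $\L^n$-a.e.\ point; combined with $\mm_\delta\downarrow\mm$ this forces $\bar f(x_0,\mathrm{v},\mat)\le f(x_0,\mathrm{v},\mat)$. The analogous argument run on the cubes $Q^\nu$ and the two-value maps $u_{\mathrm{v}^-,\mathrm{v}^+,\nu}$ gives $\bar g=g$, and passing to recession functions $\bar h=f^\infty$; finally the a.e.\ identities are upgraded to arbitrary arguments by the continuity of $f$ and $g$ in the first two variables (cf.\ \eqref{e:stimafcont}) together with a density argument. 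I expect the main obstacle to be exactly this last identification step: the limit $\delta\downarrow0$ does not a priori commute with the $\limsup_{r\to0}$ occurring in \eqref{e:f}--\eqref{e:g}, and the commutation must be extracted from the two-sided estimate above combined with the Radon-Nikod\'ym characterization available for the coercive approximants $\F_\delta$.
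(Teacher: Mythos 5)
Your construction $\F_\delta:=\F+\delta|E\cdot|$, the verification of (H1)--(H5) for $\F_\delta$ (including the observation that the perturbation is exactly invariant under the transformations appearing in (H4) and (H5)), the application of Theorem~\ref{thm:mainthm} to $\F_\delta$, and the monotone passage to the limit $\delta\downarrow 0$ reproduce the paper's argument; apart from the harmless slip that $\F_\delta$, $\mm_\delta$, $f_\delta$, $g_\delta$ are non-\emph{decreasing} in $\delta$ (what you actually use, correctly, is that they decrease to their limits as $\delta\downarrow0$), this part is fine and is in fact all the paper does: its proof represents $\F$ with the pointwise limits $\bar f=\lim_{\delta\downarrow0} f_\delta$, $\bar g=\lim_{\delta\downarrow0} g_\delta$ and the corresponding limit for the Cantor density, and stops there, without identifying these limits with the cell-formula densities \eqref{e:f}--\eqref{e:cantor} built from $\mm$ for $\F$ itself.

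The additional identification step you attempt is where your proposal has a genuine gap. The inequality you establish, $f\le\bar f$, is the easy direction (it already follows from $\mm_\delta\ge\mm$). For the reverse direction, invoking Lemma~\ref{lem:derivative of m} and Lemma~\ref{lem:goodpointsdensity} for the coercive approximants $\F_\delta$ and ``combining with $\mm_\delta\downarrow\mm$'' does not resolve the interchange problem you yourself flag: those lemmas give information at each fixed $\delta$, whereas what is needed is a bound of the type $\mm_\delta(\mathrm{v}+\mat(\cdot-x_0),Q(x_0,\e))\le\mm(\mathrm{v}+\mat(\cdot-x_0),Q(x_0,\e))+\delta\,C(1+|\mat|)\e^n$ \emph{uniformly in} $\e$, i.e.\ near-minimizers of $\mm$ whose total variation in $Q(x_0,\e)$ is controlled by $C(1+|\mat|)\e^n$. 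That control is exactly what the dropped lower bound in (H2) would provide and is not available under (HH2): monotone pointwise convergence $\mm_\delta\downarrow\mm$ at each fixed scale does not commute with $\limsup_{\e\to0}$ without such uniformity, and Lemma~\ref{lem:derivative of m} itself cannot be applied to the non-coercive $\F$, since its proof uses coercivity to control the gluing near the boundary of the cells. So either one is content, as the paper's proof is, with representing $\F$ by the limit densities $\bar f$, $\bar g$ (and the limit of $f_\delta^\infty$), or the identification $\bar f=f$, $\bar g=g$ with the cell formulas of $\F$ requires an argument that your sketch does not supply.
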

\begin{proof}
Let $\delta>0$, and consider the functionals $\F_\delta:L^1(\Omega;\R^n)\times \mathcal{O}(\Omega)\to[0,+\infty]$  
be defined by $\F_\delta(u,A):=\F(u,A)+\delta|Eu|(A)$. Since $\F_\delta$ satisfies all the conditions (H1)-(H5) of 
Theorem~\ref{thm:mainthm} there are two functions $f_\delta$ and $g_\delta$ such that $\F_\delta$ can be represented as in 
the statement there. The family of functionals $\F_\delta$ is pointwise increasing in $\delta$, therefore there exist the pointwise 
limits $f$ of $f_\delta$ and $g$ of $g_\delta$ as $\delta\downarrow 0$ (note that from the very definition of recession function 
$f^\infty_\delta(x,\mathrm{v},\mathbb{A})=f^\infty(x,\mathrm{v},\mathbb{A})+\delta|\mathbb{A}|$). 
As $\F_\delta(\cdot,A)$ is pointwise converging to $\F(\cdot,A)$ for $\delta\downarrow 0$ on $BD(\Omega)$ for all 
$A\in\mathcal{O}(\Omega)$, we conclude that the integral representation with energy densities $f$, $g$ and $f^\infty$ 
for the bulk, surface and Cantor parts, respectively, holds for $\F$. 
\end{proof}

\section{Some applications}\label{s:applications}

Following \cite[Section~4]{bouchitte1998global} we provide some applications of the integral representation 
Theorem~\ref{thm:mainthm} to the topics of relaxation of bulk energies, of bulk and interfacial energies, to that of $L^1$ lower semicontinuity of functionals defined on $BD$, and to that of homogenization of bulk energies. 

Throughout the section, for all $\nu\in\mathbb{S}^{n-1}$ and $r>0$  we denote the cubes $Q^\nu(\underline{0},r)$ by $Q^\nu_r$, 
and moreover $Q(\underline{0},r)$ by $Q_r$.

\subsection{Relaxation and $L^1$ lower semicontinuity of bulk energies}\label{ss:relax}

In this section we address the issue of giving an explicit expression to the $L^1$ lower semicontinuous envelope of a linearly growing functional defined on 
smooth maps, for instance $LD(\Omega)$.

Theorem~\ref{t:relaxationLD} below generalizes to $BD(\Omega)$ the results proven in \cite{barroso2000relaxation} and \cite{ebobisse2001note} on $SBD(\Omega)$. In particular, in \cite{barroso2000relaxation} a continuous autonomous integrand $f_0$ (i.e. depending only on the symmetric gradient) is considered, the integral representation is then given in terms of the symmetric quasiconvex envelope of $f_0$  (see the definition below) and its associated recession function.  In addition, Theorem~\ref{t:relaxationLD} also generalizes 
partially the results on $BD(\Omega)$ established in \cite{rindler2011lower} and \cite[Corollary~1.10]{arroyorabasadephilippisrindler}, \cite[Theorem~1.4]{kosibarindler}. Note that in the former, also a Dirichlet boundary condition is considered, while
in the last two integral representations of the weakly* lower semicontinuous envelope of functionals with linear growth at infinity are provided for more general PDEs constraints on the approximating sequences. 

We stress that in the ensuing result, the strong recession function is not required to exist, and that the integrand is allowed 
to depend also on $x$ and $\mathrm{v}$. Moreover, global continuity is replaced by the weaker condition (H2').

Let $f_0:\Omega\times\R^n\times\mathbb{M}^{n\times n}_{sym}\to[0,+\infty)$ be a Borel integrand satisfying:
\begin{itemize}
\item[(H1')] there exists a constant $C>0$ such that 
for every $(x,\mathrm{v},\mat)\in\Omega\times\R^n\times\mathbb{M}^{n\times n}_{sym}$ 
\begin{equation}
 \frac1C|\mat| \leq f_0(x,\mathrm{v},\mat)\leq C\big(1+|\mat|\big);
 \end{equation}
 \item[(H2')] there exists a constant $C>0$ such that for every $\e>0$ there exists 
 $\delta>0$ such that
 \[
  |x-x_0|+|\mathrm{v}-\mathrm{v}_0|\leq\delta\Longrightarrow
  |f_0(x,\mathrm{v},\mat)-f_0(x_0,\mathrm{v}_0,\mat)|\leq C\e(1+|\mat|),
 \]
 for every 
 $(x,\mathrm{v},\mathrm{v}_0,\mat)\in\Omega\times(\R^n)^2\times\mathbb{M}^{n\times n}_{sym}$.
\end{itemize}
Let then $\F_0:L^1(\Omega;\R^n)\times \mathcal{O}(\Omega)\to[0,+\infty]$ be the functional defined by 
\begin{equation}\label{e:F0}
 \F_0(u,A):=\inf\Big\{\liminf_{j\to+\infty}F_0(u_j,A):\,u_j\to u \text{ in $L^1(\Omega;\R^n)$}\Big\},
\end{equation}
namely the $L^1(\Omega;\R^n)$ lower semicontinuos envelope of the functional
\begin{equation}\label{e:F00}
F_0(u,A):=\begin{cases}
\displaystyle{\int_Af_0\big(x,u(x),e(u)(x)\big)\d x} & \text{if $u\in LD(\Omega)$} \\ 
                        & \\
                       +\infty & \text{otherwise on $L^1(\Omega;\R^n)$.}
                      \end{cases}
\end{equation}
We denote by $\mm$ the cell formula defined in \eqref{e:mm} and related to $\mathcal{F}_0$, and
recall the notation $u_{\mathrm{v}^-,\mathrm{v}^+,\nu}$ introduced in \eqref{e:uvpiuvmeno}. 
We also recall that $f^\infty$ stands for the (weak) recession function as defined by \eqref{e:cantor}.  
 \begin{theorem}\label{t:relaxationLD}
 Let $F_0:L^1(\Omega;\R^n)\times \mathcal{O}(\Omega)\to[0,+\infty]$ be the functional defined in \eqref{e:F00}.
 Then, assuming (H1')-(H2'), the functional $\F_0:L^1(\Omega;\R^n)\times \mathcal{O}(\Omega)\to[0,+\infty]$
 defined in \eqref{e:F0} is represented by
 \begin{align*}
  \F_0(u,A)&=\int_Af\big(x,u(x),e(u)(x)\big)\d x\\&
  +\int_{J_u\cap A}g\big(x,u^-(x),u^+(x),\nu_u(x)\big)\d\H^{n-1}(x)
    +\int_Af^\infty\Big(x,u(x),\frac{\d E^cu}{\d|E^cu|}(x)\Big)\d|E^cu|(x), 
    \end{align*}
for all $(u,A)\in BD(\Omega)\times \mathcal{O}(\Omega)$, where for every 
$(x_0,\mathrm{v},\mat)\in\Omega\times\R^n\times\mathbb{M}^{n\times n}_{sym}$
\begin{align}\label{e:fLD}
 f(x_0,\mathrm{v},\mat)
 =\limsup_{\e\to0}\inf_{\substack{w\in LD(Q_1)\\ w|_{\partial Q_1}=\mat y|_{\partial Q}}}%
\int_{Q_1}f_0\big(x_0,\mathrm{v}+\e w(y),e(w)(y)\big)\d y,
\end{align}
and for every $(x_0,\mathrm{v}^-,\mathrm{v}^+,\nu)\in\Omega\times(\R^n)^2\times\mathbb{S}^{n-1}$
\begin{align}\label{e:gLD}
g(x_0,\mathrm{v}^-,\mathrm{v}^+,\nu)=\limsup_{\e\to0}
\inf_{\substack{w\in LD(Q^\nu_1)\\w|_{\partial Q^\nu_1}=u_{\mathrm{v}^-,\mathrm{v}^+,\nu}|_{\partial Q^\nu_1}}}
\int_{Q^\nu_1}\e\, f_0\big(x_0,w(y),\textstyle{\frac1\e}e(w)(y)\big)\d y.
\end{align}
\end{theorem}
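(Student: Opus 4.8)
The plan is to verify that the relaxed functional $\F_0$ satisfies hypotheses (H1)--(H5) of Theorem~\ref{thm:mainthm} after suitably localizing it, and then identify the abstract cell formulas $f$, $g$ of \eqref{e:f}, \eqref{e:g} with the explicit expressions \eqref{e:fLD}, \eqref{e:gLD}. First I would check (H1): $\F_0(\cdot,A)$ is $L^1(A;\R^n)$ lower semicontinuous by construction, being a relaxed functional (the lower semicontinuous envelope is automatically lower semicontinuous, using a diagonal argument). For (H2), the lower bound $\frac1C|Eu|(A)\le\F_0(u,A)$ follows from (H1') together with the lower semicontinuity of the functional $u\mapsto|Eu|(A)$ under $L^1$ convergence and the fact that $|Eu|(A)=|e(u)|\L^n(A)$ on $LD$; the upper bound $\F_0(u,A)\le C(\L^n(A)+|Eu|(A))$ requires a density/approximation argument: one must show that any $u\in BD(\Omega)$ can be approached in $L^1$ by $LD$ (in fact $W^{1,1}$) maps with $|Eu_j|(A)\to|Eu|(A)$, i.e. strict convergence, which is quoted in Section~\ref{prelBD}. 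For (H3) --- the measure property of $\F_0(u,\cdot)$ --- I would invoke the standard De Giorgi--Letta criterion: subadditivity, superadditivity and inner regularity of the set function $A\mapsto\F_0(u,A)$ are routine consequences of the integral structure and of the localization method (this is exactly as in \cite{bouchitte1998global}).

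Next I would verify the two continuity hypotheses. Hypothesis (H4) follows from (H2'): given a recovery sequence $u_j\to u$ in $L^1(A;\R^n)$ for $\F_0(u,A)$, the translated and shifted sequence $\mathrm{v}+u_j(\cdot-x_0)\to \mathrm{v}+u(\cdot-x_0)$ in $L^1(x_0+A;\R^n)$ is admissible for $\F_0(\mathrm{v}+u(\cdot-x_0),x_0+A)$, and the change of variables in the integral $\int f_0(x,u_j(x),e(u_j)(x))\d x$ produces precisely an error controlled by the modulus in (H2') times $(\L^n(A)+|Eu_j|(A))$; passing to the liminf and using the lower bound in (H2) to absorb $|Eu_j|(A)$ gives \eqref{eqn:continuity of energy hyp}. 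Hypothesis (H5) is immediate and is in fact the reason the autonomous-in-the-symmetric-variable structure is natural here: since $f_0$ depends on the gradient only through $e(\cdot)$, adding an infinitesimal rigid motion $\LL(\cdot-x_0)$ with $\LL\in\mathbb{M}^{n\times n}_{skew}$ changes neither $e(u_j)$ nor $e(u)$, so $F_0(u+\LL(\cdot-x_0),A)=F_0(u,A)$ exactly (modulo the harmless change $u\mapsto u+\LL(\cdot-x_0)$ inside the first slot of $f_0$, whose effect is again bounded by (H2') with $|\LL|\,\mathrm{diam}(A)$); hence $\F_0$ satisfies the strict form \eqref{eqn:continuity of energy hyp 2}, in fact with equality in the relevant part.

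Having established (H1)--(H5), Theorem~\ref{thm:mainthm} yields the integral representation with $f$, $g$ given by the abstract cell formulas \eqref{e:f}, \eqref{e:g} and Cantor density $f^\infty$. It then remains to identify these with \eqref{e:fLD}, \eqref{e:gLD}. For the bulk density: by definition $f(x_0,\mathrm{v},\mat)=\limsup_{\e\to0}\e^{-n}\mm(\mathrm{v}+\mat(\cdot-x_0),Q(x_0,\e))$, and one first uses (H5)/\eqref{eqn:invariance} to reduce to $\mat\in\mathbb{M}^{n\times n}_{sym}$; then one rewrites $\mm(\mathrm{v}+\mat(\cdot-x_0),Q(x_0,\e))$ by a scaling $y=(x-x_0)/\e$, which turns competitors $v$ with $v|_{\partial Q(x_0,\e)}=(\mathrm{v}+\mat(\cdot-x_0))|_{\partial Q(x_0,\e)}$ into $w(y)=\e^{-1}(v(x_0+\e y)-\mathrm{v})$ with $w|_{\partial Q_1}=\mat y|_{\partial Q_1}$, and $\F_0(v,Q(x_0,\e))=\e^n\int_{Q_1}f_0(x_0+\e y,\mathrm{v}+\e w(y),e(w)(y))\,\d y+o(\e^n)$ after replacing $\F_0$ by its integral form on $LD$ competitors (justified because $\mm$ can be computed using $LD$ competitors, up to negligible error, via Lemma~\ref{lem:continuity if m} and density) and using (H2') to freeze the $x$-dependence at $x_0$. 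This gives \eqref{e:fLD}. The surface density \eqref{e:gLD} is handled by the entirely analogous $(n-1)$-scaling of $\mm(u_{\mathrm{v}^-,\mathrm{v}^+,\nu}(\cdot-x_0),Q^\nu(x_0,\e))$, where the rescaling $w(y)=v(x_0+\e y)$ leaves the boundary datum $u_{\mathrm{v}^-,\mathrm{v}^+,\nu}$ invariant (it is $0$-homogeneous) and produces the factor $\e$ and the $\frac1\e e(w)$ inside $f_0$.

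The main obstacle I expect is the upper-bound half of (H2) together with the passage, inside the cell formulas, from the relaxed functional $\F_0$ to the explicit integral on $LD$ competitors: one must argue that in computing $\mm(u,A)$ it suffices (up to an arbitrarily small error) to take competitors in $LD(A)$ --- so that $\F_0$ on them coincides with the integral $\int_A f_0(x,v,e(v))\,\d x$ --- and that these competitors can be chosen to match the prescribed affine (resp.\ jump) boundary data while keeping $|Ev|(A)$ controlled. This relies on the strict-topology density of smooth maps in $BD$ and on the continuity estimate for $\mm$ in Lemma~\ref{lem:continuity if m} (which bounds $|\mm(u_1,A)-\mm(u_2,A)|$ by $C\int_{\partial A}|u_1-u_2|\d\H^{n-1}$), so that boundary data can be adjusted cheaply; assembling these pieces carefully, while tracking the $x$- and $\mathrm{v}$-dependence via (H2'), is the technical heart of the proof, although each individual ingredient is by now standard.
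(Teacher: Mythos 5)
Your overall architecture coincides with the paper's: check (H1)--(H5) for $\F_0$, apply Theorem~\ref{thm:mainthm}, and then identify the abstract cell formulas \eqref{e:f}, \eqref{e:g} with \eqref{e:fLD}, \eqref{e:gLD} by scaling and by freezing the $(x,\mathrm{v})$-dependence via (H2'). The verification of the hypotheses is essentially right (and your observation that (H5) holds because $f_0$ sees only $e(u)$, with the shift in the $\mathrm{v}$-slot absorbed by (H2'), is exactly the point). The genuine gap is in the step you yourself single out as the technical heart: passing, inside the cell problem, from $\F_0$-competitors to $LD$-competitors with the prescribed boundary datum, i.e. the identity $\mm(u,A)=\mm_0(u,A):=\inf\{F_0(w,A):w=u\text{ on }\partial A\}$ of Lemma~\ref{l:mm0}. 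The tools you propose for this do not suffice. Lemma~\ref{lem:continuity if m} compares $\mm(u_1,A)$ and $\mm(u_2,A)$, i.e. it changes the boundary datum within the cell problem \emph{for the same functional} $\F_0$; it cannot convert an $L^1$-recovery sequence $v_j\in LD(A)$ for a near-optimal $\F_0$-competitor $v$ into admissible $F_0$-competitors, because $L^1(A)$ convergence gives no control whatsoever on the traces $v_j|_{\partial A}$ (traces are continuous only for the strict topology), so the quantity $\int_{\partial A}|v_j-u|\,\d\H^{n-1}$ entering that lemma is not small. Strict-topology density with assigned trace does not repair this either: it approximates the single map $v$ by $W^{1,1}$ maps with the correct trace, but along such sequences one only knows $\liminf_j F_0(w_j,A)\geq \F_0(v,A)$ by lower semicontinuity; for a non-convex $f_0$ there is no Reshetnyak-type upper continuity, so these competitors need not have energy close to the relaxed value $\F_0(v,A)$, which is what the inequality $\mm_0\leq\mm$ requires.

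The missing idea is the De Giorgi averaging/slicing argument (the ``fundamental estimate''), which is exactly what the paper invokes through \cite[Proposition~3.7]{barroso2000relaxation} in the proof of Lemma~\ref{l:mm0}: one takes a recovery sequence $v_j\to v$ in $L^1(A)$ with $F_0(v_j,A)\to\F_0(v,A)$, glues $v_j$ to (an $LD$ lifting of) the boundary datum across a thin layer near $\partial A$, and chooses the layer by averaging over many slices so that the extra energy --- controlled, thanks to the linear growth (H1'), by $\L^n(\mathrm{layer})+|Ev_j|(\mathrm{layer})$ plus a term of the form $\tfrac{1}{\mathrm{width}}\int_{\mathrm{layer}}|v_j-\mathrm{datum}|$, the latter handled using $v_j\to v$ in $L^1$ and the $L^1$-attainment of the trace of $v$ on $\partial A$ --- is arbitrarily small. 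The same gluing mechanism is also what underlies the subadditivity part of (H3), which you dismissed as routine; citing the localization method is acceptable there, but for the identification $\mm=\mm_0$ the slicing construction is indispensable and is not replaceable by Lemma~\ref{lem:continuity if m} plus density. Once this lemma is in place, your scaling computations for \eqref{e:fLD} and \eqref{e:gLD} go through as you describe.
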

\begin{remark}\label{r:truncation}
Assumption  (H2') implies that $\F_0$ satisfies (H4).
Instead, in the $BV$-setting, the ensuing weaker assumption (H3') replaces  (H2') in \cite[Section~4.1]{bouchitte1998global}:
\begin{itemize}
 \item[(H3')] there exists a constant $C>0$ such that for every $\e>0$ there exists 
 $\delta>0$ such that
 \[
  |\mathrm{v}-\mathrm{v}_0|\leq\delta\Longrightarrow
  |f_0(x,\mathrm{v},\mat)-f_0(x,\mathrm{v}_0,\mat)|\leq C\e(1+|\mat|),
 \]
 for every $(x,\mathrm{v},\mathrm{v}_0,\mat)\in\Omega\times(\R^n)^2\times\mathbb{M}^{n\times n}_{sym}$.
\end{itemize}
The latter condition and a truncation argument 
(cf. \cite[Lemma~2.6]{bouchitte1998global}) are employed both to simplify the minimum problems defining $f$ and $g$ with respect to the $\mathrm{v}$-variable 
(cf. equations \cite[(4.1.5) and (4.1.6)]{bouchitte1998global}, and also to dispense with the analogue of (H4). Therefore, since the integral representation result in the $BV$ setting cannot be directly applied, 
the quoted truncation argument and the analogue of Lemma~\ref{lem:shapeofF} are central to give an 
explicit formula for the energy density of the Cantor part of the relaxed functionals analogous to $\F_0$ 
(cf. Eq. \cite[(4.1.7) in Theorem~4.1.4]{bouchitte1998global} and 
\cite[Remark~4.1.5]{bouchitte1998global}). 
Instead, since truncations are not permitted in the current $BD$ setting, we need the stronger assumption (H2') to enforce (H4).
\end{remark}

\begin{remark}\label{r:H1 weakened}
In order to prove the integral representation of $\F_0$ over the subspace $SBD(\Omega)$ 
only, assumption (H2') is actually not needed and (H1') can be weakened. Indeed, to that aim one can allow for $f_0$ to depend also on 
the skew-symmetric part of the given matrix in view of Lemma~\ref{lem:goodpointsdensity} (cf. Section~\ref{s:discussionH5}). 
Clearly, formulas \eqref{e:fLD} defining $f$ and \eqref{e:gLD} defining $g$ have to be changed accordingly.
\end{remark}
\begin{remark}\label{r:LDvssobolev}
In view of the density of $W^{1,1}(\Omega;\R^n)$ in $BD(\Omega)$ with respect to the strict topology 
with assigned boundary trace (cf. \cite[Theorem~2.6]{barroso2000relaxation}), the space $LD$ can 
be substituted with $W^{1,1}$ in the minimum problems defining $f$ and $g$. 

Therefore, the same conclusions of Theorem~\ref{t:relaxationLD} can be drawn if we consider the functional 
$F'_0(u,A):=F_0(u,A)$ for $u\in W^{1,1}(\Omega;\R^n)$, and $+\infty$ otherwise on $L^1(\Omega;\R^n)$. 
Then the space of test maps for the minimum problems defining $f$ and $g$ in \eqref{e:fLD} and \eqref{e:gLD} respectively, 
is $W^{1,1}(\Omega;\R^n)$.
\end{remark}
The main steps to prove Theorem~\ref{t:relaxationLD} are similar to those exploited for the analogous result in the $BV$ setting in \cite[Section~4.1]{bouchitte1998global} 
to which we refer. Therefore, we provide only a sketch of those proofs for which some changes are needed. 
\begin{lemma}[Lemma~4.1.3 \cite{bouchitte1998global}]\label{l:mm0}
Assume (H1')-(H2'). Then, $\F_0$ satisfies (H1)-(H5), and for all $(u,A)\in BD(\Omega)\times \mathcal{O}_{\infty}(\Omega)$ 
\[
\mm(u,A)=\mm_0(u,A):=\inf\{F_0(w,A):\,w\in BD(A),\,w=u \textrm{ on $\partial A$}\}.
\]
\end{lemma}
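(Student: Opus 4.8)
The plan is to first verify that $\F_0$ satisfies (H1)--(H5), and then to prove the two inequalities $\mm(u,A)\le\mm_0(u,A)$ and $\mm_0(u,A)\le\mm(u,A)$ separately.

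\emph{Verification of (H1)--(H5).} Condition (H1) holds by construction, $\F_0(\cdot,A)$ being the $L^1$-lower semicontinuous envelope of $F_0(\cdot,A)$, and it implies locality. For (H2), the lower bound follows from the coercivity in (H1') together with the $L^1(A;\R^n)$-lower semicontinuity of $v\mapsto|Ev|(A)$: along any sequence $v_j\in LD(A)$ converging to $u$ in $L^1$ one has $F_0(v_j,A)\ge\tfrac{1}{C}|Ev_j|(A)$, hence $\liminf_jF_0(v_j,A)\ge\tfrac{1}{C}|Eu|(A)$; the upper bound follows by choosing $v_j\in\mathcal C^\infty(\Omega;\R^n)\cap W^{1,1}(\Omega;\R^n)$ converging to $u$ strictly in $BD(\Omega)$, so that $F_0(v_j,A)\le C(\L^n(A)+|Ev_j|(A))\to C(\L^n(A)+|Eu|(A))$. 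Observe next that (H2') is equivalent to the existence of a modulus of continuity $\omega$ with
\[
|f_0(x,\mathrm{v},\mat)-f_0(x_0,\mathrm{v}_0,\mat)|\le\omega(|x-x_0|+|\mathrm{v}-\mathrm{v}_0|)\,(1+|\mat|)\qquad\text{on }\Omega\times(\R^n)^2\times\mathbb{M}^{n\times n}_{sym}.
\]
Applying this bound to a recovery sequence $v_j\to u$ for $\F_0(u,A)$, after the change of variables $y=x-x_0$ and the shift by $\mathrm{v}$, gives $F_0(\mathrm{v}+v_j(\cdot-x_0),x_0+A)\le F_0(v_j,A)+\omega(|x_0|+|\mathrm{v}|)(\L^n(A)+|Ev_j|(A))$; taking the $\liminf$ in $j$ and using (H2) to bound $|Ev_j|(A)$ by $C(\L^n(A)+|Eu|(A))$ proves (H4). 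Property (H5) is where the structure of $f_0$ enters: since $\LL\in\mathbb{M}^{n\times n}_{skew}$ we have $E(\LL(\cdot-x_0))=0$ and $e(u+\LL(\cdot-x_0))=e(u)$, so that $F_0(v_j+\LL(\cdot-x_0),A)=\int_A f_0\big(x,v_j(x)+\LL(x-x_0),e(v_j)(x)\big)\d x\le F_0(v_j,A)+\omega(|\LL|\,\mathrm{diam}(A))(\L^n(A)+|Ev_j|(A))$, because $|\LL(x-x_0)|\le|\LL|\,\mathrm{diam}(A)$ for $x\in A$; passing to the envelope exactly as for (H4) yields (H5). Finally, (H3): the bounds in (H2) give the absolute continuity of $\F_0(u,\cdot)$ with respect to $\L^n+|Eu|$; superadditivity on disjoint open subsets is a consequence of locality, while subadditivity is the standard fundamental estimate (gluing two $L^1$-recovery sequences over overlapping open sets by a cut-off function, with the cut-off error averaged over finitely many nested layers), and the De Giorgi--Letta criterion then gives the measure property.

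\emph{The inequality $\mm\le\mm_0$.} Let $w\in BD(A)$ with $w=u$ on $\partial A$; we may assume $F_0(w,A)<\infty$, so that $w\in LD(A)$. The map $v$ equal to $w$ on $A$ and to $u$ on $\Omega\setminus A$ belongs to $BD(\Omega)$ with $v|_{\partial A}=u|_{\partial A}$, the traces matching on the Lipschitz surface $\partial A$. By locality and by $\F_0\le F_0$ we get $\F_0(v,A)=\F_0(w,A)\le F_0(w,A)$, and taking the infimum over $w$ yields $\mm(u,A)\le\mm_0(u,A)$.

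\emph{The inequality $\mm_0\le\mm$.} Fix $v\in BD(\Omega)$ with $v|_{\partial A}=u|_{\partial A}$ and $\F_0(v,A)<\infty$; it suffices to show $\mm_0(u,A)\le\F_0(v,A)$. Pick $v_j\in LD(A)$ with $v_j\to v$ in $L^1(A)$, $F_0(v_j,A)\to\F_0(v,A)$ and hence $\sup_j|Ev_j|(A)<\infty$ by (H1'); and, using the density of $W^{1,1}(A)$ in $BD(A)$ with prescribed boundary trace (\cite[Theorem~2.6]{barroso2000relaxation}), pick $\tilde v_m\in W^{1,1}(A)\subseteq LD(A)$ with $\tilde v_m=u$ on $\partial A$ and $\tilde v_m\to v$ strictly in $BD(A)$. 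For small $\delta>0$ set $L_\delta:=\{x\in A:\mathrm{dist}(x,\partial A)<\delta\}$, subdivide $L_\delta$ into $N$ sub-layers and, on the $i$-th one, interpolate between $v_j$ (on the inner side) and $\tilde v_m$ (on the outer side) through a cut-off $\varphi_i$ with $|\nabla\varphi_i|\le CN/\delta$, obtaining $w_{i,j,m}:=\varphi_iv_j+(1-\varphi_i)\tilde v_m\in LD(A)$ with $w_{i,j,m}=u$ on $\partial A$. Using $f_0(x,\mathrm{v},\mat)\le C(1+|\mat|)$ on the transition sub-layer and on $L_\delta$, bounding $\int f_0(x,v_j,e(v_j))$ by $F_0(v_j,A)$ on the complement of $L_\delta$, and choosing $i$ so as to minimise the interpolation error over $i=1,\dots,N$, one obtains
\[
\mm_0(u,A)\le F_0(v_j,A)+C\big(\L^n(L_\delta)+|E\tilde v_m|(L_\delta)\big)+\tfrac{C}{N}\big(|Ev_j|(A)+|E\tilde v_m|(A)\big)+\tfrac{C}{\delta}\,\|v_j-\tilde v_m\|_{L^1(A)}.
\]
Choosing $m=m(j)$ with $\|\tilde v_{m(j)}-v\|_{L^1(A)}\le1/j$, and letting $j\to\infty$, then $N\to\infty$, then $\delta\to0$ (so that $\L^n(L_\delta)\to0$ and $|Ev|(L_\delta)\to0$, for the latter avoiding the at most countably many values with $|Ev|(\{\mathrm{dist}(\cdot,\partial A)=\delta\})>0$), one concludes $\mm_0(u,A)\le\F_0(v,A)$; taking the infimum over admissible $v$ gives $\mm_0(u,A)\le\mm(u,A)$.

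\emph{Main obstacle.} The only genuinely delicate point is the fundamental estimate in the $BD$-setting, needed both for the subadditivity in (H3) and for the boundary adjustment in $\mm_0\le\mm$: since the truncation argument available in $BV$ is not at our disposal, one cannot cut off against a constant and must instead interpolate between the energy-recovery sequence and a trace-matching $W^{1,1}$ sequence; the annulus-averaging device nonetheless forces all the error terms to vanish in the limit. Everything else is routine.
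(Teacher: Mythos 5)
Your proposal is correct and follows essentially the same route as the paper: the paper's own proof is a two-sentence sketch that checks (H1)--(H5) from (H1')--(H2'), gets $\mm\le\mm_0$ from $\F_0\le F_0$ together with locality, and delegates the measure property (H3) and the inequality $\mm_0\le\mm$ to the De~Giorgi averaging/slicing arguments of \cite{barroso2000relaxation} (Propositions~3.7 and 3.9 there). What you do is simply carry out those cited arguments explicitly (fundamental estimate for (H3), slicing plus the trace-matching $W^{1,1}$ approximation for the boundary adjustment), so the approach coincides with the paper's, only with more detail.
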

\begin{proof}
Assumptions (H1), (H2) and (H5) are easily checked to be satisfied by $\F_0$ in view of (H1')
and the very definition of $\F_0$. Instead, (H4) follows from (H2').
For what concerns (H3) one can argue similarly to \cite[Proposition~3.9]{barroso2000relaxation}.

The inequality $\mm(u,A)\leq\mm_0(u,A)$ follows from $\F_0\leq F_0$. 
For the opposite, one uses the very definition of $\F_0$ as the relaxation of $F_0$ together 
with the version of the De~Giorgi's averaging/slicing lemma stated in 
\cite[Proposition~3.7]{barroso2000relaxation}.
\end{proof}
By means of the alternative characterization of $\mm$ provided by $\mm_0$, of 
Theorem~\ref{thm:mainthm}, of the results in Section~4 and of a change of variable, Theorem~\ref{t:relaxationLD} follows at once.

If an additional quantified closeness condition between $f$ computed on large gradients and $f^\infty$ is added, 
formula \eqref{e:gLD} defining $g$ can be simplified. The ensuing claim \eqref{e:gLDbis} is 
well-known in the in the $BV$ case (cf. \cite[Theorem~4.1.4]{bouchitte1998global}). 
\begin{corollary}\label{c:relaxationLD}
 Under the assumptions and notation of Theorem~\ref{t:relaxationLD} and in addition 
 \begin{itemize}
  \item[(H4')] on setting for any $L>0$ and $x_0\in\Omega$
\[
\omega_{f_0}(x_0,L):=\sup_{\substack{\mathrm{v}\in\R^n,\,t\geq L \\ \mat\in\mathbb{M}^{n\times n}_{sym}:\,|\mat|=1}}\Big|f_0^\infty(x_0,\mathrm{v},\mat)
-\frac 1tf_0(x_0,\mathrm{v},t\mat)\Big|\,,
\]  
then $\omega_{f_0}(x_0,L)$ is infinitesimal as $L\to+\infty$,
 \end{itemize}
the function $g$ in Eq. \eqref{e:gLD} in the conclusions of 
Theorem~\ref{t:relaxationLD} is characterized alternatively by
\begin{align}\label{e:gLDbis}
 g(x_0,\mathrm{v}^-,\mathrm{v}^+,\nu)=
\inf_{\substack{w\in LD(Q^\nu_1)\\w|_{\partial Q^\nu_1}=u_{\mathrm{v}^-,\mathrm{v}^+,\nu}|_{\partial Q^\nu_1}}}
\int_{Q^\nu_1}f_0^\infty&\big(x_0,w(y),e(w)(y)\big)\d y.
\end{align}
\end{corollary}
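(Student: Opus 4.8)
The strategy is to show that, under the extra closeness hypothesis (H4'), the $\limsup_{\e\to0}$ in formula \eqref{e:gLD} can be removed and the integrand $\e f_0(x_0,w,\frac1\e e(w))$ replaced by $f_0^\infty(x_0,w,e(w))$, so that the surface density reduces to \eqref{e:gLDbis}. I would proceed by a two-sided estimate. First I would bound the $\limsup$-expression in \eqref{e:gLD} from above by the right-hand side of \eqref{e:gLDbis}: given $\delta>0$ pick a competitor $w\in LD(Q^\nu_1)$ with $w|_{\partial Q^\nu_1}=u_{\mathrm v^-,\mathrm v^+,\nu}|_{\partial Q^\nu_1}$ almost attaining the infimum in \eqref{e:gLDbis}; feeding this \emph{same} $w$ into \eqref{e:gLD}, the integrand $\e f_0(x_0,w(y),\frac1\e e(w)(y))$ at a point where $e(w)(y)\neq0$ equals $\frac1{t}f_0(x_0,w(y),t\,\widehat{e(w)}(y))$ with $t=|e(w)(y)|/\e\to+\infty$, hence by (H4') it converges (dominated by $C(1+|e(w)|)\in L^1$ via (H1')) to $f_0^\infty(x_0,w(y),e(w)(y))$; points with $e(w)(y)=0$ contribute $0$ on both sides by $0$-homogeneity of $f_0^\infty$ in the last slot. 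Passing to the limit inside the integral and then letting $\delta\to0$ gives ``$\le$''.

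For the reverse inequality I would use the lower bound already recorded in the proof of Theorem~\ref{thm:mainthm}: by \eqref{eqn:afinal1}, for $|E^cu|$-a.e.\ $x_0$ the Cantor density of $\F_0$ dominates $f^\infty(x_0,u(x_0),\tfrac{\eta\odot\xi}{|\eta\odot\xi|})$, and by Lemma~\ref{lem:lwr bound} this is in turn bounded below by the $\liminf$ of the rescaled cell quantities. More directly at the level of the cell formula: for any admissible $w$ in \eqref{e:gLD} and any scale $\e$, one has $\mm(u_{\mathrm v^-,\mathrm v^+,\nu}(\cdot-x_0),Q^\nu(x_0,\e))\le \mathcal F_0(w((\cdot-x_0)/\e)+\text{affine correction},\cdot)$, and expanding $\mathcal F_0=\int f$ and using (H4') together with \eqref{e:stimafcont} shows that every competitor for \eqref{e:gLDbis} is asymptotically admissible for \eqref{e:gLD}, giving the matching ``$\ge$''. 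Equivalently, and more cleanly, I would invoke Lemma~\ref{l:mm0} to replace $\mm$ by $\mm_0$, rescale the torn competitor in \eqref{e:gLDbis} to the cube $Q^\nu(x_0,\e)$, and observe that $\int_{Q^\nu_\e}f_0(x_0,u_{\mathrm v^-,\mathrm v^+,\nu},\frac1\e e(w(\cdot/\e)))\,\d x=\e^{n-1}\int_{Q^\nu_1}\e f_0(x_0,\cdot,\frac1\e e(w))\,\d y$, so the value $g$ from Theorem~\ref{t:relaxationLD} is $\le$ the value in \eqref{e:gLDbis} plus an error controlled by $\omega_{f_0}(x_0,L)$ on the set $\{|e(w)|\ge \e L\}$ and by $C\e|\{|e(w)|<\e L\}|$ elsewhere, both vanishing as $\e\to0$ and then $L\to+\infty$.

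The only genuinely delicate point is the interchange of limit and integral in the upper bound: near the set $\{e(w)=0\}$ the ratio $t=|e(w)|/\e$ need not go to infinity, so (H4') does not apply there; I would handle this by splitting $Q^\nu_1=\{|e(w)|\ge\e L\}\cup\{|e(w)|<\e L\}$, using (H4') on the first set (where $t\ge L$) and the linear growth bound $\e f_0(x_0,w,\frac1\e e(w))\le C(\e+|e(w)|)\le C(\e+\e L)$ on the second, whose measure stays bounded while the pointwise bound is $O(\e L)\to0$; then letting $L\to+\infty$ after $\e\to0$. A symmetric remark handles the dependence on $w(y)$ rather than $\mathrm v^\pm$ via the modulus in (H2') / \eqref{e:stimafcont}, since $w$ is a fixed $L^1$ function and the prefactor $\e$ kills that error. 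With these estimates in place the two inequalities combine to \eqref{e:gLDbis}, and the statement follows.
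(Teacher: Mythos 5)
Your first paragraph (together with the splitting in your last paragraph) correctly proves one half of the statement, namely $g(x_0,\mathrm{v}^-,\mathrm{v}^+,\nu)\le\inf_w\int_{Q^\nu_1}f_0^\infty(x_0,w,e(w))\d y$: fixing a near-optimal competitor $w$ for \eqref{e:gLDbis}, on $\{|e(w)|\ge\e L\}$ one has $|\e f_0(x_0,w,\tfrac1\e e(w))-f_0^\infty(x_0,w,e(w))|\le\omega_{f_0}(x_0,L)\,|e(w)|$ by $1$-homogeneity of $f_0^\infty$, while on $\{|e(w)|<\e L\}$ both integrands are $O(\e(1+L))$; letting $\e\to0$ and then $L\to+\infty$ gives the inequality. (The paper itself states the corollary without proof, referring to the $BV$ analogue, so this is indeed the expected route.) The genuine gap is in your ``reverse inequality'' paragraph: none of the arguments you sketch there produces the inequality $g\ge\inf_w\int f_0^\infty(x_0,w,e(w))\d y$. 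The appeal to \eqref{eqn:afinal1} and Lemma~\ref{lem:lwr bound} concerns the Radon--Nikod\'ym derivative with respect to $|E^cu|$, i.e.\ the Cantor density, and has no bearing on the jump density $g$; and both of your ``cell formula'' arguments (inserting a rescaled competitor of \eqref{e:gLDbis} into $\mm$, or the estimate you state explicitly as ``$g$ is $\le$ the value in \eqref{e:gLDbis} plus an error'') bound $\mm$, hence $g$, \emph{from above} — they re-derive the same inequality as your first paragraph, not its converse.

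The missing step is short but necessary, and it is not covered by your fixed-competitor argument because there $w$ does not depend on $\e$. For each $\e$ pick a near-minimizer $w_\e\in LD(Q^\nu_1)$ of the inner infimum in \eqref{e:gLD}. By the coercivity in (H1'), $\frac1C|e(w_\e)|\le\e f_0(x_0,w_\e,\tfrac1\e e(w_\e))$, and since the infima in \eqref{e:gLD} are bounded uniformly in $\e$ (test with any fixed admissible map), one gets $\sup_\e\int_{Q^\nu_1}|e(w_\e)|\d y=:M<+\infty$. Then the same splitting as before yields
\begin{equation*}
\int_{Q^\nu_1}\e f_0\big(x_0,w_\e,\tfrac1\e e(w_\e)\big)\d y\;\ge\;\int_{Q^\nu_1}f_0^\infty\big(x_0,w_\e,e(w_\e)\big)\d y-\omega_{f_0}(x_0,L)\,M-C\e L\;\ge\;\inf_{w}\int_{Q^\nu_1}f_0^\infty\big(x_0,w,e(w)\big)\d y-\omega_{f_0}(x_0,L)\,M-C\e L,
\end{equation*}
because $w_\e$ is admissible for \eqref{e:gLDbis}. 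Taking the $\limsup$ as $\e\to0$ and then $L\to+\infty$ gives the lower bound; note that the uniform bound $M$ obtained from coercivity is exactly what makes the error term $\omega_{f_0}(x_0,L)\int|e(w_\e)|$ controllable, which is why this direction cannot be dispensed with or deduced from upper-bound-type comparisons of the cell formulas. With this addition your argument becomes a complete proof; as written, the reverse inequality is not established.
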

Furthermore, we deal with the $\mathrm{v}$-independent case for which (H4') is actually not needed (cf. \cite[Remark~2.17]{fonsecamuller} 
for the analogous result in the $BV$ setting). We start off with a preliminary result.
\begin{corollary}\label{c:vindependent}
Under the assumptions and notation of Theorem~\ref{t:relaxationLD}, if the integrand $f_0$ satisfies
$f_0(x,\mathrm{v},\mat)=f_0(x,\mat)$ for every $(x,\mathrm{v},\mat)\in\Omega\times\R^n\times\mathbb{M}^{n\times n}_{sym}$, then
\begin{equation*}
 g(x_0,\mathrm{v}^-,\mathrm{v}^+,\nu)=f^\infty\big(x_0,(\mathrm{v}^+-\mathrm{v}^-)\odot\nu\big)
\end{equation*}
for every $(x_0,\mathrm{v}^-,\mathrm{v}^+,\nu)\in\Omega\times(\R^n)^2\times\mathbb{S}^{n-1}$. 
\end{corollary}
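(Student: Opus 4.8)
The goal is to identify the surface density $g$ in the $\mathrm{v}$-independent case with $f^\infty$ evaluated on the rank-one matrix $(\mathrm{v}^+-\mathrm{v}^-)\odot\nu$. I would deduce this from formula \eqref{e:gLD} for $g$ together with formula \eqref{e:fLD} for $f$ (and hence the definition \eqref{e:cantor} of $f^\infty$), rather than going back to $\mm$ directly. Since $f_0$ does not depend on $\mathrm{v}$, the inner minimisation in \eqref{e:gLD} becomes
\[
 g(x_0,\mathrm{v}^-,\mathrm{v}^+,\nu)=\limsup_{\e\to0}\;\e\;
 \inf_{\substack{w\in LD(Q^\nu_1)\\ w|_{\partial Q^\nu_1}=u_{\mathrm{v}^-,\mathrm{v}^+,\nu}|_{\partial Q^\nu_1}}}
 \int_{Q^\nu_1}f_0\big(x_0,\textstyle{\frac1\e}e(w)(y)\big)\d y,
\]
and the plan is to relate this, via the rescaling $w\mapsto \e w$, to the asymptotic cell problem defining $f^\infty\big(x_0,(\mathrm{v}^+-\mathrm{v}^-)\odot\nu\big)$.

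\textbf{Key steps.} First I would fix $\nu$ and, after a change of variables reducing to $\nu=\en$, write $\zeta:=\mathrm{v}^+-\mathrm{v}^-$ and use as a competitor in \eqref{e:gLD} a one-dimensional profile: for a small parameter $\lambda>0$ consider $w_\lambda(y):=\mathrm{v}^-+\zeta\,h_\lambda(y\cdot\nu)$ where $h_\lambda$ interpolates between $0$ and $1$ on an interval of length $\lambda$ near $0$ and equals the jump of $u_{\mathrm{v}^-,\mathrm{v}^+,\nu}$ otherwise; then $e(w_\lambda)=\tfrac1\lambda\,\zeta\odot\nu$ on a slab of thickness $\lambda$. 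Plugging this in and letting $\e\to0$ with $\lambda$ fixed gives, using the linear growth \eqref{e:flingr} of $f_0$ to discard the $O(\e)$ terms coming from the region where $e(w_\lambda)=0$,
\[
 g(x_0,\mathrm{v}^-,\mathrm{v}^+,\nu)\le \limsup_{t\to+\infty}\frac{f_0(x_0,t\,\zeta\odot\nu)}{t}=f_0^\infty\big(x_0,\zeta\odot\nu\big),
\]
after recognising $t=1/\lambda$ and comparing with \eqref{e:cantor} (noting $f_0=f$ on symmetric matrices once relaxed, by symmetric quasiconvexity). For the reverse inequality I would use that $g$ is $BV$-elliptic (Lemma~\ref{lem:goodpointsdensity} and the comment after it), so by Jensen-type / De Giorgi ellipticity arguments $g(x_0,\mathrm{v}^-,\mathrm{v}^+,\nu)$ is bounded below by the value obtained from any admissible competitor's averaged symmetric gradient; combined with the fact that for every admissible $w$ in \eqref{e:gLD} one has $\fint_{Q^\nu_1}e(w)=\zeta\odot\nu$ (by the divergence theorem, since the boundary datum is $u_{\mathrm{v}^-,\mathrm{v}^+,\nu}$), symmetric quasiconvexity of $f_0^\infty$ yields
\[
 \e\int_{Q^\nu_1}f_0\big(x_0,\textstyle{\frac1\e}e(w)\big)\d y\;\ge\;\int_{Q^\nu_1}f_0^\infty\big(x_0,e(w)\big)\d y+o(1)\;\ge\;f_0^\infty\big(x_0,\zeta\odot\nu\big)+o(1),
\]
where the first inequality uses the $1$-homogeneity and continuity of $f_0^\infty$ together with the linear growth bound, uniformly in $w$. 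Taking $\limsup_{\e\to0}$ gives the matching lower bound.

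\textbf{Main obstacle.} The delicate point is the uniform-in-$w$ comparison between $\e f_0(x_0,\tfrac1\e e(w))$ and $f_0^\infty(x_0,e(w))$ inside the integral: the competitor $w=w_\e$ depends on $\e$, so one cannot merely use pointwise convergence of $\e f_0(x_0,\tfrac1\e A)\to f_0^\infty(x_0,A)$. One must control the region where $|e(w_\e)(y)|$ is small (of order $\e$ or smaller), where $\tfrac1\e e(w_\e)$ stays bounded and the approximation of $f_0$ by $f_0^\infty$ degrades; here the linear growth lower bound $\tfrac1C|A|\le f_0(x_0,A)$ keeps both sides comparably small, and the 1-homogeneity of $f_0^\infty$ handles the large-gradient region. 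I expect this is exactly the role played by a quantified closeness condition in the $BV$ analogue, but in the autonomous $\mathrm{v}$-independent case it can be dispensed with precisely because $f_0^\infty$ inherits the two-sided linear bound from $f_0$, so the standard lower-semicontinuity argument for $\int f_0^\infty(x_0,e(\cdot))$ on $BD$ (which is symmetric quasiconvex with linear growth) closes the estimate without any extra hypothesis such as (H4'). A second, more routine point is justifying the reduction to $\nu=\en$ via Lemma~\ref{lem:changeofvariable}, and checking that the boundary-trace constraint is preserved under the rescaling $w\mapsto\e w$ so that \eqref{e:fLD} and the recession formula \eqref{e:cantor} genuinely apply.
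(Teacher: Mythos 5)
Your plan has the right overall shape (explicit competitor for the upper bound, a quasiconvexity/Jensen argument for the lower bound), but there are two genuine gaps. The central one: you argue throughout with $f_0$ and its recession $f_0^\infty$, whereas the corollary identifies $g$ with $f^\infty$, the recession of the \emph{relaxed} density $f$ in \eqref{e:fLD}. These differ in general: $f\leq f_0$ and in fact $f=SQf_0$ (cf. Corollary~\ref{c:lsc}), so $f^\infty\leq f_0^\infty$ with possibly strict inequality when $f_0$ is not symmetric quasiconvex; your parenthetical ``$f_0=f$ once relaxed'' is not correct. The missing step — and the first step of the paper's proof — is to show that $\F_0$ coincides with the relaxation of the auxiliary functional $\widetilde F_0$ whose integrand is $f$, so that formula \eqref{e:gLD} holds with $f$ in place of $f_0$; only then can an upper bound land on $f^\infty$. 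Moreover your sharp-interface competitor $w_\lambda$ is not admissible in \eqref{e:gLD}: on the lateral faces of $Q^\nu_1$ it does not agree with $u_{\mathrm{v}^-,\mathrm{v}^+,\nu}$ (which jumps at $y\cdot\nu=0$, while $w_\lambda$ interpolates over a slab of width $\lambda$), so it must be glued to a map with the correct trace through a cut-off, exactly as the paper does with the competitor $\zeta_{r,\delta}=\varphi\,\zeta(\sfrac{\cdot}{\delta})+(1-\varphi)\widetilde w(\sfrac{\cdot}{\delta})$, and the resulting error terms have to be estimated before letting $\delta\downarrow0$, $r\uparrow1$.

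The lower bound as written also does not go through. First, $f_0^\infty$ is not known to be symmetric quasiconvex (only the relaxed $f$, hence its recession, enjoys \eqref{e:sqc0}--\eqref{e:sqcPER}), so the Jensen-type step is unjustified. Second, the uniform-in-$w$ inequality $\e\, f_0(x_0,\tfrac1\e e(w))\geq f_0^\infty(x_0,e(w))+o(1)$ is exactly what the weak (limsup) recession function fails to provide: along a sequence $t\to\infty$ the ratio $f_0(x_0,t\mat)/t$ may drop well below $f_0^\infty(x_0,\mat)$, and the two-sided linear bound only gives comparability up to multiplicative constants, not the asymptotic identification; this is precisely the obstruction that hypothesis (H4') of Corollary~\ref{c:relaxationLD} is designed to remove, and the whole point of the $\mathrm{v}$-independent corollary is to avoid it. The paper's mechanism sidesteps both problems: after replacing $f_0$ by $f$, it notes that the Dirichlet class in \eqref{e:gLD} is contained in $\mathcal B^\nu=\zeta(\cdot)+\mathcal A^\nu$ (maps matching the datum on the two faces orthogonal to $\nu$ and tangentially $1$-periodic), and then applies the periodic characterization \eqref{e:sqcPER} of the symmetric quasiconvexity of $f$ \emph{at each fixed} $\e$, to the rescaled test map $\sfrac{w}{\e}$, which gives the exact inequality $\inf\geq\e\, f\big(x_0,\tfrac1\e(\mathrm{v}^+-\mathrm{v}^-)\odot\nu\big)$ with no error term; the $\limsup_{\e\to0}$ is then literally the definition \eqref{e:cantor} of $f^\infty$. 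Note also that the Dirichlet form \eqref{e:sqc0} cannot be used directly, since $w-\zeta$ does not vanish on the lateral boundary — this is why the passage to the periodic class is needed, even though your divergence-theorem computation of $\fint_{Q^\nu_1}e(w)$ is correct.
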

\begin{proof}
We start off defining
\begin{equation*}
\widetilde{F}_0(u,A):=\begin{cases}
\displaystyle{\int_Af\big(x,e(u)(x)\big)\d x} & \text{if $u\in LD(\Omega)$} \\ 
                       +\infty & \text{otherwise on $L^1(\Omega;\R^n)$,}
                      \end{cases}
\end{equation*}
where $f$ is given by \eqref{e:fLD}. Note that $f$ is $\mathrm{v}$-independent as
$\F_0(u+\mathrm{z},A)=\F_0(u,A)$ for all $(u,A,\mathrm{z})\in BD(\Omega)\times\mathcal{O}(\Omega)\times\R^n$.
Moreover, $f(x_0,\mat)\leq f_0(x_0,\mat)$, for all $(x_0,\mat)\in\Omega\times\mathbb{M}^{n\times n}_{sym}$, by using 
the linear function $\mat y$ itself as a test in \eqref{e:fLD} and (H2').

Denote by $\widetilde{\mathcal{F}}_0$ the $L^1(\Omega;\R^n)$ lower semicontinuous envelope of $\widetilde{F}_0$.
Then, $\widetilde{F}_0\leq F_0$ implies that $\widetilde{\mathcal{F}}_0\leq\F_0$ on $L^1(\Omega;\R^n)$, and since $\F_0=\widetilde{F}_0$ on $LD(\Omega)$
and $\F_0\leq\widetilde{F}_0$ otherwise, we conclude that 
$\F_0\equiv\widetilde{\mathcal{F}}_0$.
Therefore, equality \eqref{e:gLD} defining $g$ holds with $f$ in place of $f_0$ in the minimum problem there. 
In passing, we point out that the invariance of $f_0$ implies that $g=g(x,\mathrm{v}^+-\mathrm{v}^-,\nu)$, as well
(cf. Remark~\ref{r:H5bis}).

Let us first prove that $g(x_0,\mathrm{v}^+-\mathrm{v}^-,\nu)\geq 
f^\infty\big(x_0,(\mathrm{v}^+-\mathrm{v}^+)\odot\nu\big)$.
Let $\{\nu_1,\ldots,\nu_{n-1},\nu\}$ form an orthonormal basis of $\R^n$ and set 
\begin{align*}
\mathcal{A}^\nu:=\big\{w\in W^{1,1}(Q^\nu_1;\R^n):\,& w|_{\{x\in\partial Q^\nu_1:\,x\cdot\nu=\pm\sfrac12\}}=0,\\
 &\textrm{$w$ is $1$-periodic in $\nu_i$, $1\le i \le n-1$}\big\},\\ 
\end{align*}
and
\begin{align*}
\mathcal{B}^\nu:=\big\{w\in W^{1,1}(Q^\nu_1;\R^n):\,& (w-u_{\mathrm{v}^-,\mathrm{v}^+,\nu})|_{\{x\in\partial Q^\nu_1:\,x\cdot\nu=\pm\sfrac12\}}=0,\\
 &\textrm{$w$ is $1$-periodic in $\nu_i$, $1\le i \le n-1$}\big\},\\ 
\end{align*}
In particular, note that $\mathcal{B}^\nu=\zeta(\cdot)+\mathcal{A}^\nu$, where
\begin{equation}\label{e:zeta}
\zeta(x):=\frac{\mathrm{v}^++\mathrm{v}^-}{2}+(\mathrm{v}^+-\mathrm{v}^-)x\cdot\nu.
\end{equation}
We then argue as follows
\begin{align}\label{e:gfinfty}
 g&(x_0,\mathrm{v}^+-\mathrm{v}^-,\nu)
=\limsup_{\e\to0}\inf_{\substack{w\in LD(Q^\nu_1)\\w|_{\partial Q^\nu_1}=u_{\mathrm{v}^-,\mathrm{v}^+,\nu}|_{\partial Q^\nu_1}}}
\int_{Q^\nu_1}\e f\big(x_0,{\textstyle\frac1\e}e(w)(y)\big)\d y\nonumber\\
&=\limsup_{\e\to0}\inf_{\substack{w\in W^{1,1}(Q^\nu_1;\R^n)\\w|_{\partial Q^\nu_1}=u_{\mathrm{v}^-,\mathrm{v}^+,\nu}|_{\partial Q^\nu_1}}}
\int_{Q^\nu_1}\e f\big(x_0,{\textstyle\frac1\e}e(w)(y)\big)\d y\nonumber\\
&\geq\limsup_{\e\to0}\inf_{w\in\mathcal{B}^\nu}
\int_{Q^\nu_1}\e f\big(x_0,{\textstyle\frac1\e}e(w)(y)\big)\d y\nonumber\\
&=\limsup_{\e\to0}\inf_{w\in\mathcal{A}^\nu}\int_{Q^\nu_1}\e f\big(x_0,{\textstyle\frac1\e}
(\mathrm{v}^+-\mathrm{v}^-)\odot\nu+{\textstyle\frac1\e}e(w)(y)\big)\d y\,,
\end{align}
where for the second equality we have used Remark~\ref{r:LDvssobolev}, and for the last, Eq. \eqref{e:zeta}. Using the characterization of symmetric quasiconvexity expressed in \eqref{e:sqcPER} we conclude from \eqref{e:gfinfty} that 
\[
g(x_0,\mathrm{v}^+-\mathrm{v}^-,\nu)\geq 
\limsup_{\e\to0}\e f\big(x_0,{\textstyle\frac1\e}(\mathrm{v}^+-\mathrm{v}^-)\odot\nu\big)=
f^\infty\big(x_0,(\mathrm{v}^+-\mathrm{v}^-)\odot\nu\big).
\]
To prove the opposite inequality, consider the affine function $\zeta$ in \eqref{e:zeta}, 
extend it by $1$-periodicity in the directions $\nu_i$, $1\leq i\leq n-1$, and extend 
it further by $\mathrm{v}^\pm$ if $\pm x\cdot \nu\geq\sfrac12$ (with a slight 
abuse we keep the same notation for the extended function). 
Next let $\widetilde{w}\in W^{1,1}(Q^\nu_1;\R^n)$ have the same trace of $u_{\mathrm{v}^-,\mathrm{v}^+,\nu}$
on $\partial Q^\nu_1$ (cf. \cite[Theorem 2.6]{barroso2000relaxation}), extend it by $1$-periodicity 
in the directions $\nu_i$, $1\leq i\leq n-1$, and then extend it by $u_{\mathrm{v}^-,\mathrm{v}^+,\nu}$
on the complement set (again we keep the same notation for the extended function).
Let $r$ and $\delta\in(0,1)$ and fix a cut-off function $\varphi\in W^{1,\infty}_0(Q^\nu_1;[0,1])$ such that $\varphi|_{Q^\nu_r}\equiv 1$.
Define $\zeta_{r,\delta}:=\varphi\,\zeta(\sfrac{\cdot}\delta)+(1-\varphi)\widetilde{w}(\sfrac{\cdot}\delta)\in W^{1,1}(Q^\nu_1;\R^n)$ 
and use it as a test function in the minimum problem in \eqref{e:gLD}. Note that $\zeta_{r,\delta}=\mathrm{v}^\pm$ if $\pm x\cdot\nu\geq\sfrac12$.
A simple computation yields,
\[
e(\zeta_{r,\delta})=\textstyle{{\frac1\delta}}\big(\varphi\,e(\zeta)(\sfrac\cdot\delta)+(1-\varphi)e(\widetilde{w})(\sfrac{\cdot}\delta)\big)
+\nabla\varphi\odot(\zeta(\sfrac\cdot\delta)-\widetilde{w}(\sfrac\cdot\delta)),
\] 
thus using (H1'), a simple scaling argument and periodicity give for some $\omega_\delta\downarrow 0$ as $\delta\downarrow 0$
\begin{align*}
g(&x_0,\mathrm{v}^+-\mathrm{v}^-,\nu)\leq\limsup_{\e\to0}\int_{Q^\nu_1}\e f\big(x_0,{\textstyle{\frac1\e}}e(\zeta_{r,\delta})(y)\big)\d y\\
=&\limsup_{\e\to0}\Big(\int_{Q^\nu_1\cap\{|x\cdot\nu|\leq\sfrac\delta2\}}\e f\big(x_0,{\textstyle{\frac1\e}}e(\zeta_{r,\delta})(y)\big)\d y+\e f\big(x_0,0\big)\Big)\\
\leq&(1+\omega_\delta)\limsup_{\e\to0}\int_{Q^\nu_1}\e\delta f\big(x_0,{\textstyle{\frac1{\e\delta}}}(\mathrm{v}^+-\mathrm{v}^+)\odot\nu\big)\d y\\
&+C(1+\omega_\delta)\int_{(Q^\nu_1\setminus Q^\nu_r)\cap\{|x\cdot\nu|\leq\sfrac\delta2\}} 
\big({\textstyle{\frac1\delta}}|e(\zeta)(\sfrac{y}\delta)|+{\textstyle{\frac1\delta}}|e(\widetilde{w})(\sfrac{y}\delta)|
+|\nabla\varphi||\zeta(\sfrac{y}\delta)-\widetilde{w}(\sfrac{y}\delta)|\big)\d y\\
\leq& (1+\omega_\delta)f^\infty\big(x_0,(\mathrm{v}^+-\mathrm{v}^-)\odot\nu\big)
+C\int_{Q^\nu_1\setminus Q^\nu_r}\big(|e(\zeta)|+|e(\widetilde{w})|\big)\d y
\\
&+\frac{C\delta}{1-r}\int_{Q^\nu_1\setminus Q^\nu_r}|\zeta(y)-\widetilde{w}(y)|\d y.
\end{align*}
We conclude 
\[
g(x_0,\mathrm{v}^+-\mathrm{v}^-,\nu)\leq f^\infty\big(x_0,(\mathrm{v}^+-\mathrm{v}^+)\odot\nu\big),
\]
by letting first $\delta\downarrow 0$ and then $r\uparrow 1$ in the latter inequality.
\end{proof}
In view of the previous corollary we are able to characterize explicitly the relaxed functional $\F_0$ in the $\mathrm{v}$-independent case.
Additionally, as a consequence, we are also able to deal with the issue of $L^1$ lower semicontinuity on $BD$. In particular, we improve upon \cite{rindler2011lower} 
and \cite[Corollary~1.10]{arroyorabasadephilippisrindler} (in the curl-curl case according to terminology used there) dispensing with 
the existence of the strong recession function. 
Note that $f_0$ is allowed to depend on $x$ and that  the full continuity of $f_0$ is not required, being replaced by the weaker assumption (H2').  

To state the result recall that given $f:\mathbb{M}^{n\times n}_{sym}\rightarrow \R$ a Borel function, 
its symmetric quasiconvex envelope $SQf:\mathbb{M}^{n\times n}_{sym}\rightarrow \R$ is defined to be
	\[
	SQf(\mat):=\sup\{h(\mat) \  | \ h\leq f, \ \text{ $h$ is symmetric quasiconvex}\}.
	\]
Clearly, $f$ is symmetric quasiconvex if and only if $f=SQf$. 
Finally, if $f:\Omega \times \mathbb{M}^{n\times n}_{sym}\rightarrow [0,+\infty)$ we write 
$SQf(x,\cdot):=SQ(f(x,\cdot))$ for all $x\in \Omega$.

\begin{corollary}\label{c:lsc}
Let $f_0:\Omega\times\mathbb{M}^{n\times n}_{sym}\to[0,+\infty)$ be a Borel function satisfying (H1')-(H2'). 
Let $F_0:L^1(\Omega;\R^n)\times \mathcal{O}(\Omega)\to[0,+\infty]$ be the functional defined in \eqref{e:F00}
corresponding to $f_0$. 

Then, for all $(u,A)\in BD(\Omega)\times\mathcal{O}(\Omega)$
\begin{equation}\label{e:Flsc0}
\F_0(u,A)=\int_A SQf_0\big(x, e(u)(x)\big)\d x
+\int_A(SQf_0)^\infty\Big(x,\frac{\d E^su}{\d|E^su|}(x)\Big)\d|E^su|.
\end{equation}

In particular, if $f_0:\Omega\times\mathbb{M}^{n\times n}_{sym}\to[0,+\infty)$ is a Borel symmetric quasiconvex satisfying (H1')-(H2'), 
the functional $\widetilde{\F}_0:L^1(\Omega;\R^n)\times\mathcal{O}(\Omega)\to[0,+\infty]$ 
defined by
\begin{equation}\label{e:Flsc}
\widetilde{\F}_0(u,A):=\int_A f_0\big(x, e(u)(x)\big)\d x
+\int_A f_0^\infty\Big(x,\frac{\d E^su}{\d|E^su|}(x)\Big)\d|E^su|
\end{equation}
if $u\in BD(\Omega)$ and $+\infty$ otherwise, is $L^1(\Omega;\R^n)$ lower semicontinuous.
\end{corollary}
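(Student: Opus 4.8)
The plan is to read \eqref{e:Flsc0} off Theorem~\ref{t:relaxationLD} and Corollary~\ref{c:vindependent}, the only substantial step being the identification of the abstract bulk density $f$ from \eqref{e:fLD} with the symmetric quasiconvex envelope $SQf_0$. Since $f_0$ is independent of the $\mathrm{v}$-variable, the middle argument of $f_0$ in \eqref{e:fLD} no longer enters, so the outer $\limsup_{\e\to0}$ is vacuous; writing a competitor as $w=\mat y+\varphi$ with $\varphi\in LD(Q_1)$ vanishing on $\partial Q_1$ (which, by Remark~\ref{r:LDvssobolev}, may be taken in $W^{1,1}$), we get
\[
f(x_0,\mat)=\inf\Big\{\int_{Q_1}f_0\big(x_0,\mat+e(\varphi)(y)\big)\d y:\ \varphi\in LD(Q_1),\ \varphi|_{\partial Q_1}=\underline{0}\Big\}.
\]
First I would check that the right-hand side equals $SQf_0(x_0,\mat)$: it is $\le f_0(x_0,\mat)$ (take $\varphi\equiv\underline{0}$); it is symmetric quasiconvex, by the standard subadditivity/gluing argument; and any symmetric quasiconvex $h\le f_0(x_0,\cdot)$ satisfies $h(\mat)=\fint_{Q_1}h(\mat+e(\varphi))\le\fint_{Q_1}f_0(x_0,\mat+e(\varphi))$ for every admissible $\varphi$ by \eqref{e:sqc0}, so $h\le f(x_0,\cdot)$ after passing to the infimum. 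Hence $f(x_0,\cdot)=SQf_0(x_0,\cdot)$ and, trivially, $f^\infty(x_0,\cdot)=(SQf_0)^\infty(x_0,\cdot)$; the latter is finite and positively $1$-homogeneous in the matrix variable, since $SQf_0$ inherits the linear growth $\tfrac1C|\mat|\le SQf_0(x_0,\mat)\le C(1+|\mat|)$ from (H1') — the lower bound survives the envelope because $\mat\mapsto\tfrac1C|\mat|$ is convex, hence symmetric quasiconvex.

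Applying Corollary~\ref{c:vindependent} then gives $g(x_0,\mathrm{v}^-,\mathrm{v}^+,\nu)=f^\infty\big(x_0,(\mathrm{v}^+-\mathrm{v}^-)\odot\nu\big)=(SQf_0)^\infty\big(x_0,(\mathrm{v}^+-\mathrm{v}^-)\odot\nu\big)$, so that Theorem~\ref{t:relaxationLD} yields, for every $u\in BD(\Omega)$,
\[
\F_0(u,A)=\int_A SQf_0\big(x,e(u)\big)\d x+\int_{J_u\cap A}(SQf_0)^\infty\big(x,(u^+{-}u^-)\odot\nu_u\big)\d\H^{n-1}+\int_A(SQf_0)^\infty\Big(x,\tfrac{\d E^cu}{\d|E^cu|}\Big)\d|E^cu|.
\]
To merge the last two integrals I would use $E^su=(u^+-u^-)\odot\nu_u\,\H^{n-1}\res J_u+E^cu$ from \eqref{e:Eu decomposition}, a sum of mutually singular measures: on $J_u$ we have $\tfrac{\d E^su}{\d|E^su|}=\tfrac{(u^+-u^-)\odot\nu_u}{|(u^+-u^-)\odot\nu_u|}$ and $|E^su|\res J_u=|(u^+-u^-)\odot\nu_u|\,\H^{n-1}\res J_u$, while on $\Omega\setminus J_u$ we have $\tfrac{\d E^su}{\d|E^su|}=\tfrac{\d E^cu}{\d|E^cu|}$ $|E^cu|$-a.e. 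By positive $1$-homogeneity of $(SQf_0)^\infty(x,\cdot)$ the jump integral equals $\int_{J_u\cap A}(SQf_0)^\infty\big(x,\tfrac{\d E^su}{\d|E^su|}\big)\d|E^su|$, and adding the Cantor integral produces $\int_A(SQf_0)^\infty\big(x,\tfrac{\d E^su}{\d|E^su|}\big)\d|E^su|$, which is precisely \eqref{e:Flsc0}.

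For the final assertion, if $f_0(x,\cdot)$ is already symmetric quasiconvex then $SQf_0=f_0$, so \eqref{e:Flsc0} shows that $\widetilde{\F}_0(\cdot,A)$ coincides on $BD(\Omega)$ with the $L^1$-relaxed functional $\F_0(\cdot,A)$; since the relaxed functional is $L^1$ lower semicontinuous by construction, and by the coercivity in (H1') the $L^1$-limit of a sequence of equibounded energy cannot leave $BD$, we conclude that $\widetilde{\F}_0(\cdot,A)$ is $L^1$ lower semicontinuous. The main obstacle is the identification $f=SQf_0$, which rests on the Dacorogna-type cell formula for the symmetric quasiconvex envelope and on the fact that the $\limsup_{\e\to0}$ in \eqref{e:fLD} degenerates once the $\mathrm{v}$-dependence is removed; everything else is bookkeeping with the decomposition of $Eu$ and the homogeneity of the recession function.
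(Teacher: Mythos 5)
Your proposal is correct and follows essentially the same route as the paper: both identify $f=SQf_0$ from the cell formula \eqref{e:fLD} (one inequality from the symmetric quasiconvexity inequality applied to minorants of $f_0$, the other from the maximality of $SQf_0$ combined with $f\le f_0$ and the symmetric quasiconvexity of $f$, which you justify by the standard gluing argument while the paper cites its earlier observation after Lemma~\ref{lem:goodpointsdensity} and Corollary~\ref{c:vindependent}), and then conclude via Corollary~\ref{c:vindependent}, Theorem~\ref{t:relaxationLD} and the decomposition of $E^su$ with the $1$-homogeneity of the recession function. The only slip is typographical: in the chain $h(\mat)=\fint_{Q_1}h(\mat+e(\varphi))\le\fint_{Q_1}f_0(x_0,\mat+e(\varphi))$ the first relation should be the inequality $\le$ given by symmetric quasiconvexity, exactly as in the paper's display.
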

\begin{proof}
We start off noting that by \eqref{e:fLD}, by inequality $SQf_0\leq f_0$, and by the symmetric quasiconvexity of $SQf_0$ we get
\begin{align*}
f(x_0,\mat)&= \inf_{\substack{w\in LD(Q_1)\\ w|_{\partial Q_1}=\mat y|_{\partial Q_1}}}\int_Qf_0\big(x_0,e(w)(y)\big)\d y
\\&
\geq \inf_{\substack{w\in LD(Q_1)\\ w|_{\partial Q_1}=\mat y|_{\partial Q_1}}}\int_{Q_1} SQf_0\big(x_0,e(w)(y)\big)\d y\geq SQf_0(x_0,\mat).
\end{align*}
As noticed in Corollary~\ref{c:vindependent}, the bulk energy density $f$ is symmetric quasiconvex and $f\leq f_0$. 
Thus, by the very definition of $SQf_0$, we get $f\leq SQf_0$. Therefore, the representation for $\F_0$ in \eqref{e:Flsc0} is attained
thanks to Corollary~\ref{c:vindependent}. 

Finally, the $L^1(\Omega;\R^n)$ lower semicontinuity of $\widetilde{\F}_0$ in \eqref{e:Flsc} follows at once. 
\end{proof}

\subsection{Relaxation of bulk and interfacial energies}\label{ss:relaxbulksurface}

In this section we consider linear functionals defined on the subspace $SBD(\Omega)$ and provide a relaxation result for them. 
To our knowledge this is the first result of this kind.

We introduce the notation required for our purposes following \cite[Section~4.2]{bouchitte1998global}. 
Let $f_1:\Omega\times\R^n\times\mathbb{M}^{n\times n}_{sym}\to[0,\infty)$ and
$g_1:\Omega\times(\R^n)^2\times\mathbb{S}^{n-1}\to[0,\infty)$ be continuous integrands such that 
\begin{itemize}
\item[(H1'')] there exists a constant $C>0$ such that for every 
$(x,\mathrm{v},\mat)\in\Omega\times\R^n\times\mathbb{M}^{n\times n}_{sym}$ 
\begin{equation}
 \frac1C|\mat|
 \leq f_1(x,\mathrm{v},\mat)\leq C\big(1+|\mat|\big);
\end{equation}
\item[(H2'')] there exists a constant $C>0$ such that for every $\e>0$ there exists 
$\delta>0$ such that
\[
 |x-x_0|+|\mathrm{v}-\mathrm{v}_0|\leq\delta\Longrightarrow
 |f_1(x,\mathrm{v},\mat)-f_1(x_0,\mathrm{v}_0,\mat)|\leq C\e(1+|\mat|),
\]
for every $(x,x_0,\mathrm{v},\mathrm{v}_0,\mat)\in(\Omega)^2\times(\R^n)^2\times\mathbb{M}^{n\times n}_{sym}$;

\item[(H3'')] there exist $C>0$, such that for every $(x,\mathrm{v}^-,\mathrm{v}^+,\nu)\in\Omega\times(\R^n)^2\times\mathbb{S}^{n-1}$ 
\[
\frac1C|\mathrm{v}^+-\mathrm{v}^-|\leq g_1(x,\mathrm{v}^-,\mathrm{v}^+,\nu)
\leq C|\mathrm{v}^+-\mathrm{v}^-|.
\]
\item[(H4'')] there exist $C>0$, such that for every $(x,x_0,\mathrm{v}^-,\mathrm{v}^+,\mathrm{v}_0,\nu)\in(\Omega)^2\times(\R^n)^3\times\mathbb{S}^{n-1}$;
\[
 |x-x_0|+|\mathrm{v}_0|\leq\delta\Longrightarrow
 |g_1(x,\mathrm{v}^-+\mathrm{v}_0,\mathrm{v}^++\mathrm{v}_0,\nu)-g_1(x_0,\mathrm{v}^-,\mathrm{v}^+,\nu)|\leq C\e|\mathrm{v}^+-\mathrm{v}^-|.
 \]
\end{itemize}
Let then $\F_1:L^1(\Omega;\R^n)\times \mathcal{O}(\Omega)\to[0,+\infty]$ be the functional defined by 
\begin{equation}\label{e:F1}
 \F_1(u,A):=\inf\Big\{\liminf_jF_1(u_j,\Omega):\,u_j\to u \text{ in $L^1(\Omega;\R^n)$}\Big\},
\end{equation}
namely the $L^1(\Omega;\R^n)$ lower semicontinuos envelope of the functional
\begin{equation}\label{e:F11}
F_1(u,A):=\begin{cases}
\displaystyle{\int_Af_1\big(x,u(x),e(u)(x)\big)\d x}  \\
\qquad\displaystyle{+\int_{J_u\cap A}g_1\big(x,u^-(x),u^+(x),\nu_u(x)\big)\d\H^{n-1}(x)} 
& \text{if $u\in SBD(\Omega)$} \\ 
                        & \\
                       +\infty & \text{otherwise on $L^1(\Omega;\R^n)$.}
                      \end{cases}
\end{equation}
Denote by $\mm$ the cell formula defined in \eqref{e:mm} related to $\mathcal{F}_1$. We provide next an integral representation
result for $\F_1$
\begin{theorem}\label{t:relaxationSBD}
Let $F_1:L^1(\Omega;\R^n)\times \mathcal{O}(\Omega)\to[0,+\infty]$ be the functional defined in \eqref{e:F11}.
 Then, assuming (H1")-(H4"), the functional $\F_1:L^1(\Omega;\R^n)\times \mathcal{O}(\Omega)\to[0,+\infty]$ 
defined in \eqref{e:F1} is represented by 
 \begin{align*}
  \F_1(u,A)&=\int_Af\big(x,u(x),e(u)(x)\big)\d x\\&
  +\int_{J_u\cap A}g\big(x,u^-(x),u^+(x),\nu_u(x)\big)\d\H^{n-1}(x)+\int_A f^\infty\Big(x,u(x),\frac{\d E^cu}{\d|E^cu|}(x)\Big)\d|E^cu|(x),
 \end{align*}
 for all $(u,A)\in BD(\Omega)\times \mathcal{O}(\Omega)$, 
where for every $(x_0,\mathrm{v},\mat)\in\Omega\times\R^n\times\mathbb{M}^{n\times n}_{sym}$
\begin{align}\label{e:fSBD}
 f(x_0,\mathrm{v},\mat)=\limsup_{\e\to0}&\inf_{\substack{w\in SBD(Q_1)\\ w|_{\partial Q_1}=\mat y|_{\partial Q_1}}}
 \Big\{\int_{Q_1}f_1\big(x_0,\mathrm{v}+\e w(y),e(w)(y)\big)\d y\nonumber\\
 &+\int_{J_w\cap Q_1}\textstyle{\frac1\e} g_1(x_0,\mathrm{v}+\e w^-(y),\mathrm{v}+\e w^+(y),\nu_w(y))\d\H^{n-1}(y)\Big\},
\end{align}
and for every $(x_0,\mathrm{v}^-,\mathrm{v}^+,\nu)\in\Omega\times(\R^n)^2\times\mathbb{S}^{n-1}$
\begin{align}\label{e:gSBD}
 g(x_0,\mathrm{v}^-,\mathrm{v}^+,\nu)=\limsup_{\e\to0}&
\inf_{\substack{w\in SBD(Q^\nu_1)\\ w|_{\partial Q^\nu_1}=u_{\mathrm{v}^-,\mathrm{v}^+,\nu}|_{\partial Q^\nu_1}}}
 \Big\{\int_{Q^\nu_1}\e\,f_1\big(x_0,\mathrm{v}+\e w(y),\textstyle{\frac1\e}e(w)(y)\big)\d y\nonumber\\
 &+\int_{J_w\cap Q^\nu_1}g_1(x_0,w^-(y),w^+(y),\nu_w(y))\d\H^{n-1}(y)\Big\}.
\end{align}
\end{theorem}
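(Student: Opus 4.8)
The plan is to deduce Theorem~\ref{t:relaxationSBD} from the abstract integral representation in Theorem~\ref{thm:mainthm}, in the spirit of the proof of Theorem~\ref{t:relaxationLD}. The first step is to establish the analogue of Lemma~\ref{l:mm0}: namely, that the functional $\F_1$ defined by \eqref{e:F1} satisfies (H1)--(H5), and that for every $(u,A)\in BD(\Omega)\times\mathcal{O}_{\infty}(\Omega)$ one has $\mm(u,A)=\mm_1(u,A)$, where $\mm$ is the cell formula \eqref{e:mm} associated to $\F_1$ and
\[
\mm_1(u,A):=\inf\big\{F_1(w,A):\,w\in SBD(A),\ w=u\text{ on }\partial A\big\}.
\]
Property (H1) is automatic, $\F_1$ being an $L^1$-relaxed functional. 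The bounds in (H2) are easily checked from (H1''), (H3'') and the definition of $\F_1$ (for the upper bound one tests along smooth maps converging to $u$ strictly in $BD$, on which $F_1$ is finite with linearly growing energy). Property (H4) follows from (H2'') and (H4'') via the change of variables $y\mapsto x_0+y$ in the integrals defining $F_1$. For (H5) one observes that adding an infinitesimal rigid motion $\LL(\cdot-x_0)$, with $\LL\in\mathbb{M}_{skew}^{n\times n}$, leaves $e(u)$ unchanged, hence does not affect the bulk term since $f_1$ depends only on the symmetric part, and changes the one-sided traces on $J_u$ only by the vector $\LL(x-x_0)$, of size at most $|\LL|\,\mathrm{diam}(A)$, so that (H4'') bounds the variation of the surface term by $\Psi(|\LL|\,\mathrm{diam}(A))\,|Eu|(A)$. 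The delicate point in this first step is (H3), that is, $\F_1(u,\cdot)$ being the restriction of a Radon measure: we argue as in \cite[Proposition~3.9]{barroso2000relaxation} and \cite{bouchitte1998global}, verifying the De~Giorgi--Letta criterion (subadditivity, superadditivity, inner regularity), the nontrivial ingredient being a De~Giorgi type averaging argument adapted to $SBD$, which glues competitors on overlapping open sets producing only a controlled amount of extra jump energy.

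The identity $\mm=\mm_1$ is then the main obstacle of the whole argument. The inequality $\mm\le\mm_1$ is immediate from $\F_1\le F_1$. For the reverse one exploits the definition of $\F_1$ as the relaxation of $F_1$: given a competitor $v$ for $\mm(u,A)$ and an $L^1$-approximating sequence $v_k\to v$ with $F_1(v_k,A)\to\F_1(v,A)$, one corrects each $v_k$ near $\partial A$ so as to attain the boundary trace $u$ while increasing the energy by an infinitesimal amount; this is done through the $SBD$ version of the De~Giorgi slicing/averaging lemma (as in \cite[Proposition~3.7]{barroso2000relaxation}), picking among finitely many disjoint annuli one on which the interpolation costs little in both the bulk and the surface energy, using once more (H1'')--(H4'').

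Once these facts are available, Theorem~\ref{thm:mainthm} applies to $\F_1$ and yields the representation with bulk, surface and Cantor densities given by \eqref{e:f}, \eqref{e:g} and \eqref{e:cantor} respectively, in terms of $\mm$; the Cantor density is then already $f^\infty$, the weak recession function of $f$, with no further work. It remains to rewrite the cell formulas for $f$ and $g$ in the explicit forms \eqref{e:fSBD} and \eqref{e:gSBD}. Using $\mm=\mm_1$, in the formula for $f$ we rescale a competitor $w$ for $\mm_1\big(\mathrm{v}+\mat(\cdot-x_0),Q(x_0,\e)\big)$ as $w(x)=\mathrm{v}+\e\,v\big(\tfrac{x-x_0}{\e}\big)$ with $v\in SBD(Q_1)$, $v|_{\partial Q_1}=\mat y|_{\partial Q_1}$, so that $e(w)(x)=e(v)\big(\tfrac{x-x_0}{\e}\big)$, $J_w=x_0+\e J_v$ and $w^\pm=\mathrm{v}+\e v^\pm\big(\tfrac{x-x_0}{\e}\big)$; the change of variables $y=\tfrac{x-x_0}{\e}$ turns $\e^{-n}F_1(w,Q(x_0,\e))$ into
\[
\int_{Q_1}f_1\big(x_0+\e y,\mathrm{v}+\e v(y),e(v)(y)\big)\d y
+\tfrac1\e\int_{J_v\cap Q_1}g_1\big(x_0+\e y,\mathrm{v}+\e v^-(y),\mathrm{v}+\e v^+(y),\nu_v(y)\big)\d\H^{n-1}.
\]
Since the affine map is admissible, near-optimal $v$ satisfy $\int_{Q_1}|e(v)|\,\d y+\tfrac1\e\int_{J_v\cap Q_1}|v^+-v^-|\,\d\H^{n-1}\le C(1+|\mat|)$, hence $|Ev|(Q_1)\le C(1+|\mat|)$, and (H2'')--(H4'') then permit replacing $x_0+\e y$ by $x_0$ up to an error vanishing as $\e\to0$; taking the infimum over $v$ and the $\limsup$ in $\e$ gives \eqref{e:fSBD}. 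Formula \eqref{e:gSBD} is obtained analogously, rescaling instead as $w(x)=\tilde w\big(\tfrac{x-x_0}{\e}\big)$ (so that the $0$-homogeneous datum $u_{\mathrm{v}^-,\mathrm{v}^+,\nu}$ is preserved on the boundary) and dividing $F_1(w,Q^\nu(x_0,\e))$ by $\e^{n-1}$, which produces the rescaled bulk term $\e\,f_1\big(x_0,\cdot,\tfrac1\e e(\cdot)\big)$ and leaves the surface term unscaled. Finally, by the strict density of $W^{1,1}$ in $SBD$ with prescribed trace, one may replace $SBD(Q_1)$ and $SBD(Q^\nu_1)$ in \eqref{e:fSBD}--\eqref{e:gSBD} by Sobolev maps whenever convenient, as in Remark~\ref{r:LDvssobolev}.
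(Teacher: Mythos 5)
Your proposal is correct and follows essentially the same route as the paper: verify (H1)--(H5) for $\F_1$ and the identification $\mm=\mm_1$ by arguing as in Lemma~\ref{l:mm0} (De~Giorgi--Letta plus the averaging/slicing lemma of \cite[Propositions~3.7, 3.9]{barroso2000relaxation}), then apply Theorem~\ref{thm:mainthm} and rescale the cell formulas to obtain \eqref{e:fSBD} and \eqref{e:gSBD}. One small correction: in your check of (H5) the bulk term is \emph{not} untouched by adding $\LL(\cdot-x_0)$, since $f_1$ also depends on the $\mathrm{v}$-variable through $u$ itself; the shift of that argument is bounded by $|\LL|\,\mathrm{diam}(A)$ and is controlled by (H2''), exactly as you already do for (H4).
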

\begin{remark}\label{r:truncation2}
In the $BV$ setting under study in \cite{bouchitte1998global} conditions (H3") and (H4") are additionally used to simplify 
the analogue of formulas \eqref{e:fSBD} for $f$ and \eqref{e:gSBD} for $g$ thanks to the truncation argument quoted in 
Remark~\ref{r:truncation} (cf. equations \cite[(4.2.3) and (4.2.4) in Theorem~4.2.2]{bouchitte1998global}). 
\end{remark}
\begin{remark}
Formulas \eqref{e:fSBD} for $f$ and \eqref{e:gSBD} for $g$ can be expressed in terms of the recession functions of $f_1$ at $\infty$, $f_1^\infty$, 
and the recession function of $g_1$ at $0$, $g_1^0$, provided some further technical conditions in the spirit of (H3') in Corollary~\ref{c:relaxationLD}
are imposed (cf. Eq. \cite[(4.2.3)' and (4.2.4)' in Remark~4.2.3]{bouchitte1998global}).
\end{remark}

The proof of Theorem~\ref{t:relaxationSBD} is similar to the corresponding one of \cite[Theorem~4.2.2]{bouchitte1998global}.
First, we note that arguing as in Lemma~\ref{l:mm0} one can prove the following result.
\begin{lemma}
Assume (H1")-(H4"). Then, $\F_1$ satisfies (H1)-(H5), and for all $(u,A)\in BD(\Omega)\times \mathcal{O}_{\infty}(\Omega)$ 
\[
\mm(u,A)=\mm_1(u,A):=\inf\{F_1(w,A):\,w=u \textrm{ on $\partial A$}\}.
\]
\end{lemma}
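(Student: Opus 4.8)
The statement asserts that the functional $\mathcal F_1$ (defined as the $L^1$-relaxation of $F_1$) satisfies the structural hypotheses (H1)–(H5) needed to invoke Theorem~\ref{thm:mainthm}, and that the abstract Dirichlet cell-formula $\mm(u,A)$ attached to $\mathcal F_1$ coincides, on sets $A\in\mathcal O_\infty(\Omega)$, with the more concrete minimisation problem $\mm_1(u,A)=\inf\{F_1(w,A):w=u\text{ on }\partial A\}$ over competitors in $SBD(A)$. The approach is to follow verbatim the pattern already established in Lemma~\ref{l:mm0} (the bulk-only case), adapting each ingredient to the presence of the interfacial term. I would first check (H1)–(H5) for $\mathcal F_1$, then prove the two inequalities $\mm\le\mm_1$ and $\mm\ge\mm_1$ separately.

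\textbf{Verification of (H1)–(H5).} Property (H1) holds because $\mathcal F_1(\cdot,A)$ is by construction an $L^1(A;\R^n)$-lower semicontinuous envelope. For (H2): the upper bound $\mathcal F_1(u,A)\le C(\L^n(A)+|Eu|(A))$ follows from (H1'') and (H3'') together with the density of $\mathcal C^\infty$ maps in $BD(\Omega)$ for the strict topology (which controls $\int_A f_1(x,u,e(u))\,dx+\int_{J_u\cap A}g_1(\dots)\,d\H^{n-1}$ by $C(\L^n(A)+|Eu|(A))$ along a recovery sequence), while the lower bound $\tfrac1C|Eu|(A)\le\mathcal F_1(u,A)$ is the standard lower-semicontinuity estimate for linear-growth functionals on $BD$ — one uses that along any sequence $u_j\to u$ in $L^1$ with bounded energies one has $\liminf_j\int f_1+\int g_1\ge\tfrac1C\liminf_j|Eu_j|(A)\ge\tfrac1C|Eu|(A)$ by lower semicontinuity of the total variation. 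For (H3), i.e. that $\mathcal F_1(u,\cdot)$ is (the restriction of) a Radon measure, I would argue exactly as in \cite[Proposition~3.9]{barroso2000relaxation}, using the De~Giorgi–Letta criterion: inner regularity, subadditivity and superadditivity are checked via the measure-theoretic slicing/averaging lemma \cite[Proposition~3.7]{barroso2000relaxation}. Assumption (H4) is a direct consequence of (H2'') and (H4''): translating $u$ by $\mathrm v$ and the domain by $x_0$ changes $f_1$ and $g_1$ pointwise by at most $\Psi(|x_0|+|\mathrm v|)(1+|e(w)|)$ and $\Psi(|x_0|+|\mathrm v|)|w^+-w^-|$ respectively along recovery sequences, giving the required estimate after integration. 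Finally (H5): adding $\LL(\cdot-x_0)$ with $\LL\in\mathbb M^{n\times n}_{skew}$ does not change $e(w)$ at all and changes the jump traces only by a bounded affine term on $A$, so that the energy is in fact \emph{invariant} under such additions in the $SBD$ case — hence (H5) holds trivially with $\Psi\equiv0$ on that subspace, and passes to the relaxation.

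\textbf{The identity $\mm=\mm_1$.} The inequality $\mm(u,A)\le\mm_1(u,A)$ is immediate from $\mathcal F_1\le F_1$: any competitor $w\in SBD(A)$ with $w=u$ on $\partial A$ is in particular admissible for $\mm(u,A)$ and $\mathcal F_1(w,A)\le F_1(w,A)$. The reverse inequality $\mm_1(u,A)\le\mm(u,A)$ is the genuine content and the main obstacle: given $v\in BD(\Omega)$ with $v=u$ on $\partial A$, one must produce a competitor $w\in SBD(A)$, still matching $u$ on $\partial A$, with $F_1(w,A)$ close to $\mathcal F_1(v,A)$. Here I would use the definition of $\mathcal F_1(v,A)$ as a relaxation to pick $v_j\to v$ in $L^1$ with $F_1(v_j,A)\to\mathcal F_1(v,A)$ (so in particular $v_j\in SBD(\Omega)$), and then apply the De~Giorgi-type averaging/slicing argument of \cite[Proposition~3.7]{barroso2000relaxation}: slicing between suitable concentric copies of $A$ one interpolates between $v_j$ on an inner region and $u$ on a thin outer shell near $\partial A$, at the cost of an interfacial error controlled by $\int_{\partial A}|v_j-u|\,d\H^{n-1}$ (which vanishes after averaging the radius and using the trace continuity of $v_j\to v$ in $BD$) plus a bulk error from the cut-off that is also absorbed. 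The delicate point specific to the $SBD$ setting is that the interpolation must not create Cantor part and must keep the newly introduced jump set $\H^{n-1}$-finite; this is exactly what the slicing lemma guarantees, since one only ever superposes $SBD$ functions via smooth cut-offs and the jump contribution is linear in the difference of traces. Passing to the limit in $j$ and then optimising the radius of the shell yields $\mm_1(u,A)\le\mathcal F_1(v,A)$, and taking the infimum over $v$ gives $\mm_1(u,A)\le\mm(u,A)$, completing the proof.
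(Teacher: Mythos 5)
Your overall architecture is the same as the paper's (which simply says to argue as in Lemma~\ref{l:mm0}): verify (H1)--(H5) for $\F_1$ and then prove the two inequalities, the nontrivial one via the De~Giorgi averaging/slicing lemma of \cite[Proposition~3.7]{barroso2000relaxation}. However, your justification of (H5) is wrong as stated. You claim that adding $\LL(\cdot-x_0)$ with $\LL\in\mathbb{M}^{n\times n}_{skew}$ leaves the energy invariant, ``hence (H5) holds trivially with $\Psi\equiv 0$''. It does not: $f_1$ depends explicitly on its second argument $u(x)$, and $g_1$ depends on the traces $u^\pm$ individually, not only on their difference, so both the bulk and the surface integrands genuinely change when the arguments are shifted by $\LL(x-x_0)$. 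What holds, and what (H5) asks for, is a modulus-of-continuity estimate: since $|\LL(x-x_0)|\le |\LL|\,\mathrm{diam}(A)$ for $x,x_0\in A$, hypotheses (H2'') and (H4'') (with $\mathrm{v}_0=\LL(x-x_0)$) control the change of $f_1$ by $C\e(1+|e(w)|)$ and of $g_1$ by $C\e|w^+-w^-|$ along recovery sequences, which after integration gives \eqref{eqn:continuity of energy hyp 2} with a nontrivial $\Psi$. This distinction is not cosmetic: the whole point of (H5) versus the stronger invariance \eqref{verystrongh5} is discussed in Remark~\ref{r:H5bis} and Section~\ref{s:discussionH5}, and (H4'') is in the hypothesis list precisely to deliver this step (the same remark applies to the bulk term in your verification, where you overlook the $u$-dependence of $f_1$).

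In the proof of $\mm_1\le\mm$ two points of your gluing would fail as written. First, you interpolate between $v_j$ and $u$ itself on the outer shell; if $u$ has a nontrivial Cantor part near $\partial A$, the glued map is not in $SBD(A)$ and its $F_1$-energy is $+\infty$. One must instead glue $v_j$ with a fixed finite-energy function having the same trace as $u$ on $\partial A$ (e.g. a $W^{1,1}$ lifting of the boundary datum), which is how the quoted slicing lemma is used. Second, the transition error is not ``$\int_{\partial A}|v_j-u|\,\d\H^{n-1}$, which vanishes by trace continuity of $v_j\to v$'': the trace operator on $BD$ is \emph{not} continuous under $L^1$ convergence (only under strict convergence). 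The correct mechanism is that, after averaging over layers, the error is of order $\rho^{-1}\int_{\{x\in A:\ \mathrm{dist}(x,\partial A)<\rho\}}|v_j-\psi|\,\d x$; one first lets $j\to\infty$ (so $v_j\to v$ in $L^1$), and then lets the shell width $\rho\to 0$, using that $v$ and the lifting $\psi$ share the trace $u$ on $\partial A$, so this term disappears. With these corrections your argument coincides with the one the paper intends.
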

The alternative characterization of $\mm$ via $\mm_1$, 
Theorem~\ref{thm:mainthm}, the results in Section~4 and a change of variable 
provide the proof of Theorem~\ref{t:relaxationSBD}.\\
\smallskip
We give next an explicit application of Theorem~\ref{t:relaxationSBD}. 
\begin{corollary}
Let $f_1:\Omega\times\mathbb{M}^{n\times n}_{sym}\to[0,+\infty)$ be a Borel, symmetric quasiconvex function satisfying (H1")-(H2").
 
If $F_1:L^1(\Omega;\R^n)\times \mathcal{O}(\Omega)\to[0,+\infty]$ is the functional in \eqref{e:F11} corresponding to 
$f_1$ and $g_1=f_1^\infty$, then 
\begin{equation}\label{F1final}
\F_1(u,A)=\int_A f_1\big(x, e(u)(x)\big)\d x
+\int_{A}f^\infty_1\Big(x,\frac{\d E^su}{\d|E^su|}(x)\Big)\d |E^su|
\end{equation}
if $u\in BD(\Omega)$ and $+\infty$ otherwise.
\end{corollary}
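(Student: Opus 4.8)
The plan is to recognise the right-hand side of \eqref{F1final} as the functional $\widetilde{\F}_0$ of \eqref{e:Flsc} for the choice $f_0:=f_1$, and then to trap $\F_1$ between $\widetilde{\F}_0$ and the relaxation $\F_0$ of the purely bulk functional \eqref{e:F00}, both of which are already delivered by Corollary~\ref{c:lsc}. Since $f_1$ is Borel, symmetric quasiconvex and satisfies (H1'')-(H2'') — which, being $\mathrm v$-independent, reduce to (H1')-(H2') — Corollary~\ref{c:lsc} applies with $f_0=f_1$: it gives that $\widetilde{\F}_0$ is $L^1(\Omega;\R^n)$ lower semicontinuous and, via \eqref{e:Flsc0} together with $SQf_1=f_1$, that $\F_0=\widetilde{\F}_0$, where $\F_0$ denotes the $L^1$ relaxation of the bulk functional \eqref{e:F00} built on $f_1$. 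Observe that $(u,A)\mapsto\widetilde{\F}_0(u,A)$ is precisely the right-hand side of \eqref{F1final}.

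First I would check that $\widetilde{\F}_0\le F_1$ pointwise on $L^1(\Omega;\R^n)\times\mathcal O(\Omega)$. For $u\in SBD(\Omega)$ one has $E^su=(u^+-u^-)\odot\nu_u\,\H^{n-1}\res J_u$, so, recalling $g_1=f_1^\infty$ (i.e. $g_1(x,\mathrm v^-,\mathrm v^+,\nu)=f_1^\infty(x,(\mathrm v^+-\mathrm v^-)\odot\nu)$) and the positive $1$-homogeneity of $f_1^\infty(x,\cdot)$,
\begin{align*}
\int_A f_1^\infty\Big(x,\tfrac{\d E^su}{\d|E^su|}(x)\Big)\d|E^su|(x)
&=\int_{J_u\cap A}f_1^\infty\big(x,(u^+-u^-)\odot\nu_u\big)\d\H^{n-1}\\
&=\int_{J_u\cap A}g_1(x,u^-,u^+,\nu_u)\d\H^{n-1},
\end{align*}
so that $\widetilde{\F}_0(u,A)=F_1(u,A)$; for $u\in BD(\Omega)\setminus SBD(\Omega)$ the linear bound (H1'') makes $\widetilde{\F}_0(u,A)$ finite while $F_1(u,A)=+\infty$; and for $u\notin BD(\Omega)$ both equal $+\infty$. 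Since $\widetilde{\F}_0$ is lower semicontinuous and $\F_1(\cdot,A)$ is the largest $L^1$ lower semicontinuous minorant of $F_1(\cdot,A)$, this gives $\widetilde{\F}_0\le\F_1$.

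For the converse I would compare $F_1$ with the bulk functional $F_0$ of \eqref{e:F00} (integrand $f_1$): on $LD(\Omega)$ the jump part of $u$ is trivial, so $F_1(\cdot,A)$ and $F_0(\cdot,A)$ agree there, while off $LD(\Omega)$ one has $F_0=+\infty\ge F_1$; hence $F_1\le F_0$ pointwise and, by monotonicity of the relaxation operation, $\F_1\le\F_0=\widetilde{\F}_0$. Chaining the two inequalities yields $\widetilde{\F}_0\le\F_1\le\widetilde{\F}_0$, i.e. $\F_1=\widetilde{\F}_0$, which is exactly \eqref{F1final} (together with $\F_1=+\infty$ outside $BD(\Omega)$).

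The argument is short precisely because the genuinely hard analysis — the lower semicontinuity of $\widetilde{\F}_0$ and the identity $\F_0=\widetilde{\F}_0$ — is imported wholesale from Corollary~\ref{c:lsc}, hence ultimately from Theorem~\ref{thm:mainthm}, Theorem~\ref{t:relaxationLD} and the blow-up machinery of Section~\ref{s:blowupCantor2}; the only elementary but essential ingredient is the $1$-homogeneity identity turning the surface integral of $g_1=f_1^\infty$ into the singular-measure integral of $f_1^\infty$, so that is the step I would write out with care. An alternative route, more in the spirit of the other corollaries of Section~\ref{ss:relax}, is to invoke Theorem~\ref{t:relaxationSBD} directly — after verifying that $g_1=f_1^\infty$ inherits (H3'')-(H4'') from (H1'')-(H2'') — and then to identify the densities in \eqref{e:fSBD}-\eqref{e:gSBD}: $f=f_1$ because the $\tfrac1\e$-penalty in \eqref{e:fSBD} forces admissible competitors to become jump-free in the limit, reducing \eqref{e:fSBD} to the bulk cell formula of the symmetric quasiconvex integrand $f_1$, and $g(x_0,\mathrm v^-,\mathrm v^+,\nu)=f_1^\infty\big(x_0,(\mathrm v^+-\mathrm v^-)\odot\nu\big)$ by an argument as in Corollary~\ref{c:vindependent}. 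The squeezing argument has the advantage of bypassing these density computations entirely.
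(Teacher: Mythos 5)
Your argument is correct and is essentially the paper's own proof: the paper likewise squeezes $\F_1$ between the relaxation of the purely bulk functional on $LD(\Omega)$ (identified with the right-hand side of \eqref{F1final} via Corollary~\ref{c:lsc} and $SQf_1=f_1$) and the right-hand side itself, using its lower semicontinuity together with the pointwise bound by $F_1$. The only difference is presentational: you write out the $1$-homogeneity computation turning the $g_1=f_1^\infty$ surface term into the $|E^su|$ integral on $SBD(\Omega)$, which the paper leaves implicit.
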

\begin{proof}
We start off defining
\begin{equation*}
\widetilde{F}_1(u,A):=\begin{cases}
\displaystyle{\int_Af_1\big(x,e(u)(x)\big)\d x} & \text{if $u\in LD(\Omega)$} \\ 
                       +\infty & \text{otherwise on $L^1(\Omega;\R^n)$.}
                      \end{cases}
\end{equation*}
Denote by $\widetilde{\mathcal{F}}_1$ the $L^1(\Omega;\R^n)$ lower semicontinuous envelope of $\widetilde{F}_1$. 
Note that $\F_1\leq F_1\leq\widetilde{F}_1$, therefore 
$\F_1\leq\widetilde{\mathcal{F}}_1$.
On the other hand, Corollary~\ref{c:lsc} shows that  
$\widetilde{\mathcal{F}}_1$ coincides with the right hand side of \eqref{F1final}, therefore  $\widetilde{\mathcal{F}}_1\leq F_1$, and then $\widetilde{\mathcal{F}}_1\leq\F_1$. 
\end{proof}

\subsection{Homogenization}\label{ss:homogenization}
In this section we briefly show how to apply Theorem~\ref{thm:mainthm} to a homogenization problem in $BD$.
More precisely, we identify the $\Gamma(L^1(\Omega;\R^n))$-limit of the family of functionals
$F_{\delta}:L^1(\Omega;\R^n)\times \mathcal{O}(\Omega)\to[0,+\infty]$, $\delta>0$, given by 
	\begin{equation}\label{eqn:hom}
	F_{\delta}(u,A):=\begin{cases}
	\displaystyle{\int_A f_0\Big(\frac{x}{\delta},e(u)(x)\Big)\d x} & \text{if $u\in LD(\Omega)$}\\
	+\infty & \text{otherwise on $L^1(\Omega;\R^n)$,}
	\end{cases} 
	\end{equation}
where $f_0:\R^n\times \mathbb{M}^{n\times n}_{sym} \rightarrow [0,+\infty)$ is any Borel function satisfying
	\begin{itemize}
	\item[(Hom)]  $f_0(\cdot ,\mat)$ is $Q_1$-periodic for all $\mat \in \mathbb{M}^{n\times n}_{sym} $ and
		\[
		\frac{1}{C}|\mat|\leq f_0(x,\mat)\leq C(1+|\mat|)
		\]
	for all $(x,\mat)\in \R^n\times  \mathbb{M}^{n\times n}_{sym}$ and for some universal constant $C>0$.
	\end{itemize}
Homogenization type problems are well-known, the literature on the topic is very rich, we refer to the book \cite{BDF98} for an exhaustive introduction in particular in the case of variational energies defined on Sobolev spaces (see also \cite{maso2012introduction} for more classical results), to \cite{braides1996homogenization} for homogenization issues related to energies defined on suitable subspaces of 
$SBV$, to \cite{bouchitte1986convergence}, to \cite{DEAG95} and
to \cite[Section~4.3]{bouchitte1998global} for energies defined on $BV$, and to \cite{MMS15} for linear growth functionals in the setting of $\mathcal{A}$-quasiconvexity under additional conditions both 
on the differential operator $\mathcal{A}$, ruling out the 
$\mathrm{curl}$-$\mathrm{curl}$ setting considered in this section, and on the density $f_0$, i.e. $f_0(x,\cdot)$ is Lipschitz continuous uniformly in $x\in\R^n$.

For energy densities $f_0(x,\cdot)$ convex for all $x\in\R^n$, the homogenization of the energies 
in \eqref{eqn:hom} on $BD(\Omega)$ has been investigated in \cite[Theorem~3.2]{bouchitte1986convergence} by duality methods. In this subsection, we extend such a result to the general case under the sole assumption (Hom) thanks to the global method 
for relaxation. In this perspective, Corollary~\ref{c:lsc} and \cite[item (ii) of Lemma~4.3]{AmbrosioDalMaso} will be instrumental. 

In addition, let us remark that the argument used in Theorem~\ref{thm:hom} below works also in the setting of homogenization 
of bulk energies defined on $BV$, thus recovering the results contained in \cite[Theorem~3.1]{bouchitte1986convergence}, for 
$f_0(x,\cdot)$ convex for all $x\in\R^n$, and in 
\cite[Theorem~4.7]{DEAG95}, for $f_0(x,\cdot)$ continuous for 
$\L^n$-a.e. $x\in\R^n$, via the global method for relaxation without any additional assumption on $f_0$ than (Hom). 
Finally, we remark that \cite[Section~4.3]{bouchitte1998global} deals with the analogous problem in $BV$ for energies consisting more
generally of the sum of a volume and of a surface term. We shall not deal with that more general setting here, that will be the object of 
further studies. Despite this, our result for the homogenization of bulk integrals seems to be new for what the regularity of $f_0$ is concerned in the $BV$ setting, as well.
\smallskip

We recall for the reader's convenience the definition of $\Gamma(d)$-convergence for a family of functionals 
$F_\delta:(X,d)\to\R\cup\{\pm\infty\}$ defined on a metric space $(X,d)$: $\{F_\delta\}_{\delta>0}$ 
$\Gamma(d)$-converges to $\F:(X,d)\to\R\cup\{\pm\infty\}$ if for every infinitesimal sequence $\delta_i$
the ensuing two conditions are satisfied
\begin{itemize}
\item[(i)] For all $x\in X$ and for all $x_i \stackrel{d}{\longrightarrow} x$ it holds
$\displaystyle{\liminf_{i \rightarrow +\infty}}  F_{\delta_i} (x_i)\geq \F(x)$;

\item[(ii)] For all $x\in X$ there exists $x_i \stackrel{d}{\longrightarrow} x$ such that 
$\displaystyle{\limsup_{i\rightarrow +\infty}} F_{\delta_i}(x_i)\leq \F(x)$.
\end{itemize}
We refer to the books \cite{maso2012introduction}, \cite{Braides02}, and to the survey \cite{focardisurvey} 
for several properties of $\Gamma$-convergence in metric spaces. 

In what follows, 
we use the short hand notation $u_{\mathrm{v},\nu}$ for the piecewise constant function $u_{\mathrm{v},\underline{0},\nu}$ 
defined in \eqref{e:uvpiuvmeno}.

We are now ready to prove the following homogenization result. 
\begin{theorem}\label{thm:hom}
Let $f_0:\R^n\times \mathbb{M}^{n\times n}_{sym} \rightarrow [0,+\infty)$ be a Borel function satisfying (Hom). 

The family $F_{\delta}:L^1(\Omega;\R^n)\times \mathcal{O}(\Omega)\rightarrow [0,+\infty]$ defined 
in \eqref{eqn:hom} $\Gamma(L^1(\Omega;\R^n))$-converges to the functional
$\F_{\hom}:L^1(\Omega;\R^n)\times \mathcal{O}(\Omega)\rightarrow [0,+\infty]$ given for 
$(u,A)\in BD(\Omega)\times \mathcal{O}(\Omega)$ by 
\begin{align}\label{e:Fhom}
\F_{\hom}(u,A):=&\int_A f_{\hom}\big(e(u)(x)\big)\d x
+\int_{A} f_{\hom}^{\infty}\left(\frac{\d E^s u}{\d |E^s u|}(x)\right)\d |E^s u|
\end{align}
and $+\infty$ otherwise on $L^1(\Omega;\R^n)$, where $f_{\hom}:\mathbb{M}^{n\times n}_{sym}\to[0,+\infty)$ is defined by
\begin{equation}\label{e:fhom}
f_{\hom}(\mat):= \lim_{T\rightarrow +\infty} 
\frac{1}{T^n}\inf_{\substack{w\in LD(Q_T)\\ w|_{\partial Q_T}=\mat y|_{\partial Q_T}}}
\int_{Q_T} f_0\big(x,e(w)(x)\big)\d x\,,
\end{equation}
and where $f_{\hom}^{\infty}$ denotes the recession function of $f_{\hom}$.
\end{theorem}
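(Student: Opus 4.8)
The plan is to deduce the result from the abstract integral representation Theorem~\ref{thm:mainthm} together with the relaxation machinery of Section~\ref{ss:relax}, following the scheme of \cite[Section~4.3]{bouchitte1998global}. First I would introduce the localized functionals and pass to the $\Gamma$-limit at the level of open sets: given an infinitesimal sequence $\delta_i\downarrow 0$, by the growth bound (Hom) and the compactness of the embedding $BD(\Omega)\hookrightarrow L^1$, together with a standard diagonal/compactness argument for $\Gamma$-convergence (e.g. via the localization method and De~Giorgi--Letta, see \cite{maso2012introduction}), one extracts a subsequence along which $F_{\delta_i}(\cdot,A)$ $\Gamma(L^1)$-converges to some functional $\F(\cdot,A)$ for all $A\in\mathcal{O}(\Omega)$, and $\F(u,\cdot)$ is (the restriction of) a Radon measure. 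The linear growth (Hom) forces $\F(u,A)=+\infty$ unless $u\in BD(\Omega)$, and on $BD(\Omega)$ it satisfies (H1) (lower semicontinuity is automatic for $\Gamma$-limits) and (HH2). Translation invariance in $x$ of $\F$ in the space variable up to the periodic cell, and the fact that $f_0$ is $x$-independent as a dependence on $\mathrm{v}$, give (H4); invariance of $e(\cdot)$ under infinitesimal rigid motions gives (H5). Hence the Corollary to Theorem~\ref{thm:mainthm} applies and $\F$ has the integral representation with densities $f$, $g$, $f^\infty$ defined by the cell formulas \eqref{e:f}--\eqref{e:cantor} relative to $\F$ itself.

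Next I would identify the bulk density. Since $f_0$ does not depend on $\mathrm{v}$, Corollary~\ref{c:vindependent} and Corollary~\ref{c:lsc} apply: the density $f$ is $\mathrm{v}$-independent, symmetric quasiconvex, and the surface density is $g(x_0,\mathrm{v}^-,\mathrm{v}^+,\nu)=f^\infty\big((\mathrm{v}^+-\mathrm{v}^-)\odot\nu\big)$, so that $\F(u,A)=\int_A f\big(e(u)\big)\d x+\int_A f^\infty\big(\tfrac{\d E^su}{\d|E^su|}\big)\d|E^su|$ for $u\in BD(\Omega)$. It remains to prove $f=f_{\hom}$ with $f_{\hom}$ given by \eqref{e:fhom}, and to conclude that the whole family (not merely a subsequence) converges, which follows by the usual Urysohn-type argument once the limit is shown to be independent of the subsequence.

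The identification $f=f_{\hom}$ is the technical heart. One inequality comes from the cell formula for $f$: testing the Dirichlet problem defining $\mm$ for $\F$ at scale $\e$ with suitably rescaled minimizers of the finite-scale problems in \eqref{e:fhom} (rescaling $Q_T$ to $Q_\e$ with $T=\e/\delta_i$) and using a De~Giorgi slicing/averaging argument to adjust boundary values, as in \cite[Proposition~3.7]{barroso2000relaxation}, gives $f(\mat)\le f_{\hom}(\mat)$. For the reverse inequality one uses that $\F$ is the $\Gamma$-limit: any recovery sequence $u_i\to \mat y$ in $L^1$ with $\limsup_i F_{\delta_i}(u_i,Q_1)\le\F(\mat y,Q_1)=f(\mat)$ can, after a cut-off argument near $\partial Q_1$ to restore the affine boundary datum (controlling the error by the linear growth and the $L^1$-convergence), be rescaled to produce competitors in \eqref{e:fhom} at scale $T=1/\delta_i$, yielding $f_{\hom}(\mat)\le f(\mat)$. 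The existence of the limit in \eqref{e:fhom} (rather than merely $\liminf$/$\limsup$) is the classical subadditive-ergodic/almost-subadditivity argument for periodic integrands (cf.~\cite[Lemma~4.3]{AmbrosioDalMaso}, and the analogous statements in \cite{DEAG95,bouchitte1986convergence}), which also shows $f_{\hom}$ is independent of the shape of the cube; I would invoke it directly. The main obstacle is the boundary-layer bookkeeping in both inequalities: matching the affine (or jump) boundary conditions of the cell problems with the traces of the (recovery) sequences while keeping the energy error infinitesimal, which in the non-reflexive $BD$ setting must be done without truncation (cf.~Remark~\ref{r:truncation}) and instead via the $BD$ De~Giorgi slicing lemma of \cite[Proposition~3.7]{barroso2000relaxation}.
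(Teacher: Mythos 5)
Your overall scheme — abstract compactness of the localized $\Gamma$-limits, verification of (H1)--(H5) for any subsequential limit $\F$, integral representation via Theorem~\ref{thm:mainthm}, and identification $f=f_{\hom}$ by matching the cell formula for $\F$ with the rescaled finite-cell problems (with De~Giorgi slicing to fix boundary data, and the subadditive argument for the existence of the limit in \eqref{e:fhom}) — coincides with the paper's Steps~1 and~2, up to inessential variants (the paper passes through the intermediate cell minima $\mm_{\delta_j}$ and the Licht--Michaille ergodic theorem rather than through recovery sequences plus cut-off, but these are interchangeable).

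The genuine gap is in the identification of the surface density. You claim that Corollaries~\ref{c:vindependent} and~\ref{c:lsc} ``apply'' to $\F$ and yield $g(\mathrm{v}^-,\mathrm{v}^+,\nu)=f_{\hom}^\infty\big((\mathrm{v}^+-\mathrm{v}^-)\odot\nu\big)$, but those corollaries concern the $L^1$ lower semicontinuous envelope of a functional with a \emph{fixed} integrand, whereas $\F$ is a $\Gamma$-limit of the oscillating family $F_{\delta_j}$; in particular the identity $\mm=\mm_0$ of Lemma~\ref{l:mm0}, on which their proofs rest, is not available for $\F$. What does follow from $f=f_{\hom}$ on $LD(\Omega)$ is only the one-sided comparison $\F\leq\widetilde{\F}$, where $\widetilde{\F}$ is the relaxation of $u\mapsto\int_A f_{\hom}(e(u))\d x$, hence only $g\leq f_{\hom}^\infty$; an $L^1$-lower semicontinuous functional agreeing with $\int_A f_{\hom}(e(u))\d x$ on $LD(\Omega)$ may a priori be strictly smaller than $\widetilde{\F}$ on $BD(\Omega)\setminus LD(\Omega)$, so no abstract argument closes this. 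The missing ingredient is the lower bound $g\geq f_{\hom}^\infty$, i.e.\ that the Dirichlet problems defining $\mm$ for $\F$ with jump datum $u_{\mathrm{v},\nu}$ at scale $\e$ cost at least $\e^{n-1}f_{\hom}^\infty(\mathrm{v}\odot\nu)+o(\e^{n-1})$. The paper proves this in Step~3 by a homogenization-type construction borrowed from item (ii) of Lemma~4.3 in \cite{AmbrosioDalMaso}: one rescales to the auxiliary functionals $\F^{(\e)}$, replaces a competitor $w$ by the periodized maps $w_j(x)=\frac1j\big(w(jx-\lfloor jx\rfloor)+\frac1\e\lfloor jx_1\rfloor\mathrm{v}\big)$, which converge in $L^1$ to the affine map $\frac{x_1}{\e}\mathrm{v}$, uses periodicity and scaling invariance to keep the energy unchanged, and then invokes the $L^1$ lower semicontinuity of the $\Gamma$-liminf together with its already established value on $LD$ (cf.\ \eqref{e:gamma liminf ineq}) to obtain $\F^{(\e)}(w,Q^\nu_1)\geq f_{\hom}\big(\frac{\mathrm{v}\odot\nu}{\e}\big)$, whence $g\geq f_{\hom}^\infty$. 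Your proposal contains no substitute for this step, so as written it does not prove the stated formula for the jump part of $\F_{\hom}$.
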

The first part of the proof of Theorem~\ref{thm:hom} is a simplification of that of \cite[Theorem 4.3.1]{bouchitte1998global} 
since we deal only with bulk energies (cf. Steps~1 and 2). For this reason, we give a concise proof providing precise references 
whenever details are omitted. Instead, the second part is based upon a homogenization type argument contained in 
\cite[item (ii) of Lemma~4.3]{AmbrosioDalMaso} (cf. Step~3).
\begin{proof}
We divide the proof in intermediate steps for the sake of convenience. 

We start off recalling some abstract compactness properties that follows from the general theory of 
$\overline{\Gamma}$-convergence (cf. \cite[Chapters~14, 16, 18]{maso2012introduction}). 
In view of the growth condition on $f_0$ in (Hom), with given any sequence $\delta_i\downarrow 0$, 
we can extract a subsequence $\delta_{i_j}$ such that the $\Gamma(L^1(\Omega;\R^n))$-limit of 
$F_{\delta_{i_j}}(\cdot,A)$ exists for all $A\in\mathcal{O}(\Omega)$ (cf. \cite[Propositions~16.9, 18.6]{maso2012introduction}, and \cite[Lemma~4.3.4]{bouchitte1998global}). 
For the sake of notational convenience we denote the subsequence $\delta_{i_j}$ simply by $\delta_j$ and by 
$\F$ the corresponding $\Gamma(L^1(\Omega;\R^n))$-limit. 

In what follows, we shall show that $\F$ does not depend on the chosen subsequence, thus proving that the $\Gamma(L^1(\Omega;\R^n))$-limit  
of the family $\{F_\delta\}_{\delta>0}$ exists by Urysohn's property (cf. \cite[Proposition~16.8]{maso2012introduction}). 
To prove that, we shall first show that each functional $\F$ introduced above satisfies the assumptions of the integral 
representation Theorem~\ref{thm:mainthm}, and then we shall identify its energy densities with $f_{\hom}$ for the absolutely 
continuous part, and with $f^\infty_{\hom}$ for the singular part.
\smallskip

\noindent\textbf{Step $1$:} \textit{Integral representation of $\F$.}
We start off noting that \cite[Lemma 3.7]{braides1996homogenization}
(see also \cite[Lemma~4.3.3]{bouchitte1998global}) implies that $\F$ satisfies (H4) and (H5), namely one can prove that 
\begin{equation}\label{e:inv}
\F(u(\cdot-x_0)+\LL \cdot + \mathrm{v},x_0+A)=\F(u,A) 
\end{equation}
for all $(u,A,\mathrm{v},x_0,\LL)\in BD(\Omega)\times\mathcal{O}(\Omega)\times(\R^n)^2\times\mathbb{M}^{n\times n}_{skew}$,
with $x_0+A\subseteq\Omega$.
The very definition of $\F$ gives immediately (H1), (Hom) implies (H2), and finally arguing as in 
\cite[Lemma~4.3.4]{bouchitte1998global} together with \cite[Proposition~3.7]{barroso2000relaxation} 
yields (H3). 

In particular, we can apply Theorem~\ref{thm:mainthm} to deduce that $\F$ can be represented as
\begin{equation}\label{inthom}
\begin{split}
\F(u,A)=\int_A& f\big(e(u)(x)\big)\d x\\& +\int_{J_u\cap A} g\big(u^+(x)-u^-(x),\nu_u(x)\big)\d\H^{n-1}(x)
+\int_A f^{\infty}\Big(\frac{\d E^c u}{\d |E^c u|}(x)\Big)\d |E^c u|(x)
\end{split}
\end{equation}
where $f$ and $g$ are identified by the cell formulas \eqref{e:f} and \eqref{e:g}, respectively.
Indeed, on account of Eq. \eqref{e:inv} with $x_0=\underline{0}$, Remark~\ref{r:H5bis} implies that 
$f\big(x,\mathrm{v},\mat\big)=f\big(x,\mat\big)$ and 
$g(x,\mathrm{v}^-,\mathrm{v}^+,\nu)=g(x,\mathrm{v}^+-\mathrm{v}^-,\nu)$. Instead, if in Eq. \eqref{e:inv}
one chooses $\LL=\underline{0}$ and $\mathrm{v}=\underline{0}$, we infer that $f$ and $g$ do not depend on $x$.

Finally, since $\F$ is translation invariant we may assume that $\underline{0}\in\Omega$, this will simplify the 
notation in the sequel.
\smallskip

\noindent\textbf{Step $2$:} \textit{$f=f_{\hom}$.} 
First we claim that the limit defining $f_{\hom}(\mat)$ exists finite for all 
$\mat\in \mathbb{M}^{n\times n}_{sym}$. Indeed, with fixed 
$\mat\in\mathbb{M}^{n\times n}_{sym}$, the proof of such a claim 
follows by applying the global ergodic theorem in \cite[Theorem~2.1]{LM} (or \cite[Theorem~3.1]{LM})
to the $\mathbb{Z}^n$-invariant subadditive process $\mathcal{S}:\mathcal{O}_\infty(\R^n)\to[0,+\infty]$ defined by 
$\mathcal{S}(A):=\mm(\mat x;A)$
(for more details cf. Eq. (4.3.15) in \cite[Lemma~4.3.7]{bouchitte1998global}
and \cite[Lemma~4.3.6]{bouchitte1998global}). In particular, we note that the conclusions of \cite[Theorem~2.1]{LM} 
are true also for subadditive processes defined only on bounded open sets with Lipschitz boundary, 
as outlined in the remark at the end of \cite[Section~2.1]{LM}.

For $(u,A)\in BD(\Omega)\times\mathcal{O}_\infty(\Omega)$ and 
$j\in\N$, consider the intermediate cell problems
	\[
	\mm_{\delta_j}(u,A):=\inf\left\{F_{\delta_j}(u,A):\,w\in LD(A),\quad w=u 
	\textrm{ on $\partial A$}\right\}\,.
	\]
With fixed $x_0\in\Omega$ 
using the version of the De~Giorgi's averaging/slicing lemma developed in 
\cite[Proposition~3.7]{barroso2000relaxation} one can argue as in \cite[Lemma 4.3.5]{bouchitte1998global} to establish the convergence 
\begin{equation}\label{e:mdelta conv}
\liminf_{j\to+\infty}\mm_{\delta_j}(u,Q(x_0,r))=\mm(u,Q(x_0,r))
\end{equation}
for $\mathcal{L}^1$-a.e. $r\in(0,\frac2{\sqrt{n}}d(x_0,\partial\Omega))$, where $\mm$ is the cell formula for $\F$
defined in Eq. \eqref{e:mm}.

Thus, by taking into account \eqref{e:f} and \eqref{e:mdelta conv} we conclude that
	\begin{equation}\label{e:barf char}
	f(\mat)=\lim_{\e\rightarrow 0} \liminf_{j\to+\infty} \frac{\mm_{\delta_j}(\mat x,Q_\e)}{\e^n}.
	\end{equation}
We prove $f\geq f_{\hom}$. With fixed $\mat\in \mathbb{M}^{n\times n}_{sym}$, 
select $\delta_{j_\e}$, with $\sfrac{\delta_{j_\e}}\e\to 0$ as $\e\downarrow 0$, and $v_{\e}\in LD(Q_\e)$, 
with $v_{\e}|_{\partial Q_\e}= \mat x|_{\partial Q_\e}$, such that
	\[
	f(\mat)=\lim_{\e\rightarrow 0} \frac{1}{\e^n} F_{\delta_{j_\e}} (v_{\e},Q_\e).
	\]
Set $T_{\e}:=\sfrac{\e}{\delta_{j_\e}}\uparrow+\infty$ and
$w_{\e}(y):=  \frac{1}{\delta_{j_\e}} v_{\e}(\delta_{j_\e} y) \in LD(Q_{T_{\e}})$, then
$w_{\e}|_{\partial Q_{T_{\e}}}=\mat x|_{\partial  Q_{T_{\e}}}$ and by a change of 
variables the very definition of $f_{\hom}(\mat)$ yields
	\[
	f(\mat)= \lim_{\e\rightarrow 0} \frac{1}{\e^n} F_{\delta_{j_\e}} (v_{\e},Q_\e)= 
	\lim_{\e\to 0} \frac{1}{T_{\e}^n}\int_{Q_{T_{\e}}}f_0\big(z,e(w_{\e})(z)\big) \d z\geq f_{\hom}(\mat).
	\]
For the converse inequality $f\leq f_{\hom}$, for every 
$T>0$ choose $u_T \in LD(Q_T)$ such that 
$u_T|_{\partial Q_T}=\mat x|_{\partial Q_T}$ and
	\[
	f_{\hom}(\mat)=\lim_{T\rightarrow +\infty} \frac{1}{T^n} \int_{Q_T} f_0\big(x,e(u_T)(x)\big)\d x\,.
	\]
For every $\e>0$ and $j\in\N$, let $T_{\e,j}=\sfrac{\e}{\delta_j}>0$, and set $w_{\e,j}(z):= 
\delta_ju_{T_{\e,j}}\big(
\frac z{\delta_j}\big)$.
For every $\e>0$ note that $T_{\e,j}\uparrow+\infty$ as 
$j\uparrow+\infty$, $w_{\e,j}|_{\partial Q_\e}=\mat z|_{\partial Q_\e}$, and thus by changing variables that 
	\begin{align*}
	f_{\hom}(\mat)&=\lim_{j\rightarrow +\infty} 
	\frac1{T_{\e,j}^n}\int_{Q_{T_{\e,j}}} 
	f_0\big(z,e(u_{T_{\e,j}})(z)\big)\d z\\
	&=\lim_{j\rightarrow +\infty} \frac{1}{\e^n} \int_{Q_\e} f_0\Big(\frac{z}{\delta_j},e(w_{\e,j})(z)\Big)\d z
	\geq \liminf_{j\to+\infty}\frac{\mm_{\delta_j}(\mat z,Q_\e)}{\e^n}\,.
\end{align*}
Therefore, Eq. \eqref{e:barf char} provides the inequality $f(\mat)\leq f_{\hom}(\mat)$ by letting $\e\downarrow 0$ in the inequality above.

In particular, since the argument above is independent from the chosen subsequence $\delta_j$, we conclude that 
for all $(u,A)\in LD(\Omega)\times\mathcal{O}(\Omega)$ 
\begin{equation}\label{e:int repr F LD}
\F(u,A)=\int_A f_{\hom}\big(e(u)(x)\big)\d x\,
\end{equation}
for all functionals $\F$ arising as $\Gamma(L^1(\Omega;\R^n))$-limits of subsequences of the family $\{F_\delta\}_{\delta>0}$.
Therefore, by \eqref{e:int repr F LD} the $\Gamma$-liminf $\F^-$
of the family $\{F_\delta\}_{\delta>0}$, defined as
\[
\F^-(u,A):=\inf\{\liminf_{j\to +\infty} F_{\delta_j}(u_j,A):\,
u_j\to u\quad L^1(\Omega;\R^n),\quad\delta_j\to 0\}\,,
\]
satisfies
\begin{equation}\label{e:gamma liminf}
\F^-(u,A)=\int_A f_{\hom}\big(e(u)(x)\big)\d x\,
\end{equation}
for all $(u,A)\in LD(\Omega)\times\mathcal{O}(\Omega)$.
In particular, by definition $\F^-(\cdot,A)$ is $L^1(\Omega;\R^n)$
lower semicontinuous and 
\begin{equation}\label{e:gamma liminf ineq}
\F(u,A)\geq\F^-(u,A)
\end{equation}
for all $(u,A)\in LD(\Omega)\times\mathcal{O}(\Omega)$, where 
$\F$ is any  $\Gamma(L^1(\Omega;\R^n))$-limit
 of a subsequence $F_{\delta_j}$.

%
\smallskip

\noindent\textbf{Step $3$:} \textit{$g=f_{\hom}^\infty$.}
We start off proving the inequality $g\leq f_{\hom}^\infty$. For all $(u,A)\in L^1(\Omega;\R^n)\times\mathcal{O}(\Omega)$ set
\[
 F(u,A):=\begin{cases}
                 \displaystyle{\int_A} f_{\hom}\big(e(u)(x)\big)\d x & \textrm{if $u\in LD(\Omega)$} \cr
                 +\infty & \textrm{otherwise on $L^1(\Omega;\R^n)$,}
                \end{cases}
\]
then by Steps~1 and 2 it follows that $\F(u,A)\leq F(u,A)$. 
Denoting by $\widetilde{\F}$ the $L^1(\Omega;\R^n)$ lower semicontinuous envelope of $F$, 
then $\F(u,A)\leq \widetilde{\F}(u,A)$. Hence, we conclude $g\leq f^\infty_{\hom}$ in view of Corollary~\ref{c:lsc} 
that provides the explicit expression of $\widetilde{\F}$. 

To prove $g\geq f_{\hom}^\infty$ we argue as follows. 
For every $\e>0$, we introduce the family of auxiliary functionals 
$F_{\delta_j}^{(\e)},\,\F^{(\e)}: L^1(\textstyle{\frac1\e}\Omega;\R^n)\times\mathcal{O}(\textstyle{\frac1\e}\Omega)\to[0,+\infty]$ 
defined by
\begin{equation*}
	F_{\delta_j}^{(\e)}(u,A):=\begin{cases}
	\displaystyle{\int_A f_0\Big(\frac{\e}{\delta_j}x, e(u)(x)\Big)\d x} & \text{if $u\in LD(\textstyle{\frac1\e}\Omega)$}\\
	+\infty & \text{otherwise on $L^1(\textstyle{\frac1\e}\Omega;\R^n)$,}
	\end{cases} 
	\end{equation*}
and
\begin{align*}
\F^{(\e)}(w,A)&:=\int_{A} f_{\hom}\big(e(w)(x)\big)\d x\\
&+\int_{J_w\cap A}{\textstyle{\frac1\e}}g\big(\e (w^+(x)-w^-(x)),\nu_w(x)\big)\d \mathcal{H}^{n-1}(x)+\int_{A}f_{\hom}^\infty\Big(\frac{\d E^cw}{\d|E^cw|}(x)\Big)\d|E^cw|\,.
\end{align*}
For $w\in BD(\Omega)$ set $w_\e(y):=\frac1\e w(\e y)\in BD(\textstyle{\frac1\e}\Omega)$, then a simple scaling argument yields that 
(cf. Remark~\ref{r:scaling})
\[
F_{\delta_j}(w,A)=\e^{n}F_{\delta_j}^{(\e)}(w_\e,\textstyle{\frac1\e}A)\quad\textrm{ and }\quad
\F(w,A)=\e^{n}\F^{(\e)}(w_\e,\textstyle{\frac1\e} A)\,,
\]
so that $\F^{(\e)}(u,O)=\Gamma(L^1(\textstyle{\frac1\e}\Omega;\R^n))\textrm{-}\displaystyle{\lim_{j\to+\infty}} F_{\delta_j}^{(\e)}(u,O)$
for all $(u,O)\in BD(\textstyle{\frac1\e}\Omega)\times\mathcal{O}(\textstyle{\frac1\e}\Omega)$. 
In addition, for all $(\mathrm{v},\nu)\in\R^n\times\mathbb{S}^{n-1}$, by \eqref{e:g} we deduce that 
\[
g(\mathrm{v},\nu)=\lim_{\e\to 0}\frac{\mm(u_{\mathrm{v},\nu},Q^\nu_\e)}{\e^{n-1}}=
\lim_{\e\to 0}\e\inf\{\F^{(\e)}(w,Q^\nu_1):\,w=\textstyle{\frac1\e} u_{\mathrm{v},\nu} \textrm{ on } \partial Q^\nu_1\}\,.
\]
We next claim that for all $\e>0$
\begin{equation}\label{e:stima dal basso Fhom}
\inf\{\F^{(\e)}(w,Q^\nu_1):\,w={\textstyle{\frac1\e}} u_{\mathrm{v},\nu} \textrm{ on } \partial Q^\nu_1\}
\geq f_{\hom}\Big(\frac{\mathrm{v}\odot\nu}{\e}\Big)\,.
\end{equation}
Given this for granted, we infer that
\[
g(\mathrm{v},\nu)\geq\limsup_{\e\to 0}\e\, f_{\hom}\Big(\frac{\mathrm{v}\odot\nu}{\e}\Big)=
f_{\hom}^\infty(\mathrm{v}\odot\nu)\,.
\]

Thus, to conclude we are left with the proof of \eqref{e:stima dal basso Fhom}.
To this aim we use a construction introduced in \cite[item (ii) of Lemma~4.3]{AmbrosioDalMaso}, and adapt their argument to our setting. By scaling we assume that $Q_2\subset\Omega$, and
for the sake of simplicity we assume $\nu=\euno$. In particular, 
$\F^{(\e)}(w,Q_1)=\F^{(\e)}(w(\cdot+\mathrm{e}),(0,1)^n)$  
for every $\e>0$ and $w\in BD(Q_1)$, where $\mathrm{e}:=\frac12(\euno+\ldots+\en)$.
Therefore, in place of \eqref{e:stima dal basso Fhom} we shall equivalently prove that for all $\e>0$
\begin{equation}\label{e:stima dal basso Fhom bis}
\inf\{\F^{(\e)}(w,(0,1)^n):\,w={\textstyle{\frac1\e}} 
u_{\mathrm{v},\nu}(\cdot-\mathrm{e}) \textrm{ on } \partial (0,1)^n\}
\geq f_{\hom}\Big(\frac{\mathrm{v}\odot\nu}{\e}\Big)\,.
\end{equation}
We introduce next some notation: $\lfloor t\rfloor$ stands for the integer part of $t\in\R$, 
and with a slight abuse set $\lfloor x\rfloor:=(\lfloor x_1\rfloor,\ldots,\lfloor x_n\rfloor)$ 
for all $x\in\R^n$. Then, for every $w\in BD((0,1)^n)$ with $w=\frac1\e u_{\mathrm{v},\nu}(\cdot-\mathrm{e})$ on $\partial (0,1)^n$, set 
\[
w_j(x):={\textstyle\frac1j} \big(w\big(jx-\lfloor jx\rfloor\big)
+{\textstyle\frac1\e}\lfloor jx_1\rfloor \mathrm{v}\big)
\]
for all $j\in\N$ and for all $x\in (0,1)^n$.
The sequence $\{w_j\}_{j\in\N}$ is bounded in $BD((0,1)^n)$ and converges to the affine function 
$w_{\infty}(x):=\frac{x_1}\e\mathrm{v}$ strongly in 
$L^1((0,1)^n;\R^n)$ (we do not highlight the dependence of $w_j$ and $w_\infty$ on $\e$ for notational convenience). Indeed, 
it is easy to check that $\frac1j\lfloor jx_1\rfloor$ converges to 
$x_1$ strongly in $L^\infty((0,1)^n;\R^n)$, and that by $Q_1$-periodicity of $w$
\[
\frac1j \int_{(0,1)^n}\Big|w\big(jx-\lfloor jx\rfloor\big)\Big|\d x=
\frac1{j^{n+1}} \int_{(0,j)^n}|w(y-\lfloor y\rfloor)|\d y=\frac1j \int_{(0,1)^n}|w(y)|\d y\,.
\]
Let $Q^{(1)},\ldots,Q^{(j^n)}$ be the standard decomposition of $(0,1)^n$ into $j^n$ congruent subcubes of 
side $\sfrac1j$ with sides parallel to the coordinate hyperplanes. 
Since by construction $|Ew_1|(\pi)=0$ for every coordinate hyperplane of the form 
$\pi=\{x\in\R^n:\,x_i=c\in\mathbb{Z}^n\}$, for some $i\in\{1,\ldots,n\}$, and $w_j(x)=\frac1jw_1(jx)$, then 
$|Ew_j|((0,1)^n\cap\partial Q^{(k)})=0$, for all $k\in\{1,\ldots,j^n\}$. Then, $|Ew_j|((0,1)^n)=|Ew|((0,1)^n)$.

By translation invariance of $\F^{(\e j)}$ and by $(0,\sfrac1j)^n$-periodicity of $w_j$ we have that 
\begin{equation}\label{e:uno}
\F^{(\e j)}(w_j,(0,\sfrac1j)^n)=\F^{(\e j)}(w_j,Q^{(i)})
\end{equation}
for all $i\in\{1,\ldots,j^n\}$. 
Recalling that $f_{\hom}(\mat)\leq C(1+|\mat|)$ for all 
$\mat\in \mathbb{M}_{sym}^{n\times n}$, then $\F^{(\e j)}(u,A)\leq C(\L^n(A)+|Eu|(A))$ for all 
$(u,A)\in BD((0,1)^n)\times\mathcal{O}((0,1)^n)$. Therefore, $\F^{(\e j)}(w_j,(0,1)^n\cap \partial Q^{(k)})=0$, and 
thus from \eqref{e:uno} we get that 
\begin{equation}\label{e:tre}
\F^{(\e j)}(w_j,(0,1)^n)=j^{n}\F^{(\e j)}(w_j,(0,\sfrac1j)^n)\,.
\end{equation}
On the other hand, by changing variables ($x=\sfrac yj$) we get that
\begin{equation}\label{e:due}
\F^{(\e j)}(w_j,(0,\sfrac1j)^n)=j^{-n}\F^{(\e)}(w,(0,1)^n)\,.
\end{equation}
In conclusion, Eqs. \eqref{e:tre} and \eqref{e:due} yield that for all 
$\e>0$ and for all $j\in\N$
\begin{equation}\label{e:quattro}
\F^{(\e)}(w,(0,1)^n)= \F^{(\e j)}(w_j,(0,1)^n)\,.
\end{equation}
Since $F^{(\e)}_{\delta_j}(\cdot,(0,1)^n)=F_{\sfrac{\delta_j}\e}(\cdot,(0,1)^n)$ for all $j\in\N$ and $\e>0$,
then $\F^{(\e)}$ is a $\Gamma(L^1(\Omega;\R^n))$-limit of a (suitable) subsequence of the family $\{F_\delta\}_{\delta>0}$.
Thus, by the $L^1(\Omega;\R^n)$ lower semicontinuity 
of the $\Gamma$-liminf functional 
$\F^-(\cdot,(0,1)^n)$, we conclude that 
\begin{align*}
\F^{(\e)}(w,(0,1)^n)&\stackrel{\eqref{e:quattro}}{=}
\liminf_{j\to+\infty}\F^{(\e j)}(w_j,(0,1)^n)\\
&\stackrel{\eqref{e:gamma liminf ineq}}{\geq}
\liminf_{j\to+\infty}\F^-(w_j,(0,1)^n)\geq\F^-(w_\infty,(0,1)^n)
\stackrel{\eqref{e:gamma liminf}}{=}f_{\hom}\Big(\frac{\mathrm{v}\odot\euno}{\e}\Big)\,.
\end{align*}
In turn, from this \eqref{e:stima dal basso Fhom bis} follows at once.
\end{proof}
\begin{remark}
In the convex case it is well-known that the homogenized bulk energy density $f_{\hom}$ can be alternatively expressed for all $\mat\in\mathbb{M}^{n\times n}_{sym}$ as
\[
 f_{\hom}(\mathbb{A})=\inf_{\substack{w\in LD_{loc}(\R^n)\\ \textrm{$w$ $Q_1$-periodic}}} 
 \int_{Q_1}f_0(x,\mathbb{A}+e(w)(x))\d x\,
\]
(see \cite[Theorems~14.5, 14.7, and Remark~14.6]{BDF98} and \cite[Theorem~3.1]{bouchitte1986convergence}). 
\end{remark}

\section{Comments on the assumption of invariance under superposed rigid body motion}\label{s:discussionH5}

In this section we comment on the need of assumption (H5) in the $BD$ setting. As noticed in formula \eqref{eqn:invariance}, assumption (H5) implies that the bulk 
energy density $f$ of $\F$, and then in turn its recession function $f^\infty$, does 
not depend on the skew-symmetric part of the relevant matrix. 
This piece of information has been substantially used in the proof of Theorem~\ref{thm:mainthm} to give a lower bound of the Radon-Nikod\'ym derivative of 
$\F$ at $u\in BD(\Omega)$ with respect to $|E^cu|$ (cf. \eqref{eqn:afinal1}), the upper bound instead being always true. 
As far as we have understood, this seems not to be a mere technical issue as we try to explain in what follows. 
First we notice that there exist one-homogeneous, nonconvex, quasiconvex functions $f$ satisfying for all  $\mat\in\mathbb{M}^{n\times n}$
\begin{equation}\label{e:h}
\frac1C\textstyle{\frac{|\mat+\mat^t|}{2}}\leq f(\mat)\leq C\textstyle{\frac{|\mat+\mat^t|}{2}}
\end{equation}
and depending non trivially on the skew-symmetric part of the relevant matrix $\mat$. 
The example is obtained by a slight modification of the one-homogeneous, nonconvex, quasiconvex function exhibited by 
M\"uller \cite[Theorem~1]{Muller1992} that we briefly recall. Given a matrix $\mat\in\mathbb{M}^{2\times 2}$ consider
\[
h(\mat):=|\mat_{11}-\mat_{22}|+|\mat_{12}+\mat_{21}|
+\min\{|\mat_{11}+\mat_{22}|,|\mat_{12}-\mat_{21}|\}
\]
and set $Qh$ to be the quasiconvexification of $h$, i.e. 
	\[
	Qh(\mat):=\sup\{h'(\mat) \  | \ h'\leq h, \ \text{ $h'$ is quasiconvex}\}.
	\]
Clearly, $Qh$ is quasiconvex by definition and it is easy to see that it is one-homogeneous and satisfying
$0\leq Qh(\mat)\leq C\textstyle{\frac{|\mat+\mat^t|}{2}}$ for all  $\mat\in\mathbb{M}^{2\times 2}$. 
To prove the nonconvexity of $Qh$, M\"uller shows that for the rank-$2$ matrix
\[
\mat_0:=
\begin{pmatrix} 
1 & -1 \\
1 & 1
\end{pmatrix}
\]
$Qh(\mat_0)>0$ is satisfied. On the other hand, it is easy to show that the convex envelope of 
$h$ is null on $\mat_0$. In addition, to prove that $Qh$ depends on the skew-symmetric part 
of the matrix $\mat$ it suffices to notice that  
\[
\frac{\mat_0+\mat_0^t}{2}=\textrm{Id}, \qquad
\frac{\mat_0-\mat_0^t}{2}=
\begin{pmatrix} 
0 & -1 \\
1 & 0
\end{pmatrix}
\]
and that $h(\textrm{Id})=0$, in turn implying $Qh(\textrm{Id})=0$.
Finally, in order to satisfy the growth condition in \eqref{e:h} we define the function 
$f$ to be equal to 
\[
Qh(\mat)+\e\textstyle{\frac{|\mat+\mat^t|}{2}}
\]
for $\e>0$ sufficiently small. Indeed, the set 
\[
I:=\big\{\e\geq0:\text{ $\mathbb{M}^{n\times n}\ni\mat\mapsto 
Qh(\mat)+\e\textstyle{\frac{|\mat+\mat^t|}{2}}$ is convex}\big\}
\]
is closed (and potentially empty), since convexity is stable under pointwise convergence, and $0\in[0,+\infty)\setminus I$.
In passing, we recall that an example of similar nature has been exhibited in the superlinear case in 
\cite[Remark~4.14]{ContiFocardiIurlano17} with a polyconvex, non-convex energy density.  

Hence, the full integral representation result for the corresponding functional
\[
\F(u,A):=\inf\Big\{\liminf_j\int_Af\big(\nabla u_j(x)\big)\d x:\,u_j\to u
\text{ in $L^1(\Omega;\R^2)$, $u_j\in LD(\Omega)$}\Big\}
\]
can be proven by means neither of Theorem~\ref{thm:mainthm}, since assumption (H5) 
is violated (while (H1)-(H4) are easily checked to be valid), nor of any of the results available 
in the related literature (cf. \cite{barroso2000relaxation},  \cite{ebobisse2001note}, \cite{rindler2011lower}, \cite{arroyorabasadephilippisrindler}, \cite{kosibarindler}).
On the other hand, thanks to Lemma~\ref{lem:goodpointsdensity} (cf. \cite[Theorem~3.3, Remark~3.5]{ebobisse2001note}), 
the quasiconvexity and $1$-homogeneity of $f$ itself imply that for all $(u,A)\in SBD(\Omega)\times \mathcal{O}(\Omega)$ it holds
\begin{equation}\label{e:relaFnonsymm}
\F(u,A)=\int_Af\big(\nabla u(x)\big)\d x+\int_{J_u\cap A}f\big((u^+(x)-u^-(x))\otimes\nu_u(x)\big)\d\H^{1}
\end{equation}
(see also Theorem~\ref{t:relaxationLD}, Remark~\ref{r:H1 weakened} and Corollary~\ref{c:lsc}).
More generally, such representations of the volume and surface parts of the energy hold for all $u\in BD(\Omega)$ 
in view of Lemma~\ref{lem:goodpointsdensity} itself.

In addition, for all $(u,A)\in BV(\Omega;\R^2)\times\mathcal{O}(\Omega)$ we claim that
\begin{equation}\label{e:relaFnonsymm2}
\F(u,A)=\int_Af\big(\nabla u(x)\big)\d x+\int_{J_u\cap A}f\big((u^+(x)-u^-(x))\otimes\nu_u(x)\big)\d\H^{1}
+\int_{A}f\Big(\frac{\d D^cu}{\d|D^cu|}(x)\Big)\d|D^cu|\,.
\end{equation}
To prove this, for $\delta\in(0,\e)$ consider the functional 
\[
\F_\delta(u,A):=\inf\Big\{\liminf_{j\to+\infty}\int_A f_\delta(\nabla u_j)
\d x:\,u_j\to u
\text{ in $L^1(\Omega;\R^2)$, $u_j\in W^{1,1}(\Omega;\R^2)$}\Big\},
\]
where $\mathbb{M}^{2\times 2}\ni\mat\mapsto f_\delta(\mat):=f(\mat)+\delta\textstyle{\frac{|\mat-\mat^t|}{2}}$, 
and note that it satisfies the assumptions of the integral representation result 
\cite[Theorem~4.1.4]{bouchitte1998global}. To this aim, set 
\[
F_\delta(u,A):=\int_A f_\delta(\nabla u)\d x,\quad\text{ and }\quad F(u,A):=\int_A f(\nabla u)\d x,
\]
if $u\in  W^{1,1}(\Omega;\R^2)$ and $+\infty$ otherwise on $L^1(\Omega;\R^2)$. In particular, $\F$ turns out to be the 
$L^1$ lower semicontinuous envelope of $F$ (cf. Remark~\ref{r:LDvssobolev}).
Furthermore, by its very definition $f_\delta$ is quasiconvex, one-homogeneous and $\delta|\mat|\leq f_\delta(\mat)\leq C|\mat|$ 
for all $\mat\in\mathbb{M}^{2\times 2}$, for some $C$ independent from $\delta$. 
Therefore, \cite[Theorem~4.1.4]{bouchitte1998global} gives that for all $(u,A)\in BV(\Omega;\R^2)\times\mathcal{O}(\Omega)$
\[
\F_\delta(u,A)=\int_Af_\delta\big(\nabla u(x)\big)\d x+\int_{J_u\cap A}f_\delta\big((u^+(x)-u^-(x))\otimes\nu_u(x)\big)\d\H^{1}
+\int_{A}f_\delta\Big(\frac{\d D^cu}{\d|D^cu|}(x)\Big)\d|D^cu|,
\]
and moreover $\F_\delta(u,A)=+\infty$ if $u\in L^1\setminus BV(\Omega;\R^2)$.

Being $\delta\mapsto F_\delta$ monotone increasing and converging to $F$ as $\delta\downarrow 0$  
in view of the pointwise convergence of $f_\delta$ to $f$, the $\Gamma$-limit in the strong $L^1(\Omega;\R^2)$ 
topology of both $(F_\delta)_\delta$ and $(\F_\delta)_\delta$ is exactly $\F$ 
(cf. \cite[Propositions~5.7 and 6.11]{maso2012introduction}, \cite[Theorem~1.17 and Remark~1.40]{Braides02}, see also 
\cite[Example~2.5 and Theorem~2.8]{focardisurvey}).
Moreover, $\delta\mapsto \F_\delta$ is monotone increasing with pointwise limit as $\delta\downarrow 0$
the functional $\widetilde{\F}$ given by the 
right hand side of \eqref{e:relaFnonsymm2} for all $(u,A)\in BV(\Omega;\R^2)\times\mathcal{O}(\Omega)$ and $+\infty$ 
otherwise on $L^1(\Omega;\R^2)$.
The equality in \eqref{e:relaFnonsymm2} then follows at once in view of the $L^1(A;\R^2)$ lower semicontinuity of $\widetilde{\F}(\cdot,A)$ 
itself on $BV(A;\R^2)$ for all $A\in\mathcal{O}(\Omega)$ (cf. \cite[Theorem~4.1]{AmbrosioDalMaso}, \cite[Theorem~5.47]{AFP00}). 
In addition, the functional $\widetilde{\F}$ turns out to provide the $L^1_{loc}(\Omega;\R^2)$ lower semicontinuous envelope of 
$F$ on $(u,A)\in BV(\Omega;\R^2)\times\mathcal{O}(\Omega)$ in view of \cite[Theorem~4.1]{AmbrosioDalMaso}.

Actually, as a byproduct, it follows that for all $(u,A)\in BD(\Omega)\times\mathcal{O}(\Omega)$
\begin{equation}\label{e:relaxBV}
\F(u,A)=\inf\Big\{\liminf_{j\to+\infty}\widetilde{\F}(u_j,A):u_j\to u \text{ in $L^1(\Omega;\R^2)$, $u_j\in BV(\Omega;\R^2)$}\Big\}\,.
\end{equation}


Summarizing, for the functional $\F$ related to the integrand $f$ introduced above, 
we can prove an integral representation result on maps $u\in BV(\Omega;\R^2)$ (see \eqref{e:relaFnonsymm2}) with volume density 
expressed in terms of the full approximate gradient of $u$ and with surface density depending on the full tensor 
product of the jump and the approximate normal to the jump set of $u$ (cf. \eqref{e:relaFnonsymm} and 
\eqref{e:relaFnonsymm2}). 
For $BD(\Omega)$ maps which are not $BV(\Omega;\R^2)$ we are not able to provide the integral representation of the Cantor
term. 
The expression of $\F$ for $BV$ maps in \eqref{e:relaFnonsymm2} 
suggests that despite the recent progresses obtained by De Philippis and Rindler in \cite{de2016structure}, some other structure properties of the Cantor part of the symmetrized distributional derivative are still missing for $BD$ maps
(see \cite[Conjecture~3.4]{DPR19} for further discussions on the topic).
In Theorem~\ref{thm:mainthm} we use assumption (H5) to rule out such kind of difficulties. 

As already noticed, (H5) is not needed for the integral representation on the subspace of functions $SBD(\Omega)$.
The latter comment is coherent with the needed assumptions in case of functionals defined on $SBD^p(\Omega)$, $p>1$ 
(see \cite[Theorem~1.1 and Remark~4.14]{ContiFocardiIurlano17}).

Finally, we note that all the (few) known examples of functions $u$ in $BD(\Omega)\setminus BV(\Omega;\R^n)$ are such that 
$\nabla u\notin L^1(\Omega;\mathbb{M}^{n\times n})$, though, with a slight abuse of notation, $D^su\in\mathcal{M}(\Omega;\mathbb{M}^{n\times n})$ 
(cf. \cite{ornstein}, \cite[Example~7.7]{ambrosio1997fine}, \cite[Theorem~1]{ContiFaracoMaggi2005}, 
\cite{KirchheimKristensen2011}, \cite[Theorems~1.3 and 5.1]{KirchheimKristensen2016}, \cite[Theorems~3.1 and 3.6]{conti2017special}). 

\begin{remark}
 More generally, considering a generic functional $\F$ satisfying (H1)-(H4), similar conclusions as those discussed above can be drawn
 for what concerns the integral representation on the subspace $BV(\Omega;\R^n)$ and the analogous of the relaxation formula \eqref{e:relaxBV}.
\end{remark}

\subsection*{Acknowledgements}
The work of M.C. has been supported by the grant ``Calcolo delle variazioni, Equazione alle derivate parziali, Teoria geometrica della misura, Trasporto ottimo'' 
co-founded by Scuola Normale Superiore di Pisa and the University of Firenze. M.F. acknowledges the support of GNAMPA of INdAM. N.V.G. was supported by FCT - Fundaç\~ao para a Ci\^encia e a Tecnologia, starting grant “Mathematical theory of dislocations: geometry, analysis, and modelling” (IF/00734/2013) and by the FCT project UIDB/04561/2020. 

The first and second authors would like to thank G. De Philippis for interesting conversations on the subject.

\bibliography{references8}
\bibliographystyle{plain}

\end{document}